\def\div{\mathrm{div}}
\def\diag{\mathrm{diag}}
\def\tr{\mathrm{tr}}
\def\R{\mathbb{R}}
\def\vep{\varepsilon}
\def\X{\mathbf{X}}
\def\q{\mathbf{q}}
\def\p{\mathbf{p}}
\def\Qb{\mathbf{Q}}
\def\Pb{\mathbf{P}}
\def\z{\mathbf{z}}
\def\x{\mathbf{x}}
\def\v{\mathbf{v}}
\def\div{\mathrm{div}}
\def\D{\mathbf{D}}
\def\W{\mathbf{W}}
\newcommand{\la}{\langle}
\newcommand{\ra}{\rangle}
\newcommand{\E}{\mathbb{E}}
\newcommand{\Norm}[1]{\Big\|  #1 \Big\|}
\newcommand{\Normbig}[1]{\big\|  #1 \big\|}
\renewcommand{\P}{\mathbb{P}}
\newcommand{\inner}[2]{\Big\langle #1 , #2 \Big\rangle}
\newcommand{\innerbig}[2]{\big\langle #1 , #2 \big\rangle}
\newcommand{\lr}[1]{\Big(  #1 \Big)}
\newcommand{\lrbig}[1]{\big(  #1 \big)}
\newcommand{\ud}{\ensuremath{\mathrm{d} }}
\newtheorem{theorem}{Theorem}[section]
\newtheorem{lemma}[theorem]{Lemma}
\newtheorem{assumption}[theorem]{Assumption}
\newtheorem{proposition}[theorem]{Proposition}
\newtheorem{remark}{Remark}[section]
\newtheorem{definition}[theorem]{Definition}
\numberwithin{equation}{section}
\title[Ergodicity and asymptotic limits for Langevin interacting systems]{Ergodicity and asymptotic limits for Langevin interacting systems with singular forces and multiplicative noises}
\author{Manh Hong Duong$^1$, Hung Dang Nguyen$^2$ and Wenxuan Tao$^1$}
\address{$^1$ School of Mathematics, University of Birmingham, Birmingham, UK}
\address{$^2$ Department of Mathematics, University of Tennessee, Knoxville, Tennessee, USA}
\date{}
\begin{document}

\begin{abstract}
In this paper, we study systems of $N$ interacting particles described by the classical and relativistic Langevin dynamics  with singular forces and multiplicative noises. For the classical model, we prove the ergodicity, obtaining an exponential rate of convergence to the invariant Boltzmann-Gibbs distribution, and the small-mass limit, recovering the $N$-particle interacting overdamped Langevin dynamics. For the relativistic model, we establish the ergodicity, obtaining an algebraic mixing rate of any order to the Maxwell-J\"uttner distribution, and the Newtonian limit (that is when the speed of light tends to infinity), approximating a system of underdamped Langevin dynamics. The proofs rely on the construction of Lyapunov functions that account for irregular potentials and multiplicative noises.
\end{abstract}
\maketitle
\tableofcontents
\section{Introduction}
\subsection{Langevin interacting systems with singular forces and multiplicative noise}
In this paper, we are interested in $N$-particle (underdamped) Langevin interacting systems of the form

\begin{subequations}
\label{eq: general model}
    \begin{align} 
\ud Q^\vep_i &= \nabla K^\vep(P^\vep_i)\,\ud t,~~i=1,\ldots, N, \label{position}\\
\ud P^\vep_i & = -\nabla U(Q^\vep_i)\, \ud t-\sum_{j\neq i}\nabla G\big(Q^\vep_i-Q^\vep_j\big) \, \ud t -D^\vep(Q^\vep_i,p^\vep_i)\nabla K^\vep(P^\vep_i)\,\ud t\notag\\
&\qquad+\div_{P_i}D^\vep(Q^\vep_i,P^\vep_i)\,\ud t+\sqrt{2 D^\vep(Q^\vep_i,P^\vep_i)}\,\ud W_i.\label{eq:velocity}
\end{align}
\end{subequations}

This system describes the motion of the $i$-th particle with position $Q_i^\varepsilon\in \mathbb{R}^d$ and momentum/velocity $P_i^\varepsilon \in \mathbb{R}^d$. The first equation \eqref{position} captures the relation between the position and the momentum/velocity, where we allow for a general kinetic energy $K^\vep:\mathbb{R}^d\rightarrow\mathbb{R}$. The second equation \eqref{eq:velocity} 
posits that the particle moves in accordance with four different forces, namely, (i) an external confining force $-\nabla U(Q_i^\varepsilon)$, (ii) an interaction force $-\sum_{j\neq i}\nabla G(Q_i^\varepsilon-Q_j^\varepsilon)$ obtained from its interactions with other particles, (iii) a friction $-D^\vep (Q_i^\varepsilon,P_i^\varepsilon)\nabla K(P_i^\varepsilon)$ and (iv) a stochastic noise $\sqrt{2 D^\vep(Q^\vep_i,P^\vep_i)})\,\ud W_i$. Here $U, G:\mathbb{R}^d\rightarrow \mathbb{R}$ are given functions, representing external potential and interaction potential respectively. In \eqref{eq:velocity}, we consider multiplicative noises, where $\{W_i\}$ are independent $d$-dimensional standard Wiener processes, and $D^\vep:\mathbb{R}^{2d}\rightarrow\mathbb{R}^{d\times d}$ is a given matrix, representing the strength of the noises. We are using It\^o's formulation and the term $\div_{P_i}D^\vep(Q^\vep_i,P^\vep_i)\,\ud t$ is an It\^o-correction term arising from the multiplicative noise, where for $z\in \R^d$ and a matrix $D=D(z)\in \R^{d\times d}$, the divergence of $D$, $\div(D(z))\in \R^d$ is defined as
\[
[\div(D(z))]_i=\sum_{j=1}^d\frac{\partial  D_{ij}}{\partial z_j}(z).
\]
In \eqref{position}-\eqref{eq:velocity}, we use the superscript $\varepsilon$ to indicate that \eqref{eq: general model} depends on a small parameter $\varepsilon$ that will be specified later. The It\^o-correction term, together with the dissipation-fluctuation relations that appear in \eqref{eq: general model} via the scaling of the matrix $D$ in the external force, the friction and the noise terms, guarantees that, at least formally, the Gibbs distribution $\pi^\varepsilon$ is an invariant probability measure of \eqref{eq: general model}. In particular, thanks to the Hamiltonian structure of the system, $\pi^\varepsilon$ is explicitly given by
\begin{align} \label{form:pi^epsilon}
\pi^\varepsilon(\ud\mathbf{Q},\ud\mathbf{P})=\frac{1}{Z_N^\varepsilon}e^{-H^\varepsilon(\mathbf{Q},\mathbf{P})}\ud\mathbf{Q} \ud\mathbf{P},  
\end{align}
where $Z_N^\varepsilon$ is the normalization constant and the Hamiltonian $H^{\varepsilon}$ is defined as
\[
H^\vep(\mathbf{Q},\mathbf{P})=\sum_{i=1}^N\big(K^\vep(P_i)+ U(Q_i)\big)+\frac{1}{2}\sum_{1\leq i\neq j\leq N} G(Q_i-Q_j).
\]
  This can also be seen by writing \eqref{eq: general model} in a more compact and familiar form of a general underdamped Langevin equation using the notations $\mathbf{Q}^\varepsilon=(Q_1^\varepsilon,\ldots, Q_N^\varepsilon)$, $\mathbf{P}^\varepsilon=(P_1^\varepsilon,\ldots, P_N^\varepsilon)$ and $\D^\varepsilon(\mathbf{Q}^\varepsilon,\mathbf{P}^\varepsilon)=\diag(D^\varepsilon(Q_1^\varepsilon,P_1^\varepsilon),\ldots,D^\varepsilon(Q_N^\varepsilon,P_N^\varepsilon))$:
\begin{align*}  %\label{E:GeneralSys}
    \notag \ud \mathbf{Q}^\vep=& \nabla_{\mathbf{P}} H^\vep(\mathbf{Q}^\vep,\mathbf{P}^\vep) \, \ud t,\\
 \notag \ud \mathbf{P}^\vep=& - \nabla_{\mathbf{Q}}H^\vep(\mathbf{Q}^\vep,\mathbf{P}^\vep)\, \ud t- \D^\varepsilon(\mathbf{Q}^\vep,\mathbf{P}^\vep)\nabla_{\mathbf{P}}H^\vep(\mathbf{Q}^\vep,\mathbf{P}^\vep) \,\ud t 
 \\
 &\quad +\div_{\mathbf{P}}\D^\vep(\mathbf{Q}^\vep,\mathbf{P}^\vep)\ud t+ \sqrt{2 \D^\vep(\mathbf{Q}^\vep,\mathbf{P}^\vep)}\ud \W_t.
\end{align*}
The purpose of this paper is to rigorously establish the unique ergodicity of \eqref{eq: general model} together with a qualitative convergence rate toward the equilibrium $\pi^\varepsilon$, and derive the asymptotic limit of \eqref{eq: general model} as $\varepsilon\rightarrow 0$.    %H^\vep(\q,\p)=\sum_{i=1}^N\Big(K^\vep(p_i)+ U(q_i)\Big)+\frac{1}{2}\sum_{1\leq i\neq j\leq N} G(q_i-q_j).
%\]
%where
%\[
%\D(\x)=\mathrm{diag}(D(x_1),\ldots, D(x_i)).
%\]
%Due to the singularity of $G$, we write the phase space $\X=\mathcal{O}\times \lr{\R^d}^N$ where
%\begin{align*}
    %\mathcal{O}\coloneqq\lr{(q_1,\dots,q_N)\in \lr{\R^d}^N: q_i\neq q_j \quad\text{for $i\ne j$}}
%\end{align*}
More specifically, we focus on the following two important classes of models:

(1) \textbf{$N$-particle interacting systems of classical Langevin dynamics with state-dependent friction}:
\begin{equation}\label{E:Classical_Eq}
    \begin{aligned}
        \ud x_i &= v_i\,\ud t,~~i=1,\ldots, N,\\
m \ud v_i & = -\nabla U(x_i)\, \ud t-\sum_{j\neq i}\nabla G\big(x_i-x_j\big) \, \ud t -D(x_i)v_i\,\ud t+\sqrt{2 D(x_i)}\,\ud W_i.
    \end{aligned}
\end{equation}
In the above, $m$ is a positive constant representing the particle's mass. We note that \eqref{E:Classical_Eq} is a special case of \eqref{eq: general model} with
\[(Q,P)=(x,mv),\quad
K(P)=\frac{1}{2m}|P|^2=\frac{1}{2}m|v|^2,\quad D(Q,P)=D(x),\quad \vep=m.
\]
Here, while $K$ is the classical kinetic energy, the noise and friction coefficients are both state-dependent through the matrix $D$ that satisfies Assumption \ref{Assumption D} below. Langevin dynamics with state-dependent friction and diffusion of the form \eqref{E:Classical_Eq} arise in many applications, for instance as a coarse-grain model from quantum classical molecular
dynamics~\cite{szepessy2011langevin}, in dissipative particle dynamics \cite{leimkuhler2016pairwise}, and in sampling techniques~\cite{lim2025appropriate}, just to name a few.

Note that we write equation \eqref{E:Classical_Eq} in terms of the position and velocity rather than the position and momentum. This will be more convenient for the purpose of studying the behaviors of \eqref{E:Classical_Eq} in the regime $\varepsilon=m\rightarrow 0$, which is the celebrated small mass (also known as the overdamped or Smoluchowski-Kramers) limit. Another asymptotic limit of interest is the convergence of \eqref{E:Classical_Eq} toward the invariant probability measure, denoted by $\pi^\varepsilon=\pi^m_{\textup{GB}}$, that corresponds to the Gibbs-Boltzmann distribution. More specifically, in view of \eqref{form:pi^epsilon}, $\pi^m_{\textup{GB}}$ is given by
\begin{align} \label{form:pi^m_GB}
   \pi^m_{\textup{GB}}(\ud \x,\ud\v) = \frac{1}{Z_N^m}\exp\Big\{-\sum_{i=1}^N \Big(\frac{1}{2}m|v_i|^2+U(x_i)\Big)-\frac{1}{2}\sum_{1\le i\neq j\le N}G(x_i-x_j)   \Big\} \ud \x\ud\v.
\end{align}
The main results established for \eqref{E:Classical_Eq} in the aforementioned topics will be precisely stated in Theorems \ref{thm: ergodicityLE} and \ref{thm: smallLE} in Section \ref{sec:intro:result}.

(2) \textbf{$N$-particle interacting systems of relativistic Langevin dynamics}:
\begin{align}\label{E:RelativisticSys}
\ud q_i(t) &= \frac{\mathbf{c}p_i}{\sqrt{m^2\mathbf{c}^2+|p_i|^2}}\, \ud t, ~~i=1,\ldots, N,\notag \\
\ud  p_i(t) & = - \gamma D(p_i) \frac{\mathbf{c}p_i}{\sqrt{m^2\mathbf{c}^2+|p_i|^2}} \, \ud  t + \gamma \div(D(p_i))\ud  t  +\sqrt{2 \gamma D(p_i)} \ud  W_{i}(t)\notag \\
& \qquad -\nabla U(q_i)\, \ud  t- \sum_{j\neq i}\nabla G\big(q_i-q_j\big) \, \ud  t,
\end{align}
where $\mathbf{c}$ denotes the speed of light, $\gamma$ represents the friction coefficient, and the friction matrix $D$ is given by
\begin{align} \label{E:DefD:c}
     D(p)=\frac{m\mathbf{c}}{\sqrt{m^2\mathbf{c}^2+|p|^2}}\Big(I+\frac{p\otimes p}{m^2\mathbf{c}^2}\Big) .
\end{align}
Here $I\in \R^{d\times d}$ denotes the identity matrix. We note that system \eqref{E:RelativisticSys} is a special case of \eqref{eq: general model} with
\begin{align}
    \label{E:RelativisiicKD}
(Q,P)=(q,p),\quad K(p)=\mathbf{c}\sqrt{m^2\mathbf{c}^2+|p|^2},\quad D(q,p)=D(p),\quad \vep=\frac{1}{\mathbf{c}^2}.
\end{align}
Historically, the single-particle system ($N=1$, $G=0$) of \eqref{E:RelativisticSys} was originally introduced in \cite{dunkel2005atheory,dunkel2005theory} as an extension of the classical Langevin dynamics to the special relativity setting, that is to comply with Einstein's special relativity. The specific momentum-dependent form of the diffusion matrix $D$ in \eqref{E:DefD} guarantees, among other properties, that system \eqref{E:RelativisticSys}, in the absence of friction, is Lorentz invariant \cite{alcantara2011relativistic,dunkel2005atheory,dunkel2005theory}. We note that while this type of invariance is a desirable feature when extending the classical Langevin dynamics to the special relativity, it does not hold true in the presence of additive noise, hence the choice of the diffusion matrix $D$ as in \eqref{E:DefD:c}. In the past decades, many relativistic stochastic models similar to \eqref{E:RelativisticSys} have been proposed, see for instance the work of \cite{alcantara2011relativistic,chevalier2008relativistic,
debbasch2004diffusion,debbasch1998diffusion,
duong2015formulation,felix2013newtonian,
haba2009relativistic,
haba2009relativisticII,haba2010energy,
pal2020stochastic} and the survey articles \cite{debbasch2007relativistic,dunkel2009relativistic} for a detailed exposition of the topic.

In this model, since the mass parameter $m$ and the friction coefficient $\gamma$ are fixed and do not affect the analysis, we will set $m=\gamma=1$ for the sake of simplicity. So, in term of $\varepsilon=1/\mathbf{c}^2$, equation \eqref{E:RelativisticSys} can be recast as 
\begin{align}\label{E:RelativisticSys:epsilon}
\ud q^\varepsilon_i(t) &= \frac{p^\vep_i}{\sqrt{1+\varepsilon|p^\varepsilon_i|^2}}\, \ud t, \notag \\
\ud p^\varepsilon_i(t) & = - D(p^\varepsilon_i) \frac{p^\varepsilon_i}{\sqrt{1+\varepsilon|p^\varepsilon_i|^2}} \, \ud t + \div(D(p^\varepsilon_i))\ud t  +\sqrt{2 D(p^\varepsilon_i)} \ud W_{i}(t) \\
& \qquad -\nabla U(q^\varepsilon_i)\, \ud t- \sum_{j\neq i}\nabla G\big(q^\varepsilon_i-q^\varepsilon_j\big) \, \ud t,\notag
\end{align}
\begin{comment}
    At least formally, the system admits the following Maxwell-Jüttner invariant distribution
\[
\pi(\q,\p,\z)=\exp\bigg[-\Big( \sum_{i=1}^N \Big(U(q_i)+K(p_i)+\frac{|\mathbf{z}_i|^2}{2}\Big)+\sum_{1\leq i< j\leq N}G(q_i-q_j)\Big)\bigg],
\]
where $K$ is the relativistic kinetic energy
$$
K(p)=c\sqrt{m^2 c^2+|p|^2}.
$$
\end{comment}
where the friction matrix $D$ is expressed as
\begin{equation}\label{E:DefD}
    D(p) =\frac{1}{\sqrt{1+\varepsilon|p|^2}}\lrbig{I+\varepsilon p\otimes p}.
\end{equation}
Note that $D$ also admits the following representation
\begin{align*}
    D(p)=\frac{1}{\sqrt{1+\varepsilon|p|^2}} P^\perp+\sqrt{1+\varepsilon|p|^2}P,
\end{align*}
where $P=p \otimes p/|p|^2$ denotes the projection map from $\R^d$ to $\text{span}\{p^\varepsilon_i\}$ and $P^\perp=I-P$. Since $D$ is positive definite, we can take the square root to get 
\begin{align}\label{E:sqrtD}
    \sqrt{D(p)}=\frac{1}{\big(1+\varepsilon|p|^2\big)^{1/4}}P^{\perp}+\lrbig{1+\varepsilon|p|^2}^{1/4}P.
\end{align}
In the case $N=1$, \eqref{E:RelativisticSys:epsilon} is reduced to
\begin{align} \label{E:RelativisticSysN=1}
\ud q(t) &= \frac{p}{\sqrt{1+\varepsilon|p|^2}}\, \ud t, \notag \\
\ud  p(t) & = - D(p) \frac{p}{\sqrt{1+\varepsilon|p|^2}} \, \ud  t + \div(D(p))\ud  t  +\sqrt{2 D(p)} \ud  W(t)\notag \\
& \qquad -\nabla U(q)\, \ud  t- \nabla G\big(q\big) \, \ud  t.
\end{align}
It is important to point out that we still include the potential $G$ in \eqref{E:RelativisticSysN=1} in order to understand the role of singularity. 

Two asymptotic topics of interest for \eqref{E:RelativisticSys:epsilon} are the large-time stability and the approximation of \eqref{E:RelativisticSys:epsilon} in the regime of $\varepsilon\rightarrow 0$ (i.e., $\mathbf{c}\rightarrow +\infty$ in \eqref{E:RelativisticSys}). Particularly, the former explores the convergence of the dynamics toward the statistically steady state, which is the Maxwell-J\"uttner distribution and is denoted by $\pi^\varepsilon=\pi^{\varepsilon}_{\textup{MJ}}$, whereas the latter corresponds to the so-called Newtonian (non-relativistic) limit. Here, in comparison with \eqref{form:pi^epsilon}, $\pi^\varepsilon_{\text{MJ}}$ is given by
\begin{align} \label{form:pi^epsilon_MJ}
    \pi^\vep_{\text{MJ}}(\ud \q,\ud \p) =  \frac{1}{Z_N^\vep}\exp\Big\{-\sum_{i=1}^N\Big( \frac{1}{\varepsilon}\sqrt{1+\varepsilon  |p_i|^2}+U(q_i)
    \Big)-\frac{1}{2}\sum_{1\le i\neq j\le N}G(q_i-q_j)   \Big\} \ud \q\ud\p.
\end{align}

We refer the reader to Theorems \ref{thm: ergodicityRLE} and \ref{thm:NewtonianLimit} for the precise statements of the main results established for \eqref{E:RelativisticSys:epsilon}.

\subsection{Main results} \label{sec:intro:result}
In this paper, we will seek sufficient conditions on the nonlinearities $U, G$ and the diffusion matrix $D$ in order to establish the ergodicity and small-mass limit for the classical model \eqref{E:Classical_Eq}, as well as the ergodicity and Newtonian limit for the relativistic equation \eqref{E:RelativisticSys:epsilon}. Although \eqref{E:Classical_Eq} and \eqref{E:RelativisticSys:epsilon} appear unrelated, they can actually be analyzed using the same methodology and techniques.

Owing to the presence of the repulsive force $G$, it is crucial to ensure that no collision occurs in finite time. For this purpose, we introduce the domain $\mathcal{D}$ where the process $\Qb^\varepsilon(t)$ evolves in
\begin{align} \label{form:D}
\mathcal{D}=\begin{cases}
    \{\Qb=(Q_1,\dots,Q_N) \in (\R^d)^N: Q_i\neq Q_j \text{ if } i\neq j\},& d\ge 2,\\
    \{\Qb=(Q_1,\dots,Q_N) \in (\R)^N: Q_1<Q_2<\dots<Q_N\}, & d=1.
\end{cases} 
\end{align}
As mentioned elsewhere in \cite{duong2024trend,herzog2017ergodicity,lu2019geometric}, since the set $\{\q=(q_1,\dots,q_N) \in (\R^d)^N: q_i\neq q_j \text{ if } i\neq j\}$ is not path connected when $d=1$, we have to restrict the dynamics of \eqref{eq: general model} to a connected component of $(\R)^N$, hence the choice of $\mathcal{D}$ when $d=1$ for simplicity. Then, we define the phase space for the general equation \eqref{eq: general model} as follow:
\begin{align} \label{form:X}
\X=\mathcal{D}\times (\R^d)^N .
\end{align}
Under the assumptions of the present paper, cf. Section \ref{sec: prim:assumption}, the well-posedness of the two particular systems \eqref{E:Classical_Eq} and \eqref{E:RelativisticSys:epsilon} is guaranteed, see Theorem \ref{T:ExistandUnique} in Section \ref{sec: wellposedness} below. In other words, there always exists a unique global strong solution evolving in the phase space $\X$. 

As an immediate consequence of the well-posedness result stated in Theorem \ref{T:ExistandUnique}, we can introduce a Markov transition probability given by the solution $X^\varepsilon=(\Qb^\varepsilon,\Pb^\varepsilon)$. That is for any Borel set $A\subset \X$,
\begin{align*}
P_t^\varepsilon(X_0,A):=\P(X^\varepsilon(t;X_0)\in A),
\end{align*}
is well-defined for all $t\ge 0$ where $X_0$ is the initial condition of~\eqref{eq: general model}. Moreover, $P^\varepsilon_t$ defines a Markov semigroup on the space of bounded Borel measurable functions $\mathcal{B}_b(\X)$. That is for any $f\in \mathcal{B}_b(\X)$,
\begin{align*}
    P_t^\varepsilon f(X_0)=\E[f(X^\varepsilon(t;X_0))], \,\, f\in \mathcal{B}_b(\X).
\end{align*}
Let $\mathcal{P}r(\X)$ denote the space of probability measures on the Borel sets of $\X$. A distribution is called invariant if for any $f\in \mathcal{B}_b(\X)$, it holds that
\begin{align*}
\int_{\X} f(X) (P_t^\varepsilon)^*\mu(\ud X)=\int_{\X} f(X)\mu(\ud X),
\end{align*}
where $(P_t^\varepsilon)^*\mu(\ud X)$ is the probability measure obtained by applying $P^\varepsilon_t$ to the measure $\mu$,
\begin{align*}
(P_t^\varepsilon)^*\mu(A) = \int_{\X} P_t^\varepsilon(X,A)\mu (\ud X).
\end{align*}

To measure the convergence rate of solutions of \eqref{E:Classical_Eq} and \eqref{E:RelativisticSys:epsilon} toward the corresponding equilibrium measures, we need to recall the definition of a weighted total variance distance developed in \cite{hairer2011yet}. For a positive function $V$, we introduce the weighted norm on the measurable functions defined as
\begin{align}
    \Normbig{f}_V=\sup_{X\in\X}\frac{|f(X)|}{1+V(X)},
\end{align}
which induces a Banach space denoted by $\mathcal{B}(\X;V)$. The corresponding weighted total variance distance is given by
\begin{align}
    \mathcal{W}_V\lrbig{\mu_1,\mu_2}=\sup_{\|f\|_V\le 1}\Big|\int_{\X} f(X)\mu_1(\ud X)-\int_{\X} f(X)\mu_2(\ud X)\Big|.
\end{align}

We are now in the position to state the main results of the present paper, namely, Theorems \ref{thm: ergodicityLE} - \ref{thm: smallLE} and Theorems \ref{thm: ergodicityRLE} - \ref{thm:NewtonianLimit} respectively concerning the asymptotic analysis for equation \eqref{E:Classical_Eq} and equation \eqref{E:RelativisticSys:epsilon}.

We start with Theorem \ref{thm: ergodicityLE} establishing that the distributions of solutions to the classical Langevin model \eqref{E:Classical_Eq} exponentially converges, in a weighted total variation distance, to the unique Gibbs-Boltzmann invariant measure.
\begin{theorem}[Ergodicity of the classical model]
\label{thm: ergodicityLE}
    Suppose Assumptions \ref{Assumption U}, \ref{A:G} and \ref{Assumption D} respectively on the external potential $U$, the interaction potential $G$, and the diffusion matrix $D$ hold. Then the Gibbs-Boltzmann distribution $\pi^m_{\textup{GB}}$ given by \eqref{form:pi^m_GB} is the unique invariant measure of the classical Langevin system~\eqref{E:Classical_Eq}. Moreover, for each $m>0$, there exist a function $V_m:\X\to[1,\infty)$, positive constants $\alpha_m$ and $C_M$ such that the following holds
    \begin{equation}
  \label{eq:LErate}      \mathcal{W}_{V_m}\lrbig{\lrbig{P_t^m}^*\mu,\pi^{m}_{\textup{GB}} }\le C_m e^{-\alpha_mt}\mathcal{W}_{V_m}\lrbig{\mu,\pi^{m}_{\textup{GB}}},\quad t\ge 0,
    \end{equation}
    for all probability measure $\mu$ such that
    \begin{align*}
        \int_\X V_m(X)\mu(\ud X)<\infty.
    \end{align*}
\end{theorem}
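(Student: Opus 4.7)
The plan is to apply the Harris-type ergodic theorem in the weighted total variation distance $\mathcal{W}_{V_m}$, in the spirit of Hairer-Mattingly and Meyn-Tweedie. The two ingredients to verify are: a Foster-Lyapunov drift inequality of the form $\mathcal{L}^m V_m \leq -\alpha_m V_m + K_m \mathbf{1}_{\mathcal{C}}$ for some compact set $\mathcal{C}\subset\X$, and a minorization condition on $\mathcal{C}$ giving a uniform lower bound on $P_T^m(X,\cdot)$ for $X\in \mathcal{C}$ and some $T>0$. Combined, these yield a unique invariant probability measure on $\X$ together with the exponential contraction \eqref{eq:LErate}. Since $\pi^m_{\textup{GB}}$ in \eqref{form:pi^m_GB} is a formal stationary solution of the Fokker-Planck equation associated with \eqref{E:Classical_Eq} (this is the whole point of writing the It\^o correction $\div D$ and the fluctuation-dissipation structure), uniqueness immediately identifies the invariant measure with $\pi^m_{\textup{GB}}$.

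The heart of the argument, and the main technical obstacle, is the construction of the Lyapunov function $V_m$. The natural choice, based on the Hamiltonian $H^m$ alone, fails twice: under the generator $\mathcal{L}^m$ one gets dissipation only in the velocity variable (a hypocoercive deficit), and near the collision set $\partial\mathcal{D}$ the singular potential $G$ contributes unbounded terms that must be absorbed. I would therefore propose
\[
V_m(\x,\v) \;=\; \exp\!\big\{\eta\, H^m(\x,\v)\big\} \;+\; \beta\, \Psi(\x,\v),
\]
with a twist term of Hairer-Mattingly type such as $\Psi(\x,\v)=\sum_i \langle v_i,\nabla U(x_i)+\sum_{j\neq i}\nabla G(x_i-x_j)\rangle$, possibly regularized, and small tuning constants $\eta,\beta>0$. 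The exponential of $H^m$ forces $V_m\to\infty$ both at $|\v|\to\infty$ and along collision sequences (thanks to the singularity of $G$), so sub-level sets of $V_m$ are compact subsets of $\X$. Applying $\mathcal{L}^m$, the Hamiltonian part yields the dissipative term $-\sum_i \langle D(x_i)v_i,v_i\rangle$ and a quadratic It\^o correction controlled by Assumption~\ref{Assumption D}, while the twist $\Psi$ contributes, after integration by parts in the cross terms, a negative definite piece in the position gradient $|\nabla U+\nabla G|^2$. Balancing the sizes of $\eta$ and $\beta$ carefully so that the twist provides position-dissipation without destroying the velocity-dissipation, and so that the unbounded cross terms coming from $\nabla G$ at near-collisions are dominated by the exponential growth of $e^{\eta H^m}$, produces the drift inequality.

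For the local minorization on $\mathcal{C}$, the generator of \eqref{E:Classical_Eq} is hypoelliptic by H\"ormander's bracket condition: the noise enters directly through the velocities via $\sqrt{2D(x_i)}$ (positive definite by Assumption~\ref{Assumption D}), and the bracket with the drift $v\cdot\nabla_x$ propagates smoothing into positions. Together with a standard Stroock-Varadhan controllability argument, which uses that one can steer $(\x,\v)$ between any two configurations in $\X$ by choosing a smooth path of velocities avoiding the collision set (feasible because $\mathcal{D}$ is path connected in the cases \eqref{form:D}), this gives the strong Feller property and topological irreducibility on $\X$. Standard arguments then promote these into the required uniform minorization on the compact set $\mathcal{C}=\{V_m\leq R\}$ for $R$ large enough.

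The main obstacle will therefore be the Lyapunov construction, specifically: ensuring that the twist $\Psi$ produces genuinely coercive position-dissipation near $\partial\mathcal{D}$ where $\nabla G$ blows up, and controlling the It\^o correction $\div D$ and the quadratic variation $\tr(D\cdot \mathrm{Hess})$ at the same time. I expect that choosing $\Psi$ with a cutoff supported away from $\partial\mathcal{D}$ (exploiting that $e^{\eta H^m}$ already dominates near collisions) and then optimizing $\eta,\beta$ as a function of $m$ is the clean way to proceed.
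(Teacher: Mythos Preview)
Your overall strategy---Foster--Lyapunov drift plus a minorization obtained from H\"ormander's condition and controllability, then a Harris-type contraction---is exactly the paper's, and your minorization discussion is correct. The Lyapunov construction, however, has a genuine gap.

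The additive form $V_m=e^{\eta H^m}+\beta\Psi$ cannot yield $\mathcal{L}^m V_m\le-\alpha V_m+K$. Since the Hamiltonian part of the generator annihilates $H^m$, one computes
\[
\mathcal{L}^m e^{\eta H^m}=e^{\eta H^m}\sum_i\Big[(\eta^2-\eta)\langle D(x_i)v_i,v_i\rangle+\tfrac{\eta}{m}\tr D(x_i)\Big],
\]
so at $\v=0$ this equals $\tfrac{\eta}{m}\sum_i\tr D(x_i)\cdot e^{\eta H^m}>0$. Near a collision (or as $|\x|\to\infty$) with $\v=0$, this term is \emph{positive and exponentially large}, while $\beta\,\mathcal{L}^m\Psi$ is at best polynomially negative and cannot supply the required $-\alpha e^{\eta H^m}$. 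Your phrase ``$e^{\eta H^m}$ already dominates near collisions'' conflates the size of $V_m$ with the sign of its drift: what is needed is dissipation of order $V_m$, and the Hamiltonian alone produces none in the position direction. Cutting $\Psi$ off near $\partial\mathcal{D}$ only makes this worse, since then nothing at all counteracts the positive drift there.

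The paper works instead with a \emph{linear} perturbation of the Hamiltonian,
\[
V_{N,m}=H_{N,m}+\varepsilon_1 m\sum_i\langle x_i,v_i\rangle-\varepsilon_1 m\sum_i\Big\langle v_i,\sum_{j\ne i}\tfrac{x_i-x_j}{|x_i-x_j|}\Big\rangle,
\]
and proves $\mathcal{L}^m_N V_{N,m}\le -c\, V_{N,m}+C$ directly. The first cross term, hit by the force part of the generator, produces $-\langle\nabla U(x_i),x_i\rangle\le -a_2|x_i|^{\lambda+1}+a_3$; the second uses the \emph{unit vector} $\tfrac{x_i-x_j}{|x_i-x_j|}$ rather than $\nabla G$, so the force yields $\langle\nabla G(x_i-x_j),\tfrac{x_i-x_j}{|x_i-x_j|}\rangle\sim -|x_i-x_j|^{-\beta_1}$, the needed singular dissipation, while transport $v\cdot\nabla_x$ acting on a bounded direction field gives terms that combine to be $\le 0$ by Cauchy--Schwarz. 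Your twist $\Psi=\langle v,\nabla U+\nabla G\rangle$ would, under $v\cdot\nabla_x$, generate $|v|^2\,\nabla^2 G\sim |v|^2|x_i-x_j|^{-\beta_1-1}$, a term not absorbed by either $-|\v|^2$ or $-|\nabla G|^2$; the unit-vector trick is precisely what sidesteps this.
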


In the second result stated for the classical Langevin model \eqref{E:Classical_Eq}, we rigorously justify that the overdamped Langevin dynamics of $N$ interacting particles is the small-mass limit of \eqref{E:Classical_Eq}.

\begin{theorem}[Small-mass limit of the classical model]
\label{thm: smallLE}  Suppose Assumptions \ref{Assumption U}, \ref{A:G} and \ref{Assumption D} respectively on the external potential $U$, the interaction potential $G$, and the diffusion matrix $D$ hold.
For all initial value $(\x_0,\v_0)\in \X$, let $(\x^m,\v^m)$ be the solution to~\eqref{E:Classical_Eq} and $\q$ solve the following overdamped Langevin dynamics:
    \begin{align}\label{EC:Limiting}
    \ud q_i=[-D^{-1}(q_i)(\nabla U(q_i)+\sum_{j\neq i}\nabla G(q_j-q_i))-\div D^{-1}(q_i)]\ud t+\sqrt{2}\sqrt{D}^{-1}\ud W_i.
\end{align}
with initial condition $\q(0)=\x_0$.
Then, $\x^m$ converges to $\q$ in probability in the small mass regime, i.e., for all $T>0$ and $\xi>0$,
\begin{align*}
    \P\Big(\sup_{t\in[0,T]}|\x^m(t)-\q(t)|\ge \xi\Big)\to0,\quad m\to 0.
\end{align*}
\end{theorem}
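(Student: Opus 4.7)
The plan is to follow the pathwise Smoluchowski-Kramers strategy going back to Freidlin and refined in the multiplicative-noise setting by Hottovy-McDaniel-Volpe-Wehr, adapted to the present singular $N$-particle setup. The starting point is the identity obtained by dividing the momentum equation in~\eqref{E:Classical_Eq} by $D(x_i)$: $\v_i\, dt = D^{-1}(x_i)F_i(\x)\, dt - m D^{-1}(x_i)\, d\v_i + \sqrt{2}\,D^{-1/2}(x_i)\, dW_i$, where $F_i(\x)=-\nabla U(x_i)-\sum_{j\neq i}\nabla G(x_i-x_j)$. Applying the It\^o product formula to $D^{-1}(x_i^m)\v_i^m$ (no cross-variation appears since $D^{-1}(x_i^m)$ is a function of the finite-variation process $\x^m$) and integrating yields the key representation
\begin{align*}
x_i^m(t)-x_i^m(0) &= \int_0^t D^{-1}(x_i^m) F_i(\x^m)\, ds + m\int_0^t \big[\nabla D^{-1}(x_i^m)\,\v_i^m\big]\v_i^m \, ds \\
&\quad + \sqrt{2}\int_0^t D^{-1/2}(x_i^m)\, dW_i - m\big[D^{-1}(x_i^m(t))\v_i^m(t) - D^{-1}(x_i^m(0))\v_i^m(0)\big].
\end{align*}
The goal is to pass to the limit $m\to 0$ in each of these four terms.

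First, I would derive uniform-in-$m$ a priori bounds by constructing a Lyapunov function adapted from the Hamiltonian $H^m$, augmented with lower-order correctors that absorb the multiplicative-noise drift, yielding an estimate of the form $\sup_{m\in(0,1]}\E[\sup_{t\in[0,T]}(m|\v^m(t)|^2+\sum_{i\neq j}G(x_i^m(t)-x_j^m(t)))]<\infty$. Two consequences are crucial: the kinetic bound forces the boundary term $m D^{-1}(x_i^m(t))\v_i^m(t)$ to vanish in probability as $m\to 0$; and the control on $\E[G(x_i^m-x_j^m)]$, combined with the singular growth of $G$ near the diagonal in Assumption~\ref{A:G}, prevents collisions in the limit and permits localization on compact sets $K_\delta\subset\X$ of arbitrarily large probability.

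The most delicate step is the identification of the noise-induced drift, i.e., showing in probability on $[0,T]$ that
\begin{align*}
m\int_0^t \big[\nabla D^{-1}(x_i^m)\,\v_i^m\big]\v_i^m \, ds \;\longrightarrow\; \pm\int_0^t \div\big(D^{-1}\big)(q_i)\, ds
\end{align*}
(with the sign dictated by the convention in~\eqref{EC:Limiting}). Heuristically, by equipartition the fast variable $m\,\v_i(\v_i)^\top$ averages to the identity, so the integrand converges on average to $\sum_k\partial_k(D^{-1})_{jk}=[\div D^{-1}]_j$. Rigorously I would apply It\^o's formula to the quadratic form $m\,(\v_i^m)_l(\v_i^m)_k$ using~\eqref{E:Classical_Eq}: this expresses $m(\v_i^m)_l(\v_i^m)_k-\delta_{lk}$ as a compensator term (integrating to the divergence contribution once inserted back in the integral) plus a stochastic integral whose quadratic variation is $O(m)$, the latter being killed by Doob's inequality in the spirit of a standard corrector argument from stochastic homogenization.

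Combining these ingredients, the representation together with the uniform bounds yield tightness of $\{\x^m\}$ in $C([0,T];\X)$; the identification of the noise-induced drift and the continuity of $D^{-1}$, $\nabla U$, $\nabla G$ on $K_\delta$ ensure that any weak limit solves~\eqref{EC:Limiting}; pathwise uniqueness for this limiting SDE (from local Lipschitz continuity of its coefficients away from the collision set) together with a Gy\"ongy-Krylov-type argument upgrades subsequential weak convergence to the claimed convergence in probability. \emph{The principal obstacle} is handling the singularity of $G$ at the collision set simultaneously with the fast-variable averaging; the natural remedy is to stop the processes upon approaching a collision or when $m|\v^m|^2$ exceeds a large threshold, carry out the convergence inside the stopping times, and then remove the localization using the Lyapunov control from the first step.
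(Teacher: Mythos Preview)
Your representation formula and the corrector argument for the noise-induced drift are correct in spirit---this is essentially the Hottovy--McDaniel--Volpe--Wehr machinery spelled out---but the proposal contains a genuine gap at the step you label ``first'': the uniform-in-$m$ a priori bound
\[
\sup_{m\in(0,1]}\E\Big[\sup_{t\in[0,T]}\Big(m|\v^m(t)|^2+\sum_{i\neq j}G(x_i^m(t)-x_j^m(t))\Big)\Big]<\infty.
\]
Applying the generator to the Hamiltonian gives $\mathcal{L}^m_N H_{N,m}=-\langle \mathbf{D}\v,\v\rangle+\tfrac{1}{m}\mathrm{tr}(\mathbf{D})$, so the naive energy estimate yields $\E[H_{N,m}(t)]\le H_{N,m}(0)+Ct/m$, which blows up as $m\to 0$. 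No ``lower-order corrector'' that I can see absorbs this $1/m$; the correctors in Proposition~\ref{P:CLyapunov} produce constants $c_1(\varepsilon_1,m),C_1(\varepsilon_1,m)$ that explicitly depend on $m$. Without this uniform control, your final localization-removal step (``remove the localization using the Lyapunov control from the first step'') does not go through, and consequently neither the vanishing of the boundary term $mD^{-1}(x_i^m)\v_i^m$ nor the collision-avoidance is justified uniformly.

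The paper sidesteps exactly this difficulty, and the mechanism is worth internalizing. Rather than seeking uniform-in-$m$ bounds on the singular system $(\x^m,\v^m)$, it (i) truncates $\nabla U$ and $\nabla G$ to obtain globally Lipschitz systems and invokes \cite[Theorem~1]{hottovy2015smoluchowski} as a black box for $L^2$ convergence $\x_m^R\to\q^R$; (ii) establishes moment bounds \emph{only on the limiting overdamped process} $\q$ (Lemma~\ref{L_:Step2Goal}, via an exponential-martingale argument on $\Gamma_1$ or $\Gamma_2$); and (iii) removes the truncation by a stopping-time argument in which the event $\{\sigma_m^R<T\le\sigma^R\}$---where the approximating process exits the good set but the limit has not---is controlled by combining the closeness $|\x_m^R-\q^R|\le\xi/R$ with the bound on $\q$, thereby transferring the estimate from the limit to the approximation. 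The point is that you never need to know anything uniform about $\x^m$ itself beyond what the truncated convergence already provides. Your localization idea is in the right direction, but the missing ingredient is that the exit probability for $\x^m$ should be bounded \emph{via} $\q$, not via a direct Lyapunov estimate on $\x^m$.
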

It is worth noting the appearance of the diffusion matrix $D$ in the limiting system, in particular it gives rise to an additional drift term $\div{D}^{-1}(q_i)$ in the limiting system. This is known as noise-induced phenomena which has been revealed in many other systems, see for instance \cite{volpe2016effective}. The proofs of Theorems \ref{thm: ergodicityLE} - \ref{thm: smallLE} will be supplied in details in Section \ref{sec: classical LE}.

In the next two theorems stated for the relativistic model \eqref{E:RelativisticSys:epsilon}, we establish the ergodicity for large-time stability and the Newtonian limit $\varepsilon\to 0$, i.e., as the speed of light $\mathbf{c}\to\infty$ in \eqref{E:RelativisticSys}. Concerning the former limit, in contrast to the classical model, due to the interplay between the nonlinearity of the relativistic kinetic energy and the irregularity of the interaction forces, provided the speed of light $\mathbf{c}$ is large enough, we are only able to obtain an algebraic convergence rate, of any order, toward the unique Maxwell-J\"uttner invariant distribution.
\begin{theorem}[Ergodicity of the relativistic model]
\label{thm: ergodicityRLE}
     Suppose that Assumptions \ref{Assumption U} and \ref{A:G} respectively on the external potential $U$ and the interaction potential $G$ hold and that the diffusion matrix $D$ satisfies \eqref{E:DefD}. Suppose further that when $N\ge2$, $G$ also satisfies Assumption \ref{A:G:relativistic:N>1}. 
     
     \textup{(1)} For all $\varepsilon>0$ sufficiently small, $\pi^\varepsilon_{\textup{MJ}}$ defined in \eqref{form:pi^epsilon_MJ} is the unique invariant measure of the relativistic Langevin system \eqref{E:RelativisticSys:epsilon}.

         \textup{(2)} For all $r>0$, there exists a positive constant $\varepsilon^*=\varepsilon^*(r)$ sufficiently small such that for all $\varepsilon<\varepsilon^*$, there exist a function $V_{\varepsilon,r}:\X\to[1,\infty)$ and a positive constant $C_{\varepsilon,r}$ such that the following holds
\begin{align} \label{eq: RLErate}
\mathcal{W}_{V_{\varepsilon,r}}\lrbig{\lrbig{P_t^\varepsilon}^*\mu,\pi^{\varepsilon}_{\textup{MJ}} }\le \frac{C_{\varepsilon,r}}{(1+t)^r}\mathcal{W}_{V_{\varepsilon,r}}\lrbig{\mu,\pi^{\varepsilon}_{\textup{MJ}}},\quad t\ge 0,
    \end{align}
    for all probability measure $\mu$ such that
    \begin{align*}
        \int_\X V_{\varepsilon,r}(X)\mu(\ud X)<\infty.
    \end{align*}

\end{theorem}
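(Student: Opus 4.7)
The approach is to invoke a Harris-type theorem for subgeometric rates of convergence, in the form developed by Douc--Fort--Guillin and refined by Hairer and others. The ingredients required are (i) a minorization condition on every compact subset of the phase space $\X$, and (ii) a Lyapunov drift inequality of the form $\mathcal{L}^\vep V_{\vep,r} \le -\phi(V_{\vep,r}) + K\mathds{1}_C$ where $\phi$ is concave and sublinear; the polynomial rate $r$ is then determined by the shape of $\phi$ near infinity.

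For the minorization, the noise enters only the momentum variables, but $\ud q^\vep_i = p^\vep_i/\sqrt{1+\vep|p^\vep_i|^2}\,\ud t$ couples $q$ to $p$ non-degenerately, so Hörmander's parabolic bracket condition holds at every point of $\X$, giving the strong Feller property via smoothness of the transition densities. A piecewise smooth control argument, carried out locally within a connected component of $\X$ (recall \eqref{form:D}), together with continuity of the controlled flow up to the collision set, yields topological irreducibility. Together these produce the required minorization on every compact subset of $\X$, and existence combined with minorization + irreducibility gives uniqueness of $\pi^\vep_{\textup{MJ}}$.

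For the drift condition, I would use a Lyapunov function of the form
\[
V_{\vep,r}(\Qb,\Pb) = \bigl(1+H^\vep(\Qb,\Pb)+\Phi(\Qb,\Pb)\bigr)^{\theta},
\]
where $\theta=\theta(r)\in(0,1)$ and $\Phi$ is a hypocoercive perturbation of Talay / Herzog--Mattingly type, essentially $\Phi = \delta \sum_{i} \la \nabla_p K^\vep(P_i),\nabla_{Q_i}H^\vep\ra$ with a small $\delta>0$ and a cutoff near the collision set where $\nabla G$ is unbounded. A direct computation using \eqref{E:DefD} gives the identity $D(p)\nabla_p K^\vep(p)=p$ and hence $\nabla_p K^\vep\cdot D(p)\nabla_p K^\vep = |p|^2/\sqrt{1+\vep|p|^2}$, so $\mathcal{L}^\vep H^\vep$ is dominated by $-\sum_i|p_i|^2/\sqrt{1+\vep|p_i|^2}$ modulo bounded trace corrections (since $\textup{trace}(D(p)\nabla^2_p K^\vep)=d/(1+\vep|p|^2)$). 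The role of $\Phi$ is to generate, via the Poisson-bracket cancellations, a dissipative contribution proportional to $|\nabla_\Qb H^\vep|^2$ weighted by a decreasing function of $|\p|$, supplying the missing dissipation in the position variable. Putting these together gives a drift inequality with exponent $\alpha(\theta,\vep)\in(0,1)$ on $V_{\vep,r}$ that can be pushed arbitrarily close to $1$ by choosing $\theta$ close to $1$ and $\vep$ small. The subgeometric Harris theorem then yields \eqref{eq: RLErate} once $\vep<\vep^*(r)$.

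The main obstacle lies in the interplay between the hypocoercive cross-term and the singular interaction $G$. The perturbation $\Phi$ involves $\nabla_\Qb H^\vep$, which blows up near the collision set, and $\mathcal{L}^\vep \Phi$ produces unbounded cross-terms that must be absorbed by the leading dissipation $-|p|^2/\sqrt{1+\vep|p|^2}$ together with the Hessian of $U$. Because the relativistic dissipation grows only like $|p|$ for large $|p|$ (in contrast to the classical $|p|^2$), this absorption is delicate and succeeds only for $\vep$ sufficiently small; this is the origin of the $\vep$-dependence of the achievable polynomial order. For $N\ge 2$, the pairwise cross-terms $\nabla G(Q_i-Q_j)$ between different particles require additional structural control, which is the role of Assumption \ref{A:G:relativistic:N>1}, ensuring these singular cross-contributions can be dominated uniformly up to the boundary of $\X$.
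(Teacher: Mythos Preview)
Your overall framework (subgeometric Harris via H\"ormander hypoellipticity plus controllability plus a Lyapunov drift) matches the paper's, and your identity $D(p)\nabla_pK^\vep(p)=p$ is correct. The gap is in the Lyapunov construction, where two specific choices fail.

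First, the perturbation $\Phi=\delta\sum_i\langle\nabla_pK^\vep(p_i),\nabla_{q_i}H^\vep\rangle$ cannot be made to work with singular $G$. Applying the transport part $\nabla_pK^\vep\!\cdot\!\nabla_q$ of $\mathcal{L}^\vep$ to $\Phi$ produces terms containing $\nabla^2G(q_i-q_j)$, of order $|q_i-q_j|^{-\beta_1-1}$, multiplied by bounded kinetic factors; these are strictly more singular than the good term $-\delta|\nabla_qH^\vep|^2/(1+\vep|p|^2)^{3/2}$, which is only of order $|q_i-q_j|^{-2\beta_1}$ in the collision variable and is further damped in $|p|$. Your proposed cutoff near the collision set removes the bad terms but simultaneously kills the only source of dissipation there: with the cutoff active and $|p|$ bounded, $\mathcal{L}^\vep(H^\vep+\Phi)\le C$ while $V\to\infty$, so the drift inequality fails precisely at the boundary. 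The paper sidesteps this by \emph{not} putting $\nabla_qH^\vep$ into the perturbation: it uses $\varepsilon_1\langle p,q\rangle-\langle p,q\rangle/|q|$ for $N=1$, and for $N\ge2$ the pair $\vep H_{N,\vep}\langle\p,\q\rangle$ together with the singular cross-term $-A_2\vep^2\sum_{i\ne j}\langle q_i-q_j,p_i-p_j\rangle/|q_i-q_j|^{\beta_1-1}$, whose exponent is chosen so that the generator delivers dissipation of order $|q_i-q_j|^{-(2\beta_1-2)}$ via Lemma~\ref{L:s+1timesGamma}; the restriction $\beta_1\in(1,2]$ in Assumption~\ref{A:G:relativistic:N>1} is exactly the hypothesis of that lemma.

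Second, the exponent mechanism is backwards. Taking $\theta\in(0,1)$ close to $1$ does \emph{not} push the drift exponent $\alpha$ towards $1$: if $\mathcal{L}W\le-cW^\beta+C$ and $V=W^\theta$ with $\theta<1$, then $\mathcal{L}V\lesssim-c\theta V^{1-(1-\beta)/\theta}$, and $1-(1-\beta)/\theta<\beta$, so the rate degrades. The paper goes in the opposite direction: it builds a base $V_{N,\vep}$ comparable to $H_{N,\vep}^3$ (respectively $H_{1,\vep}^2$ when $N=1$) and proves $\mathcal{L}^\vep V_{N,\vep}^n\le-c_nV_{N,\vep}^{\,n-1/3}+C_n$ for every $n\ge1$, i.e.\ $\alpha=1-\tfrac{1}{3n}$. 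Any prescribed polynomial order $r$ is then reached by taking $n$ large, and the threshold $\vep^*(r)$ arises because the It\^o correction $n(n-1)V^{n-2}\langle\nabla_pV,D(p)\nabla_pV\rangle$ carries an extra factor $\sqrt{1+\vep|p|^2}$ from the unbounded diffusion and must be absorbed by shrinking $\vep$.
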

In the last result concerning the Newtonian limit for the relativistic Langevin equation \eqref{E:RelativisticSys:epsilon}, we send the speed of light $\mathbf{c}$ to infinity and rigorously justify that \eqref{E:RelativisticSys:epsilon} is well approximated by the classical Langevin system studied in \cite{herzog2017ergodicity,lu2019geometric}.

\begin{theorem}[Newtonian limit]
\label{thm:NewtonianLimit}
     Suppose that Assumptions \ref{Assumption U} and \ref{A:G} respectively on the external potential $U$ and the interaction potential $G$ hold and that the diffusion matrix $D$ satisfies \eqref{E:DefD}.
     
     \textup{(1)} Multi-particle case \textup{($N\ge 2$)}: Let $(\q^\varepsilon,\p^\varepsilon)$ be the solution of~\eqref{E:RelativisticSys:epsilon} and $(\q,\p)$ solve the equation
\begin{align}\label{E:LimitingRelativisticSys}
\ud q_i(t) &= p_i\, \ud t,\quad i=1,\dots,N, \notag \\
\ud p_i(t) & = - p_i \, \ud t  -\nabla U(q_i)\, \ud t- \sum_{j\neq i}\nabla G\big(q_i-q_j\big) \, \ud t+\sqrt{2} \ud W_{i}(t),
\end{align}
with $(\q^\varepsilon(0),\p^\varepsilon(0))=(\q(0),\p(0))=(\q_0,\p_0)$. Then, for all $T>0$ and $\xi>0$.
        \begin{equation} \label{lim:Newtonian:N>1}
    \mathbb{P} \Big( \sup_{t \in [0, T]} \Big[ |\q^\varepsilon(t) - \mathbf{q}(t)| + |\p^\varepsilon(t) - \mathbf{p}(t)| \Big] > \xi \Big) \to 0, \quad \varepsilon \to 0;
\end{equation}

        \textup{(2)} Single-particle case $(N=1)$:   Let $(q^\varepsilon,p^\varepsilon)$ be the solution of~\eqref{E:RelativisticSysN=1} and $(q,p)$ solve the equation
\begin{align}\label{E:LimitingRelativisticSys:N=1}
\ud q(t) &= p\, \ud t,\notag \\
\ud p(t) & = - p \, \ud t  -\nabla U(q)\, \ud t- \nabla G(q) \, \ud t+\sqrt{2} \ud W(t),
\end{align}
with $(q^\varepsilon(0),p^\varepsilon(0))=(q(0),p(0))=(q_0,p_0)$. Then, for all $T>0$ and $n\ge 1$, 
        \begin{equation} \label{lim:Newtonian:N=1:lambda>1}
            \E\sup_{t\in [0,T]}\Big[|q^\varepsilon(t)-q(t)|^n+|p^\varepsilon(t)-p(t)|^n\Big] \to 0,\quad \varepsilon \to 0.
        \end{equation}
\end{theorem}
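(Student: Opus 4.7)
The overall strategy is to write coupled SDEs for the differences $q^\varepsilon_i-q_i$ and $p^\varepsilon_i-p_i$ and to close a Gronwall estimate after a localization that isolates the $\varepsilon$-dependent corrections. Concretely, I will expand each coefficient of \eqref{E:RelativisticSys:epsilon} around the corresponding coefficient of the limiting system. Writing $f_\varepsilon(p)=p/\sqrt{1+\varepsilon|p|^2}$, I have $f_\varepsilon(p)=p+r^1_\varepsilon(p)$ with $|r^1_\varepsilon(p)|\le C\varepsilon |p|^3$; similarly $D(p)f_\varepsilon(p)=p+r^2_\varepsilon(p)$, $\div D(p)=r^3_\varepsilon(p)$, and $\sqrt{2D(p)}=\sqrt 2\, I+r^4_\varepsilon(p)$, each remainder being bounded by $\varepsilon$ times a polynomial in $|p|$ by \eqref{E:DefD}--\eqref{E:sqrtD}. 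Thus the only $\varepsilon$-dependence in the drift/diffusion appears through these remainders, and the task reduces to showing they vanish in the appropriate sense along the trajectory.

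For Part (1), I will combine this expansion with a stopping-time argument. Using the Lyapunov function constructed in the proof of Theorem \ref{thm: ergodicityRLE} together with the corresponding Lyapunov function for the limiting system \eqref{E:LimitingRelativisticSys} (available from \cite{herzog2017ergodicity,lu2019geometric}), I obtain uniform-in-$\varepsilon$ bounds $\E\sup_{t\le T} \Psi_\varepsilon(\q^\varepsilon(t),\p^\varepsilon(t))\le C_T$ and $\E\sup_{t\le T} \Psi(\q(t),\p(t))\le C_T$. Define the stopping time $\tau^\varepsilon_R=\inf\{t: \Psi_\varepsilon(\q^\varepsilon_t,\p^\varepsilon_t)+\Psi(\q_t,\p_t)\ge R\}$. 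On $\{t\le \tau^\varepsilon_R\}$, the momenta are bounded by a function of $R$, the interparticle distances are bounded below, and $\nabla U,\nabla G$ are Lipschitz with constants depending on $R$. Applying It\^o's formula to $|\q^\varepsilon-\q|^2+|\p^\varepsilon-\p|^2$, using BDG for the stochastic term and the bounded-on-$\{t\le\tau^\varepsilon_R\}$ remainders $r^k_\varepsilon=O(\varepsilon C_R)$, Gronwall's inequality yields
\begin{align*}
\E\sup_{t\le T\wedge \tau^\varepsilon_R}\Big[|\q^\varepsilon(t)-\q(t)|^2+|\p^\varepsilon(t)-\p(t)|^2\Big]\le C_R\, \varepsilon^{2\alpha}
\end{align*}
for some $\alpha>0$. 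Sending $\varepsilon\to 0$ at fixed $R$ and then $R\to\infty$, using $\sup_\varepsilon \P(\tau^\varepsilon_R\le T)\to 0$, produces \eqref{lim:Newtonian:N>1}.

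For Part (2), the single-particle case, the same SDE comparison is used, but the goal is $L^n$ rather than probability convergence. The key is to upgrade the Lyapunov estimate to polynomial moments of every order, i.e.\ $\E\sup_{t\le T}[\Psi_\varepsilon(q^\varepsilon_t,p^\varepsilon_t)]^n\le C_{T,n}$, uniformly in $\varepsilon$. With $N=1$, the Lyapunov function only needs to control a single singularity of $G$ and the relativistic kinetic growth, and the analogous bound for the limit is classical; no stopping time is needed. Combining these strong moment bounds with the coefficient expansion, BDG in $L^n$, and Gronwall's inequality directly delivers \eqref{lim:Newtonian:N=1:lambda>1}.

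The main obstacle is the balance between the growth of the remainder terms $r^k_\varepsilon(p^\varepsilon_i)$, which carry high powers of $|p^\varepsilon_i|$, and the available moment control. Since $\pi^\varepsilon_{\textup{MJ}}$ only has polynomial (not Gaussian) momentum tails, not every polynomial expectation of $|p^\varepsilon_i|$ is uniformly finite in $\varepsilon$, so the degrees in the expansions of $r^k_\varepsilon$ must be matched carefully against the polynomial orders that the Lyapunov function can absorb. In Part (1) the stopping time eliminates this tension, but in Part (2) the stronger $L^n$ statement forces us to establish polynomial moments of all orders for the Lyapunov function, which is the most technical step.
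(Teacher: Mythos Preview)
Your overall architecture (expand coefficients, localize, Gronwall) is the right one, and it is also what the paper does. But there are two genuine gaps, one in each part.

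\textbf{Part (1).} You assert that the Lyapunov function from Theorem~\ref{thm: ergodicityRLE} yields a bound $\E\sup_{t\le T}\Psi_\varepsilon(\q^\varepsilon_t,\p^\varepsilon_t)\le C_T$ uniformly in $\varepsilon$, so that $\sup_\varepsilon\P(\tau^\varepsilon_R\le T)\to 0$. This is not established: the constants $c_n,C_n$ in Proposition~\ref{P:RLyapunovMulti} depend on $\varepsilon$, and the function $V_{N,\varepsilon}$ itself is built from $H_{N,\varepsilon}=\sum\sqrt{1+\varepsilon|p_i|^2}+\varepsilon\sum U+\varepsilon\sum G$, which degenerates as $\varepsilon\to 0$. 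The paper deliberately avoids uniform control of the relativistic system for $N\ge 2$. Instead it (i) truncates both systems so that all coefficients are globally Lipschitz and proves $L^n$ convergence of the truncated pair with an explicit $\varepsilon^{n/2}$ rate (Proposition~\ref{P:RTruncConv}); (ii) controls the exit probabilities using only moment bounds on the \emph{limiting} classical Langevin system (Lemma~\ref{L:RUnifEnergy}); (iii) handles the awkward event $\{\sigma^R_\varepsilon<T\le\sigma^R\}$ by intersecting with $\{\text{truncated processes are close}\}$ and transferring the largeness of $\q^{\varepsilon,R}$ back to largeness of $\q$ via the triangle inequality. This last trick is what replaces your unproved uniform-in-$\varepsilon$ estimate.

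\textbf{Part (2).} You claim that with uniform polynomial moments ``no stopping time is needed'' and a direct BDG--Gronwall closes. But $\nabla U$ and $\nabla G$ are not globally Lipschitz ($G$ is singular at the origin), so the difference equation for $(q^\varepsilon-q,p^\varepsilon-p)$ does not produce a linear term in $|q^\varepsilon-q|$ without localization; you still need truncation plus stopping times. The paper follows the same truncation route as in Part~(1), and the upgrade from convergence in probability to $L^n$ comes from applying H\"older's inequality to the indicator $\mathbf{1}\{\tau^R_\varepsilon\le T\}$, which requires the uniform-in-$\varepsilon$ moment bound on the \emph{relativistic} system that you identify as the main obstacle. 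The paper obtains this (Proposition~\ref{P:RSingUnifMomentBd}) via the auxiliary functional $\Gamma_3=\tfrac12\varepsilon(U+G)^2+(U+G)\sqrt{1+\varepsilon|p|^2}+\tfrac12|p|^2$, whose drift under $\mathcal L^\varepsilon_1$ is bounded by $c\Gamma_3+C$ with constants independent of $\varepsilon$; this is the concrete construction your proposal is missing.
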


\subsection{Methodology of the proofs} \label{sec:intro:proof}
\textit{Idea of the proof of convergence rate to equilibrium.} With regards to the mixing results in Theorems \ref{thm: ergodicityLE} and \ref{thm: ergodicityRLE}, we adopt the framework of \cite{hairer2009hot}, which in turn was built upon the techniques developed in \cite{ bakry2008rate, douc2009subgeometric, fort2005subgeometric}. The proof of estimates \eqref{eq:LErate} and \eqref{eq: RLErate} consists of three main ingredients, namely, the H\"ormander's condition ensuring the smoothness of the transition probabilities, the solvability of the associated control problem giving the possibility of returning to the center of the phase space, and a suitable Lyapunov function quantifying the convergent rate. 

The first criterion is a consequence of the classical H\"ormander's Theorem \cite{hormander1967hypoelliptic} asserting that the phase space may be generated by the collection of vector fields jointly induced by the diffusion and the drifts. Since we are dealing with finite-dimensional settings, this is a relatively short computation on Lie brackets. The second condition on the associated control problem can be established by the Support Theorem \cite{stroock1972degenerate} showing that one can always find appropriate controls allowing for driving the dynamics to any bounded ball. The proof of H\"ormander theorem and solvability condition are rather standard and will be discussed in Section \ref{sec: prim}.
The last ingredient requires the construction of a Lyapunov function, which is an energy-like function $V$ satisfying an inequality of the form
\begin{align} \label{ineq:Lyapunov:polynomial-mixing}
\frac{\ud}{\ud t}\E\big[ V(\mathbf{Q}^\vep(t),\mathbf{P}^\vep(t)) \big]\le  -c_1\E\big[ V(\mathbf{Q}^\vep(t),\mathbf{P}^\vep(t))^\alpha\big]+c_2,\quad t\ge 0,
\end{align}
for a suitable constant $\alpha\in(0,1]$. 

The construction of such a function $V$ is highly nontrivial due to the singularity of the interaction forces and the degeneracy of the noises. It needs to be done on a case-by-case basis. For the classical model \eqref{E:Classical_Eq}, we will show that \eqref{ineq:Lyapunov:polynomial-mixing} holds for $\alpha=1$, giving rise to the exponential rate of convergence stated in Theorem \ref{thm: ergodicityLE}. However, for the relativistic model \eqref{E:RelativisticSys:epsilon}, due to the lack of strong dissipation in the $p$-direction as well as the impact of the nonlinearity of the relativistic kinetic energy and the singularities of the interaction potentials, we are only able to prove \eqref{ineq:Lyapunov:polynomial-mixing} for some $\alpha\in(0,1)$, thus yielding only an algebraic mixing rate as stated in Theorem \ref{thm: ergodicityRLE}.

\textit{Idea of the proofs of the singular parameter limits.} The argument of Theorems \ref{thm: smallLE} and \ref{thm:NewtonianLimit} is motivated by the strategy of \cite{ duong2024asymptotic, duong2024trend,herzog2016small}. The general approach essentially consists of two main steps: firstly, we truncate the nonlinearities in both the original and the limiting systems. This results in Lipschitz dynamics, whose convergence in expectations can be handled directly. Then, we remove the Lipschitz constraint by making use of suitable moment bounds on the solutions. In particular, for the relativistic model \eqref{E:RelativisticSys:epsilon}, due to the unboundedness of the diffusion matrix, it is also crucial to truncate the noise term, making the analysis more intricate than the classical model \eqref{E:Classical_Eq}.
%Particularly, with regard to the multi-particle case, instead of controlling the energy of \eqref{eqn:rLE:N-particle:original}, we turn to estimating \eqref{eqn:LE:original} through the Hamiltonian structure. Combining with a careful analysis on the relation between \eqref{eqn:rLE:N-particle:original} and \eqref{eqn:LE:original}, we are able to obtain the convergence in probability \eqref{lim:newtonian-limit:N-particle:prob:meta} while making use of auxiliary inequalities on the singularities. Turning to the simpler single-particle setting, the previous probability argument can be upgraded to a convergence in $L^p$ by facilitating critical moment bounds on \eqref{eqn:rLE:N-particle:original} that maintain the uniformity in the parameter $c$. More specifically, such an estimate has the form
%\begin{align} \label{ineq:moment-bound:q_c,p_c}
%\lim_{c\to\infty}\E \sup_{t\in[0,T]} \Big(\big[U(q_c(t))+G(q_c(t))\big]^2+\big[U(q_c(t))+G(q_c(t))\big]K(p_c(t))+|p_c(t)|^2\Big)<\infty.
%\end{align}
%We note that while the left-hand side of \eqref{ineq:moment-bound:q_c,p_c} is bounded when $N=1$, it may be arbitrarily large depending on $c$ when $N\ge 2$, stemming from the same issue on the interplay between the relativistic kinetic and potential energies that we face in the mixing problem. The reader is referred to Theorem \ref{thm:newtonian-limit} for the precise statement of \eqref{lim:newtonian-limit:N-particle:prob:meta}-\eqref{lim:newtonian-limit:single-particle:L^p:meta} and to Section \ref{sec:newtonian-limit} for their proofs.
\subsection{Novelties}The key challenge and novelty of the present paper is that we deal with $N$-particle interacting systems with physically relevant singular interaction forces, including Coulomb and Lennard-Jones forces, and multiplicative noises, and with non-quadratic kinetic energy in the relativistic model. The combination of the nonlinearity, the singularity and the multiplicative noises makes the analysis of the systems highly nontrivial. Existing works in the literature lack one or more of these features. In what follows, we review the historical background of equations \eqref{E:Classical_Eq} and \eqref{E:RelativisticSys:epsilon}.
\subsection{Relevant literature}
There is a vast literature on the ergodicity and asymptotic limits for the Langevin dynamics and related models. Below, we only discuss most relevant papers that consider either singular forces or multiplicative noises. 

\textit{On ergodicity of the classical Langevin dynamics.} Under different assumptions on the nonlinearities, the geometric ergodicity of Langevin dynamics with repulsive interactions was proven using the PDE and hypocoercivity approach \cite{villani2009hypocoercivity} in~\cite{conrad2010construction,grothaus2015hypocoercivity}, and through the construction of suitable Lyapunov functions in \cite{baudoin2021gamma,camrud2021weighted,cooke2017geometric,herzog2017ergodicity,lu2019geometric,song2020well}. The latter method has been extended to the generalized Langevin dynamics \cite{duong2024asymptotic}, Langevin dynamics driven by jump processes \cite{bao2024exponential} and the setting of quasi-stationary distributions \cite{guillin2023quasi}. However, the common feature of these aforementioned papers is the presence of additive noise. While there are several work exploring ergodicity of the Langevin dynamics with multiplicative noises \cite{bertram2022essential, eisenhuth2024hypocoercivity}, they only consider smooth confining potentials ($G\equiv 0$). 

\textit{On small mass limit of the classical Langevin model}. The small-mass limit of the classical Langevin dynamics, with smooth coefficients, has been studied intensively since the seminal paper \cite{kramers1940brownian}, see the recent paper \cite{son2024rate} and references therein for more information. Regarding the setting with singular forces, in \cite{el2010diffusion, goudon2005hydrodynamic,poupaud2000parabolic}, the qualitative analysis of overdamped limit from the Vlasov–Poisson–Fokker–Planck system (which is the mean-field limit of the multi-particle Langevin dynamics with Coulomb potentials) towards the drift-diffusion equation is studied. A quantitative rate of convergence in the Wasserstein distance for this limit is provided in~\cite{choi2022quantified}. We also mention the article \cite{grothaus2020overdamped}, which establishes the small mass limit (convergence in laws) for the single-particle (no pair-wise interaction forces) Langevin dynamics with singular external forces. On the one hand, the results from these papers only cover the case of singular forces under the impact of additive noise. On the other hand, the small mass limit for the Langevin dynamics in the presence of multiplicative noise but regular forces is proved rigorously in~\cite{herzog2016small,hottovy2015smoluchowski}. See also \cite{volpe2016effective} for a survey on the topic. More recently, the work of \cite{xie2022smoluchowski} investigated the small-mass limit of a single-particle Langevin dynamics with multiplicative noise and H\"older continuous forces, which is a different setting from those considered in the present article.

For the classical Langevin dynamics \eqref{E:Classical_Eq}, compared to the existing literature on both topics, the novelty in this paper is that we incorporate both singular forces and multiplicative noises for $N$-particle interacting systems.

\textit{On ergodicity of the relativistic model}. Concerning the issue of large-time stability, a rigorous proof of the exponential convergence toward equilibrium for the spatially homogeneous relativistic Fokker-Planck equation is established in \cite{angst2011trends}. The well-posedness of the spatially inhomogeneous Fokker-Planck equation is established in \cite{alcantara2011relativistic} whereas in the absence of potentials $U$, the momentum processes are shown to exhibit a geometric mixing rate \cite{angst2011trends,
felix2013newtonian}. Similar asymptotic behavior is also obtained when the position variable is posed on a torus \cite{calogero2012exponential} making use of the hypocoercive method \cite{villani2009hypocoercivity}. More recently, the technique has been extended by means of Lyapunov functionals to successfully treat the case of smooth potential $U$ in~\cite{arnold2025trend}. We also mention papers that study convergence to equilibrium for Langevin dynamics with general kinetic energy \cite{brigati2023construct,brigati2024explicit,stoltz2018langevin}.

\textit{On the Newtonian limit of the relativistic Langevin dynamics}. The Newtonian limit for the single-particle relativistic Langevin dynamics, with regular forces, has been formally derived in \cite{debbasch1997relativistic,dunkel2005theory} and rigorously justified later in \cite{felix2013newtonian}. See also \cite{han2018long,schaeffer1986classical} for the related Maxwell-Vlasov systems. 

To the best of our knowledge, the ergodicity and the Newtonian limit for the $N$-particle relativistic Langevin dynamics with singular forces is investigated only recently by the first two named authors in \cite{duong2024trend}. However, this paper deals with an additive constant diffusion matrix, that is \eqref{E:RelativisticSys:epsilon} with $D=I$, which is the relativistic model proposed in \cite{debbasch.mallick.ea;97;relativistic}. In the present article, we extend the technique employed in \cite{duong2024trend} to successfully handle the case of multiplicative noise in \eqref{E:RelativisticSys:epsilon} with $D$ given by \eqref{E:DefD}.

%Regarding $N\ge 2$, relativistic deterministically $N$-particle systems with singular interactions (i.e., \eqref{eqn:rLE:N-particle:original} with $\gamma=0$) have also been studied considerably, often in the context of the mean-field limits and their connections to the relativistic Vlasov systems. See for instance~\cite{chen2020combined,elskens2009vlasov,golse2012mean,
%kiessling2008relativistic,lieb1988stability}.

\subsection{Organization of the paper}
The rest of the paper is organized as follows. In Section \ref{sec: prim}, we present precise assumptions on the external potential $U$, the interaction potential $G$, and the diffusion matrix $D$, as well as a preliminarily result on the well-posedness, mininorization condition and controllability of the general system \ref{eq: general model}.
In Section \ref{sec: classical LE}, we prove Theorems \ref{thm: ergodicityLE}  and \ref{thm: smallLE} respectively establishing the ergodicity and the small-mass limit for the classical model \eqref{E:Classical_Eq}. In Section \ref{sec: RLE}, we prove Theorems \ref{thm: ergodicityRLE} and \ref{thm:NewtonianLimit} respectively for the ergodicity and the Newtonian limit of the relativistic model \eqref{E:RelativisticSys:epsilon}. The paper concludes with Appendix \ref{sec:appendix} where we collect auxiliary estimates that are invoked to prove the main theorems.

\section{Preliminary}
\label{sec: prim}
\subsection{Assumptions} \label{sec: prim:assumption}
In this section, we introduce the assumptions we make on the the external potential $U$, the interaction potential $G$ and the diffusion matrix $D$. We also present preliminary results that hold true for both systems \eqref{E:Classical_Eq} and \eqref{E:RelativisticSys:epsilon}, using the general and common form \eqref{eq: general model}.
We start with Assumption \ref{Assumption U} giving sufficient conditions on the smooth potential $U$. For $a,b\in \mathbb{R}^d$, we use both notations $a\cdot b$ or $\langle a, b\rangle$ to denote their standard inner product.
\begin{assumption}[U]\label{Assumption U}
    The function $U\in C^\infty\lrbig{\R^d;[1,\infty)}$ satisfies the followings for all $ Q\in\R^d $:

    \textup{(i)} There exist constants $a_1,a_2,a_3>0$ and $\lambda\ge 1$ such that
    \begin{align}
        &\begin{aligned}\label{cond:U:U(x)=O(x^lambda+1)}
            |U(Q)|\le a_1\lrbig{1+|Q|^{\lambda+1}},
        \end{aligned}\\
        &\begin{aligned}\label{cond:U:U'(x)=O(x^lambda)} 
            |\nabla U(Q)|\leq a_1\lrbig{1+|Q|^\lambda},
        \end{aligned} \\
        &\begin{aligned}\label{cond:U:x.U'(x)>-x^(lambda+1)}
            \langle\nabla U(Q),Q\rangle\geq a_2|Q|^{\lambda+1}-a_3.
        \end{aligned}
    \end{align}

    \textup{(ii)} In the classical model \eqref{E:Classical_Eq}, we further assume that
    \begin{align}\label{E:U_3}
         \|\nabla^2U(Q)\|\leq a_1(1+|Q|^{\lambda-1}), 
    \end{align}
    where $\|A\|$ denotes the spectral norm of the matrix $A$.
\end{assumption}
\begin{remark} We note that as a consequence of \eqref{cond:U:x.U'(x)>-x^(lambda+1)}, $U(Q)$ is bounded from below by $\frac{1}{a_1}|Q|^{\lambda+1}-a_1$. To see this, for $Q$ satisfying $|Q|\ge 1$, let $\phi(r)=U(r\xi)=U(Q)$ where $\xi=\frac{Q}{|Q|}$ and $r=|Q|$. Then, we have
    \begin{align*}
        \phi'(r)=\innerbig{\nabla U(r\xi)}{\xi}\ge \frac{1}{r}\lrbig{a_2|r|^{\lambda+1}-a_3},
    \end{align*}
    whence
    \begin{align*}
        \phi(r)-\phi(1)\ge \int_1^r\frac{1}{s}\lrbig{a_2|s|^{\lambda+1}-a_3}\ud s=\frac{a_2}{\lambda+1}(r^{\lambda+1}-1)-a_3\log r.
    \end{align*}
    Since $U$ is continuous, $\phi(1)=U(\xi)\ge \inf_{|Q|\le 1}U(Q)$. Then, we may infer the existence of a positive constant $C$ such that for all $Q$ satisfying $ |Q|\ge 1$
    \begin{align*}
        U(Q)\ge \frac{a_2}{2(\lambda+1)}|Q|^{\lambda+1}-C.
    \end{align*}
    By taking $a_1$ sufficiently large if necessary, we conclude 
    \begin{align}\label{cond:U:U(x)>=O(x^lambda+1)}
         |U(Q)|\ge \frac{1}{a_1}|Q|^{\lambda+1}-a_1,\quad Q\in\R^d,
    \end{align}
    as claimed.
\end{remark}
Next, we impose the following assumption on the singular potential $G$:
\begin{assumption}[G]\label{A:G}
The even function $G\in C^\infty(\R^d\setminus\{0\};\R) $ satisfies the followings for all $Q\in \R^d$:

\textup{(i)} $G(Q)$ converges to infinity as $|Q|$ goes to $0$. Furthermore, there exists $\beta_1\ge 1$ such that for all $Q\in \R^d\setminus\{0\}$,
    \begin{align}
        &\begin{aligned}\label{E:G_1}
            |G(Q)|\leq a_1\Big(1+|Q|+\frac{1}{|Q|^{\beta_1}}\Big),
        \end{aligned}\\
        &\begin{aligned}\label{E:G_2}
            |\nabla G(Q)|\leq a_1\Big    (1+\frac{1}{|Q|^{\beta_1}}\Big),
        \end{aligned}\\
        &\begin{aligned}\label{E:G_3}
            |\nabla^2G(Q)|\leq a_1\Big(1+\frac{1}{|Q|^{\beta_1+1}}\Big),
        \end{aligned}
    \end{align}
    where $a_1$ is given by Assumption~\ref{Assumption U}.

\textup{(ii)} In the classical case \eqref{E:Classical_Eq}, we assume that there exist constants $\beta_2\in[0,\beta_1)$, $a_4>0$, $a_5\in\R$ and $a_6>0$ such that 
    \begin{equation} \label{cond:G:|grad.G(x)+q/|x|^beta_1|<1/|x|^beta_2:Classic}
        \Big|\nabla G(Q) +a_4\frac{Q}{|Q|^{\beta_1+1}}\Big| \le a_5\frac{1}{|Q|^{\beta_2}} +a_6, \quad Q\in\R^d\setminus\{0\}.
    \end{equation}

\textup{(ii')} In the relativistic case \eqref{E:RelativisticSys:epsilon}, inequality \eqref{cond:G:|grad.G(x)+q/|x|^beta_1|<1/|x|^beta_2:Classic} is replaced by a stronger condition
    \begin{equation} \label{cond:G:|grad.G(x)+q/|x|^beta_1|<1/|x|^beta_2:Relativistic}
\Big|\nabla G(Q) +a_4\frac{Q}{|Q|^{\beta_1+1}}+a_5\frac{Q}{|Q|^{\beta_2+1}}\Big| \le a_6, \quad Q\in\R^d\setminus\{0\}.
\end{equation}
\end{assumption}
While Assumption \ref{A:G} (G) will be employed throughout of the analysis, in the ergodic and mixing rate result for the relativistic model \eqref{E:RelativisticSys:epsilon}, we will have to further restrict the range of $\beta_1$. 

\begin{assumption} \label{A:G:relativistic:N>1}
    Let $\beta_1$ be the positive constant from Assumption \ref{A:G}, we further restrict $\beta_1$ by assuming that $\beta_1\in (1,2]$.
\end{assumption}

By adding a suitable constant to $U$, we can assume without loss of generality that
\begin{align*}
    \sum_{i=1}^{N}U(Q_i)+\sum_{1\le i<j\le N}G(Q_i-Q_j)\ge 1
\end{align*}
\begin{remark} The assumptions on the external and interaction potentials are essentially the same as in \cite{duong2024asymptotic,duong2024trend}. As mentioned in \cite{duong2024asymptotic,duong2024trend}, a routine calculation shows that both the Coulomb and the Lennard-Jones functions satisfy \eqref{cond:G:|grad.G(x)+q/|x|^beta_1|<1/|x|^beta_2:Classic}-\eqref{cond:G:|grad.G(x)+q/|x|^beta_1|<1/|x|^beta_2:Relativistic}. In particular, a well-known example for the case $\beta_1=1$ is the log function $G(x)=-\log|x|$ whereas the case $\beta_1>1$ includes the instance $G(x)=|q|^{-\beta_1+1}$.
\end{remark}
In the classical model \eqref{E:Classical_Eq}, we also need the following assumptions on the diffusion matrix $D$.
\begin{assumption}[D]\label{Assumption D}
The matrix valued function $D  \in C^{1,1}(\R^d,\R^{d\times d})$ satisfies the following:

\textup{(i)} $D(x)$ is Lipschitz with Lipschitz derivative. Furthermore, $D(x)$ is uniformly positive definite and bounded. That is, there exist positive constants $0<\underline{\gamma}\leq \overline{\gamma}<\infty$ such that for all $\xi\in \R^d$ and all $x\in \R^d$,
    \begin{align}\label{E:UniformElliptic}
        \underline{\gamma}|\xi|^2\le \innerbig{D(x)\xi}{\xi}\le \overline{\gamma}|\xi|^2.
    \end{align}

\textup{(ii)} If $\lambda=1$, where $\lambda$ is given by Assumption~$(U)$, we further assume that the derivative of $D$ has sublinear growth rate, which means that
    \begin{align*}
        \Big\|\frac{\partial}{\partial x_j}D(x)\Big\|=o(|x|) 
    \end{align*}
    as $|x|$ goes to infinity, for any $j= 1,\dots ,d$.

\textup{(iii)} In the classical model \eqref{E:Classical_Eq}, when $\beta_1=1$, we further assume that constant $a_4$ as in Assumption~\ref{A:G} (G) (ii) satisfies
    \begin{align}\label{E:ConditionA4}
        a_4+d-2\bar{\gamma}\underline{\gamma}^{-1}>0.
    \end{align}
\end{assumption}
\begin{remark}
    The condition~\eqref{E:ConditionA4} is technical and we use it in the proof of Lemma~\ref{L_:Step2Goal} in the special case when $\beta_1=1$. In turn, the result of Lemma~\ref{L_:Step2Goal} will be employed to establish the small mass limit.
\end{remark}
Now, we turn to the topic of large-time stability and denote by $\mathcal{L}_N$ the generator of the general Langevin dynamics \eqref{eq: general model}. As mentioned in the introduction, to prove the ergodicity and the convergence rate to equilibrium for the two specific Langevin dynamics \eqref{E:Classical_Eq} and \eqref{E:RelativisticSys:epsilon}, we will employ the framework of \cite{hairer2009hot} which consists of three main ingredients: a minorization condition, the solvability of an associated control problem and the construction of Lyapunov functions. For the completeness, we provide the definitions of these concepts adapted to the system \eqref{eq: general model} below.
\begin{definition}[Lyapunov function]\label{D:Lyapunov function}
    A function $V\in C^2\lrbig{\X;[1,\infty)}$ is called a Lyapunov function for~\eqref{eq: general model} if both of the following conditions are satisfied:
    \begin{enumerate}
        \item $V(X)\to \infty$ whenever $|X|+\sum_{1\le i<j\le N}|Q_i-Q_j|^{-1}\to \infty$ in $\X$;
        \item for all $X\in \X$, there exist constants $\alpha\in(0,1]$, $c_1>0$  and $c_2\ge 0$ independent of $X$ such that
        \begin{align}\label{E:Lyapunov function}
            \mathcal{L}_N V(X)\le -c_1V(X)^\alpha+c_2.
        \end{align}
   
\end{enumerate}
   \end{definition}
The value of $\alpha$ will determine the speed of convergence of the process to equilibrium, $\alpha=1$ gives rise to an exponential rate whereas $\alpha\in(0,1)$ yields an algebraic rate. In particular, we will show that the classical model \eqref{E:Classical_Eq} satisfies the former case, while the relativistic model \eqref{E:RelativisticSys:epsilon} corresponds to the latter.
\begin{comment}
    \begin{definition}[weak Lyapunov function]\label{D:WeakLyapunov}
    A function $V\in C^2\lr{\X;[1,\infty)}$ is called a (weak) Lyapunov function for~\eqref{E:RelativisticLangevin} if
    \begin{enumerate}
        \item $V(X)\to \infty$ whenever $|X|+\sum_{1\le i<j\le N}|x_i-x_j|^{-1}\to \infty$ in $\X$;
        \item for all $X\in \X$, there exist constants $c>0$, $D>0$ and $\alpha\in(0,1)$ such that
        \begin{align}\label{E:Lyapunov function}
            \mathcal{L}_N V(X)\le -c\lr{V(X)}^\alpha+D,
        \end{align}
    \end{enumerate}
\end{definition}
\end{comment}
\begin{definition}[Minorization condition]\label{D:MinorizationCondition}
    Let $V$ be a Lyapunov function as in Definition~\ref{D:Lyapunov function}. Denote
    \begin{align*}
        \X_R=\big\{X\in \X:V(X)\le R\big\}.
    \end{align*}
    The system~\eqref{eq: general model} is said to satisfy a minorization condition if for all $R$ sufficiently large,  there exist positive constants $t_R$ and $c_R$, a probability measure $\nu_R$ such that $\nu_R(\X_R)=1$ and for every $X\in \X_R$ and any Borel set $A\subset X$,
    \begin{align*}
        P_{t_R}(X,A)\ge c_R\nu_R(A).
    \end{align*}
\end{definition}
Suppose that the generator $\mathcal{L}_N$ of the system~\eqref{eq: general model} can be written of the form
\begin{align}\label{E:HormanderVector}
    \mathcal{L}_N=Y^0+\sum_{i=1}^N(Y^{i})^2,
\end{align}
where $Y^0, Y^{i,k}$ ($i=1,\ldots, N$ and $k=1,\ldots, d$) are some vector fields. H\"ormander's condition for \eqref{eq: general model} is defined as follows.
\begin{definition}[Hormander's condition]\label{D:Hormander's condition}
    The collection of vector fields $\{Y^i\}_{i=1}^N$ given by~\eqref{E:HormanderVector} is called to satisfy H\"ormander's condition if the Lie algebra generated by 
    \begin{align*}
        \{Y^i\}_{i=1}^N,\{[Y^i,Y^j]\}_{i,j=0}^N,\{[[Y^i,Y^j],Y^k]\}_{i,j,k=0}^N\},\dots,
    \end{align*}
    has maximal rank at every $X\in\X$. Here, we recall $[X,Y](f)=XY(f)-YX(f)$ is the Lie bracket of $X$ and $Y$.
\end{definition}
The last definition describes the solvability of the following system
\begin{align}\label{E:ControlledSys}
\ud q_i(t) &= \nabla K(p_i)\, \ud t, \notag \\
\ud p_i(t) & = - D(q_i,p_i)\nabla K(p_i)\, \ud t + \div_{p_i}(D(q_i,p_i))\ud t  +\sqrt{2 D(q_i,p_i)} \ud V_{i}(t)\notag \\
& \qquad -\nabla U(q_i)\, \ud t- \sum_{j\neq i}\nabla G\big(q_i-q_j\big) \, \ud t,
\end{align}
where $V_i(t)$ is a deterministic controlled path.
\begin{definition}[Solvability]\label{D:Solvable}
    The controlled system  \eqref{E:ControlledSys} is called solvable if for all $X_0,X_1\in\X$, there exists $T>0$ and paths $\{V_i(t)\}_{i=1}^N$ such that the solution $Z(t)$ of the ODE system~\eqref{E:ControlledSys} with initial value $Z(0)=X_0$ satisfies $Z(T)=X_1$.
\end{definition}
 \subsection{Preliminary results}
\label{sec: wellposedness} Having introduced the terminologies related to the ergodicity, we assert the following result establishing the well-posedness, the minorization condition and the controllability of the general system \eqref{eq: general model}.
\begin{theorem}\label{T:ExistandUnique}
    Suppose that Assumptions \ref{Assumption U} and \ref{A:G} hold and that $D$ either satisfies Assumption \ref{Assumption D} in the classical equation \eqref{E:Classical_Eq} or is given by~\eqref{E:DefD} in the relativistic model \eqref{E:RelativisticSys:epsilon}. Then, for every $X_0\in\X$, there exists unique strong solutions to both equations \eqref{E:Classical_Eq} and \eqref{E:RelativisticSys:epsilon}. Furthermore, the minorization condition holds.
\end{theorem}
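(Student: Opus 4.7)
The theorem contains two independent assertions: global existence and pathwise uniqueness of strong solutions valued in $\X$, and the minorization condition on compact sublevel sets. Both are handled by classical machinery, adapted to the facts that the drift blows up on the collision set $\partial\X$ and that the noise is degenerate in the $\Qb$-direction. On $\X$, the coefficients of \eqref{E:Classical_Eq} and \eqref{E:RelativisticSys:epsilon} are $C^\infty$ and locally Lipschitz (Assumptions~\ref{Assumption U},~\ref{A:G},~\ref{Assumption D} together with the explicit formula~\eqref{E:DefD}), so standard SDE theory supplies, for each $n\ge1$, a unique strong solution up to the stopping time
\begin{align*}
\tau_n:=\inf\Big\{t\ge 0:\,|X^\vep(t)|\ge n\ \text{or}\ \min_{i\ne j}|Q_i^\vep(t)-Q_j^\vep(t)|\le 1/n\Big\}.
\end{align*}
Promoting this to a global solution in $\X$ reduces to showing $\tau_n\nearrow\infty$ almost surely, which I would do via Khasminskii's test applied to the Lyapunov function $W:=H^\vep+c_0\ge 1$. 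This function diverges both on $\partial\X$ (since $G$ blows up at the origin) and at infinity (by the coercivity estimate~\eqref{cond:U:U(x)>=O(x^lambda+1)} together with the coercivity of $K^\vep$), and the Hamiltonian structure produces the cancellation
\begin{align*}
\mathcal{L}_N H^\vep = -\nabla_\Pb H^\vep\cdot \D^\vep\nabla_\Pb H^\vep + \div_\Pb \D^\vep\cdot\nabla_\Pb H^\vep + \tr(\D^\vep\nabla_\Pb^2 H^\vep),
\end{align*}
whose right-hand side is dominated by $C(1+W)$ using the growth controls on $K^\vep$ and on $\D^\vep$ provided by Assumption~\ref{Assumption D} or by~\eqref{E:DefD}. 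Itô's formula applied to $W(X^\vep(\cdot\wedge\tau_n))$ combined with Gronwall yields $\E[W(X^\vep(t\wedge\tau_n))]\le C_t$, and since $W(X^\vep(\tau_n))\to\infty$ on $\{\tau_n<\infty\}$, Markov's inequality forces $\P(\tau_n\le t)\to 0$.

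For the minorization condition I would follow the blueprint of~\cite{hairer2009hot} outlined in Section~\ref{sec:intro:proof}. Writing $\mathcal{L}_N=Y^0+\sum_{i,k}(Y^{i,k})^2$, the noise vector fields $Y^{i,k}$ span the $\Pb$-directions because $\D^\vep$ is non-degenerate, and the Lie brackets $[Y^{i,k},Y^0]$ produce directions of the form $(\nabla^2 K^\vep(P_i))_{k,\cdot}\cdot\partial_{Q_i}$ which span the $\Qb$-directions since $\nabla^2 K^\vep$ is positive definite for both $K^\vep(P)=|P|^2/(2m)$ and $K^\vep(P)=\mathbf{c}\sqrt{m^2\mathbf{c}^2+|P|^2}$. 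This verifies H\"ormander's condition (Definition~\ref{D:Hormander's condition}) pointwise on $\X$, so by H\"ormander's theorem~\cite{hormander1967hypoelliptic} the kernel $P_t^\vep(X,\cdot)$ admits a smooth density on $\X$ for every $t>0$. Solvability of the deterministic control system~\eqref{E:ControlledSys} (Definition~\ref{D:Solvable}) is obtained by prescribing a smooth path $(\Qb(t),\Pb(t))$ joining $X_0$ to $X_1$ inside $\X$ (possible since $\X$ is path connected by construction~\eqref{form:D}), with $\Pb(t)$ chosen via $\nabla K^\vep(\Pb(t))=\dot\Qb(t)$ and then the control $V_i$ read off algebraically from the $\Pb$-equation using invertibility of $\sqrt{2D}$; in the relativistic case one additionally slows the path so that $|\dot\Qb|<\mathbf{c}$ to ensure that $\dot\Qb$ lies in the range of $\nabla K^\vep$. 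Combining this with the Stroock--Varadhan support theorem~\cite{stroock1972degenerate} gives strict positivity of the smooth density everywhere on $\X$, and a routine compactness argument on $\X_R\subset\X$ (which lies at positive distance from $\partial\X$) produces the minorizing measure $\nu_R$ and the constant $c_R$ required by Definition~\ref{D:MinorizationCondition}.

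The main obstacle I anticipate is verifying the bound $\mathcal{L}_N W\le C(1+W)$ in the relativistic setting, where $K^\vep$, $\D^\vep$ and $\div_\Pb\D^\vep$ all depend on $\Pb$ nonlinearly through $\sqrt{1+\vep|p|^2}$, and where one must carefully track the cancellations between the friction, the It\^o correction and the diffusion trace. A secondary technical nuisance is the one-dimensional case $d=1$, where the prescribed ordering in \eqref{form:D} forces the control path to remain monotone inside $\mathcal{D}$; this is harmless since $\mathcal{D}$ stays contractible after fixing the ordering.
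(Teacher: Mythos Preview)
Your proposal is correct and follows essentially the same approach as the paper: well-posedness via Khasminskii's test with a Hamiltonian-based Lyapunov function (the paper invokes the slightly more elaborate Lyapunov functions of Propositions~\ref{P:CLyapunov}, \ref{P:RLyapunovSing}, \ref{P:RLyapunovMulti}, but the bare Hamiltonian already satisfies $\mathcal{L}_N H^\vep\le C$ in both models, so your simpler choice works), and minorization via H\"ormander's condition plus solvability of the control problem, exactly as in Lemmas~\ref{L:HormanderCondition} and~\ref{L:Solvability}. Your anticipated ``main obstacle'' in the relativistic case is in fact benign: the explicit computation in the proof of Proposition~\ref{P:RLyapunovSing} shows that the friction, It\^o correction and trace terms combine to give $\mathcal{L}_N^\vep H_{N,\vep}\le 2N\varepsilon d$, a uniform constant.
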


The proof of the well-posedness is relatively standard, following immediately by using the Hamiltonian structures presented in Proposition~\ref{P:CLyapunov}, Proposition~\ref{P:RLyapunovSing} and Proposition~\ref{P:RLyapunovMulti}. Thus, we will omit the explicit argument and refer the reader to \cite{veretennikov2024lyapunov} for a more detailed discussion. Moreover, the minorization condition is a consequence of H\"ormander's condition proven in Lemma~\ref{L:HormanderCondition} and the solvability of the controlled system verified through Lemma~\ref{L:Solvability} (see e.g.,~\cite{mattingly2002ergodicity}).

\begin{lemma}[Hormander's condition]\label{L:HormanderCondition}
    Under the same hypothesis of Theorem \ref{T:ExistandUnique}, both \eqref{E:Classical_Eq} and \eqref{E:RelativisticSys:epsilon} satisfy H\"ormander's condition as in Definition \ref{D:Hormander's condition}.
\end{lemma}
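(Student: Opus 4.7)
The plan is to verify H\"ormander's condition by a single round of Lie brackets: I will show that the diffusion vector fields, together with their brackets against the drift, already span the $2Nd$-dimensional tangent space at every point $X\in\X$. Writing the generator of \eqref{eq: general model} in sum-of-squares form as
\[
\mathcal{L}_N=Y^0+\sum_{i=1}^N\sum_{k=1}^d(Y^{i,k})^2,\qquad Y^{i,k}=\sum_{j=1}^d[\sqrt{D^\varepsilon(Q_i,P_i)}]_{jk}\,\partial_{P_i^j},
\]
where $Y^0$ absorbs the Hamiltonian drift, the friction, and any Stratonovich first-order correction. For the classical model the correction vanishes because $D$ depends only on position; for the relativistic model it contributes an additional first-order drift which plays no role in what follows.

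First, I would observe that $\sqrt{D^\varepsilon(Q_i,P_i)}$ is a pointwise invertible $d\times d$ matrix: in the classical case by the uniform ellipticity \eqref{E:UniformElliptic} in Assumption \ref{Assumption D}(i), and in the relativistic case from the explicit representation \eqref{E:sqrtD}. Consequently, for each particle $i$, the vectors $\{Y^{i,k}\}_{k=1}^d$ are linearly independent and span the momentum subspace $\mathrm{span}\{\partial_{P_i^j}:1\le j\le d\}$ at every point of $\X$. Taking the union over $i$, the diffusion vector fields themselves already exhaust all $Nd$ momentum directions.

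Next, I would compute the first-order bracket $[Y^0,Y^{i,k}]$ using the identity $[X,Y]=\sum_\gamma(Xb^\gamma-Ya^\gamma)\partial_\gamma$. Since $Y^{i,k}$ differentiates only along $P_i$ and since the kinematic contribution to $Y^0$ is $\sum_i\nabla K^\varepsilon(P_i)\cdot\nabla_{Q_i}$ from \eqref{position}, the coefficient of $\partial_{Q_i^l}$ in the bracket equals
\[
-Y^{i,k}\big([\nabla K^\varepsilon(P_i)]_l\big)=-\sum_{j=1}^d[\sqrt{D^\varepsilon(Q_i,P_i)}]_{jk}\,[\nabla^2K^\varepsilon(P_i)]_{jl}=-\big[\nabla^2K^\varepsilon(P_i)\sqrt{D^\varepsilon(Q_i,P_i)}\big]_{lk}.
\]
The Hessian $\nabla^2K^\varepsilon$ is pointwise positive definite in both models: for $K(P)=|P|^2/(2m)$ one has $\nabla^2K=m^{-1}I$, and for $K(P)=\mathbf{c}\sqrt{m^2\mathbf{c}^2+|P|^2}$ a direct computation gives $\nabla^2K(P)=(\mathbf{c}/\rho)(I-(P\otimes P)/\rho^2)$ with $\rho=\sqrt{m^2\mathbf{c}^2+|P|^2}$, whose eigenvalues $\mathbf{c}/\rho$ on $P^\perp$ and $m^2\mathbf{c}^3/\rho^3$ on the $P$-direction are both strictly positive. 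Combined with the pointwise positive definiteness of $\sqrt{D^\varepsilon}$, the $d\times d$ matrix $\nabla^2K^\varepsilon(P_i)\sqrt{D^\varepsilon(Q_i,P_i)}$ is invertible, so the vectors $\{[Y^0,Y^{i,k}]\}_{k=1}^d$ contribute, modulo the already-spanned momentum directions, $d$ linearly independent vectors in $\mathrm{span}\{\partial_{Q_i^l}:1\le l\le d\}$. Moreover, the bracket carries no $\partial_{Q_j}$ or $\partial_{P_j}$ component for $j\neq i$, since neither $\nabla K^\varepsilon(P_j)$ nor the drift for $P_j$ depends on $P_i$.

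Summing over $i=1,\ldots,N$ then yields all $Nd$ position directions; combined with the momentum directions already spanned by $\{Y^{i,k}\}$, this exhausts the full $2Nd$-dimensional tangent space at every point of $\X$, so H\"ormander's condition holds without any higher-order brackets. The computation is essentially routine; the only substantive point is the pointwise positive definiteness of the relativistic Hessian $\nabla^2K^\varepsilon$, which follows from the explicit eigenvalue calculation above.
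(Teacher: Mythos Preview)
Your proof is correct and follows essentially the same strategy as the paper: the diffusion fields $\{Y^{i,k}\}$ span the momentum directions by invertibility of $\sqrt{D^\varepsilon}$, and a single bracket with $Y^0$ produces the position directions through the invertible matrix $\nabla^2K^\varepsilon\sqrt{D^\varepsilon}$. The only minor difference is that in the relativistic case the paper argues this matrix is positive definite by observing that both factors have the form $\alpha I+\beta\,p\otimes p$ and hence commute, whereas you more directly note that a product of two positive-definite (hence invertible) matrices is invertible---which is all that is needed here.
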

\begin{proof} With regard to the classical equation~\eqref{E:Classical_Eq}, we define the vector fields $Y^0, Y^{i,k}$ for $1\le i\le N$ and $1\le k\le d$ as
\begin{align*}
    Y^0=&\sum_iv_i\nabla_{x_i}-\frac{1}{m}\sum_{i=1}^N D(x_i)v_i\nabla_{v_i}-\frac{1}{m}\sum_{i=1}^N\nabla U(x_i)\nabla_{v_i}\\
    &-\frac{1}{m}\sum_{1\le i<j\le N}\nabla G(x_i-x_j)[\nabla_{v_i}-\nabla_{v_j}],\\
Y^{i,k}=&\sum_{l=1}^d\Big[\sqrt{2D(x_i)}\Big]_{kl}\nabla_{v_i^l}.
\end{align*}
Denoting
the row vector $Y^i=\lrbig{Y^{i,1},\dots,Y^{i,d}}^t$, we have
\begin{align*}
    \lrbig{Y^1,\dots,Y^N}^t=\sqrt{2\D(\x)}\mathbf{\nabla}_{\v},
\end{align*}
where $\sqrt{2\D(\x)}$ is the block diagonal matrix $\sqrt{2\D(\x)}=\diag(\sqrt{2D(x_1)},\dots,\sqrt{2D(x_N)})$ which is full-ranked and $\nabla_{\v}=\lrbig{\nabla_{v_1},\dots,\nabla_{v_N}}$ forms a vector basis in $\R^{Nd}$. Moreover,
\begin{align*}
    Z^{i,k}\coloneqq\Big[Y^{i,k},Y^0\Big]=\sum_{l=1}^d\Big[\sqrt{2D(x_i)}\Big]_{kl}\nabla_{x_i^l}-\sum_{l=1}^d\Big[{R}(x_i,v_i)\Big]_{kl}\nabla_{v_i^l}
\end{align*}
where $\big[{R}(x_i,v_i)\big]_{kl}=\big[\sqrt{2D(x_i)}D(x_i)\big]_{kl}-v_i\nabla_{x_i}\big[\sqrt{2D(x_i)}\big]_{kl}$. Similarly, by writing $Z^i=\lr{Z^{i,1},\dots,Z^{i,d}}^t$, we get
\begin{align*}
    \lrbig{Z^1,\dots,Z^N,Y^1,\dots,Y^N}^t=\begin{pmatrix}
        \sqrt{2\D(\x)}&\mathbf{R(\x,\v)}\\
        0&\sqrt{2\D(\x)}
    \end{pmatrix}\begin{pmatrix}
        \nabla_{\x}\\
        \nabla_{\v}
    \end{pmatrix},
\end{align*}
where $\mathbf{R(\x,\v)}=\diag\lrbig{R(x_1,v_1),\dots,R(x_N,v_N)}$. The H\"ormander's condition follows from the positivity of $\sqrt{2\D(\x)}$ at any point $\x\in \mathcal{D}$.

Concerning the relativistic model \eqref{E:RelativisticSys:epsilon}, define $Y^0, Y^{i,k}$ for $1\le i\le N$ and $1\le k\le d$
\begin{align*}
    Y^0=&\sum_{i=1}^N\frac{p_i}{\sqrt{1+\varepsilon|p_i|^2}}\nabla_{q_i}-\sum_{i=1}^N D(p_i)\frac{p_i}{\sqrt{1+\varepsilon|p_i|^2}}\nabla_{p_i}+\sum_{i=1}^N\div_{p_i}D(p_i)\nabla_{p_i}\\
    &-\sum_{i}\nabla U(q_i)\nabla_{p_i}-\sum_{1\le i<j\le N}\nabla G(q_i-q_j)[\nabla_{p_i}-\nabla_{p_j}],\\
    Y^{i,k}=&\sum_{l=1}^d\big[\sqrt{2D(p_i)}\big]_{kl}\nabla_{p_i^l}.
\end{align*}
Since the proof in this case employs a similar argument as in the classical case, we will omit the redundant details. In particular, the treatment of $Y^{i,k}$ is the same as in the previous case giving the identity
\begin{align*}
    \lrbig{Y^1,\dots,Y^N}^t=\sqrt{2\D(\p)}\mathbf{\nabla}_{\p},
\end{align*}
where $Y^i=\lrbig{Y^{i,1},\dots,Y^{i,d}}^t$. Also, we have the identity  
\begin{align*}
Z^{i,k}&\coloneqq\Big[Y^{i,k},Y^0\Big]\\
&=\sum_{l=1}^d\big[\sqrt{2D(p_i)}\big]_{kl}\lr{\frac{1}{\sqrt{1+\varepsilon|p_i|^2}}\nabla q_i^l-\frac{\varepsilon p_i^l }{\lrbig{1+\varepsilon|p_i|^2}^{\frac{3}{2}}}p_i\nabla q_i}+ \big[R_{}(q_i,p_i)\nabla _{p_i}\big]_{k},
\end{align*}
for certain matrices $R(q_i,p_i)$. By writing $Z^i=\lr{Z^{i,1},\dots,Z^{i,d}}^t$, we get
\begin{align}\label{E_:DefZ}
    \notag Z^i=&\frac{\sqrt{2D(p_i)}\lrbig{\lrbig{1+\varepsilon|p_i|^2}I-\varepsilon p_i\otimes p_i}}{\lrbig{1+\varepsilon|p_i|^2}^{\frac{3}{2}}}\nabla_{q_i}+R(q_i,p_i)\nabla_{p_i}\\
    \eqqcolon& A(p_i)\nabla_{q_i}+R(q_i,p_i)\nabla_{p_i}.
\end{align}
Alternatively, we may recast the above equations as
\begin{align*}
    \lrbig{Z^1,\dots,Z^N,Y^1,\dots,Y^N}^t=\begin{pmatrix}
        \mathbf{A}(\p)&\mathbf{R(\p,\q)}\\
        0&\sqrt{2\D(\q)}
    \end{pmatrix}\begin{pmatrix}
        \nabla_{\q}\\
        \nabla_{\p}
    \end{pmatrix},
\end{align*}
where $\mathbf{A}(\p)=\diag\lr{A(p_1),\dots,A(p_N)}$.
Given that $\sqrt{\D(\p)}$ is of full rank, it suffices to show that $A(p_i)$ is positive definite for all $i=1,\dots,N$. To see this, recall $A(p_i)$ is given in~\eqref{E_:DefZ},
\begin{align*}
    A(p_i)=\frac{\sqrt{2D(p_i)}}{\lrbig{1+\varepsilon|p_i|^2}^{\frac{3}{2}}}\lrbig{\lrbig{1+\varepsilon|p_i|^2}I-\varepsilon p_i\otimes p_i}\eqqcolon B(p_i)C(p_i).
\end{align*}
On the one hand, it is clear that $B(p_i)$ is positive definite. On the other hand, since $C(p_i)$ is symmetric with spectrum $\{1,1+\varepsilon|p_i|^2\}$, $C(p_i)$ is also positive definite. Moreover, recalling the matrix $D$ from~\eqref{E:DefD} together with the square root matrix $\sqrt{D}$ from~\eqref{E:sqrtD}, we see that both $B(p_i)$ and $C(p_i)$ are of the form
\begin{align*}
    \alpha I+\beta p_i\otimes p_i,
\end{align*}
for some $\alpha,\beta\in \R$ depending on $p_i$. As a consequence, we deduce
\begin{align*}
    B(p_i)C(p_i)=C(p_i)B(p_i)=A(p_i).
\end{align*}
In turn, this implies that $A(p_i)$ is positive definite by combining the positivity of $B(p_i)$ and $C(p_i)$. The proof is thus finished.
\end{proof}

\begin{lemma}[Solvability]\label{L:Solvability}
    Under the same hypothesis of Theorem \ref{T:ExistandUnique}, for both \eqref{E:Classical_Eq} and \eqref{E:RelativisticSys:epsilon}, the controlled system~\eqref{E:ControlledSys} satisfies the solvability condition as in Definition~\ref{D:Solvable}.
\end{lemma}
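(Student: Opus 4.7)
The plan is to treat the controlled equation as an underdetermined system and exploit the fact that the noise coefficient $\sqrt{2D(q_i,p_i)}$ is invertible: once the position trajectories are prescribed, the momenta and the controls are forced upon us. Fix $X_0=(\Qb_0,\Pb_0)$ and $X_1=(\Qb_1,\Pb_1)$ in $\X$. First I would pick a time horizon $T>0$ and, for each $i=1,\dots,N$, a smooth curve $q_i\in C^\infty([0,T];\R^d)$ satisfying the boundary conditions
\begin{align*}
q_i(0)=Q_{0,i},\quad q_i(T)=Q_{1,i},\quad \dot q_i(0)=\nabla K(P_{0,i}),\quad \dot q_i(T)=\nabla K(P_{1,i}),
\end{align*}
and such that the joint trajectory $\q(t)=(q_1(t),\dots,q_N(t))$ stays in $\mathcal{D}$ for all $t\in[0,T]$. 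In dimension $d\ge 2$ this is straightforward since the collision set $\{\q:q_i=q_j\text{ for some }i\neq j\}$ has codimension $d\ge 2$, so a generic smooth interpolation (e.g.\ a Hermite cubic spline with a small transverse perturbation) avoids it; in dimension $d=1$ the fact that $X_0,X_1\in\X$ means the initial and final positions are both strictly ordered, so one can simply interpolate each coordinate monotonically in a way that preserves the order throughout.

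Second, I would recover the momenta by setting $p_i(t):=(\nabla K)^{-1}(\dot q_i(t))$. In the classical case $\nabla K(p)=p/m$ is a global diffeomorphism of $\R^d$, so no restriction is needed. In the relativistic case $\nabla K(p)=p/\sqrt{1+\varepsilon|p|^2}$ is a diffeomorphism from $\R^d$ onto the open ball of radius $1/\sqrt{\varepsilon}$, so the path $q_i$ must additionally satisfy $|\dot q_i(t)|<1/\sqrt{\varepsilon}$; since the boundary velocities $\nabla K(P_{0,i}),\nabla K(P_{1,i})$ already lie in that ball, this constraint is easy to enforce by choosing $T$ sufficiently large and interpolating smoothly.

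Third, with $\q(t)$ and $\p(t)$ now fixed smooth functions, I would read off the control by inverting the momentum equation. Namely, define
\begin{align*}
\dot V_i(t):=\bigl[\sqrt{2D(q_i(t),p_i(t))}\bigr]^{-1}\Bigl[\dot p_i(t)+D(q_i,p_i)\nabla K(p_i)-\div_{p_i}D(q_i,p_i)+\nabla U(q_i)+\sum_{j\neq i}\nabla G(q_i-q_j)\Bigr],
\end{align*}
which is well-defined and continuous on $[0,T]$ because $D$ is positive definite (cf.\ Assumption \ref{Assumption D} in the classical case and the explicit form \eqref{E:DefD} with square root \eqref{E:sqrtD} in the relativistic case), and because $\q(t)\in\mathcal{D}$ ensures $\nabla G(q_i-q_j)$ stays bounded. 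Setting $V_i(t):=\int_0^t\dot V_i(s)\,\ud s$ then yields a $C^1$ control path such that $Z(t)=(\q(t),\p(t))$ solves \eqref{E:ControlledSys} with $Z(0)=X_0$ and $Z(T)=X_1$.

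The main obstacle is the simultaneous management of three constraints on the interpolating curves $q_i$: the non-collision requirement $\q(t)\in\mathcal{D}$, the boundary conditions on position and velocity at $t=0,T$, and, in the relativistic case, the subluminal speed restriction $|\dot q_i(t)|<1/\sqrt{\varepsilon}$. All three can be handled by choosing $T$ large enough and using a smooth piecewise construction (for instance, first translate each particle along straight-line segments within $\mathcal{D}$, then reshape the endpoints with bump-function adjustments to match the velocity boundary conditions), but one must be mindful that the singularity of $G$ only restricts the pointwise behaviour of $\q$ and does not enter the construction otherwise, since $\dot V_i$ never blows up so long as the positions stay a positive distance apart.
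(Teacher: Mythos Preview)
Your proposal is correct and follows essentially the same approach as the paper: prescribe the position trajectories $q_i$ with the right boundary data (and, in the relativistic case, subluminal speed), recover $p_i$ via $(\nabla K)^{-1}$, and then read off $V_i$ by inverting the momentum equation using the invertibility of $\sqrt{2D}$. The paper's proof is terser only because it outsources the construction of the interpolating path $(\q(t),\p(t))$ to a citation (\cite[Equation~(3.29)]{duong2024trend}), whereas you sketch that construction explicitly, including the three constraints you rightly flag as the only nontrivial ingredients.
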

\begin{proof}
    We present the proof only for the relativistic case \eqref{E:RelativisticSys:epsilon}, as the analogous proof for the classical model \eqref{E:Classical_Eq} is comparatively simpler. Let $K(p)=\frac{1}{\varepsilon}\sqrt{1+\varepsilon|p|^2}$ and $D(p)$ be given as~\eqref{E:DefD}. For any $X_0=(\q_0,\p_0)\in \X$, in view of~\cite[Equation~(3.29)]{duong2024trend}, we see that there exists $T>0$ and a well defined path $\big\{\q(t),\p(t)\big\}_{t\in[0,T]}\subset\X$  such that 
    \begin{align*}
        q_i(0)=q_{0,i},\quad p_i(0)=p_{0,i},\quad q_i(T)=q_{1,i},\quad p_{i}(T)=p_{1,i},
    \end{align*}
    and
    \begin{align*}
        \frac{\ud q_i(t)}{\ud t} =\nabla K(p_i(t)).
    \end{align*}
    We define $V_i(t)$ as
    \begin{align*}
        V_i(t)=\frac{1}{\sqrt{2}}\int_0^t &\lr{\sqrt{D(p_i(s))}}^{-1}\bigg[D(p_i(s))\frac{p_i(s)}{\sqrt{1+\varepsilon|p_i(s)|^2}}-\div_{p_i}\lrbig{D(p_i(s))}\\
        &+\nabla U(q_i(s))+\sum_{j\ne i}\nabla G\lrbig{q_i(s)-q_j(s)}\bigg]\ud s 
    \end{align*}
    By construction, the path $
        \big\{\q(t),\p(t),\mathbf{V}(t)\big\}_{t\in [0,T]}
    $ solves the controlled system~\eqref{E:ControlledSys} where $\mathbf{V(t)}=\lrbig{V_1(t),\dots,V_N(t)}$. The proof is thus completed.
\end{proof}

\section{Langevin dynamics with state-dependent friction}
\label{sec: classical LE}
In this section, we  establish the ergodicity and the small mass limit for the classical equation \eqref{E:Classical_Eq}. More specifically, in Section \ref{sec: classical LE:Ergodicity}, we detail the construction of Lyapunov functions and discuss the proof of Theorem \ref{thm: ergodicityLE} whereas in Section \ref{sec: classical LE:Small-mass}, we present the argument for the small mass limit and prove Theorem \ref{thm: smallLE}. To avoid confusion, we denote the infinitesimal generator of the classical system~\eqref{E:Classical_Eq} as $\mathcal{L}^m_N$, which is given by
\begin{equation}\label{classical N particle generator}
    \begin{aligned}
        \mathcal{L}^m_N f=&\sum_{i=1}^N v_i\cdot\nabla_{x_i} f-\frac{1}{m}\sum_{i=1}^N \Big(\nabla U(x_i)+\sum_{j\neq i}\nabla G\big(x_i-x_j\big) \Big)\cdot\nabla_{v_i} f\\
    &-\frac{1}{m}\sum_{i=1}^N D(x_i)v_i\cdot \nabla_{v_i}f+\frac{1}{m^2}\sum_{i=1}^N\mathrm{div}_{v_i}(D(x_i)\nabla_{v_i} f),
    \end{aligned}
\end{equation}
for $f\in C^2(\X)$. Recall that \eqref{E:Classical_Eq} possesses the following Hamiltonian structure
\begin{equation}\label{classical N-particle Hamiltonian}
    H_{N,m}(\x,\v)=\frac{1}{2}m|\v|^2+ \sum_{i=1}^N U(x_i)+\frac{1}{2}\sum_{1\leq i\neq j\leq N} G(x_i-x_j).
\end{equation}
Also, in order to distinguish $\varepsilon$ given in~\eqref{E:RelativisiicKD}, we use $\varepsilon_1$ to denote the auxiliary constant in the construction of Lyapunov function which would be chosen sufficiently small. The choice of $\varepsilon_1$ may be different between different lemmas/propositions.

\begin{comment}
    We recall the $N$-particle interacting systems of classical Langevin dynamics with state-dependent friction and noise: 
\begin{equation}
    \begin{aligned}
        \ud x_i &= v_i\,\ud t,\\
m \ud v_i & = -\nabla U (x_i)\, \ud t-\sum_{j\neq i}\nabla G\big(x_i-x_j\big) \, \ud t -D(x_i)v_i\,\ud t+\sqrt{2 D(x_i)}\,\ud W_i.
    \end{aligned}
\end{equation}
We also recall that the Boltzmann-Gibbs distribution associated to this system is given by
\[
\pi^{\x,\v}(d\x d\v)=\frac{1}{Z_N}\exp(-H(\x,\v))d\x d\v,
\]
where the Hamiltonian is given by
\end{comment}

\subsection{Ergodicity} \label{sec: classical LE:Ergodicity}
\begin{comment}
\begin{theorem}\label{T:Ergodicity}
    Under Assumption~$(U)$, $(G)$ and $(D)$. Let $\pi_*$ be given by~\eqref{E:Invariant} and the transition probability given by~\eqref{E:Transition}. Then the system~\eqref{E:LangevinDynamics} admits a unique invariant measure $\pi_*$ and there exists a Lyapunov function $V$, constants $c$ and $C$ such that
    \begin{equation}
        \mathcal{W}_V\lr{\lr{P_t}^*\mu,\pi_*}\le Ce^{-ct}\mathcal{W}_V\lr{\mu,\pi_*}
    \end{equation}
    for all probability measure $\mu$ such that
    \begin{align*}
        \int_\X V(X)\mu\lr{\ud X}<\infty.
    \end{align*}
\end{theorem}   
\end{comment}
In the following proposition, we construct a Lyapunov function for the classical Langevin dynamics \eqref{E:Classical_Eq}. This together with the minorization condition detailed in Section \ref{sec: prim} ultimately concludes the uniqueness of the invariant probability measure $\pi^m_{\textup{BG}}$, as well as the geometric mixing rate stated in Theorem \ref{thm: ergodicityLE}.

\begin{proposition}\label{P:CLyapunov}
Under the same hypothesis of Theorem \ref{thm: ergodicityLE}, let $H_{N,m}$ be given in \eqref{classical N-particle Hamiltonian} and $V_{N,m}$ be defined as
\begin{equation} \label{form:V_{N,m}:Classic-eq}
    V_{N,m}=H_{N,m}+\sum_{i=1}^N\varepsilon_1 m\langle x_i,v_i\rangle-\sum_{i=1}^{N}\varepsilon_1 m \Big\langle v_i, \sum_{j\neq i}\frac{x_i-x_j}{|x_i-x_j|}\Big\rangle.
\end{equation}
Then, for all $\varepsilon_1=\varepsilon_1(m)>0$ sufficiently small, $V_{N,m}$ is a Lyapunov function to~\eqref{E:Classical_Eq}, i.e.,
\begin{align} \label{ineq:L_N.V_{N,m}:Classic-eq}
    \mathcal{L}_N^m V_{N,m}(X)\le -c_1 V_{N,m}(X)+C_1,\quad X\in\X,
\end{align}
for some positive constants $c_1=c_1(\varepsilon_1,m)$, $C_1=C_1(\varepsilon_1,m)$ independent of $X$ .
\end{proposition}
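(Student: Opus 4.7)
The plan is to compute $\mathcal{L}_N^m V_{N,m}$ in three pieces, one per summand of \eqref{form:V_{N,m}:Classic-eq}, and then to fix $\varepsilon_1=\varepsilon_1(m)>0$ small enough that the dissipative contributions dominate all cross-term errors. Because $H_{N,m}$ is the Hamiltonian of the conservative part of \eqref{E:Classical_Eq}, its Poisson bracket with the drift vanishes, leaving only the friction and It\^o-correction pieces:
\begin{equation*}
    \mathcal{L}_N^m H_{N,m} = -\sum_i \la D(x_i)v_i, v_i\ra + \frac{1}{m}\sum_i \tr D(x_i) \le -\underline{\gamma}|\v|^2 + \frac{Nd\overline{\gamma}}{m},
\end{equation*}
which supplies the dissipation in $\v$. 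For the virial correction, symmetrising the $G$-sum by the evenness of $G$ yields
\begin{equation*}
    \mathcal{L}_N^m\Big(\varepsilon_1 m\sum_i\la x_i, v_i\ra\Big) = \varepsilon_1 m|\v|^2 - \varepsilon_1\sum_i\la \nabla U(x_i), x_i\ra - \frac{\varepsilon_1}{2}\sum_{i\ne j}\la \nabla G(x_i-x_j), x_i-x_j\ra - \varepsilon_1\sum_i\la D(x_i) v_i, x_i\ra,
\end{equation*}
whose $U$-term is bounded above by $-\varepsilon_1 a_2\sum_i|x_i|^{\lambda+1}+\varepsilon_1 Na_3$ via \eqref{cond:U:x.U'(x)>-x^(lambda+1)}, supplying the coercivity in $\x$. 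The friction-virial coupling is absorbed by Young's inequality, and the decomposition $\nabla G(Q)=-a_4 Q/|Q|^{\beta_1+1}+R(Q)$ from \eqref{cond:G:|grad.G(x)+q/|x|^beta_1|<1/|x|^beta_2:Classic} shows the $G$-piece has at worst a $|x_i-x_j|^{-(\beta_1-1)}$ singularity, strictly weaker than the Coulomb dissipation extracted next.

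The crux is the unit-vector correction $-\varepsilon_1 m\sum_i\la v_i,\sum_{j\ne i}u_{ij}\ra$ with $u_{ij}=(x_i-x_j)/|x_i-x_j|$, chosen precisely so that its transport part collapses to a non-positive square. Using $\nabla_{x_i}u_{ij}=|x_i-x_j|^{-1}P^\perp_{ij}$ with $P^\perp_{ij}=I-u_{ij}\otimes u_{ij}$ and combining the $(i,j)$ and $(j,i)$ ordered-pair contributions gives
\begin{equation*}
    -\varepsilon_1 m\sum_k v_k\cdot\nabla_{x_k}\sum_i\sum_{j\ne i}\la v_i, u_{ij}\ra = -\varepsilon_1 m\sum_{1\le i<j\le N}\frac{|P^\perp_{ij}(v_i-v_j)|^2}{|x_i-x_j|}\le 0,
\end{equation*}
so no singular $|\v|^2/|x_i-x_j|$ term is generated. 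The decisive dissipation then comes from the velocity-potential coupling: since $\nabla_{v_k}\sum_i\sum_{j\ne i}\la v_i, u_{ij}\ra=\sum_{j\ne k}u_{kj}$, the $-(\nabla U+\nabla G)\cdot\nabla_{v_k}/m$ drift produces, via \eqref{cond:G:|grad.G(x)+q/|x|^beta_1|<1/|x|^beta_2:Classic}, the diagonal sum
\begin{equation*}
    \varepsilon_1\sum_{i\ne j}\la\nabla G(x_i-x_j), u_{ij}\ra \le -\varepsilon_1 a_4\sum_{i\ne j}\frac{1}{|x_i-x_j|^{\beta_1}} + \varepsilon_1 a_5\sum_{i\ne j}\frac{1}{|x_i-x_j|^{\beta_2}} + \varepsilon_1 a_6 N(N-1),
\end{equation*}
with the $\beta_2<\beta_1$ sum absorbed into the $\beta_1$ one. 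The off-diagonal cross-interactions $\la\nabla G(x_i-x_j), u_{il}\ra$ with $l\ne i,j$ are handled by symmetrising in $i\leftrightarrow j$, which replaces $u_{il}$ by $\tfrac12(u_{il}-u_{jl})$; the Taylor estimate $|u_{il}-u_{jl}|\le C|x_i-x_j|/|x_i-x_l|$ then reduces the resulting singularity to order $|x_i-x_j|^{-(\beta_1-1)}$, absorbable into the diagonal dissipation. The external and friction contributions $\la\nabla U(x_k), u_{kj}\ra$ and $\la D(x_k) v_k, u_{kj}\ra$ are bounded directly by Assumptions \ref{Assumption U} and \ref{Assumption D} combined with Young's inequality.

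Collecting the three steps and taking $\varepsilon_1=\varepsilon_1(m)$ sufficiently small, we arrive at
\begin{equation*}
    \mathcal{L}_N^m V_{N,m} \le -c\Big(|\v|^2 + \sum_i|x_i|^{\lambda+1} + \sum_{i\ne j}\frac{1}{|x_i-x_j|^{\beta_1}}\Big) + C.
\end{equation*}
A matching upper bound $V_{N,m}\le C'(1+|\v|^2+\sum_i|x_i|^{\lambda+1}+\sum_{i\ne j}|x_i-x_j|^{-\beta_1})$ follows from a Young estimate on the two correction summands (using $|u_{ij}|=1$ and $2|\la x_i, v_i\ra|\le|x_i|^2+|v_i|^2$) together with \eqref{cond:U:U(x)=O(x^lambda+1)} and \eqref{E:G_1}, whence \eqref{ineq:L_N.V_{N,m}:Classic-eq} follows. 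The main technical obstacle lies in Step~3: without the square-completion identity for the transport piece and the symmetrisation trick $\la\nabla G(x_i-x_j), u_{il}\ra\leadsto\tfrac12\la\nabla G(x_i-x_j), u_{il}-u_{jl}\ra$ for the off-diagonal piece, stray singularities of orders $|\v|^2/|x_i-x_j|$ and $|x_i-x_j|^{-\beta_1}$ (with a possibly unfavourable $N$-dependent constant) would arise and defeat the absorption; the constraint $\beta_1\ge 1$ is used throughout to ensure that any residual $|x_i-x_j|^{-1}$ singularity is weaker than the extracted $|x_i-x_j|^{-\beta_1}$ reservoir.
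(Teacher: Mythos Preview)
Your overall architecture matches the paper's: compute $\mathcal{L}_N^m$ on each of the three summands of $V_{N,m}$, extract dissipation in $|\v|^2$, $\sum_i|x_i|^{\lambda+1}$, and $\sum_{i<j}|x_i-x_j|^{-\beta_1}$ respectively, then take $\varepsilon_1$ small. Your treatment of $H_{N,m}$ and of the virial term $\varepsilon_1 m\langle\x,\v\rangle$ is the paper's, and your square-completion identity for the transport of the unit-vector correction is exactly what the paper writes as $-\varepsilon_1 m\sum_{i<j}\frac{|v_i-v_j|^2}{|x_i-x_j|}+\varepsilon_1 m\sum_{i<j}\frac{\langle v_i-v_j,x_i-x_j\rangle^2}{|x_i-x_j|^3}\le 0$.

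The gap is in your handling of the off-diagonal interactions $\langle\nabla G(x_i-x_j),u_{il}\rangle$ with $l\notin\{i,j\}$. The symmetrisation in $i\leftrightarrow j$ is correct and the Lipschitz bound $|u_{il}-u_{jl}|\le 2|x_i-x_j|/|x_i-x_l|$ is valid, but your conclusion that this ``reduces the resulting singularity to order $|x_i-x_j|^{-(\beta_1-1)}$'' drops the factor $|x_i-x_l|^{-1}$. Once that factor is accounted for---by Young with exponents $(\tfrac{\beta_1}{\beta_1-1},\beta_1)$, or by the case split $|x_i-x_j|\lessgtr|x_i-x_l|$---each off-diagonal triple contributes a term of order $|x_i-x_j|^{-\beta_1}$ or $|x_i-x_l|^{-\beta_1}$ with a \emph{fixed} constant, and summing over $l\ne i,j$ multiplies by $N-2$. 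Since this carries the same $\varepsilon_1$ prefactor as the diagonal reservoir $-2\varepsilon_1 a_4\sum_{i<j}|x_i-x_j|^{-\beta_1}$, no smallness of $\varepsilon_1$ helps; for $N\ge 3$ the absorption fails unless $a_4$ is assumed large relative to $N$, which it is not. (For $N=2$ there are no off-diagonal terms and your argument goes through.)

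The paper avoids this by \emph{not} splitting diagonal/off-diagonal. It decomposes $\nabla G(Q)=-a_4 Q|Q|^{-(\beta_1+1)}+R(Q)$ and, for the leading part, applies the positivity inequality of Lemma~\ref{L:s+1times1},
\[
\sum_{i=1}^N\Big\langle\sum_{j\ne i}\frac{x_i-x_j}{|x_i-x_j|^{\beta_1+1}},\ \sum_{l\ne i}\frac{x_i-x_l}{|x_i-x_l|}\Big\rangle\ \ge\ 2\sum_{1\le i<j\le N}\frac{1}{|x_i-x_j|^{\beta_1}},
\]
which says the full bilinear form---off-diagonal terms included---is bounded below by the diagonal sum with a clean, $N$-independent constant. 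The remainder $R$ is then controlled by the crude bound $\big|\sum_{j\ne i}u_{ij}\big|\le N-1$ together with $\beta_2<\beta_1$. This sign information is precisely what your absolute-value Taylor approach discards; replacing your off-diagonal step by an appeal to Lemma~\ref{L:s+1times1} closes the argument.
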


\begin{proof}
    We first notice that since $\lambda\ge 1$, for $\varepsilon_1$ sufficiently small, it holds that
    \begin{align*}
        \sum_{i=1}^N\varepsilon_1 m\la x_i,v_i\ra-\sum_{i=1}^{N}\varepsilon_1 m \Big\langle v_i, \sum_{j\neq i}\frac{x_i-x_j}{|x_i-x_j|}\Big\rangle &\ge -\frac{\varepsilon_1 m}{2}\lr{\sum_{i=1}^N\lr{|x_i|^2+|v_i|^2+2|v_i|} } \\
        &\ge -\frac{1}{2} H_{N,m}-C,
    \end{align*}
    whence
    \begin{align}\label{EC:Vn>=Hn}
        V_{N,m}\ge \frac{1}{2}H_{N,m}-C
    \end{align}
    which shows that $V_{N,m}\to \infty$ whenever $|X|+\sum_{1\leq i<j\leq N}|x_i-x_j|^{-1}\to \infty$. Now we proceed to estimate $\mathcal{L}_N^m V_{N,m}$ in order to produce the Lyapunov bound \eqref{ineq:L_N.V_{N,m}:Classic-eq}.
    
    With regard to $H_{N,m}$, we have
    \begin{align} \label{eqn:L_N.H_{N,m}:Classic-eq}
        \mathcal{L}_N^m H_{N,m}&=-\sum_{i=1}^N \langle D(x_i)v_i,v_i\rangle+\frac{1}{m}\sum_{i=1}^N \tr(D(x_i))=-\langle\D(\x)\v,\v \rangle+\frac{1}{m}\tr(\D(\x)).
    \end{align}
    %  there exists a normal matrix $P(x_i)$ such that
    % \begin{equation}\label{diagonalize}
    %     D(x_i)=P^*(x_i)\Lambda(x_i)P(x_i),
    % \end{equation}
    % where $$\Lambda(x_i)=\diag(\lambda_{1}(x_i),\dots,\lambda_{d}(x_i)),$$
    % and
    % \begin{align*}
    %     \underline{\gamma}\le \lambda_i(x)\le \bar{\gamma} \quad\text{for all $i=1,\dots,d$ and $x\in \mathcal{D}$.}
    % \end{align*}
    % The inner product on the right hand side of \eqref{eqn:L_N.H_{N,m}:Classic-eq} can be controlled as follows.
    % \begin{align*}
    %     -\langle D(x_i)v_i,v_i\rangle&=- v_i^*D(x_i)^*v_i=-v_i^*P^*(x_i)\Lambda(x_i)P(x_i)v_i\\
    %     &=-u_i^*\Lambda(x_i)u_i=-\sum_{j=1}^d \lambda_j(x_i)|u_{i,j}|^2\leq \underline{\gamma}\sum_{j=1}^d -|u_{i,j}|^2\leq -\underline{\gamma}|u_i|^2,
    % \end{align*}
    % where we have set
    % \begin{equation}\label{new coordinate ui}
    %     u_i:=P(x_i)v_i.
    % \end{equation}
    % and $\underline{\gamma}$ is given in Assumption~\ref{Assumption D}. Since
    % \begin{align*}
    %     |u_i|^2=u_i^*u_i=v_i^*P(x_i)^*P(x_i)v_i=|v_i|^2,
    % \end{align*}
    We obtain from Assumption~\ref{Assumption D}, c.f.~\eqref{E:UniformElliptic} that
    \begin{align*}
        \langle -D(x_i)v_i,v_i\rangle\leq -\underline{\gamma}|v_i|^2.
    \end{align*}
    For any $x\in\mathcal{D}$, let $\{\lambda_{j}(x)\}_{j=1}^d$ be the eigenvalues of $D(x)$ such that $\lambda_{1}(x)\leq \lambda_{2}(x)\leq \dots \le \lambda_{d}(x)$. From~\eqref{E:UniformElliptic}, we have
    \begin{align*}
        \underline{\gamma}\le \lambda_j(x)\le \bar{\gamma} \quad\text{for all $j=1,\dots,d$ and $x\in \mathcal{D}$.}
    \end{align*}
    As a result, $\tr(D(x))=\sum_{j=1}^d \lambda_j(x)\leq \overline{\gamma}d$ and we obtain the bound
    \begin{align}\label{EC:LnHResult}
        \mathcal{L}_N^m H_{N,m}\leq -\underline{\gamma}|\v|^2+\frac{\overline{\gamma}d}{m}.
    \end{align}

    Next, applying the generator $\mathcal{L}_N^m$ to the second term of $V_{N,m}$ from \eqref{form:V_{N,m}:Classic-eq}, we get
    \begin{align*}
        \mathcal{L}_N^m\Big(\sum_{i=1}^N\varepsilon_1 m \langle x_i, v_i\rangle\Big)&=m\varepsilon_1|\v|^2-\varepsilon_1\langle\nabla U(\x),\x\rangle-\varepsilon_1\sum_{i=1}^d\langle D(x_i)v_i,x_i\rangle\\
        &\qquad-\varepsilon_1\sum_{1\leq i<j\leq N}\langle \nabla G(x_i-x_j),x_i-x_j \rangle.
    \end{align*}
    We apply Cauchy-Schwarz inequality together with~\eqref{E:G_2} to get
    \begin{align*}
        \varepsilon_1\sum_{1\leq i<j\leq N}\langle \nabla G(x_i-x_j),x_i-x_j \rangle\le \varepsilon_1\sum_{1\le i<j\le N}\lr{|x_i-x_j|+\frac{1}{|x_i-x_j|^{\beta_1-1}}}.
    \end{align*}
    Again by Cauchy-Schwarz inequality, for any $\varepsilon_1>0$, we have
    \begin{align*}
        -\langle D(x_i)v_i,x_i\rangle\leq |D(x_i)v_i||x_i|\leq \frac{1}{2}(\frac{1}{\sqrt{\varepsilon_1}}|D(x_i)v_i|^2+\sqrt{\varepsilon_1}|x_i|^2).
    \end{align*}
    Since $D(x_i)$ is symmetric, the spectral norm is the largest eigenvalue $\lambda_d(x_i)$ which is bounded by $\bar{\gamma}$,
    \begin{align*}
        |D(x_i)v_i|^2\le \|D(x_i)\|^2|v_i|^2\le \bar{\gamma}^2|v_i|^2.
    \end{align*}
    From the above estimates and Assumption~\ref{Assumption U} and Assumption~\ref{A:G} respectively on the potentials $U$ and $G$, we have
    \begin{align}\label{EC:LnCrossResult}
                    &\mathcal{L}_N^m\Big(\sum_{i=1}^N\varepsilon_1 m \langle x_i, v_i\rangle\Big) 
                    \\
                    &\leq m\varepsilon_1|\v|^2-\varepsilon_1\sum_{i=1}^N a_2|x_i|^{\lambda+1}+\varepsilon_1  a_3+\varepsilon_1\sum_{1\leq i<j\leq N}\Big(\frac{a_1}{|x_i-x_j|^{\beta_1-1}}+a_1|x_i-x_j|\Big) \notag \\
         &\qquad
        +\varepsilon_1 C\sum_{i=1}^N\Big(\frac{1}{\sqrt{\varepsilon_1}}|v_i|^2+\sqrt{\varepsilon_1}|x_i|^2\Big) \notag\\
        &\leq m\varepsilon_1\sum_{i=1}^N|v_i|^2-\varepsilon_1\sum_{i=1}^N a_2|x_i|^{\lambda+1}+\varepsilon_1  a_3+\varepsilon_1\sum_{1\leq i<j\leq N}\Big(\frac{a_1}{|x_i-x_j|^{\beta_1-1}}\Big)  \notag \\
         &\qquad
        +\varepsilon_1 C\sum_{i=1}^N\Big(\frac{1}{\sqrt{\varepsilon_1}}|v_i|^2+\sqrt{\varepsilon_1}|x_i|^2+|x_i|\Big).
    \end{align}
    
    Concerning the third term of $V_{N,m}$ in \eqref{form:V_{N,m}:Classic-eq}, we have
    \begin{align}\label{E:LnSingu}
        \notag &\mathcal{L}_N^m\Big(-\varepsilon_1 m \sum_{i=1}^N\Big\langle v_i,\sum_{j\neq i}\frac{x_i-x_j}{|x_i-x_j|}\Big\rangle\Big)\\
        \notag&=-\varepsilon_1 m\sum_{1\le i<j\le N}\frac{|v_i-v_j|^2}{|x_i-x_j|}+\varepsilon_1 m \sum_{1\le i<j\le N}\frac{|\langle v_i-v_j, x_i-x_j\rangle|^2}{|x_i-x_j|^3}\\
        &\notag \quad +\varepsilon_1\sum_{i=1}^N \langle D(x_i)v_i,\sum_{j\neq i}\frac{x_i-x_j}{|x_i-x_j|}\rangle +\sum_{1\leq i<j\leq N}\frac{\langle \nabla U(x_i)-\nabla U(x_j),x_i-x_j\rangle}{|x_i-x_j|}\\
        &\quad +\varepsilon_1 \sum_{i=1}^N \Big\langle\sum_{j\neq i}\nabla G(x_j-x_i),\sum_{l\neq i}\frac{x_i-x_l}{|x_i-x_l|}\Big\rangle.
    \end{align}
    By Cauchy-Schwarz inequality, we first notice that
    \begin{equation*}
    -\varepsilon_1 m \sum_{1 \leq i < j \leq N} \frac{|v_i - v_j|^2}{|x_i - x_j|} + \varepsilon_1 m \sum_{1 \leq i < j \leq N} \frac{\langle v_i - v_j, x_i - x_j \rangle^2}{|x_i - x_j|^3} \leq 0,
\end{equation*} 
By the boundedness of $D$,
    \begin{align*}
        \varepsilon_1\sum_{i=1}^N \langle D(x_i)v_i,\sum_{j\neq i}\frac{x_i-x_j}{|x_i-x_j|}\rangle\leq \sum_{i=1}^N |D(x_i)v_i|\cdot\Big|\sum_{j\neq i}\frac{x_i-x_j}{|x_i-x_j|}\Big|\leq \varepsilon_1 \overline{\gamma}(N-1)\sum_{i=1}^N |v_i|.
     \end{align*}
     Turning to the singular term involving $\nabla U $, we apply estimate~\eqref{cond:U:U'(x)=O(x^lambda)} to get 
     \begin{equation*}
    \sum_{1 \leq i < j \leq N} \frac{\langle \nabla U(x_i) - \nabla U(x_j), x_i - x_j \rangle}{|x_i - x_j|} \leq (N - 1)a_1 \Big( N + \sum_{i=1}^{N} |x_i|^\lambda \Big).
    \end{equation*}
     Concerning the crossing term involving $\nabla G$, we decompose this term as follows
\begin{equation*}
\begin{aligned}
    &\sum_{i=1}^{N} \Big( \sum_{j \neq i}\frac{x_i - x_j}{|x_i - x_j|}, \sum_{l \neq i}\nabla G(x_i - x_l) \Big) \\
    &= -a_4 \sum_{i=1}^{N} \Big\langle \sum_{j \neq i} \frac{x_i - x_j}{|x_i - x_j|},\sum_{l \neq i} \frac{x_i - x_l}{|x_i - x_l|^{\beta + 1}} \Big\rangle \\
    &\quad + \sum_{i=1}^{N} \Big\langle \sum_{j \neq i} \frac{x_i - x_j}{|x_i - x_j|}, \sum_{l \neq i} \nabla G(x_i - x_l) + a_4 \frac{x_i - x_l}{|x_i - x_l|^{\beta + 1}} \Big\rangle.
\end{aligned}
\end{equation*}
We apply Lemma~\ref{L:s+1times1} to find that
\begin{equation*}
    -a_4 \sum_{i=1}^{N} \Big\langle  \sum_{j \neq i} \frac{x_i - x_j}{|x_i - x_j|} ,  \sum_{l \neq i} \frac{x_i - x_l}{|x_i - x_l|^{\beta_1 + 1}}  \Big\rangle \leq -2a_4 \sum_{1 \leq i < j \leq N} \frac{1}{|x_i - x_j|^{\beta_1}}.
\end{equation*}
For the second term, recall~\eqref{cond:G:|grad.G(x)+q/|x|^beta_1|<1/|x|^beta_2:Classic} from Assumption~\ref{A:G},
\begin{align*}
\sum_{i=1}^N \Big\langle \sum_{j \neq i} \frac{x_i - x_j}{|x_i - x_j|}, \sum_{l \neq i} \nabla G(x_i - x_l) + a_4 \frac{x_i - x_l}{|x_i - x_l|^{\beta_1 + 1}} \Big\rangle \\
\leq (N - 1) \sum_{i=1}^N \sum_{l \neq i} \Big| \nabla G(x_i - x_l) + a_4 \frac{x_i - x_l}{|x_i - x_l|^{\beta_1 + 1}} \Big| \\
\leq (N - 1) \Big[ 2 \sum_{1 \leq i < l \leq N} \frac{a_5}{|x_i - x_l|^{\beta_2}} + N(N - 1)a_6 \Big].
\end{align*}
Since $\beta_2<\beta_1$, we can subsume the constants and $|x_i-x_l|^{-\beta_2}$ terms into $|x_i-x_l|^{-\beta_1}$ to see from~\eqref{E:LnSingu} that
\begin{equation}\label{EC:LnSinguReusult}
    \begin{aligned}
        &\mathcal{L}_N^m\Big(-\varepsilon_1 m \sum_{i=1}^N \Big\langle v_i, \sum_{j\neq i} \frac{x_i - x_j}{|x_i - x_j|}\Big\rangle\Big)\\
&\qquad\leq -a_4\varepsilon_1 \sum_{1\leq i<j\leq N} \frac{1}{|x_i - x_j|^{\beta_1}} + C\varepsilon_1\Big(1 + \sum_{i=1}^N |v_i| + \sum_{i=1}^N |x_i|^\lambda \Big).
    \end{aligned}
\end{equation}

Turning back to $\mathcal{L}_N^mV_{N,m}$, we collect~\eqref{EC:LnHResult},~\eqref{EC:LnCrossResult} and~\eqref{EC:LnSinguReusult} to infer
\begin{equation*}
\begin{split}
\mathcal{L}_N^mV_{N,m} &\leq -\underline{\gamma}|v|^2  - a_2\varepsilon_1 \sum_{i=1}^N |x_i|^{\lambda+1} - a_4\varepsilon_1 \sum_{1\leq i<j\leq N} \frac{1}{|x_i - x_j|^{\beta_1}} + C \\
&\quad + C\Big( \varepsilon_1^{1/2}|v|^2 + \varepsilon_1^{3/2}|x|^2 + \varepsilon_1 \sum_{1\leq i<j\leq N} \frac{1}{|x_i - x_j|^{\beta_1-1}} \Big) \\
&\quad + C\varepsilon_1\Big( \sum_{i=1}^N |v_i| + \sum_{i=1}^N |x_i|^{\lambda}  \Big).
\end{split}
\end{equation*}
Consequently, by choosing $\varepsilon_1$ sufficiently small and recalling~\eqref{cond:U:U(x)=O(x^lambda+1)} and~\eqref{E:G_1}, we arrive at the bound
\begin{align*}
    \mathcal{L}_N^mV_{N,m} \leq& -\frac{1}{2}\Big( \gamma|v|^2 + a_2\varepsilon_1 \sum_{i=1}^N |x_i|^{\lambda+1} + a_4\varepsilon_1 \sum_{1\leq i<j\leq N} \frac{1}{|x_i - x_j|^{\beta_1}} \Big) + C\\
    \le &-c_1H_{N,m}+C_2\le -cV_{N,m}+C,
\end{align*}
where the last line follows from~\eqref{EC:Vn>=Hn}. This establishes Lyapunov bound \eqref{ineq:L_N.V_{N,m}:Classic-eq}, thereby finishing the proof.
\end{proof}

\subsection{Small-mass limit} \label{sec: classical LE:Small-mass}
In this subsection, we proceed to prove Theorem \ref{thm: smallLE} establishing the convergence of the classical model \eqref{E:Classical_Eq} toward \eqref{EC:Limiting} in the small mass regime. In order to do so, it is crucial to obtain useful estimates on the nonlinearities. In Lemma \ref{lem:grad.U_grad.G} below, we assert that $\nabla U(q)$ behaves like $|q|^\lambda$ while $\nabla G$ is as singular as $|q|^{-\beta_1}$.

\begin{lemma} \label{lem:grad.U_grad.G}
    Under Assumption~\ref{Assumption U} (U)(i) and Assumption~\ref{A:G} (G)(ii), there exist constants $a_7,a_8,a_9,a_{10}>0$ such that
    
    \begin{align}\label{EC:BdNablaU}
            a_2|q|^\lambda-a_2\vee a_3\le |\nabla U(q)|\le a_1|q|^\lambda+a_1,
    \end{align}
    and
    \begin{align}\label{EC:BoundNablaG}
            a_7\sum_{i\ne j}\frac{1}{|q_i-q_j|^{2\beta_1}}-a_8\le \sum_{i=1}^N \Big| \sum_{j \ne i} \nabla G({q}_i - {q}_j) \Big|^2\le a_9\sum_{i\ne j}\frac{1}{|q_i-q_j|^{2\beta_1}}+a_{10}.
    \end{align}
    In the above, $\lambda$ and $\beta_1$ are respectively the constants from Assumption \ref{Assumption U} and \ref{A:G}.
\end{lemma}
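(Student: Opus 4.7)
The plan is to address the four inequalities in turn, grouping the two bounds on $|\nabla U|$ first (essentially immediate), then the upper bound on the $\nabla G$-sum (Cauchy-Schwarz), and finally the lower bound on the $\nabla G$-sum, which is where the real work lies.

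For the $|\nabla U|$ bounds, the upper estimate is exactly \eqref{cond:U:U'(x)=O(x^lambda)}. For the lower estimate, Cauchy-Schwarz applied to \eqref{cond:U:x.U'(x)>-x^(lambda+1)} gives $|\nabla U(q)|\,|q|\ge \langle \nabla U(q),q\rangle \ge a_2|q|^{\lambda+1}-a_3$, whence $|\nabla U(q)|\ge a_2|q|^\lambda - a_3$ for $|q|\ge 1$; for $|q|<1$ the inequality $|\nabla U(q)|\ge 0 \ge a_2|q|^\lambda-a_2$ is trivial since $\lambda\ge 1$. Combining the two cases yields the claimed bound with the constant $a_2\vee a_3$. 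The upper bound on $\sum_i|\sum_{j\ne i}\nabla G(q_i-q_j)|^2$ then follows from the discrete Cauchy-Schwarz $|\sum_{j\ne i}v_j|^2\le(N-1)\sum_{j\ne i}|v_j|^2$ together with $|\nabla G(Q)|^2\le 2a_1^2+2a_1^2/|Q|^{2\beta_1}$, obtained by squaring \eqref{E:G_2} and using $(x+y)^2\le 2x^2+2y^2$; reorganising the double sum produces e.g.\ $a_9=2a_1^2(N-1)$ and $a_{10}=2a_1^2 N(N-1)^2$.

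For the lower bound, I would invoke \eqref{cond:G:|grad.G(x)+q/|x|^beta_1|<1/|x|^beta_2:Classic} to decompose $\nabla G(Q)=-a_4 Q/|Q|^{\beta_1+1}+R(Q)$ with $|R(Q)|\le a_5/|Q|^{\beta_2}+a_6$, and apply the elementary inequality $|A+B|^2\ge \tfrac{1}{2}|A|^2-|B|^2$ to each summand, yielding
\begin{equation*}
\sum_{i=1}^N\Big|\sum_{j\ne i}\nabla G(q_i-q_j)\Big|^2 \;\ge\; \frac{a_4^2}{2}\sum_{i=1}^N\Big|\sum_{j\ne i}\frac{q_i-q_j}{|q_i-q_j|^{\beta_1+1}}\Big|^2 \;-\; \sum_{i=1}^N\Big|\sum_{j\ne i}R(q_i-q_j)\Big|^2.
\end{equation*}
The $R$-contribution is bounded via Cauchy-Schwarz by a multiple of $\sum_{i\ne j}|q_i-q_j|^{-2\beta_2}+\mathrm{const}$; splitting into $|q_i-q_j|\le 1$ versus $>1$ and using $\beta_2<\beta_1$ absorbs it into $\eta\sum_{i\ne j}|q_i-q_j|^{-2\beta_1}+C_\eta$ for any $\eta>0$.

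The substantive task is therefore to lower-bound $\sum_i|\sum_{j\ne i}(q_i-q_j)/|q_i-q_j|^{\beta_1+1}|^2$ by $c\sum_{i\ne j}|q_i-q_j|^{-2\beta_1}-C$. Expanding the square separates a diagonal $j=l$ contribution equal to $\sum_{i\ne j}|q_i-q_j|^{-2\beta_1}$ (the exact sum desired) from off-diagonal cross-terms indexed by distinct triples $(i,j,l)$. This last piece is the hardest step, because genuine cancellation occurs in the vector sum (for instance, a particle flanked by two symmetric neighbours contributes zero). My plan is to rewrite each cross-term via the polarization identity $\langle q_i-q_j,q_i-q_l\rangle=\tfrac{1}{2}(|q_i-q_j|^2+|q_i-q_l|^2-|q_j-q_l|^2)$ and then apply Young's inequality with conjugate exponents $p=2\beta_1/(\beta_1-1)$ and $q=2\beta_1/(\beta_1+1)$ (valid for $\beta_1>1$; the case $\beta_1=1$ is direct since $|q_i-q_j|^{\beta_1-1}=1$) to majorise every resulting mixed product by constant multiples of $|q_i-q_j|^{-2\beta_1}$ plus an additive constant. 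Choosing $a_7$ small enough then absorbs both the negative cross-term bound and the $R$-remainder into the diagonal, with all bounded residuals collected in $a_8$.
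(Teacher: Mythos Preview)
Your treatment of the $|\nabla U|$ bounds and the upper bound on the $\nabla G$-sum is fine and matches the paper. The gap is in the lower bound: your Young/polarization scheme for the off-diagonal cross-terms does not close for $N\ge 4$. After polarization and the triangle inequality $|q_j-q_l|\le |q_i-q_j|+|q_i-q_l|$, each cross-term is bounded below by $-(|q_i-q_j|\,|q_i-q_l|)^{-\beta_1}$, and your Young step with exponents $p=2\beta_1/(\beta_1-1)$, $q=2\beta_1/(\beta_1+1)$ (or simply AM-GM) gives only $-\tfrac{1}{2}(|q_i-q_j|^{-2\beta_1}+|q_i-q_l|^{-2\beta_1})$, with \emph{no} additive constant: both exponents land exactly on $-2\beta_1$. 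Summing over the $N(N-1)(N-2)$ distinct triples yields a lower bound of $(3-N)\sum_{i\ne j}|q_i-q_j|^{-2\beta_1}$ for $S:=\sum_i\big|\sum_{j\ne i}(q_i-q_j)/|q_i-q_j|^{\beta_1+1}\big|^2$, which is useless once $N\ge 4$. There is no freedom to ``take $a_7$ small'' here: before you rescale, you need $S\ge c\sum_{i\ne j}|q_i-q_j|^{-2\beta_1}-C$ with $c>0$, and your estimate produces a negative $c$. The configurations where Young and the triangle inequality are simultaneously tight (e.g.\ equally spaced collinear particles) show the loss is real, not an artifact of crude bounding.

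The paper avoids this entirely by invoking Lemma~\ref{L:s+1timess+1}, which gives directly
\[
\sum_{i=1}^N\Big|\sum_{j\ne i}\frac{q_i-q_j}{|q_i-q_j|^{\beta_1+1}}\Big|^2\;\ge\;\frac{4}{N(N-1)^2}\sum_{1\le i<j\le N}\frac{1}{|q_i-q_j|^{2\beta_1}}.
\]
This is a genuine positivity estimate (with an $N$-dependent constant) that cannot be recovered by term-by-term majorization of the cross-terms; it exploits the global structure of the sum of squares. With this lemma in hand, the paper writes $\nabla G(Q)=-a_4 Q/|Q|^{\beta_1+1}+R(Q)$ exactly as you do, controls $K_1=\sum_i|\sum_{j\ne i}R|^2$ and the mixed term $K_2$ by lower-order singularities $|q_i-q_j|^{-2\beta_2}$ and $|q_i-q_j|^{-\beta_1-\beta_2}$ (absorbable since $\beta_2<\beta_1$), and uses Lemma~\ref{L:s+1timess+1} for $K_3$. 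Your decomposition and handling of the $R$-remainder are correct; what you are missing is precisely this lemma for the leading term.
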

\begin{proof}
    The upper bound of $\nabla U$ is already given by~\eqref{cond:U:U'(x)=O(x^lambda)}. Recalling condition \eqref{cond:U:x.U'(x)>-x^(lambda+1)}, namely
\begin{align*}
    \innerbig{\nabla U(q)}{q}\ge a_2|q|^{\lambda+1}-a_3,
\end{align*}
it holds that
\begin{align*}
    a_2|q|^{\lambda}\le \begin{cases}
        \frac{\innerbig{\nabla U(q)}{q}+a_3}{|q|}\le |\nabla U(q)|+a_3,\quad &\text{if $ |q|>1$},\\
        a_2,\quad&\text{if $|q|\le 1$},
    \end{cases}
\end{align*}
which verifies~\eqref{EC:BdNablaU}, as claimed.

Turning to the interaction potential $G$, we have the following decomposition,
\begin{align*}
    \sum_{i=1}^N \Big| \sum_{j \ne i} \nabla G({q}_i - {q}_j) \Big|^2
    =& \sum_{i=1}^N \Big[ \sum_{j \ne i} \Big( \nabla G ({q}_i - {q}_j) + a_4 \frac{{q}_i - {q}_j}{|{q}_i - {q}_j|^{\beta_1+1}} \Big) - \sum_{j \ne i} a_4 \frac{{q}_i - {q}_j}{|{q}_i - {q}_j|^{\beta_1+1}} \Big]^2 \\
    =& \sum_{i=1}^N \Big[ \sum_{j \ne i} \Big( \nabla G ({q}_i - {q}_j) + a_4 \frac{{q}_i - {q}_j}{|{q}_i - {q}_j|^{\beta_1+1}} \Big) \Big]^2 \\
    &-2  \sum_{i=1}^N \inner{\sum_{j \ne i} \Big( \nabla G ({q}_i - {q}_j) + a_4 \frac{{q}_i - {q}_j}{|{q}_i - {q}_j|^{\beta_1+1}} \Big)}{\sum_{k \ne i} a_4 \frac{{q}_i - {q}_k}{|{q}_i - {q}_k|^{\beta_1+1}}} \\
    &+ a_4 ^2\sum_{i=1}^N \Big[ \sum_{j \ne i} \frac{{q}_i - {q}_j}{|{q}_i - {q}_j|^{\beta_1+1}} \Big]^2\\
    \eqqcolon&K_1+K_2+K_3,
\end{align*}
where
\begin{align*}
    K_1=& \sum_{i=1}^N \Big[ \sum_{j \ne i} \Big( \nabla G ({q}_i - {q}_j) + a_4 \frac{{q}_i - {q}_j}{|{q}_i - {q}_j|^{\beta_1+1}} \Big) \Big]^2 ,\\
    K_2=&-2  \sum_{i=1}^N \inner{\sum_{j \ne i} \Big( \nabla G ({q}_i - {q}_j) + a_4 \frac{{q}_i - {q}_j}{|{q}_i - {q}_j|^{\beta_1+1}} \Big)}{\sum_{k \ne i} a_4 \frac{{q}_i - {q}_k}{|{q}_i - {q}_k|^{\beta_1+1}}},\\
    K_3=&a_4 ^2\sum_{i=1}^N \Big[ \sum_{j \ne i} \frac{{q}_i - {q}_j}{|{q}_i - {q}_j|^{\beta_1+1}} \Big]^2.
\end{align*}
By Assumption \ref{A:G} (ii),
\begin{align*}
    |K_1|\le \sum_{i\neq j} \Big|\frac{a_5}{|q_i-q_j|^{\beta_2}}+a_6\Big|^2\le C\sum_{i=1}^N \frac{1}{|q_i-q_j|^{2\beta_2}}+C.
\end{align*}
% \begin{align*}
%     \frac{1}{|q_i-q_j|^{2\beta_2}}=\frac{\varepsilon_1_2^{\frac{\beta_2}{\beta_1}}}{|q_i-q_j|^{2\beta_2}}\frac{1}{\varepsilon_1_2^{\frac{\beta_2}{\beta_1}}}\le \frac{\beta_2}{\beta_1}\frac{\varepsilon_1_2}{|q_i-q_j|^{2\beta_1}}+C(\varepsilon_1_2)
% \end{align*}
% \begin{align*}
%     I_1\le C\sum_{i\ne j}\frac{\varepsilon_1_2}{|q_i-q_j|^{2\beta_1}}+C_1(\varepsilon_1)
% \end{align*}
Likewise,
\begin{align*}
    |K_2|\le& \sum_{i=1}^N\lr{2a_4a_5\sum_{j\neq i}\frac{1}{|q_i-q_j|^{\beta_1}}\sum_{k\neq i}\frac{1}{|q_i-q_j|^{\beta_2}}+2a_4a_6\sum_{k\neq i}\frac{1}{|q_i-q_j|^{\beta_1}}}.
\end{align*}
Concerning $K_3$, we apply Lemma~\ref{L:s+1timess+1} to obtain
\begin{align*}
    |K_3|\ge \frac{2a_4^2}{N(N-1)^2}\sum_{i\ne j}\frac{1}{|q_i-q_j|^{2\beta_1}}.
\end{align*}
Since $\beta_1>\beta_2$, we may subsume both $K_1$ and $K_2$ into $K_3$ to infer for some $a_7,a_8,a_9, a_{10}>0$ that
\begin{align*}
    \sum_{i=1}^N \Big| \sum_{j \ne i} \nabla G({q}_i - {q}_j) \Big|^2\le |K_1|+|K_2|+|K_3|\le a_9\sum_{1\le i<j\le N}\frac{1}{|q_i-q_j|^{2\beta_1}}+a_{10},
\end{align*}
and that
\begin{align*}
    \sum_{i=1}^N \Big| \sum_{j \ne i} \nabla G({q}_i - {q}_j) \Big|^2=K_3+K_1+K_2\ge K_3-|K_1|-|K_2|\ge a_7\sum_{i\ne j}\frac{1}{|q_i-q_j|^{2\beta_1}}-a_8.
\end{align*}
Altogether, we deduce \eqref{EC:BoundNablaG}. The proof is thus finished.
\end{proof}
 Turning to the diffusion matrix $D$, as we will see later, the inverse matrix $D^{-1}$ plays an important role in the small mass limit. In Lemma \ref{L:LipD_inv}, stated and proven next, we assert important properties of $D^{-1}$ which are implied by Assumption~\ref{Assumption D} (D). 
 \begin{lemma}\label{L:LipD_inv}
    Under Assumption~\ref{Assumption D} (D), the followings hold:

    \textup{(i)} The matrix $D^{-1}$ is Lipschitz, uniformly positive definite and bounded. 

    \textup{(ii)} The vector $\div D^{-1}(x)\in \R^d$ is Lipschitz where
    \begin{align*}
        [\div(D^{-1}(x))]_i=\sum_{j=1}^d\frac{\partial  D^{-1}_{ij}}{\partial x_j}(x).
    \end{align*}

    \textup{(iii)} Furthermore, if $\lambda=1$, then $\div D^{-1}(x)$ is also sublinear.
\end{lemma}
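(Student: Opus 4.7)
The plan is to derive every claim from the single algebraic identity
\[
D^{-1}(x)-D^{-1}(y)=D^{-1}(x)\bigl(D(y)-D(x)\bigr)D^{-1}(y),
\]
and its differential counterpart obtained by differentiating $D(x)D^{-1}(x)=I$.

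\textbf{Step 1 (part (i)).} The uniform positive definiteness \eqref{E:UniformElliptic} immediately gives $\bar{\gamma}^{-1}I\le D^{-1}(x)\le \underline{\gamma}^{-1}I$ for all $x\in\R^d$, which yields the boundedness and uniform positive definiteness of $D^{-1}$. Plugging these bounds into the displayed identity and using that $D$ is Lipschitz with some constant $L_D$ (from Assumption \ref{Assumption D}(i)), I obtain
\[
\|D^{-1}(x)-D^{-1}(y)\|\le \underline{\gamma}^{-2}L_D\,|x-y|,
\]
so $D^{-1}$ is Lipschitz.

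\textbf{Step 2 (part (ii)).} Differentiating $D(x)D^{-1}(x)=I$ coordinatewise gives the standard formula
\[
\partial_{x_j}D^{-1}(x)=-D^{-1}(x)\,\bigl(\partial_{x_j}D(x)\bigr)\,D^{-1}(x),
\]
so that $[\div D^{-1}(x)]_i=-\sum_j \bigl[D^{-1}(x)(\partial_{x_j}D(x))D^{-1}(x)\bigr]_{ij}$. By Step 1, $D^{-1}$ is bounded and Lipschitz. Since $D$ is Lipschitz and belongs to $C^{1,1}$, its derivatives $\partial_{x_j}D$ are bounded (by the Lipschitz constant of $D$) and are themselves Lipschitz (by the $C^{1,1}$ assumption). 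Since a product of bounded Lipschitz matrix-valued functions is bounded Lipschitz, the right-hand side above is a Lipschitz function of $x$, giving (ii).

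\textbf{Step 3 (part (iii)).} Using again the formula from Step 2, together with the boundedness $\|D^{-1}(x)\|\le\underline{\gamma}^{-1}$, I estimate
\[
|\div D^{-1}(x)|\le \underline{\gamma}^{-2}\sum_{j=1}^{d}\|\partial_{x_j}D(x)\|.
\]
Under the additional hypothesis of Assumption~\ref{Assumption D}(ii) applying when $\lambda=1$, each $\|\partial_{x_j}D(x)\|=o(|x|)$ as $|x|\to\infty$, so the sum is $o(|x|)$ and the conclusion follows. I do not anticipate any serious obstacle here; the whole lemma is a direct consequence of the matrix inverse formula combined with the assumed regularity of $D$.
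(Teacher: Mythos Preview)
Your proposal is correct and follows essentially the same approach as the paper: the same matrix identity for $D^{-1}(x)-D^{-1}(y)$, the same differentiation formula $\partial_{x_j}D^{-1}=-D^{-1}(\partial_{x_j}D)D^{-1}$, and the same use of the $C^{1,1}$ hypothesis and the sublinearity assumption. The only cosmetic difference is that the paper writes out explicitly the three-term telescoping decomposition of $D^{-1}(x)(\partial_{x_j}D(x))D^{-1}(x)-D^{-1}(y)(\partial_{x_j}D(y))D^{-1}(y)$, whereas you invoke the general fact that a product of bounded Lipschitz matrix-valued functions is Lipschitz; these are the same argument.
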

\begin{proof}
    \textup{(i)} It is not difficult to see that $D^{-1}$ is uniformly positive definite and bounded thanks to the fact that $0<\overline{\gamma}^{-1}\le \lambda_d^{-1}(x)\le \dots \le \lambda_1^{-1}(x)\le \underline{\gamma}^{-1}<\infty$.
    By the Lipschitz condition on $D$, we have
    \begin{align}\label{E:LipD_inv}
        \notag\|D^{-1}(x)-D^{-1}(y)\|\le& \|D^{-1}(y)\|\|D(x)-D(y)\|\|D^{-1}(x)\|\\
        \le& \frac{L_D}{\underline{\gamma}^{-2}}|x-y|\eqqcolon L_{D^{-1}}|x-y|,
    \end{align}
    where $L_D$ is the Lipschitz constant of $D$. 
    
    \textup{(ii)} Concerning the Lipschitz continuity of $\div D^{-1}$, we {recall} the matrix identity
    \begin{align*}
        \frac{\partial D^{-1}}{\partial x_j}=-D^{-1}\frac{\partial D}{\partial x_j} D^{-1},
    \end{align*}
    whence
    \begin{align}\label{E_:DivD-1}
        [\div D^{-1}(x)]_i=-\sum_j\lr{D^{-1}\frac{\partial D}{\partial x_j}D^{-1}}_{ij}.
    \end{align}
    If $\lambda=1$, $\frac{\partial D}{\partial x_j}$ is sublinear by Assumption~\ref{Assumption D} (D) (ii) and $D^{-1}$ is bounded, we find that $\div D^{-1}$ is sublinear. Moreover,
    \begin{align*}
        \div  D^{-1}(x) - \div  D^{-1}(y) = -\sum_{j=1}^{d} \Big[ D^{-1}(x) \frac{\partial D}{\partial x_j}(x) D^{-1}(x) - D^{-1}(y) \frac{\partial D}{\partial x_j}(y) D^{-1}(y) \Big] e_j.
    \end{align*}
    Consider the matrix norm inside the summation, we have the following decomposition,
    \begin{align*}
        &\Norm{D^{-1}(x) \frac{\partial D}{\partial x_j}(x) D^{-1}(x) - D^{-1}(y) \frac{\partial D}{\partial x_j}(y) D^{-1}(y)}\\
        &\le  \Norm{D^{-1}(x) \frac{\partial D}{\partial x_j}(x) D^{-1}(x)-D^{-1}(y) \frac{\partial D}{\partial x_j}(x) D^{-1}(x)}\\
        &\quad +\Norm{D^{-1}(y) \frac{\partial D}{\partial x_j}(x) D^{-1}(x)-D^{-1}(y) \frac{\partial D}{\partial x_j}(y) D^{-1}(x)}\\
        &\quad +\Norm{D^{-1}(y) \frac{\partial D}{\partial x_j}(y) D^{-1}(x)-D^{-1}(y) \frac{\partial D}{\partial x_j}(y) D^{-1}(y)}\\
        & \eqqcolon D_1+D_2+D_3.
    \end{align*}
    Concerning $D_1$ and $D_3$, we employ the Lipschitz continuity of $D^{-1}$ from~\eqref{E:LipD_inv} together with the boundedness of $D^{-1}$ to infer
    \begin{align*}
        D_1+D_3\le 2L_{D^{-1}}L_D\underline{\gamma}^{-1}|x-y|.
    \end{align*}
    Turning to $D_2$, since $D\in C^{1,1}$
    \begin{align*}
        D_2\le \underline{\gamma}^{-2}L_{\nabla D}|x-y|.
    \end{align*}
    Collecting the above estimates, we obtain
    \begin{align*}
        \big|\div D^{-1}(x) - \div D^{-1}(y) \big|\le d\lr{\underline{\gamma}^{-2}L_{\nabla D}+2L_{D^{-1}}L_D\underline{\gamma}^{-1}}|x-y|,
    \end{align*}
    thereby finishing the proof. 
\end{proof}

Having collected useful estimates on the nonlinearities, we turn to the limiting system \eqref{EC:Limiting}. In what follows, we state and prove exponential moment bounds on \eqref{EC:Limiting}. The results of which will appear later in the proof of Theorem \ref{thm: smallLE}. 
\begin{lemma}\label{L_:Step2Goal}
Under the same hypothesis of Theorem \ref {thm: smallLE}, let $\q(t)$ be the solution of \eqref{EC:Limiting} with initial condition $\q_0\in \mathcal{D}$. Then, for $\varepsilon_1$ and $\kappa$ sufficiently small and for all $T>0$, there exists a positive constant $C=C(\q_0,\varepsilon_1,\kappa)$ such that

\textup{(i)} if $\beta_1>1$,
    \begin{align} \label{ineq:EC:Limiting:beta_1>1}
        \mathbb{E}\Big[\exp\Big\{\sup_{t\in[0,T]}\Big(\kappa\sum_{i=1}^N|U(q_i(t))|+\kappa\varepsilon_1\sum_{1\leq i<j\leq N}G(q_i(t)-q_j(t)\Big)\Big\}\Big]\leq C.
    \end{align}

\textup{(ii)} if $\beta_1=1$,
    \begin{align} \label{ineq:EC:Limiting:beta_1=1}
        \mathbb{E}\Big[\exp\Big\{\sup_{t\in[0,T]}\Big(\kappa\sum_{i=1}^N|U(q_i(t))|-\kappa\varepsilon_1\sum_{1\leq i<j\leq N}\log|q_i(t)-q_j(t)|\Big)\Big\}\Big]\leq C.
    \end{align}

     In the above, $\beta_1$ is the constant as in Assumption \ref{A:G}.

\end{lemma}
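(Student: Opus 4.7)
The plan is to construct a Lyapunov-type functional $W$ whose exponential moment along the overdamped dynamics \eqref{EC:Limiting} can be controlled via It\^o + Gr\"onwall, and then to upgrade the pointwise estimate to a supremum estimate via Doob's $L^p$ submartingale inequality. As the Lyapunov functional, I take
\[
W(\q)=\sum_{i=1}^N U(q_i)+\varepsilon_1\,\mathcal{G}(\q),\qquad \mathcal{G}(\q)=\begin{cases}\displaystyle\sum_{1\le i<j\le N}G(q_i-q_j),&\beta_1>1,\\[4pt]\displaystyle-\sum_{1\le i<j\le N}\log|q_i-q_j|,&\beta_1=1.\end{cases}
\]
Since $U\ge 1$ we have $|U(q_i)|=U(q_i)$, so the exponent appearing under the supremum in the statement is exactly $\kappa W(\q(t))$, and it suffices to prove $\E[\sup_{t\in[0,T]}\exp(\kappa W(\q(t)))]\le C(\q_0,\varepsilon_1,\kappa,T)$.

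Let $\mathcal{L}$ denote the generator of \eqref{EC:Limiting} and write $\tilde W=\sum_i U(q_i)+\sum_{i<j}G(q_i-q_j)$, so that the drift in \eqref{EC:Limiting} reads $b_i=-D^{-1}(q_i)\nabla_{q_i}\tilde W-\div D^{-1}(q_i)$ (modulo the sign convention coming from evenness of $G$). It\^o's formula applied to $e^{\kappa W}$ gives
\[
\mathcal{L} e^{\kappa W}=\kappa e^{\kappa W}\Big(-\innerbig{\nabla \tilde W}{D^{-1}\nabla W}-\innerbig{\div D^{-1}}{\nabla W}+\sum_{i=1}^N\tr\bigl(D^{-1}(q_i)\nabla^2_{q_i}W\bigr)+\kappa\,|D^{-1/2}\nabla W|^2\Big),
\]
and the central task is to show that for $\kappa,\varepsilon_1$ sufficiently small, the bracket is bounded above by a constant, whence $\mathcal{L} e^{\kappa W}\le C_1 e^{\kappa W}$. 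Expanding $-\innerbig{\nabla\tilde W}{D^{-1}\nabla W}$ produces the coercive piece $-|D^{-1/2}\nabla U|^2-\varepsilon_1\sum_i|D^{-1/2}(q_i)\sum_{j\ne i}\nabla G(q_i-q_j)|^2$ plus cross terms of the form $\nabla U\cdot D^{-1}\sum_{j\ne i}\nabla G$. Lemma~\ref{lem:grad.U_grad.G} gives the quantitative comparisons $|\nabla U|^2\gtrsim|q|^{2\lambda}$ and $\sum_i|\sum_{j\ne i}\nabla G(q_i-q_j)|^2\gtrsim\sum_{i\ne j}|q_i-q_j|^{-2\beta_1}$, while Lemma~\ref{L:LipD_inv} provides the boundedness (and sublinearity when $\lambda=1$) of $\div D^{-1}$. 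With these, the cross terms, the drift correction, and the Hessian-trace piece away from collisions are absorbed into the coercive pieces thanks to $\lambda\ge 1$, which guarantees that $|\nabla U|^2$ dominates both $|\nabla^2 U|\sim|q|^{\lambda-1}$ and any linear-in-$|q|$ contribution.

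The hard part will be the near-collision balance. Near a pair collision $q_i\to q_j$, the coercive term is of order $\varepsilon_1/|q_i-q_j|^{2\beta_1}$, while the singular Hessian-trace $\varepsilon_1\tr\bigl(D^{-1}\nabla^2 G\bigr)$ (or its logarithmic analogue) is of order $\varepsilon_1/|q_i-q_j|^{\beta_1+1}$. For $\beta_1>1$ the inequality $2\beta_1>\beta_1+1$ makes the coercive term strictly dominant, and the It\^o correction $\kappa|D^{-1/2}\nabla W|^2\lesssim\kappa\varepsilon_1^2/|q_i-q_j|^{2\beta_1}$ is absorbed by choosing $\kappa\varepsilon_1$ small. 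For $\beta_1=1$ the two singular contributions are of the same order $1/|q_i-q_j|^2$, and a careful evaluation of their leading coefficients---using $\nabla G(Q)\approx -a_4 Q/|Q|^2$ near the origin, the identity $\nabla^2\log|Q|=I/|Q|^2-2Q\otimes Q/|Q|^4$, and the spectral bounds $\underline{\gamma}|\xi|^2\le\innerbig{D(q)\xi}{\xi}\le\overline{\gamma}|\xi|^2$---yields a bracket whose singular coefficient is strictly negative precisely when $a_4+d-2\overline{\gamma}\underline{\gamma}^{-1}>0$, that is, Assumption~\ref{Assumption D}(iii), condition~\eqref{E:ConditionA4}. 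A further shrinking of $\kappa\varepsilon_1$ absorbs the residual $\kappa|D^{-1/2}\nabla W|^2$ term.

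With $\mathcal{L} e^{\kappa W}\le C_1 e^{\kappa W}$ established, I apply the standard localization $\tau_n=\inf\{t:W(\q(t))\ge n\}$ (with $\tau_n\uparrow\infty$ by Theorem~\ref{T:ExistandUnique}), use It\^o on $e^{\kappa W(\q(t\wedge\tau_n))}$, take expectations, and invoke Gr\"onwall to obtain the pointwise bound $\sup_{t\in[0,T]}\E[e^{\kappa W(\q(t))}]\le e^{\kappa W(\q_0)+C_1T}$. To upgrade this to the supremum bound claimed in the lemma, I observe that $e^{\kappa W(\q(t))}e^{C_2 t}$ is a nonnegative local submartingale for any $C_2$ chosen large enough that $\mathcal{L} e^{\kappa W}+C_2 e^{\kappa W}\ge 0$, and apply Doob's $L^p$ maximal inequality for some $p>1$; choosing $\kappa$ small enough initially so that $p\kappa$ still satisfies the Lyapunov hypothesis, the right-hand side $\E[e^{p\kappa W(\q(T))}]^{1/p}$ is then controlled by the Gr\"onwall estimate run at the level $p\kappa$, delivering the finite bound $C(\q_0,\varepsilon_1,\kappa,T)$ claimed in the lemma.
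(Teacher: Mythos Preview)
Your drift analysis is essentially the same computation the paper carries out (their $I_k,J_k$ for $\beta_1>1$ and $L_k$ for $\beta_1=1$), and your identification of the near-collision balance---that $2\beta_1>\beta_1+1$ wins for $\beta_1>1$, while for $\beta_1=1$ the sign condition \eqref{E:ConditionA4} is exactly what is needed---is correct and well put. Up to the point where you obtain $\mathcal{L} e^{\kappa W}\le C_1 e^{\kappa W}$, your argument is sound and equivalent to theirs.

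The gap is in your upgrade step. You claim that $e^{\kappa W(\q(t))}e^{C_2 t}$ is a local \emph{submartingale} for $C_2$ large, i.e.\ that $\mathcal{L} e^{\kappa W}+C_2 e^{\kappa W}\ge 0$. This is false: the bracket you computed is bounded \emph{above} by a constant but is unbounded below, since the coercive piece $-|D^{-1/2}\nabla U|^2-\varepsilon_1|D^{-1/2}\sum_{j\ne i}\nabla G|^2$ tends to $-\infty$ near collisions and for large $|\q|$, and for small $\kappa$ the It\^o correction $\kappa|D^{-1/2}\nabla W|^2$ cannot compensate (it carries an extra $\kappa\varepsilon_1$ on the singular part). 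So no finite $C_2$ makes the process a submartingale, and Doob's $L^p$ submartingale inequality is unavailable.

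What you actually have is that $e^{-C_1 t}e^{\kappa W(\q(t))}$ is a nonnegative local \emph{super}martingale. The paper avoids this issue by never exponentiating: they apply It\^o to $\Gamma_1=W$ itself, show $\kappa\,\ud\Gamma_1\le C\kappa\,\ud t-\tfrac{c}{\kappa}\,\ud\langle\kappa M_1\rangle+\ud(\kappa M_1)$, and then invoke the exponential martingale inequality $\P\bigl(\sup_t[\kappa M_1-\tfrac{c}{\kappa}\langle\kappa M_1\rangle]>r\bigr)\le e^{-2cr/\kappa}$, which \emph{directly} controls the running supremum. Tail integration then gives the exponential moment for $\kappa$ small. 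If you prefer to keep your exponentiated route, the correct finishing move is the supermartingale maximal inequality $\lambda\,\P(\sup_{t\le T}Y_t\ge\lambda)\le\E[Y_0]$ applied to $Y_t=e^{-C_1 t}e^{\kappa W(\q(t))}$; this yields a $1/\lambda$ tail for $\sup_t e^{\kappa W}$, and running the whole argument at level $2\kappa$ (or any $\kappa''>\kappa$) then gives $\E[\sup_t e^{\kappa W}]<\infty$ by tail integration. Either fix closes the gap.
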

\begin{proof} (i) With regard to the case $\beta_1>1$, define
\begin{align}\label{EC:DefGamma1}
    \Gamma_1(t)=\sum_{i=1}^N U(q_i)+\varepsilon_1\sum_{1\le i<j\le N}G(q_i-q_j)
\end{align}
By It\^{o}'s formula, we have that
    \begin{align}
        \ud \lr{\sum_{i} U(q_i)}=&\sum_{i=1}^N \innerbig{\nabla U(q_i)}{\ud q_i}+\sum_{i=1}^N\tr\lr{\nabla^2 U(q_i) D^{-1}(q_i)}\ud t    \notag \\
    =&-\sum_{i=1}^N \inner{\nabla U(q_i)}{D^{-1}(q_i)\nabla U(q_i)}\ud t-\sum_{i}\inner{\nabla U(q_i)}{D^{-1}(q_i)\sum_{j\ne i}\nabla G(q_i-q_j)}\ud t \notag \\
    &-\sum_{i=1}^N \inner{\div_{q_i} D^{-1}(q_i)}{\nabla U(q_i)}\ud t+\sum_{i=1}^N \inner{\nabla U(q_i)}{\sqrt{2D^{-1}(q_i)}\ud W_i} \notag \\
    &+\sum_{i=1}^N\tr\lr{\nabla^2 U(q_i) D^{-1}(q_i)}\ud t\eqqcolon \sum_{k=1}^5 I_k,\label{EC:dU}
    \end{align}
and that
    \begin{align}
        &\ud \lr{\varepsilon_1\sum_{1\le i<j\le N}G(q_i-q_j)} \notag \\
        &=\varepsilon_1\sum_{i=1}^N \inner{\sum_{j\neq i }\nabla G(q_i-q_j)}{\ud q_i}+\varepsilon_1\sum_{i\neq j}\tr \lr{\nabla^2 G(q_i-q_j)\lrbig{D^{-1}(q_i)}}\ud t \notag   \\
    &=\varepsilon_1\sum_{i=1}^N\inner{\sum_{j\neq i}\nabla G(q_i-q_j)}{-D^{-1}(q_i)\nabla U(q_i)}\ud t \notag   \\
    &\quad -\varepsilon_1\sum_{i=1}^N\inner{\sum_{j\neq i}\nabla G(q_i-q_j)}{D^{-1}(q_i)\sum_{k\neq i}\nabla G(q_i-q_k)}\ud t \notag   \\
    &\quad -\varepsilon_1\sum_{i=1}^N\inner{\sum_{j\neq i}\nabla G(q_i-q_j)}{\div_{q_i}D^{-1}(q_i)}\ud t  +\varepsilon_1\sum_{i=1}^N\inner{\sum_{j\neq i}\nabla G(q_i-q_j)}{\sqrt{2D^{-1}(q_i)}\ud W_i} \notag   \\
    &\quad +\varepsilon_1\sum_{i\neq j}\tr \lr{\nabla^2 G(q_i-q_j)\lr{D^{-1}(q_i)}}\ud t\eqqcolon \sum_{k=1}^5 J_k. \label{EC:dG}
    \end{align}
We see that $I_1$ is negative and produces dissipation. That is,
\begin{align*}
    I_1=-\sum_{i=1}^N \inner{\nabla U(q_i)}{D^{-1}(q_i)\nabla U(q_i)}\le -\overline{\gamma}^{-1}\sum_{i=1}^N |\nabla U(q_i)|^2.
\end{align*}
Since $G$ is even by assumption, $\nabla G$ is odd. Therefore, we can recast $I_2$ as follows.
\begin{align}\label{E_:I_2}
    \notag I_2=&\sum_{1\le i<j\le N}\inner{D^{-1}(q_i)\nabla U(q_i)-D^{-1}(q_j)\nabla U(q_j)}{\nabla G(q_i-q_j)}\\
    \notag =&\sum_{1\le i<j\le N}\inner{D^{-1}(q_i)\nabla U(q_i)-D^{-1}(q_j)\nabla U(q_i)}{\nabla G(q_i-q_j)}\\
    &\qquad+\sum_{1\le i<j\le N}\inner{D^{-1}(q_j)\nabla U(q_i)-D^{-1}(q_j)\nabla U(q_j)}{\nabla G(q_i-q_j)} \notag \\
    \eqqcolon& I_{2,1}+I_{2,2}.
\end{align}
Recalling from Lemma~\eqref{L:LipD_inv} that $D^{-1}$ is both bounded and Lipschitz, by Cauchy-Schwarz inequality, it holds that
\begin{align*}
    \big|D^{-1}(q_i)\nabla U(q_i)-D^{-1}(q_j)\nabla U(q_i)\big|\le C\lrbig{|q_i-q_j|\wedge1}|\nabla U(q_i)|.
\end{align*}
So,
    \begin{align*}
        I_{2,1}=&\sum_{1\le i<j\le N}\inner{D^{-1}(q_i)\nabla U(q_i)-D^{-1}(q_j)\nabla U(q_i)}{\nabla G(q_i-q_j)}\\
    \le& \sum_{1\le i<j\le N}C\lrbig{|q_i-q_j|\wedge1}|\nabla U(q_i)|\big|\nabla G(q_i-q_j)\big|\\
    \le &C\sum_{1\le i<j\le N}|\nabla U(q_i)|\lr{1+\frac{1}{|q_i-q_j|^{\beta_1-1}}}\\
    \le &C\sum_{i=1}^N|\nabla U(q_i)|+C\sum_{1\le i<j\le N}\frac{|\nabla U(q_i)|}{|q_i-q_j|^{\beta_1-1}}\\
    \le &C\sum_{i=1}^N\Big(|\nabla U(q_i)|+|\nabla U(q_i)|^{\frac{2\beta_1-1}{\beta_1}}\Big)+C\sum_{1\le i<j\le N}\frac{1}{|q_i-q_j|^{2\beta_1-1}}.
    \end{align*}
We then employ~\eqref{E:G_2} to infer
\begin{equation}\label{E_:I_21}
    I_{2,1} =o\Big(\sum_{i=1}^N|\nabla U(q_i)|^2+\sum_{i}\Big|\sum_{j\ne i}\nabla G(q_i-q_j)\Big|^2\Big).
\end{equation}
Turning to $I_{2,2}$, by the boundedness of $D^{-1}$, together with conditions~\eqref{E:U_3} and~\eqref{E:G_2}, we estimate
\begin{align*}
    I_{2,2}=&\sum_{1\le i<j\le N}\inner{D^{-1}(q_j)\lrbig{\nabla U(q_i)-\nabla U(q_j)}}{\nabla G(q_i-q_j)}\\
    \le& \sum_{1\le i<j\le N}\underline{\gamma}^{-1}\big|\nabla^2 U(q_i)+\nabla^2 U(q_j)\big||q_i-q_j|\,\big|\nabla G(q_i-q_j) \big|\\
    \le &\underline{\gamma}^{-1}\sum_{1\le i<j\le N}\lrbig{1+|q_i|^{\lambda-1}+|q_j|^{\lambda-1}}\lr{|q_i-q_j|+\frac{1}{|q_i-q_j|^{\beta_1-1}}}.
\end{align*}
On the one hand, if $\lambda=1$,
\begin{equation}\label{E_:I22_1}
    \begin{aligned}
        I_{2,2}\le& C\sum_{1\le i<j\le N}\lr{|q_i-q_j|+\frac{1}{|q_i-q_j|^{\beta_1-1}}}\\
    \le& C\sum_{i=1}^N|q_i|+C\sum_{1\le i<j\le N}\frac{1}{|q_i-q_j|^{\beta_1-1}}\\
    =&o\Big(\sum_{i=1}^N|\nabla U(q_i)|^2+\sum_{i}\Big|\sum_{j\ne i}\nabla G(q_i-q_j)\Big|^2\Big).
    \end{aligned}
\end{equation}
On the other hand, when $\lambda>1$,
\begin{equation}\label{E_:I22_2}
    \begin{aligned}
        I_{2,2}\le& C\sum_{1\le i<j\le N}\lr{|q_i|+|q_j|+|q_i|^\lambda+|q_j|^\lambda+|q_i|^{\lambda-1}|q_j|+|q_j|^{\lambda-1}|q_i|}\\
    &+C\sum_{1\le i<j\le N}\frac{1+|q_i|+|q_j|}{|q_i-q_j|^{\beta_1-1}}\\
    \le& C\sum_{i}\lr{|q_i|^\lambda+|q_i|^{2\lambda-2}+|q_i|^2}+C\sum_{1\le i<j\le N}\frac{1}{|q_i-q_j|^{2\beta_1-2}}+C\\
    =&o\Big(\sum_{i=1}^N|\nabla U(q_i)|^2+\sum_{i}\Big|\sum_{j\ne i}\nabla G(q_i-q_j)\Big|^2\Big).
    \end{aligned}
\end{equation}
We now collect estimates~\eqref{E_:I_2}, \eqref{E_:I_21}, \eqref{E_:I22_1} and~\eqref{E_:I22_2} to deduce that
\begin{equation}\label{E_:oI_2}
    I_2=o\Big(\sum_{i=1}^N|\nabla U(q_i)|^2+\sum_{i}\Big|\sum_{j\ne i}\nabla G(q_i-q_j)\Big|^2\Big).
\end{equation}
Since $D^{-1}$ is self-adjoint, $J_1$ shares the same upper bound with $I_2$ when $\varepsilon_1\le 1$.

\begin{equation}\label{E_:oJ_1}
    J_1=\varepsilon_1 I_2=o\Big(\sum_{i=1}^N|\nabla U(q_i)|^2+\sum_{i}\Big|\sum_{j\ne i}\nabla G(q_i-q_j)\Big|^2\Big).
\end{equation}
Concerning $J_2$, we recall from Lemma \ref{L:LipD_inv} (i) that $D^{-1}$ is uniformly positive definite. Denoting
\begin{align*}
    v_i=\sum_{j\neq i}\nabla G(q_i-q_j)=\sum_{k\neq i}\nabla G(q_i-q_k),
\end{align*}
 we obtain
\begin{align*}
    J_2=-\varepsilon_1\sum_{i=1}^N\inner{v_i}{D^{-1}(q_i)v_i}\le -\varepsilon_1\sum_{i=1}^N \overline{\gamma}^{-1}|v_i|^2
    = -\varepsilon_1\overline{\gamma}^{-1}\sum_{i=1}^N |\sum_{j\neq i}\nabla G(q_i-q_j)|^2.
\end{align*}
With regards to $I_3$ and $J_3$, we recall Lemma~\ref{L:LipD_inv} once again. On the one hand, when $\lambda>1$, we have 
\begin{align*}
    I_3=-\sum_{i=1}^N \inner{\div_{q_i} D^{-1}(q_i)}{\nabla U(q_i)}\le& {L_{D^{-1}}\sum_{i=1}^N (|q_i|+C)|\nabla U(q_i)|}=o\Big(\sum_{i=1}^N|\nabla U(q_i)|^2\Big).
\end{align*}
where we applied~\eqref{EC:BdNablaU} in the last implication.
On the other hand, when $\lambda=1$, $\nabla U(q)$ dominates $|q|$, cf. \eqref{EC:BdNablaU}, whereas $\div D^{-1}$ is a sublinear function, cf Lemma \ref{L:LipD_inv} (ii). It follows that
\begin{align*}
    I_3\le C\sum_{i=1}^N|\div_{q_i}D^{-1}(q_i)||\nabla U(q_i)|=o\Big(\sum_{i=1}^N|\nabla U(q_i)|^2\Big).
\end{align*}
Likewise,
\begin{align*}
    J_3=&-\sum_{i=1}^N\Big\la \varepsilon_1^{3/4}\sum_{j\neq i}\nabla G(q_i-q_j) , \varepsilon_1^{1/4}\div_{q_i}D^{-1}(q_i) \Big\ra \\
    &\le C\varepsilon_1\Big( \varepsilon_1^{1/2}\sum_{i}\Big|\sum_{j\ne i}\nabla G(q_i-q_j)\Big|^2+ o\Big(\sum_{i=1}^N|\nabla U(q_i)^2|\Big) \Big)+C.
\end{align*}
Concerning the trace terms $I_5$ and $J_5$, note that the eigenvalues of $D^{-1}$ is bounded by $\underline{\gamma}^{-1}$. As a consequence, we infer that
\begin{align*}
    I_5=\sum_{i=1}^N\tr\lr{\nabla^2 U(q_i) D^{-1}\lr{q_i}}\le da_1\underline{\gamma}^{-1}\Big( N +\sum_{i=1}^N |q_i|^{\lambda-1} \Big),
\end{align*}
and that
\begin{align*}
    J_5=\varepsilon_1\sum_{i\neq j}\tr \lr{\nabla^2 G(q_i-q_j)\lr{D^{-1}(q_i)}}\le& a_1\varepsilon_1 d\underline{\gamma}^{-1}\Big( N^2+\sum_{i\neq j}\frac{1}{|q_i-q_j|^{\beta_1+1}} \Big).
\end{align*}
Recall that $\Gamma_1(t)$ is defined by~\eqref{EC:DefGamma1}. Combining the estimates of $I_k$, $J_k$, $k=1,2,3,5$, from~\eqref{EC:dU} and~\eqref{EC:dG}, we get
\begin{equation}\label{EC:dU+dG}
    \begin{aligned}
         \ud \Gamma_1\le&  -\overline{\gamma}^{-1}\sum_{i=1}^N |\nabla U(q_i)|^2\ud t+\underline{\gamma}\varepsilon_1\sum^N_{i=1}\Big|\nabla U(q_i)\Big|^2\ud t-\varepsilon_1\overline{\gamma}^{-1}\sum_{i=1}^N \Big|\sum_{j\neq i}\nabla G(q_i-q_j)\Big|^2\ud t\\
        &+C\varepsilon_1^{3/2}\sum_{i}\Big|\sum_{j\ne i}\nabla G(q_i-q_j)\Big|^2\ud t+o\Big(\sum_{i=1}^N|\nabla U(q_i)|^2\Big)  \ud t+da_1\underline{\gamma}^{-1}\sum |q_i|^{\lambda-1}\ud t \\
        &+\varepsilon_1 a_1 d\underline{\gamma}^{-1}\sum_{i\ne j}\frac{1}{|q_i-q_j|^{\beta_1+1}}\ud t+C \ud t+\ud M_1(t),
    \end{aligned}
\end{equation}
and
\begin{equation}\label{EC:Martingale}
    \ud M_1(t)=\sum_{i=1}^N\Big\la \nabla U(q_i)+\varepsilon_1\sum_{j\neq i}\nabla G(q_i-q_j),\sqrt{2D^{-1}(q_i)}\ud W_i\Big\ra.
\end{equation}
Letting $\kappa>0$ be given and be chosen later, in accordance with~\eqref{EC:BdNablaU} and~\eqref{EC:BoundNablaG} together with~\eqref{E_:oI_2} and~\eqref{E_:oJ_1}, by taking $\varepsilon_1$ sufficiently small, we can simplify~\eqref{EC:dU+dG} as
    \begin{align*}
        \kappa\ud \Gamma_1(t)\le&  -c\kappa\sum_{i=1}^N |\nabla U(q_i)|^2\ud t-c\kappa\sum_{i=1}^N |\sum_{j\neq i}\nabla G(q_i-q_j)|^2\ud t\\
        &+C\kappa \ud t+\kappa\ud M_1(t),
    \end{align*}
Furthermore, the variation process associated with the semi-Martingale term given by~\eqref{EC:Martingale} can be estimated as follows.
    \begin{align*}
        \ud \langle\kappa M_1(t)\rangle=&2\kappa^2\sum_{i}\Big\la \Big(\nabla U(q_i)+\varepsilon_1\sum_{j\ne i}\nabla G(q_i-q_j)\Big)  ,D^{-1}(q_i)\Big( \nabla U(q_i)+\varepsilon_1\sum_{j\ne i}\nabla G(q_i-q_j) \Big) \Big\ra \ud t\\
    \le& 2\kappa^2\underline{\gamma}^{-1}\sum_{i=1}^N\Big|\nabla U(q_i)+\sum_{j\ne i}\nabla G(q_i-q_j)\Big|^2\ud t\\
    \le& \kappa^2 c\Big( \sum_{i=1}^N|\nabla U(q_i)|^2+\sum_{j\ne i}|\nabla G(q_i-q_j)|^2 \Big) \ud t.
    \end{align*}
It follows that
\begin{align*}
    \kappa\ud \Gamma_1(t)\le C\kappa\ud t-\frac{c}{\kappa}\ud\langle \kappa M_1(t)\rangle +\ud  (\kappa M_1(t)).
\end{align*}
Integrating both sides to get
\begin{equation}\label{EC:Gamma_1}
    \kappa M_1(t)-\frac{c}{\kappa}\langle \kappa     M_1(t)\rangle\ge \kappa\Gamma_1(t)-\kappa\Gamma_1(0)-C\kappa  t.
\end{equation}
We apply exponential Martingale inequality to find
\begin{equation*}
    \mathbb{P}\Big( \sup_{t \geq 0} \Big[ \kappa M_1(t) - \frac{c}{\kappa} \langle \kappa M_1 \rangle(t) \Big] > r \Big) \leq e^{-\frac{2c}{\kappa} r}, \quad r \geq 0.
\end{equation*}
In view of~\eqref{EC:Gamma_1}, we have
\begin{equation*}
    \mathbb{P}\Big( \sup_{t \in [0, T]} \Big[ \kappa \Gamma_1(t) - \kappa \Gamma_1(0) - \kappa Ct \Big] > r \Big) \leq e^{-\frac{2c}{\kappa} r}.
\end{equation*}
By tail integration and choosing $\kappa$ small enough
\begin{align*}
    \E \exp\Big\{\sup_{t\in[0,T]}\kappa \Gamma_1(t)\Big\}\le e^{\kappa\Gamma_1(0)+\kappa CT  }+\int_0^\infty e^{r+\kappa\Gamma_1(0)+\kappa C T}e^{-\frac{2c}{\kappa}r}\ud r\le C.
\end{align*}
This establishes \eqref{ineq:EC:Limiting:beta_1>1}, thus completing part (i).

(ii) Turning to the case $\beta_1=1$, we introduce the function
\begin{align*}
    \Gamma_2=\sum_{i=1}^N U(q_i)-\varepsilon_1 \bar{\gamma}\sum_{1\le i<j\le N}\log |q_i-q_j|.
\end{align*}
We apply It\^{o}'s formula to the log term on the above right hand side and obtain
\begin{align*}
    &\ud \Big(-\varepsilon_1 \overline{\gamma}\sum_{1\le i<j\le N}\log |q_i-q_j|\Big)\\
    &=-\varepsilon_1\overline{\gamma}\sum_{i}\Big\langle\sum_{j\neq i}\frac{q_i-q_j}{|q_i-q_j|^{2}}, \big[-D^{-1}(q_i)\big(\nabla U(q_i)+\sum_{l\neq i}\nabla G(q_l-q_i)\big)\\
    &\hspace{3cm}-\div D^{-1}(q_i)\big]\ud t+\sqrt{2}\sqrt{D(q_i)}^{-1}\ud W_i\Big\rangle\\
    &\qquad +\varepsilon_1 \overline{\gamma}\sum_{i=1}^N\tr\bigg(D^{-1}(q_i)\sum_{j\neq i}\frac{|q_i-q_j|^2I-2(q_i-q_j)\otimes (q_i-q_j) }{|q_i-q_j|^4}\bigg)\ud t\\
    &\eqqcolon (L_1+L_2+L_3)\ud t -\varepsilon_1\overline{\gamma}\sum_{i}\Big\langle\sum_{j\neq i}\frac{q_i-q_j}{|q_i-q_j|^{2}},\sqrt{2}\sqrt{D(q_i)}^{-1}\ud W_i\Big\rangle+L_4 \ud t.
\end{align*}
Concerning $L_1$, we recast this term as follows.
\begin{align*}
    L_1=&\varepsilon_1\overline{\gamma}\sum_{1\le i<j\le N }\inner{\frac{q_i-q_j}{|q_i-q_j|^2}}{D^{-1}(q_i)\nabla U(q_i)-D^{-1}(q_j)\nabla U(q_j)}\\
    =&\varepsilon_1\overline{\gamma}\sum_{1\le i<j\le N }\inner{\frac{q_i-q_j}{|q_i-q_j|^2}}{D^{-1}(q_i)\nabla U(q_i)-D^{-1}(q_i)\nabla U(q_j)}\\
    &\qquad+\varepsilon_1\overline{\gamma}\sum_{1\le i<j\le N }\inner{\frac{q_i-q_j}{|q_i-q_j|^2}}{D^{-1}(q_i)\nabla U(q_j)-D^{-1}(q_j)\nabla U(q_j)}\\
    =&L_{1,1}+L_{1,2}.
\end{align*}
Applying the mean value theorem to $\nabla U$ yields
\begin{align*}
    L_{1,1}=&\varepsilon_1\overline{\gamma}\sum_{1\le i<j\le N }\inner{\frac{q_i-q_j}{|q_i-q_j|^2}}{D^{-1}(q_i)\nabla U(q_i)-D^{-1}(q_i)\nabla U(q_j)}\\
    \le&\varepsilon_1\overline{\gamma}\sum_{1\le i<j\le N }\frac{\underline{\gamma}^{-1}}{|q_i-q_j|}\nabla^2 U(\xi_{i,j})|q_i-q_j|\\
    \le &\varepsilon_1\overline{\gamma}\underline{\gamma}^{-1}a_1\lr{1+|q_i|^{\lambda-1}+|q_j|^{\lambda-1}}.
\end{align*}
Recalling from~\eqref{E:LipD_inv} that $D^{-1}$ is Lipschitz, this implies that
\begin{align*}
    L_{1,2}=&\varepsilon_1\overline{\gamma}\sum_{1\le i<j\le N }\inner{\frac{q_i-q_j}{|q_i-q_j|^2}}{D^{-1}(q_i)\nabla U(q_j)-D^{-1}(q_j)\nabla U(q_j)}\\
    \le & \varepsilon_1\overline{\gamma}\sum_{1\le i<j\le N }\frac{1}{|q_i-q_j|}L_{D^{-1}}|q_i-q_j|a_1\lrbig{1+|q_j|^{\lambda}}\\
    \le &\varepsilon_1\overline{\gamma}a_1L_{D^{-1}}N\sum_{i=1}^N |q_i|^{\lambda}.
\end{align*}
Similarly, concerning $L_2$, we have
\begin{align*}
    L_2=\varepsilon_1
    \overline{\gamma}\sum_{i}\inner{\sum_{j\neq k}\frac{q_i-q_j}{|q_i-q_j|^2}}{D^{-1}(q_i)\sum_{k\neq i}\nabla G(q_i-q_k)}\eqqcolon L_{2,1}+L_{2,2},
\end{align*}
where
\begin{align*}
    L_{2,1}=-&\varepsilon_1\overline{\gamma}\sum_{i}\inner{D^{-1}(q_i)\sum_{j\neq i}\frac{q_i-q_j}{|q_i-q_j|^{2}}}{a_4\sum_{l\neq i}\frac{q_i-q_l}{|q_i-q_l|^{2}}},
\end{align*}
and
\begin{align*}
    L_{2,2}&=\varepsilon_1\overline{\gamma}\sum_{i}\inner{D^{-1}(q_i)\sum_{j\neq i}\frac{q_i-q_j}{|q_i-q_j|^{2}}}{\sum_{l\neq i}\nabla G(q_i-q_j)+a_4\sum_{l\neq i}\frac{q_i-q_l}{|q_i-q_l|^{2}}}.
\end{align*}
By writing $w_i=\sum_{j\ne i}\frac{q_i-q_j}{|q_i-q_j|^2}=\sum_{l\ne i}\frac{q_i-q_l}{|q_i-q_l|^2}$, and by the uniform positivity of $D^{-1}$, it holds that
\begin{align*}
    L_{2,1}\le -\varepsilon_1\overline{\gamma}a_4\sum_{i=1}^N \innerbig{D^{-1}(q_i)w_i}{w_i}
    \le& -\varepsilon_1 \overline{\gamma} \overline{\gamma}^{-1}a_4\sum_{i=1}^N |w_i|^2 \\
    = &-\varepsilon_1 a_4 \sum_{i=1}^N \Big|\sum_{j\ne i}\frac{q_i-q_j}{|q_i-q_j|^2}\Big|^2\\
    \le &-2\varepsilon_1 a_4\sum_{1\le i<j\le N}\frac{1}{|q_i-q_j|^2},
\end{align*}
where the last line follows Lemma~\ref{L:s+1timess+1} by taking $s=1$.
To estimate $L_{2,2}$, applying Cauchy-Schwarz inequality, the uniform boundedness of $D^{-1}$ and Assumption~\ref{A:G}, we get
\begin{align*}
        L_{2,2}&=\varepsilon_1\overline{\gamma}\sum_{i}\inner{D^{-1}(q_i)\sum_{j\neq i}\frac{q_i-q_j}{|q_i-q_j|^{2}}}{\sum_{l\neq i}\nabla G(q_i-q_j)+a_4\sum_{l\neq i}\frac{q_i-q_l}{|q_i-q_l|^{2}}}\\
    &\le \varepsilon_1\overline{\gamma}\sum_{i}\Big|{D^{-1}(q_i)\sum_{j\neq i}\frac{q_i-q_j}{|q_i-q_j|^{2}}}\Big|\,\Big|{\sum_{l\neq i}\nabla G(q_i-q_j)+a_4\sum_{l\neq i}\frac{q_i-q_l}{|q_i-q_l|^{2}}}\Big|\\
    &\le \varepsilon_1\overline{\gamma}\underline{\gamma}^{-1}\sum_{i}\Big|{\sum_{j\neq i}\frac{1}{|q_i-q_j|}}\Big|\,\Big|{a_5\sum_{l\neq i}\frac{1}{|q_i-q_l|^{\beta_2}}+a_6}\Big|.
    \end{align*}
We invoke Young's inequality to further deduce
\begin{align*}
    L_{2,2}
    &\le C\overline{\gamma}\underline{\gamma}^{-1}\Big(\varepsilon_1^{3/2}\sum_{1\le i<j\le N}\frac{1}{|q_i-q_j|^{2}}+\varepsilon_1^{1/2}a_5^2\sum_{1\le i<j\le N}\frac{1}{|q_i-q_j|^{2\beta_2}}+\varepsilon_1 a_6\sum_{1\le i<j\le N}\frac{1}{|q_i-q_j|}\Big).
\end{align*}
It follows that
\begin{align*}
    &L_2\le -2\varepsilon_1 a_4\sum_{1\le i<j\le N}\frac{1}{|q_i-q_j|^2} \\
    & +C\overline{\gamma}\underline{\gamma}^{-1}\Big(\varepsilon_1^{3/2}\sum_{1\le i<j\le N}\frac{1}{|q_i-q_j|^{2}}+\varepsilon_1^{1/2}a_5^2\sum_{1\le i<j\le N}\frac{1}{|q_i-q_j|^{2\beta_2}}+\varepsilon_1 a_6\sum_{1\le i<j\le N}\frac{1}{|q_i-q_j|}\Big).
\end{align*}
Turning to $L_3$, since $\div D^{-1}$ is Lipschitz, cf. Lemma~\ref{L:LipD_inv}, we obtain
\begin{align*}
    L_3=\varepsilon_1\overline{\gamma}\sum_{1\le i<j\le N}\inner{\frac{q_i-q_j}{|q_i-q_j|^2}}{\div D^{-1}(q_i)-\div D^{-1}(q_j)}\le C_N\varepsilon_1\overline{\gamma}L_{D^{}-1}.
\end{align*}
Also, thanks to the fact that the eigenvalues of $D^{-1}$ are uniformly bounded, we can estimate $L_4$ by
\begin{align*}
    L_4=&-\varepsilon_1\overline{\gamma}\sum_{i=1}^N\sum_{j\neq i}\frac{\tr D^{-1}(q_i)}{|q_i-q_j|^2}-2\frac{ (q_i-q_j)^t D^{-1}(q_i)(q_i-q_j) }{|q_i-q_j|^4}\\
    \le&-2\varepsilon_1\overline{\gamma}\sum_{i< j}\frac{d\overline{\gamma}^{-1}-2\underline{\gamma}^{-1}}{|q_i-q_j|^2}.
\end{align*}

Now, we collect the bounds on $L_1,\dots, L_4$, together with the estimates of $I_1,\dots, I_5$ in the argument of part (i), to infer
\begin{align*}
    &\ud \Gamma_2(t)\\
    &\le  -\overline{\gamma}^{-1}\sum_{i=1}^N |\nabla U(q_i)|^2\ud t+I_2\ud t+ \sum_{i=1}^N (|q_i|+a_0)|\nabla U(q_i)|\ud t+da_1\underline{\gamma}^{-1}\lr{N +\sum_{i=1}^N |q_i|^{\lambda-1}}\ud t\\
    &+\varepsilon_1\overline{\gamma}\underline{\gamma}^{-1}a_1\lr{1+|q_i|^{\lambda-1}+|q_j|^{\lambda-1}}\ud t+\varepsilon_1\overline{\gamma}a_1L_{D^{-1}}N\sum_{i=1}^N |q_i|^{\lambda}\ud t-2\varepsilon_1 a_4 \sum_{1\le i<j\le N}\frac{1}{|q_i-q_j|^2}\ud t\\
    &+C\overline{\gamma}\underline{\gamma}^{-1}\lr{\varepsilon_1^{3/2}\sum_{1\le i<j\le N}\frac{1}{|q_i-q_j|^{2}}+\varepsilon_1^{1/2}a_5^2\sum_{1\le i<j\le N}\frac{1}{|q_i-q_j|^{2\beta_2}}+\varepsilon_1 a_6\sum_{1\le i<j\le N}\frac{1}{|q_i-q_j|}}\ud t\\
    &-2\varepsilon_1\overline{\gamma}\sum_{i< j}\frac{d\overline{\gamma}^{-1}-2\underline{\gamma}^{-1}}{|q_i-q_j|^2}\ud t+C\ud t + M_2(t),
\end{align*}
where 
\begin{align*}
    M_2(t)=\sum_{i=1}^N\inner{\nabla U (q_i)-\varepsilon_1\overline{\gamma}\sum_{j\ne i}\frac{q_i-q_j}{|q_i-q_j|^2}}{\sqrt{2}\sqrt{D(q_i)}^{-1}\ud W_i}.
\end{align*}
Recalling the growth estimates of $\nabla U$ and $\nabla G$ respectively in \eqref{EC:BdNablaU} and \eqref{EC:BoundNablaG}, by subsuming the lower order terms, the above inequality can be simplified as
\begin{align*}
    \ud \Gamma_2(t)\le -c\sum_{i=1}^N|\nabla U(q_i)|^2-\varepsilon_1(a_4+d-2\overline{\gamma}\underline{\gamma}^{-1})\sum_{1\le i<j\le N}\frac{1}{|q_i-q_j|^2}\ud t+C\ud t+M_2(t).
\end{align*}
It is important to note that $    a_4+d-2\overline{\gamma}\underline{\gamma}^{-1}>0$, 
as stated in Assumption~\ref{Assumption D} (D) (iii). Similar to the argument for \eqref{ineq:EC:Limiting:beta_1>1}, we employ once again the exponential martingale inequality to deduce that for $\kappa>0$ small enough, there exists a positive constant $C$ such that
\begin{align*}
    \E\Big[\exp\Big\{\sup_{t\in[0,T]}\kappa\Gamma_2(t)\Big\}\Big]\le C.
\end{align*}
This establishes \eqref{ineq:EC:Limiting:beta_1=1}, as claimed.

\end{proof}

Having collected auxiliary estimates in Lemmas \ref{lem:grad.U_grad.G}, \ref{L:LipD_inv} and \ref{L_:Step2Goal}, we are now ready to present the proof of Theorem \ref{thm: smallLE}, establishing validity of the small-mass limit of the classical model \eqref{E:Classical_Eq} toward \eqref{EC:Limiting}.
\begin{proof}[Proof of Theorem \ref{thm: smallLE}]
    The argument consists of two main steps as follows.
    
    \noindent \textbf{Step 1}: For $R>0$, let $\theta_R$ be a smooth cut-off function defined as
\begin{equation}\label{E:truncation function}
    \theta_R(t)=\begin{cases}
        1, & |t|\le R,\\
        \text{monotonicity},&R<|t|\le R+1,\\
        0,&|t|> R+1.
    \end{cases}
\end{equation}
    Consider the following truncated system
    \begin{align}
        \ud x_i^R&=v_i^R\ud t, \notag \\
    m\ud v_i^R&=-D(x_i^R)v_i^R \ud t-\theta_R(|x_i^R|)\nabla U(x_i^R)\ud t-\sum_{j\neq i}\theta_R(|x_i^R-x_j^R|^{-1})\nabla G(|x_i^R-x_j^R|)\ud t \label{EC:Truncated}\\
    &\qquad\qquad+\sqrt{2D(x_i^R)}\ud W_i, \notag 
    \end{align}
and truncated limiting process
\begin{equation}\label{EC:TrLimiting}
    \begin{aligned}
        \ud q_i^R&=[-D^{-1}(q_i^R)(\theta_R(|q_i^R|)\nabla U(q_i^R)+\sum_{j\neq i}\theta_R(|q_i^R-q_j^R|^{-1})\nabla G(q_j^R-q_i^R))
        \\&\qquad-\div D^{-1}(q_i)]\ud t+\sqrt{2}\sqrt{D}^{-1}(q_i)\ud W_i.
    \end{aligned}
\end{equation}
We aim to employ \cite[Theorem 1]{hottovy2015smoluchowski} to show that
\begin{align}\label{L2 convergence}
    \lim_{m\to 0}\mathbb{E}\Big[\sup_{t\in [0,T]}|\x_m^R(t)-\q^R(t)|)^2\Big]=0.
\end{align}
To do this, we need to verify that Assumptions 1-3 in \cite[Theorem 1]{hottovy2015smoluchowski} hold.

\textup{(i)} We have assumed that $\nabla U$, $\nabla G$, $D$ are continuously differentiable. Also, the uniform positivity of the eigenvalues of $D$ is implied by the uniform ellipticity. This verifies Assumption~1 of \cite[Theorem 1]{hottovy2015smoluchowski}.

\textup{(ii)} Thanks to the truncation approach, we have avoided the singularities of the potentials $U$ and $G$. For any $m>0$, all coefficients in the system~\eqref{EC:Truncated} are globally Lipschitz, which ensures the existence of global unique solutions on any finite time window $t\in [0,T]$. In particular, this shows that Assumption~2 of \cite[Theorem 1]{hottovy2015smoluchowski} holds.

\textup{(iii)} Turning to Assumption~3 of \cite[Theorem 1]{hottovy2015smoluchowski}, we note that the existence of a compact set $\mathcal{K}$ such that $\x (t)\in \mathcal{K}\subsetneq \mathcal{D}$ may not hold. Nevertheless, it is sufficient to check that \cite[Equation (8)]{hottovy2015smoluchowski} is satisfied. Indeed, we have that
    \begin{align*}
        -\theta_R(|x_i^R(t)|)\nabla U(x_i^R(t)) \leq C_R,\quad-\theta_R(|x_i^R(t)-x_j^R(t)|^{-1})\nabla G(x_i^R(t)-x_j^R(t)) \leq C_R,
        \end{align*}
        and that
        \begin{align*}
            |D(x_i^R(t))|\leq \overline{\gamma},\qquad|\sqrt{2D(x(t))}|\leq \sqrt{2\overline{\gamma}}.
    \end{align*}
    Moreover, the constants in the above inequalities are independent of the parameter $m$.

Altogether, we may employ \cite[Theorem 1]{hottovy2015smoluchowski} to deduce that limit \eqref{L2 convergence} holds, thereby completing Step 1.

\noindent\textbf{Step 2.~} Define stopping times
\begin{align}\label{stopping_time}
    \sigma^R=\inf_{t\ge 0}\{|\q(t)|+\sum_{1\le i<j\le N}|q_i(t)-q_j(t)|^{-1}\ge R\},
\end{align}
and
\begin{align}\label{stopping_time_m}
    \sigma_m^R=\inf_{t\ge 0}\{|\x_m(t)|+\sum_{1\le i<j\le N}|x_{i,m}(t)-x_{j,m}(t)|^{-1}\         
                             \ge R\}.
\end{align}
For any $T,\xi>0$, we have
\begin{align}\label{EC:P1+P2}
    \notag \mathbb{P}\lr{\sup_{t\in [0,T]}|\x_m(t)-\q(t)|>\xi}&\le \mathbb{P}\lr{\sup_{t\in[0,T]}|\x_m(t)-\q(t)|>\xi,\sigma^R\wedge\sigma^R_m\ge T}+\mathbb{P}\lr{\sigma^R\wedge\sigma^R_m<T}\\
    &\eqqcolon P_1+P_2.
\end{align}
By definition of the truncation $\theta_R$, it holds that
\begin{align*}
    \mathbb{P}\lr{0\le t\le \sigma^R\wedge\sigma^R_m, \q(t)=\q^R(t),\x_m(t)=\x_m^R(t)}=1.
\end{align*} 
In the above, $\x_m^R$ and $\q^R$ are respectively the solutions of \eqref{EC:Truncated} and \eqref{EC:TrLimiting}. Concerning $P_1$, we employ Chebyshev inequality to infer
\begin{align}\label{Chebyshev}
    P_1\le \mathbb{P}\lr{\sup_{t\in[0,T]}|\x_m^R(t)-\q^R(t)|>\xi}\le \frac{\mathbb{E}\lr{\sup_{t\in[0,T]}|\x_m^R(t)-\q^R(t)|^2}}{\xi^2}.
\end{align}
With regard to $P_2$, we note that
\begin{align}\label{EC:P2<=P21+P22+P23}
    \notag P_2\le & \mathbb{P}\lr{\sup_{t\in[0,T]}|\x_m^R(t)-\q^R(t)|
    \le \frac{\xi}{R},\sigma^R\wedge\sigma_m^R<T}+\mathbb{P}\lr{\sum_{t\in [0,T]}|\x_m^R-q^R|> \frac{\xi}{R}}\\
    \notag \le &\mathbb{P}\lr{\sup_{t\in[0,T]}|\x_m^R(t)-\q^R(t)|
    \le \frac{\xi}{R},\sigma_m^R<T\le \sigma_m^R}+\mathbb{P}\lrbig{\sigma^R<T}\\
    &+\mathbb{P}\lr{\sum_{t\in [0,T]}|\x_m^R(t)-\q^R(t)|>\frac{\xi}{R}}=P_{2,1}+P_{2,2}+P_{2,3}.
\end{align}
An argument similar to~\eqref{Chebyshev} produces
\begin{align*}
    P_{2,3}\le \frac{\mathbb{E}\lr{\sup_{t\in[0,T]}|\x_m^R(t)-q^R(t)|^2}}{\xi^2}.
\end{align*}
Considering $P_{2,2}$, from expression~\eqref{stopping_time} of $\sigma^R$, observe that
\begin{align*}
    \{\sigma^R<T\}=\Big\{\sup_{t\in[0,T]}|\q(t)|+\sum_{1\le i<j\le N}|q_i(t)-q_j(t)|^{-1}\ge R\Big\},
\end{align*}
whence,
\begin{align}\label{sigma_R}
    \{\sigma^R<T\}\subset \Big\{\sup_{t\in[0,T]}|\q(t)|\ge \frac{R}{N^2}\Big\}\bigcup_{1\le i<j\le N}\Big\{\sup_{t\in[0,T]}-\varepsilon_1\log|q_i(t)-q_j(t)|\ge \varepsilon_1\log\lr{\frac{R}{N^2}}\Big\}.
\end{align}
Concerning the event $\big\{\sup_{t\in[0,T]}|\q(t)|\ge \frac{R}{N^2}\big\}$, we first notice that 
\begin{align*}
    \log|q_i(t)-q_j(t)|\le \log2+\log_+\lrbig{{|q_i|\vee|q_j|}}\le \log 2+\lr{\log_+|q_i|+\log_+|q_j|},
\end{align*}
where $\log_+(x)\coloneqq\log (1\vee x)$. Consequently, we have
\begin{align}\label{subsum_log_1}
    &|\q(t)|^2-\varepsilon_1 \sum_{1\le i<j\le N}\log|q_i(t)-q_j(t)|\notag\\
    \ge &\sum_{i}|q_i(t)|^2-\varepsilon_1\sum_{1\le i<j\le N}\lrbig{\log 2+\lrbig{\log_+|q_i|+\log_+|q_j|}}\notag\\
    =&\sum_{i=1}^N |q_i|^2-\varepsilon_1 (N-1)\sum_{i=1}^N\log_+|q_i|-\varepsilon_1(\log 2) \lr{\frac{N(N-1)}{2}}\notag \\
    \ge&
    \frac{1}{2}\sum_{i=1}^N|q_i|^2-\varepsilon_1 C.
\end{align}
Letting $R$ be large enough such that $\frac{R}{N^2}\ge 4$, suppose that $\sup_{t\in[0,T]}|\q(t)|\ge \frac{R}{N^2}$. This implies that
\begin{align}\label{EC:CompareQwithQ2-log}
    \sup_{t\in[0,T]}|\q(t)|^2-\varepsilon_1 \sum_{1\le i<j\le N}\log|q_i(t)-q_j(t)|\ge \frac{1}{2}\frac{R^2}{N^4}-\varepsilon_1 C\ge \frac{R}{N^2}.
\end{align}
In other words, for $\varepsilon_1$ sufficiently small and $R$ sufficiently large,
\begin{align*}
    \Big\{\sup_{t\in[0,T]}|\q(t)|\ge \frac{R}{N^2}\Big\}\subset\Big\{\sup_{t\in[0,T]}|\q(t)|^2-\varepsilon_1 \sum_{1\le i<j\le N}\log|q_i(t)-q_j(t)|\ge \frac{R}{N^2}\Big\}.
\end{align*}
It is clear that for $R$ sufficiently large,
\begin{align*}
    &\Big\{\sup_{t\in[0,T]}-\varepsilon_1 \log|q_i(t)-q_j(t)|\ge \varepsilon_1\log\lr{\frac{R}{N^2}}\Big\}\\
    &\qquad\subset\Big\{\sup_{t\in[0,T]}|\q(t)|^2-\varepsilon_1 \log|q_i(t)-q_j(t)|\ge \varepsilon_1\log\lr{\frac{R}{N^2}}\Big\}.
\end{align*}
From~\eqref{sigma_R}, we deduce
\begin{align}\label{sigma_R_2}
    \{\sigma^R< T\}\subset& \Big\{\sup_{t\in[0,T]}|\q(t)|^2-\varepsilon_1 \sum_{1\le i<j\le N}\log|q_i(t)-q_j(t)|
    \ge \frac{R}{N^2}\Big\}\notag\\
    &\bigcup_{1\le i<j\le N}\Big\{\sup_{t\in[0,T]}|\q(t)|^2-\varepsilon_1 \log|q_i(t)-q_j(t)|\ge \varepsilon_1\log\lr{\frac{R}{N^2}}\Big\}.
\end{align}
Again for $R$ sufficiently large, it holds that
\begin{align*}
    &\Big\{\sup_{t\in[0,T]}|\q(t)|^2-\varepsilon_1 \sum_{1\le i<j\le N}\log|q_i(t)-q_j(t)|
    \ge \frac{R}{N^2}\Big\}\notag \\
    &\subset \Big\{\sup_{t\in[0,T]}|\q(t)|^2-\varepsilon_1 \sum_{1\le i<j\le N}\log|q_i(t)-q_j(t)|
    \ge \frac{\varepsilon_1}{8}\log\lr{\frac{R}{N^2}}\Big\}.
\end{align*}
Also, for each $i_0\neq j_0$,
\begin{align*}
        &\Big\{\sup_{t\in[0,T]}|\q(t)|^2-\varepsilon_1 \log|q_{i_0}(t)-q_{j_0}(t)|\ge \varepsilon_1\log\lr{\frac{R}{N^2}}\Big\}\\
    &\subset \Big\{\sup_{t\in[0,T]}|\q(t)|^2\ge \frac{\varepsilon_1}{2}\log\lr{\frac{R}{N^2}}\Big\}\bigcup\Big\{\sup_{t\in[0,T]}-\varepsilon_1 \log|q_{i_0}(t)-q_{j_0}(t)|\ge \frac{\varepsilon_1}{2}\log\lr{\frac{R}{N^2}}\Big\}\\
    &\eqqcolon E_1\bigcup E_2.
    \end{align*}
In light of~\eqref{subsum_log_1}, for $R$ sufficiently large, we have
\begin{align}\label{sigma_R_4}
    E_1\subset \Big\{\sup_{t\in[0,T]}|\q(t)|^2-\varepsilon_1 \sum_{1\le i<j\le N}\log|q_i(t)-q_j(t)|\ge \frac{\varepsilon_1}{8}\log\lr{\frac{R}{N^2}}\Big\}.
\end{align}
Moreover, it holds that for any $i_0\ne j_0$ shown in $E_2$,
\begin{align*}
    \sup_{t\in[0,T]}&|\q(t)|^2-\varepsilon_1 \sum_{1\le i<j\le N}\log|q_i(t)-q_j(t)|\\
    \ge &\sup_{t\in[0,T]} -\varepsilon_1\log|q_{i_0}-q_{j_0}|\\
    &+\Big[\varepsilon_1\sum_{i=1}^N|q_i|^2-\varepsilon_1\sum_{\substack{1\le i<j\le N\\(i,j)\neq (i_0,j_0)}}\lr{\log 2+\lr{\log_+|q_i|+\log_+|q_j|}}\Big]\\
    \ge&\sup_{t\in[0,T]} -\varepsilon_1\log|q_{i_0}-q_{j_0}|-\varepsilon_1 C.
\end{align*}
As a result,
\begin{align*}
    E_2\subset \Big\{\sup_{t\in[0,T]}|\q(t)|^2-\varepsilon_1 \sum_{1\le i<j\le N}\log|q_i(t)-q_j(t)|\ge \frac{\varepsilon_1}{8}\log\lr{\frac{R}{N^2}}\Big\}.
\end{align*}
Now, we collect \eqref{sigma_R}--~\eqref{sigma_R_4} to obtain the inclusion
\begin{align*}
    \{\sigma^R<T\}\subset \Big\{\sup_{t\in[0,T]}|\q(t)|^2-\varepsilon_1 \sum_{1\le i<j\le N}\log|q_i(t)-q_j(t)|\ge \frac{\varepsilon_1}{8}\log\lr{\frac{R}{N^2}}\Big\}.
\end{align*}
In view of Lemma~\ref{L_:Step2Goal}, we obtain
\begin{align*}
    \E\Big\{\sup_{t\in[0,T]}|\q(t)|^2-\varepsilon_1 \sum_{1\le i<j\le N}\log|q_i(t)-q_j(t)|\Big\}\le C.
\end{align*}
An application of Markov inequality shows that
\begin{align*}
   P_{2,2}= \mathbb{P}\{\sigma^R<T\}<\frac{C}{\varepsilon_1\log R}.
\end{align*}
Finally we turn to $P_{2,1}$. Recall the definition of stopping time~\eqref{stopping_time_m}, we have the following implication.
\begin{align*}
&\Big\{ \sup_{t \in [0, T]} \left| \x_{m}^{R}(t) - \q^{R}(t) \right| \leq \frac{\xi}{R}, \sigma_{m}^{R} < T \leq \sigma^{R} \Big\} \\
&= \Big\{ \sup_{t \in [0, T]} \left| \x_{m}^{R}(t) - \q(t) \right| \leq \frac{\xi}{R},  \sigma_{m}^{R} < T \leq \sigma^{R} \Big\} \\
&\qquad\bigcap \Big\{\sup_{t\in[0,T]} \Big( \left| \x_{m}^{R}(t) \right| + \sum_{1 \leq i < j \leq N} \left| x_{j,m}^{R}(t) - x_{j,m}^{R}(t) \right|^{-1} \Big) \geq R\Big\}\\
&\subseteq \Big\{ \sup_{t \in [0, T]} \left| \x_{m}^{R}(t) - \q(t) \right| \leq \frac{\xi}{R}, \sup_{t\in[0,T]} \left| \x_{m}^{R}(t) \right|  \geq \frac{R}{N^2} \Big\} \\
& \bigcup_{1 \leq i < j \leq N} \Big\{ \sup_{t \in [0, T]} \left| \x_{m}^{R}(t) - \q(t) \right| \leq \frac{\xi}{R}, \sup_{t \in [0, T]} \left| x_{i,m}^{R}(t) - x_{j,m}^{R}(t) \right|^{-1} \ge \frac{R}{N^2} \Big\}. 
\end{align*}
In other words, it holds that
\begin{align}\label{EC:F0+Fij}
&\Big\{ \sup_{t \in [0, T]} \left| \x_{m}^{R}(t) - \q^{R}(t) \right| \leq \frac{\xi}{R}, \sigma_{m}^{R} < T \leq \sigma^{R} \Big\} \notag \\
&= \Big\{ \sup_{t \in [0, T]} \left| \x_{m}^{R}(t) - \q(t) \right| \leq \frac{\xi}{R}, \sup_{t \in [0, T]} \left| \x_{m}^{R}(t) \right| \geq \frac{R}{N^2} \Big\} \notag \\
&\qquad \bigcup_{1 \leq i < j \leq N} \Big\{ \sup_{t \in [0, T]} \left| \x_{m}^{R}(t) - \q(t) \right| \leq \frac{\xi}{R}, \inf_{t \in [0, T]} \left| x_{i,m}^{R}(t) - x_{j,m}^{R}(t) \right| \le \frac{N^2}{R} \Big\} \notag \\
&\eqqcolon F_0\bigcup_{1 \leq i < j \leq N} F_{i,j}.
\end{align}
For any $\xi$ and $N$ fixed, we take $R$ large enough such that $\frac{R}{N^2}-\frac{\xi}{R}\ge \sqrt{R}$. We have
\begin{align}\label{EC:F0}
    \begin{aligned}
        F_0=&\Big\{ \sup_{t \in [0, T]} \left| \x_{m}^{R}(t) - \q(t) \right| \leq \frac{\xi}{R}, \sup_{t \in [0, T]} \left| \x_{m}^{R}(t) \right| \geq \frac{R}{N^2} \Big\}\\
    \subseteq& \Big\{\sup_{t\in[0,T]}|\q(t)|\ge \sqrt{R}\Big\}\subseteq\Big\{\sup_{t\in[0,T]}|\q(t)|^2-\varepsilon_1 \sum_{1\le i<j\le N}\log|q_i(t)-q_j(t)|\ge \sqrt{R}\Big\}.
    \end{aligned}
\end{align}
where the last implication follows from the same argument as in~\eqref{EC:CompareQwithQ2-log}.
For any $1\le i<j\le N$, by triangle inequality,
\begin{equation*}
    \inf_{t \in [0,T]} |q_i(t) - q_j(t)| \leq 2 \sup_{t \in [0,T]} |\x_m^R(t) - \q(t)| + \inf_{t \in [0,T]} |x_{i,m}^R(t) - x_{j,m}^R(t)|.
\end{equation*}
As a result, for $R$ sufficiently large such that $\frac{2 \xi + N^2}{R}\le \frac{1}{\sqrt{R}}$, it holds that
\begin{equation}\label{EC:Fij}
\begin{aligned}
F_{i,j}=&\Big\{ \sup_{t \in [0,T]} \left| \x_m^R(t) - \q(t) \right| \leq \frac{\xi}{R}, \inf_{t \in [0,T]} \left| x_{i,m}^R(t) - x_{j,m}^R(t) \right| \leq \frac{N^2}{R} \Big\} \\
\subseteq& \Big\{ \inf_{t \in [0,T]} \left| q_i(t) - q_j(t) \right| \leq \frac{2 \xi + N^2}{R} \Big\} \subseteq \left\{ \inf_{t \in [0,T]} \left| q_i(t) - q_j(t) \right| \leq \frac{1}{\sqrt{R}} \right\} \\
 =& \Big\{ -\varepsilon_1 \sup_{t \in [0,T]} \log \left| q_i(t) - q_j(t) \right| \geq \frac{1}{2} \varepsilon_1 \log R \Big\}\\
 \subseteq& \Big\{ |\q(t)|^2-\varepsilon_1 \sup_{t \in [0,T]} \log \left| q_i(t) - q_j(t) \right| \geq \frac{1}{2} \varepsilon_1 \log R \Big\}.
\end{aligned}
\end{equation}
We combine~\eqref{EC:F0},~\eqref{EC:Fij} with~\eqref{EC:F0+Fij} to find that
\begin{align*}
    \Big\{ \sup_{t \in [0, T]} \left| \x_{m}^{R}(t) - \q^{R}(t) \right| \leq \frac{\xi}{R}, \sigma_{m}^{R} < T \leq \sigma^{R} \Big\}\subseteq& \Big\{ |\q(t)|^2-\varepsilon_1 \sup_{t \in [0,T]} \log \left| q_i(t) - q_j(t) \right| \geq \frac{1}{2} \varepsilon_1 \log R \Big\}.
\end{align*}
We apply Markov inequality to get
\begin{align*}
    P_{2,1}=\P\Big\{ \sup_{t \in [0, T]} \left| \x_{m}^{R}(t) - \q^{R}(t) \right| \leq \frac{\xi}{R}, \sigma_{m}^{R} < T \leq \sigma^{R} \Big\}\le \frac{2C}{\varepsilon_1\log R}.
\end{align*}
Finally, recall~\eqref{EC:P1+P2} and~\eqref{EC:P2<=P21+P22+P23},
\begin{align*}
    \mathbb{P}\lr{\sup_{t\in [0,T]}|\x_m(t)-\q(t)|>\xi}\le& P_1+P_2\le P_1+P_{2,1}+P_{2,2}+P_{2,3}\\
    \le&\frac{2\mathbb{E}\lr{\sup_{t\in[0,T]}|\x_m^R(t)-\q^R(t)|^2}}{\xi^2}+\frac{C}{\varepsilon_1\log R}.
\end{align*}
In light of the $L^2$ convergence~\eqref{L2 convergence}, the proof of Theorem \ref{thm: smallLE} is completed by passing $m$ to zero and $R$ to infinity.  
\end{proof}

\section{Relativistic Langevin Equation with multiplicative noise}
\label{sec: RLE}

In this section, we address the asymptotic limits of the relativistic model \eqref{E:RelativisticSys:epsilon}, namely, in Section \ref{sec: RLE:ergodicity}, we construct Lypunov function for \eqref{E:RelativisticSys:epsilon} and prove Theorem \ref{thm: ergodicityRLE} giving the mixing rate of \eqref{E:RelativisticSys:epsilon} toward equilibrium, whereas in Section \ref{sec: RLE:Newtonian_limit}, we discuss the proof of Theorem \ref{thm:NewtonianLimit} validating the approximation of \eqref{E:RelativisticSys:epsilon} by \eqref{E:LimitingRelativisticSys} in the Newtonian limit. To avoid confusion in notation with the previous Section~\ref{sec: classical LE}, we denote $\mathcal{L}_N^\varepsilon$ (resp. $\mathcal{L}_{1}^\varepsilon$) and $H_{N,\epsilon}$ (resp. $H_{1,\varepsilon}$) as the infinitesimal generator of~\eqref{E:RelativisticSys:epsilon} (resp.~\eqref{E:RelativisticSysN=1}) and the corresponding Hamiltonian.

\subsection{Ergodicity} \label{sec: RLE:ergodicity}
In this subsection, we proceed to establish Theorem \ref{thm: ergodicityRLE} giving the polynomial mixing for the relativistic model \eqref{E:RelativisticSys:epsilon} (which is \eqref{E:RelativisticSys:epsilon} with $m=\gamma=1$ and $\varepsilon=1/\mathbf{c}^2$). As typical in the literature of mixing rates, the main crucial ingredient is the existence of suitable Lyapunov functions, which characterize the convergence speed. In our setting of the relativistic Langevin dynamics with multiplicative noises, the construction of Lyapunov functions will be given in Propositions \ref{P:RLyapunovSing} and \ref{P:RLyapunovMulti}, respectively corresponding to the case $N=1$ and $N\ge 2$. Together with the minorization property formulated in Theorem \ref{T:ExistandUnique}, we will be able to ultimately conclude the ergodicity result of Theorem \ref{thm: ergodicityRLE}. Since the proof of Theorem \ref{thm: ergodicityRLE} is also standard, we refer the readers to \cite[Theorem 3.5]{hairer2009hot} for a more detailed discussion.

With regard to the case $N=1$, the generator $\mathcal{L}^\varepsilon_1$ of~\eqref{E:RelativisticSys:epsilon} is given by
    \begin{align}\label{E:L_1_Generator}
        \notag \mathcal{L}^\varepsilon_1 f=&\frac{p}{\sqrt{1+\varepsilon|p|^2}}\cdot\partial_q f-D(p)\frac{p}{\sqrt{1+\varepsilon|p|^2}}\cdot\partial_p f+\gamma\div D(p)\cdot\partial_p f\\
        &-\nabla U(q)\cdot \partial_p f - \nabla G(q)\cdot \partial_p f+ \tr(D(p)\nabla^2_p f).
    \end{align}
    The existence of Lyapunov functions for the case $N=1$ is stated below through Proposition \ref{P:RLyapunovSing}. 
\begin{proposition} \label{P:RLyapunovSing}
   When $N=1$, suppose that Assumptions \ref{Assumption U} and \ref{A:G} respectively on the external potential $U$ and the interaction potential $G$ hold and that the diffusion matrix $D$ satisfies \eqref{E:DefD}. Then,
    \begin{align}\label{ER:V_1epsilon}
       V_{1,\varepsilon}=H_{1,\varepsilon}^2+\varepsilon_1\langle p,q\rangle -\frac{\langle p,q \rangle}{|q|}+\kappa_1
    \end{align}
    is a Lyapunov function of~\eqref{E:RelativisticSys:epsilon} for $\varepsilon_1=\varepsilon_1(\varepsilon)$ small enough and $\kappa_1=\kappa_1(\varepsilon)$ large enough,
    where
    \begin{align} \label{ER:H_1epsilon}H_{1,\varepsilon}=\sqrt{1+\varepsilon|p|^2}+\varepsilon U(q)+\varepsilon G(q).
    \end{align}
    In particular, for all $n\ge 1$ and $\varepsilon_1=\varepsilon_1(n)$ sufficiently small, the following holds
    \begin{align} \label{ineq:L_1V_1:Relativistic}
        \mathcal{L}^\varepsilon_1 V_{1,\varepsilon}^n\le -c_n V_{1,\varepsilon}^{n-\frac{1}{2}}+C_n, 
    \end{align}
    for some positive constants $c_n,C_n$ independent of $X=(p,q)\in\X$.
\end{proposition}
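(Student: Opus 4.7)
The plan is to read $V_{1,\varepsilon}$ as a sum of three building blocks with complementary dissipative roles: $H_{1,\varepsilon}^2$ supplies momentum dissipation through the state-dependent friction, the classical cross term $\varepsilon_1\langle p,q\rangle$ transfers this dissipation into the position variable via the confining potential $U$, and the correction $-\langle p,q\rangle/|q|$ is added specifically to extract a negative contribution from $\nabla G$ near the singular set $\{|q|=0\}$. Before estimating the generator one checks coercivity: the two linear-in-$p$ correction terms satisfy $|\varepsilon_1\langle p,q\rangle - \langle p,q\rangle/|q||\lesssim |p|^2+|q|^2+1$, which is absorbed into $H_{1,\varepsilon}^2 \gtrsim \varepsilon|p|^2 + \varepsilon^2 |q|^{2(\lambda+1)}+\varepsilon^2 G(q)^2$ once $\kappa_1$ is taken large enough; combined with $G(q)\to\infty$ as $|q|\to 0$, this ensures $V_{1,\varepsilon}\ge 1$ and the coercivity required by Definition \ref{D:Lyapunov function}.

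The central identity is $D(p)p = \sqrt{1+\varepsilon|p|^2}\,p$, which follows directly from \eqref{E:DefD} and triggers all the useful cancellations. Using the Hamiltonian structure, the force terms generated by $\nabla_q H_{1,\varepsilon}$ cancel against the gradient drifts and one obtains
\begin{align*}
\mathcal{L}_1^\varepsilon H_{1,\varepsilon} = -\frac{\varepsilon|p|^2}{\sqrt{1+\varepsilon|p|^2}} + \frac{\varepsilon\langle\div D(p),p\rangle}{\sqrt{1+\varepsilon|p|^2}} + \frac{\varepsilon d}{1+\varepsilon|p|^2},
\end{align*}
which for large $|p|$ is only of order $-\sqrt{\varepsilon}|p|$; this sublinearity is precisely what forces the algebraic rather than exponential rate in Theorem \ref{thm: ergodicityRLE}. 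Since $\nabla_p H_{1,\varepsilon} = \varepsilon p/\sqrt{1+\varepsilon|p|^2}$, the carré du champ $2\langle D(p)\nabla_p H_{1,\varepsilon},\nabla_p H_{1,\varepsilon}\rangle = 2\varepsilon^2|p|^2/\sqrt{1+\varepsilon|p|^2}$ is of the same scale as $\mathcal{L}_1^\varepsilon H_{1,\varepsilon}$ but with a strictly smaller prefactor, so $\mathcal{L}_1^\varepsilon H_{1,\varepsilon}^2$ retains a clean quadratic dissipation of order $-\varepsilon|p|^2$. The cross term is then handled exactly as in the classical case of Proposition \ref{P:CLyapunov}: $\mathcal{L}_1^\varepsilon(\varepsilon_1\langle p,q\rangle)$ produces $-\varepsilon_1\langle\nabla U(q),q\rangle\le -\varepsilon_1 a_2|q|^{\lambda+1}+\varepsilon_1 a_3$ via \eqref{cond:U:x.U'(x)>-x^(lambda+1)}, with the $\nabla G$ contribution absorbed using \eqref{E:G_2}. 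In $\mathcal{L}_1^\varepsilon(-\langle p,q\rangle/|q|)$, the combination $-|p|^2/(|q|\sqrt{1+\varepsilon|p|^2})+\langle p,q\rangle^2/(|q|^3\sqrt{1+\varepsilon|p|^2})$ is non-positive by Cauchy--Schwarz, and the decisive term $\langle\nabla G(q),q/|q|\rangle$ is evaluated using the sharper relativistic condition \eqref{cond:G:|grad.G(x)+q/|x|^beta_1|<1/|x|^beta_2:Relativistic} to produce the singular dissipation $-a_4/|q|^{\beta_1} - a_5/|q|^{\beta_2} + O(1)$. Picking $\varepsilon_1$ small and $\kappa_1$ large to absorb the $\div D(p)$ remainders and subleading growths then gives, for $n=1$,
\begin{align*}
\mathcal{L}_1^\varepsilon V_{1,\varepsilon} \le -c\left(|p|^2 + |q|^{\lambda+1} + \frac{1}{|q|^{\beta_1}}\right) + C \le -c' V_{1,\varepsilon}^{1/2} + C',
\end{align*}
where the last step uses $V_{1,\varepsilon}^{1/2}\lesssim H_{1,\varepsilon} \lesssim \sqrt{\varepsilon}|p| + \varepsilon|q|^{\lambda+1} + \varepsilon|q|^{-\beta_1} + 1$.

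For general $n\ge 1$, apply It\^o's formula to write $\mathcal{L}_1^\varepsilon V_{1,\varepsilon}^n = nV_{1,\varepsilon}^{n-1}\mathcal{L}_1^\varepsilon V_{1,\varepsilon} + n(n-1)V_{1,\varepsilon}^{n-2}\langle D(p)\nabla_p V_{1,\varepsilon},\nabla_p V_{1,\varepsilon}\rangle$. The main obstacle is that $D(p)$ is unbounded in the $p$-direction (its largest eigenvalue grows like $\sqrt{1+\varepsilon|p|^2}$), so the carré du champ must be bounded carefully rather than via a crude spectral estimate. The dominant contribution to $\nabla_p V_{1,\varepsilon}$ is $2H_{1,\varepsilon}\cdot \varepsilon p/\sqrt{1+\varepsilon|p|^2}$, and invoking once more the identity $D(p)p = \sqrt{1+\varepsilon|p|^2}p$ yields $\langle D(p)\nabla_p V_{1,\varepsilon},\nabla_p V_{1,\varepsilon}\rangle \lesssim \varepsilon H_{1,\varepsilon}^2\cdot \varepsilon|p|^2/\sqrt{1+\varepsilon|p|^2}$ plus lower-order pieces coming from $\varepsilon_1 q - q/|q|$ which are easily dominated in both the large-$|q|$ and small-$|q|$ regimes. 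This gives $V_{1,\varepsilon}^{n-2}\langle D(p)\nabla_p V_{1,\varepsilon},\nabla_p V_{1,\varepsilon}\rangle \lesssim \varepsilon V_{1,\varepsilon}^{n-1/2}$ with a strictly smaller prefactor than the $-c'n V_{1,\varepsilon}^{n-1/2}$ coming from $nV_{1,\varepsilon}^{n-1}\mathcal{L}_1^\varepsilon V_{1,\varepsilon}$, so choosing $\varepsilon_1=\varepsilon_1(n,\varepsilon)$ small enough lets the dissipation absorb the carré du champ and delivers \eqref{ineq:L_1V_1:Relativistic}.
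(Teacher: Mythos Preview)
Your proposal is correct and follows essentially the same approach as the paper: the three-block decomposition of $V_{1,\varepsilon}$, the key identity $D(p)p=\sqrt{1+\varepsilon|p|^2}\,p$, the Cauchy--Schwarz cancellation in the singular correction, and the It\^o iteration for $V_{1,\varepsilon}^n$ all match the paper's argument. One small slip: in the final absorption step for general $n$, the carr\'e du champ is controlled by a factor of $\varepsilon$ (coming from $|\nabla_p V_{1,\varepsilon}|^2\lesssim \varepsilon V_{1,\varepsilon}+C$), not $\varepsilon_1$, so it is the physical parameter $\varepsilon$ that must be taken small depending on $n$---this is consistent with the smallness of $\varepsilon$ already assumed in Theorem~\ref{thm: ergodicityRLE}.
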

\begin{proof}
Recalling $\mathcal{L}^\varepsilon_1$ from \eqref{E:L_1_Generator}, we apply $\mathcal{L}^\varepsilon_1$ to $H_{1,\varepsilon}$ to obtain
    \begin{align*}
        \mathcal{L}^\varepsilon_1 H_{1,\varepsilon}=-\Big\langle D(p)\frac{p}{\sqrt{1+\varepsilon|p|^2}},\frac{\varepsilon p }{\sqrt{1+\varepsilon|p|^2}} \Big\rangle+\Big\langle\frac{\varepsilon dp}{\sqrt{1+\varepsilon|p|^2}},\frac{\varepsilon p }{\sqrt{1+\varepsilon|p|^2}}\Big\rangle+\tr \Big(D(p)\nabla_p^2 H_{1,\varepsilon}\Big),
    \end{align*}
    where
    \begin{align*}
        (\nabla_p^2 H_{1,\varepsilon})_{ij}=\nabla_j\frac{\varepsilon p_i}{\sqrt{{1+\varepsilon|p|^2}}}=\varepsilon\frac{\delta_{ij}\sqrt{{1+\varepsilon|p|^2}}-\varepsilon p_i p_j/\sqrt{{1+\varepsilon|p|^2}}}{{1+\varepsilon|p|^2}}.
    \end{align*}
    Letting $p^*$ denote the transpose of $p$, from expression~\eqref{E:DefD} of $D$, we have
    \begin{align*}
        \mathcal{L}^\varepsilon_1 H_{1,\varepsilon}&=-\frac{\varepsilon}{1+\varepsilon|p|^2}p^*\Big(\frac{1}{\sqrt{{1+\varepsilon|p|^2}}}\Big(I+\varepsilon p p^*\Big)\Big)p+\langle\frac{\varepsilon dp}{\sqrt{1+\varepsilon|p|^2}},\frac{\varepsilon p }{\sqrt{1+\varepsilon|p|^2}}\rangle\\
        &\qquad+\tr\Big(\frac{1}{\sqrt{1+\varepsilon|p|^2}}(I+\varepsilon p p^*)\frac{\varepsilon}{(1+\varepsilon|p|^2)^{\frac{3}{2}}}\Big((1+\varepsilon|p|^2)I-\varepsilon p p^*\Big)\Big)\\
        &=-\frac{\varepsilon |p|^2}{\sqrt{1+\varepsilon|p|^2}}+\frac{\varepsilon^2d}{1+\varepsilon|p|^2}|p|^2+\frac{d\varepsilon}{1+\varepsilon|p|^2}\\
         &\le-\frac{\varepsilon|p|^2}{\sqrt{1+\varepsilon|p|^2}}+2\varepsilon d.
    \end{align*}
    Next, considering $H_{1,\varepsilon}^2$, a routine calculation gives
    \begin{align*}
        \mathcal{L}^\varepsilon_1 H_{1,\varepsilon}^2=&2H_{1,\varepsilon}\Big[\frac{p}{\sqrt{1+\varepsilon|p|^2}}\cdot\partial_q H_{1,\varepsilon}-D(p)\frac{p}{\sqrt{1+\varepsilon|p|^2}}\cdot\partial_p H_{1,\varepsilon}+\gamma\div D(p)\cdot\partial_p H_{1,\varepsilon}\\
        &\qquad\qquad-\nabla U(q)\cdot \partial_p H_{1,\varepsilon} - \nabla G(q)\cdot \partial_q H_{1,\varepsilon}\Big]+ \tr(D(p)\nabla^2_p H
        _1^2)\\
        =&2H_{1,\varepsilon} \mathcal{L}^\varepsilon_1 H_{1,\varepsilon} + \frac{2\varepsilon |p|^2}{1+\varepsilon |p|^2}.
    \end{align*}
   Since $H_{1,\varepsilon}\geq \sqrt{1+\varepsilon|p|^2}$, we find
    \begin{align}\label{ER:L_1H_1^2}
        \mathcal{L}^\varepsilon_1 H_{1,\varepsilon}^2\leq -\varepsilon |p|^2 +2d\varepsilon H_{1,\varepsilon}+2.
    \end{align}
  Letting $\varepsilon_1$ be given and be chosen later, we consider the cross term $\varepsilon_1\la p,q\ra$ and observe that
    \begin{align*}
        \mathcal{L}^\varepsilon_1 (\varepsilon_1 \langle p, q \rangle)=&\Big\langle\frac{\varepsilon_1 p }{\sqrt{1+\varepsilon|p|^2}},p \Big\rangle-\Big\langle D(p)\frac{\varepsilon_1 p}{\sqrt{1+\varepsilon|p|^2}},q  \Big\rangle+\varepsilon \langle\div D(p),q \rangle\\
        &-\varepsilon_1\langle \nabla U(q),q \rangle-\varepsilon_1\langle \nabla G(q),q \rangle.
    \end{align*}
    Recalling $D$ from expression \eqref{E:DefD}, it holds that
    \begin{align*}
        -\Big\langle D(p)\frac{\varepsilon_1 p}{\sqrt{1+\varepsilon|p|^2}},q  \Big\rangle
        &=-\frac{\varepsilon_1}{\sqrt{1+\varepsilon|p|^2}}p^*\frac{1}{\sqrt{1+\varepsilon|p|^2}}(I+\varepsilon p p^*)q\\
        &=\frac{-\varepsilon_1}{1+\varepsilon|p|^2}\langle p ,q \rangle-\frac{\varepsilon_1 \varepsilon}{1+\varepsilon|p|^2}|p|^2 \langle p,q \rangle\\
        &=-\varepsilon_1\langle p,q \rangle,
    \end{align*}
   whence,
    \begin{align*}
        \mathcal{L}^\varepsilon_1 (\varepsilon_1 \langle p, q \rangle)=&\frac{\varepsilon_1 |p|^2}{\sqrt{1+\varepsilon|p|^2}}-\varepsilon_1\Big(1+\frac{(1-\varepsilon(1+d))}{\sqrt{1+\varepsilon|p|^2}}\Big)\langle p,q \rangle\\
        &-\varepsilon_1\langle \nabla U(q),q \rangle-\varepsilon_1\langle \nabla G(q),q \rangle.
    \end{align*}
    By Cauchy-Schwarz inequality and conditions to $U$ and $G$, for $\varepsilon$ sufficiently small such that $\varepsilon(1+d)\le 1/2$
    \begin{align*}
        \mathcal{L}^\varepsilon_1 (\varepsilon_1 \langle p, q \rangle)&\leq \frac{\varepsilon_1}{\sqrt{\varepsilon}}|p|+\varepsilon_1\lr{C(a_2)|p|^2+\frac{1}{4}a_2|q|^2}+ \frac{\varepsilon_1}{\sqrt{\varepsilon}}|q|\\
        &\qquad+\varepsilon_1\Big(-\frac{1}{2}a_2 |q|^{\lambda+1}+a_1|q|+\frac{a_1}{|q|^{\beta_1-1}}+C\Big),
    \end{align*}
    where the constant $C(a_2)$ depends only on $a_2$.
    
With regard to the singular term $-\frac{\la p,q\ra }{|q|}$, we have
    \begin{align*}
        \mathcal{L}^\varepsilon_1\Big(-\frac{\langle p ,q \rangle}{|q|}\Big)&=-\Big\langle \frac{p}{\sqrt{1+\varepsilon|p|^2}},\frac{p}{|q|}  \Big\rangle+\Big\langle \frac{p}{\sqrt{1+\varepsilon|p|^2}},\langle p, q \rangle\frac{q}{|q|^3}\Big\rangle\\
        &+\Big\langle D(p)\frac{p}{\sqrt{1+\varepsilon|p|^2}},\frac{q}{|q|}\Big\rangle -\Big\langle \div D(p), \frac{q}{|q|}\Big\rangle + \Big\langle \nabla U, \frac{q}{|q|}\Big\rangle + \Big\langle \nabla G, \frac{q}{|q|}\Big\rangle.
    \end{align*}
    As $D$ is given by~\eqref{E:DefD}, the above identity is equivalent to
    \begin{align*}
        \mathcal{L}^\varepsilon_1\Big(-\frac{\langle p ,q \rangle}{|q|}\Big)&=-\frac{|p|^2}{|q|\sqrt{1+\varepsilon|p|^2}}+\frac{|\langle p,q \rangle|^2}{|q|^3 \sqrt{1+\varepsilon|p|^2}}+\frac{\langle p,q \rangle}{|q|}\\
        &-\frac{\varepsilon d}{|q|\sqrt{1+\varepsilon|p|^2}}\langle p,q \rangle+\frac{\langle \nabla U ,q\rangle}{|q|}+\frac{ \langle \nabla G, q \rangle}{|q|}.
    \end{align*}
    It is clear that
    \begin{align*}
        -\frac{|p|^2}{|q|\sqrt{1+\varepsilon|p|^2}}+\frac{|\langle p,q \rangle|^2}{|q|^3 \sqrt{1+\varepsilon|p|^2}}\leq 0,
    \end{align*}
    and that
    \begin{equation}\label{E:G_trick}
        \begin{aligned}
            \frac{ \langle \nabla G, q \rangle}{|q|}=&-\frac{a_4}{|q|^{\beta_1}}+\frac{\innerbig{\nabla G(q)+a_4\frac{q}{|q|^{\beta_1+1}}}{q}}{|q|}\\
        \le &-\frac{a_4}{|q|^{\beta_1}}+\frac{|a_5|}{|q|^{\beta_2}}+a_6\\
        \le &-\frac{a_4}{2|q|^{\beta_1}}+C.
        \end{aligned}
    \end{equation}
   As a consequence, 
    \begin{align*}
         \mathcal{L}^\varepsilon_1\Big(-\frac{\langle p ,q \rangle}{|q|}\Big)\leq |p|+\sqrt{\varepsilon}d+a_1|q|^\lambda-\frac{a_4}{2|q|^{\beta_1}}+C.
    \end{align*}
    Altogether, we obtain
    \begin{align*}
        \mathcal{L}^\varepsilon_1 V_{1,\varepsilon} \leq& -(1+\varepsilon|p|^2)+d\varepsilon H +\frac{\varepsilon_1}{\sqrt{\varepsilon}}+\varepsilon_1\lr{C(a_2)|p|^2+\frac{1}{4}a_2|q|^2}+\frac{\varepsilon_1}{\sqrt{\varepsilon}}|q|\\
        &+\varepsilon_1\Big(-\frac{1}{2}a_2|q|^{\lambda+1}+\frac{a_1}{|q|^{\beta_1-1}}\Big)+|p|+\frac{1}{2\sqrt{\varepsilon}}+a_1|q|^\lambda-\frac{a_4}{2|q|^{\beta_1}}+C.
    \end{align*}
    We can subsume $|p|$ into $|p|^2$ and $|q|^\lambda$ into $|q|^{\lambda+1}$ and get
    \begin{align*}
        \mathcal{L}^\varepsilon_1 V_{1,\varepsilon}\leq -\frac{\varepsilon-2\varepsilon_1 C(a_2)}{2\varepsilon}(1+\varepsilon|p|^2)+d\varepsilon H -\frac{\varepsilon_1}{8}|q|^{\lambda+1}-\frac{a_4}{4}\frac{1}{|q|^{\beta_1}}+C.
    \end{align*}
 Since $ 1+\varepsilon |p|^2\geq \sqrt{1+\varepsilon |p|^2}$, we may choose $\varepsilon_1\le \frac{C(a_2)\varepsilon}{4}$ and get
    \begin{align*}
        \mathcal{L}^\varepsilon_1 V_{1,\varepsilon}\leq& -\frac{1}{4}\sqrt{1+\varepsilon|p|^2}+d\varepsilon H -\frac{\varepsilon}{8}|q|^{\lambda+1}-\frac{a_4}{4}\frac{1}{|q|^{\beta_1}}+C\\
        \leq&-cH_{1,\varepsilon}+C
    \end{align*}
    where the constant $c$ does not depend on $\varepsilon$. By setting $\varepsilon_1$ sufficiently small and $\kappa_1$ sufficiently large in $V_{1,\varepsilon}$~\eqref{ER:V_1epsilon}, we see that $V_{1,\varepsilon}\ge 1$ and
    \begin{align}\label{ER:H^2V1H^2}
        \frac{1}{2}H_{1,\varepsilon}^2-C\le V_{1,\varepsilon}\le \frac{3}{2}H_{1,\varepsilon}^2+C,
    \end{align}
    for certain large $C$ depending on $\varepsilon$. As a consequence, we see that
    \begin{align}\label{ER:L1V11}
        \mathcal{L}^\varepsilon_1 V_{1,\varepsilon}\leq -c \sqrt{V_{1,\varepsilon}}+C,
    \end{align}
    where $c>0$ is independent of $\varepsilon$. This proves \eqref{ineq:L_1V_1:Relativistic} for the case $n=1$.
    % Lastly, the general case $n\ge 2$ may be carried out in a similar fashion by applying $\mathcal{L}^\varepsilon_1$ to $V_{1,\varepsilon}^n$. The rest of the proof is the same as that of \cite[Lemma 3.4]{duong2024trend} and thus is omitted.} 
    For $n\ge 2$, we apply It\^{o}'s formula to $V_{1,\varepsilon}^n$ to get
    \begin{align}\label{ER:L1V1n}
        \notag \mathcal{L}_{1}^\varepsilon V_{1,\varepsilon}^n=&n V_{1,\varepsilon}^{n-1}\mathcal{L}_{1}^\varepsilon V_{1,\varepsilon}+n(n-1)V_{1,\varepsilon}^{n-2}\tr\Big(D(p)\nabla_p V_{1,\varepsilon}\otimes \nabla_p V_{1,\varepsilon}\Big)\\
        =&n V_{1,\varepsilon}^{n-1}\mathcal{L}_{1}^\varepsilon V_{1,\varepsilon}+n(n-1)V_{1,\varepsilon}^{n-2}\langle\nabla_p V_{1,\varepsilon},D(p)\nabla_p V_{1,\varepsilon}\rangle.
    \end{align}
    Considering the inner product on the above right-hand side, recall the diffusion matrix $D(p)$ given by~\eqref{E:DefD} whose largest eigenvalue is $\sqrt{1+\varepsilon|p|^2}$. We obtain from \eqref{ER:H^2V1H^2} that
    \begin{align*}
        \langle\nabla_p V_{1,\varepsilon},D(p)\nabla_p V_{1,\varepsilon}\rangle\le \sqrt{1+\varepsilon|p|^2}|\nabla_p V_{1,\varepsilon}|^2\le H_{N,\varepsilon}|\nabla_p V_{1,\varepsilon}|^2\le cV_{1,\varepsilon}^{1/2}|\nabla_p V_{1,\varepsilon}|^2+C|\nabla_p V_{1,\varepsilon}|^2.
    \end{align*}
    Since $V_{1,\varepsilon}$ is given by~\eqref{ER:V_1epsilon},
    \begin{align*}
        |\nabla_p V_{1,\varepsilon}|^2\le &3\Big|2H_{1,\varepsilon}\frac{\varepsilon p}{\sqrt{1+\varepsilon|p|^2}}\Big|^2+3\varepsilon^2|q|^2+3\\
        \le & 12\varepsilon H_{1,\varepsilon}^2+3a_1\varepsilon H_{1,\varepsilon}+C\\
        \le &\varepsilon cH_{1,\varepsilon}^2+C\\
        \le &\varepsilon cV_{1,\varepsilon}+C,
    \end{align*}
    where the second inequality follows~\eqref{cond:U:U(x)>=O(x^lambda+1)} and the Hamiltonian structure $H_{1,\varepsilon}$~\eqref{ER:H_1epsilon}. Hence from~\eqref{ER:L1V11} and~\eqref{ER:L1V1n}, we get
    \begin{align*}
        \mathcal{L}_{1}^\varepsilon V_{1,\varepsilon}^n\le -ncV_{1,\varepsilon}^{n-\frac{1}{2}}+\varepsilon cn(n-1)V_{1,\varepsilon}^{n-\frac{1}{2}}+ C n(n-1)V_{1,\varepsilon}^{n-\frac{3}{2}}+\varepsilon CV_{1,\varepsilon}+C.
    \end{align*}
    We complete the proof of Proposition~\ref{P:RLyapunovSing} by choosing $\varepsilon$ sufficiently small. 
\end{proof}

Turning to the multi-particle case, the analogue of \eqref{E:L_1_Generator} when $N\ge 2$ is given by
\begin{align}\label{ER:GeneratorN}
    \notag \mathcal{L}^\varepsilon_Nf=&\sum_{i=1}^N\frac{p}{\sqrt{1+\varepsilon|p_i|^2}}\cdot\partial_{q_i}f-\sum_{i=1}^N D(p_i)\frac{p_i}{\sqrt{1+\varepsilon|p_i|^2}}\cdot\partial_{p_i}f+\sum_{i=1}^N\div_{p_i}D(p_i)\cdot\partial_{p_i}f\\
    &-\sum_{i}\nabla U(q_i)\cdot\partial_{p_i}f-\sum_{1\le i<j\le N}\nabla G(q_i-q_j)\cdot[\nabla_{p_i}f-\nabla_{p_j}f]+\sum_{i=1}^N\tr \lr{D(p_i)\nabla_{p_i}^2f}.
\end{align}
In Proposition \ref{P:RLyapunovMulti}, stated and proven next, we establish the existence of Lyapunov functions for \eqref{E:RelativisticSys:epsilon} when $N\ge 2$. We note that in this situation, we have to further restrict the class of singular potential $G$ by imposing Assumption \ref{A:G:relativistic:N>1} in addition to Assumption \ref{A:G}.
\begin{proposition} \label{P:RLyapunovMulti}
      When $N\ge 2$, suppose that $U$ satisfies Assumption \ref{Assumption U}, $G$ satisfies Assumptions \ref{A:G} and \ref{A:G:relativistic:N>1} and that the diffusion matrix $D$ is defined as in \eqref{E:DefD}. Then,
    \begin{align}\label{ER:DefVN}
       V_{N,\varepsilon}=A_1 H_{N,\varepsilon}^3+\varepsilon H_{N,\varepsilon}\la \q,\p\ra-A_2\varepsilon^2\sum_{i\neq j}\frac{\innerbig{q_i-q_j}{p_i-p_j}}{|q_i-q_j|^{\beta_1-1}}+\kappa_N
    \end{align}
    is a Lyapunov function of~\eqref{E:RelativisticSys:epsilon} for $\varepsilon$ sufficiently small and $\kappa_N$ large enough, and suitable positive constants $A_1,A_2$. In the above,
    \begin{align*}
        H_{N,\varepsilon}=\sum_{i=1}^N\sqrt{1+\varepsilon|p_i|^2}+\sum_{i=1}^N\varepsilon U(q_i)+\sum_{i\ne j}\varepsilon G(q_i-q_j).
    \end{align*}
    Particularly, for all $n\ge 1$, the following holds
    \begin{align} \label{ineq:L_NV_N:Relativistic}
        \mathcal{L}^\varepsilon_N V_{N,\varepsilon}^n\le -c_n V_{N,\varepsilon}^{n-\frac{1}{3}}+C_n, 
    \end{align}
    for some positive constants $c_n=c_n(\varepsilon,N)$ and $C_n=C_n(\varepsilon,N)$.
    
\end{proposition}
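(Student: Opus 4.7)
The plan is to compute $\mathcal{L}_N^\varepsilon$ applied to each of the three summands of $V_{N,\varepsilon}$ and show that together they produce dissipation in all three ``bad'' directions: large $|p_i|$, large $|q_i|$, and small inter-particle distances $|q_i-q_j|$. Throughout, we will exploit the Hamiltonian structure and the particular form \eqref{E:DefD} of $D$, which yields the identity $D(p)p = \sqrt{1+\varepsilon|p|^2}\,p$, so that when the friction acts on the momentum $p/\sqrt{1+\varepsilon|p|^2}$ it produces the clean term $-p$ (up to a divergence correction of order~$\varepsilon$).

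\textbf{Step 1: the Hamiltonian cube.} First, a direct computation (mirroring the $N=1$ calculation in the proof of Proposition~\ref{P:RLyapunovSing}) gives
\[
\mathcal{L}_N^\varepsilon H_{N,\varepsilon} \le -\sum_{i=1}^N\frac{\varepsilon |p_i|^2}{\sqrt{1+\varepsilon|p_i|^2}}+C\varepsilon,
\]
with the potentials cancelling by the Hamiltonian structure. Applying It\^o's formula to $H_{N,\varepsilon}^3$ and using that $D(p_i)$ has largest eigenvalue $\sqrt{1+\varepsilon|p_i|^2}$, the trace/carr\'e-du-champ term contributes at most $CH_{N,\varepsilon}\cdot \varepsilon |p|^2/(1+\varepsilon|p|^2)$. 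For $|p_i|$ large, $H_{N,\varepsilon}\sim \sqrt{\varepsilon}|p_i|$, so that $3H_{N,\varepsilon}^2\cdot(-\varepsilon|p_i|^2/\sqrt{1+\varepsilon|p_i|^2})\sim -cH_{N,\varepsilon}^2$, giving the main dissipation
\[
\mathcal{L}_N^\varepsilon(A_1 H_{N,\varepsilon}^3) \le -c_1 A_1 H_{N,\varepsilon}^2 + C.
\]

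\textbf{Step 2: the $q$-dissipation term.} Applying $\mathcal{L}_N^\varepsilon$ to $\varepsilon H_{N,\varepsilon}\langle \q,\p\rangle$ produces, via the identity $D(p_i)p_i=\sqrt{1+\varepsilon|p_i|^2}\,p_i$, the dominant negative contribution
\[
-\varepsilon H_{N,\varepsilon}\Big(\sum_i\langle \nabla U(q_i),q_i\rangle+\sum_{i\neq j}\langle \nabla G(q_i-q_j),q_i-q_j\rangle\Big),
\]
which by \eqref{cond:U:x.U'(x)>-x^(lambda+1)} yields $-c\varepsilon H_{N,\varepsilon}\sum_i|q_i|^{\lambda+1}$ modulo controllable terms involving $|q_i-q_j|^{1-\beta_1}$ from Assumption~\ref{A:G}. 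The cross terms $\la \nabla_p H,\p\ra\sim \varepsilon|p|^2/\sqrt{1+\varepsilon|p|^2}$ are already dominated by Step~1 after multiplication by $H_{N,\varepsilon}$.

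\textbf{Step 3: the singular corrector.} This is the main obstacle. For the term
$-A_2\varepsilon^2\sum_{i\neq j}\frac{\langle q_i-q_j,p_i-p_j\rangle}{|q_i-q_j|^{\beta_1-1}}$,
the key output after $\mathcal{L}_N^\varepsilon$ acts is the ``Newton-type'' pairing
\[
A_2\varepsilon^2\sum_{i\neq j}\frac{\langle q_i-q_j,\,\nabla G(q_i-q_j)\rangle}{|q_i-q_j|^{\beta_1-1}},
\]
which by the splitting $\nabla G(z)=-a_4 z/|z|^{\beta_1+1}+(\nabla G+a_4 z/|z|^{\beta_1+1})$ and \eqref{cond:G:|grad.G(x)+q/|x|^beta_1|<1/|x|^beta_2:Relativistic} produces the dissipation
\[
-c\varepsilon^2\sum_{i\neq j}\frac{1}{|q_i-q_j|^{2\beta_1-2}}
\]
up to lower order. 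The error terms arising from the kinetic coupling $(p_i-p_j)/\sqrt{1+\varepsilon|p_i|^2}$ and from differentiating $|q_i-q_j|^{-(\beta_1-1)}$ produce expressions of order $|p_i|^2/|q_i-q_j|^{\beta_1}$ and $|q_i-q_j|^{-\beta_1}$, and it is precisely here that we need Assumption~\ref{A:G:relativistic:N>1} ($\beta_1\in(1,2]$) to keep $2\beta_1-2\le\beta_1$ and to balance these against Step~1. The multiplicative-noise trace term $\sum_i\tr(D(p_i)\nabla_{p_i}^2(\cdot))$ vanishes since the corrector is linear in $\p$.

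\textbf{Step 4: balancing and lower bound.} Choosing $A_1$ large and $A_2,\varepsilon$ small, combining Steps~1--3, using \eqref{cond:U:U(x)>=O(x^lambda+1)} and $H_{N,\varepsilon}\ge 1$ to absorb the remainder, and taking $\kappa_N$ large enough so that $V_{N,\varepsilon}\ge 1$ and
\[
\tfrac{1}{2}A_1 H_{N,\varepsilon}^3-C\le V_{N,\varepsilon}\le 2A_1 H_{N,\varepsilon}^3+C,
\]
(the last bound follows by Cauchy--Schwarz applied to the two cross terms, noting $\varepsilon|\langle\q,\p\rangle|\le \sqrt{\varepsilon}\sum_i|q_i|\cdot\sqrt{\varepsilon}|p_i|\lesssim H_{N,\varepsilon}^2$), we arrive at
\[
\mathcal{L}_N^\varepsilon V_{N,\varepsilon}\le -cV_{N,\varepsilon}^{2/3}+C.
\]

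\textbf{Step 5: powers of $V_{N,\varepsilon}$.} Finally, It\^o's formula gives
\[
\mathcal{L}_N^\varepsilon V_{N,\varepsilon}^n=nV_{N,\varepsilon}^{n-1}\mathcal{L}_N^\varepsilon V_{N,\varepsilon}+n(n-1)V_{N,\varepsilon}^{n-2}\sum_{i=1}^N\langle\nabla_{p_i}V_{N,\varepsilon},D(p_i)\nabla_{p_i}V_{N,\varepsilon}\rangle.
\]
Since $|\nabla_{p_i}V_{N,\varepsilon}|\lesssim H_{N,\varepsilon}^2\sqrt{\varepsilon}+\varepsilon H_{N,\varepsilon}|q_i|+\varepsilon^2|q_i-q_j|^{2-\beta_1}$ and the largest eigenvalue of $D(p_i)$ is $\sqrt{1+\varepsilon|p_i|^2}\le H_{N,\varepsilon}$, one obtains $\sum_i\langle \nabla_{p_i}V_{N,\varepsilon},D(p_i)\nabla_{p_i}V_{N,\varepsilon}\rangle\le C\varepsilon V_{N,\varepsilon}^{5/3}+C$. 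Combined with Step~4 this gives
\[
\mathcal{L}_N^\varepsilon V_{N,\varepsilon}^n\le -(cn-C\varepsilon n(n-1))V_{N,\varepsilon}^{n-1/3}+C_n,
\]
and choosing $\varepsilon$ small (depending on $n$) yields \eqref{ineq:L_NV_N:Relativistic}. The dominant technical difficulty is Step~3, where the interplay between the singularity of $\nabla G$ and the nonlinear relativistic kinetic term $p/\sqrt{1+\varepsilon|p|^2}$ forces the restriction $\beta_1\in(1,2]$ and constrains us to the algebraic exponent $2/3$ rather than~$1$.
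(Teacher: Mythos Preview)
Your overall architecture matches the paper's, and Steps~2, 3, and 5 are broadly on track. However, Step~1 contains a genuine error that propagates through Step~4.

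The claimed bound $\mathcal{L}_N^\varepsilon(A_1 H_{N,\varepsilon}^3)\le -c_1 A_1 H_{N,\varepsilon}^2+C$ is false in general. The correct computation (as in the paper) gives
\[
\mathcal{L}_N^\varepsilon H_{N,\varepsilon}^3 \;\le\; 3H_{N,\varepsilon}^2\Big(-\sum_{i}\frac{\varepsilon|p_i|^2}{\sqrt{1+\varepsilon|p_i|^2}}+C\varepsilon\Big),
\]
and the bracket is \emph{positive} whenever all momenta are small. In that regime $H_{N,\varepsilon}$ can still be arbitrarily large via the potential terms $\varepsilon U(q_i)$ or $\varepsilon G(q_i-q_j)$, so the Hamiltonian cube contributes $+C\varepsilon A_1 H_{N,\varepsilon}^2$ rather than dissipation. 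Your heuristic ``for $|p_i|$ large, $H_{N,\varepsilon}\sim\sqrt{\varepsilon}|p_i|$'' only covers the momentum-dominated regime.

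The paper closes this gap by a case analysis: in \textbf{Case~2} ($\varepsilon\max_i|p_i|^2>K$) your Step~1 argument works and the cube supplies $-cH_{N,\varepsilon}^2$; in \textbf{Case~1} ($\varepsilon\max_i|p_i|^2\le K$) the kinetic part of $H_{N,\varepsilon}$ is bounded, so $H_{N,\varepsilon}$ is potential-dominated and the dissipation must come entirely from the cross terms (your Steps~2--3). In Case~1 the positive $+C\varepsilon A_1 H_{N,\varepsilon}^2$ from the cube and the harmful product $H_{N,\varepsilon}\cdot\sum\varepsilon|q_i-q_j|^{-(\beta_1-1)}$ are absorbed by taking $A_2$ \emph{large} (not small, as you write) and $K,\varepsilon$ small. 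Without this split your ``combining Steps~1--3'' in Step~4 does not go through: you have no mechanism to control $+C\varepsilon A_1 H_{N,\varepsilon}^2$ when $|p_i|$ is small but $|q_i|\to\infty$ or $|q_i-q_j|\to 0$.
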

\begin{proof}

Similar to the single-particle system, we first apply $\mathcal{L}^\varepsilon_N$ to $H_{N,\varepsilon}$ to get
\begin{align*}
    L_NH_{N,\varepsilon}=-\sum_{i=1}^N\frac{\varepsilon|p_i|^2}{\sqrt{1+\varepsilon|p_i|^2}}+\sum_{i=1}^N\frac{\varepsilon(\varepsilon d)|p_i|^2}{1+\varepsilon|p_i|^2}+\sum_{i=1}^N \frac{d\varepsilon}{1+\varepsilon|p_i|^2}.
\end{align*}
It follows that 
\begin{align*}
    L_NH_{N,\varepsilon}^3=&3H_{N,\varepsilon}^2\lr{-\sum_{i=1}^N\frac{\varepsilon|p_i|^2}{\sqrt{1+\varepsilon|p_i|^2}}+\sum_{i=1}^N\frac{\varepsilon(\varepsilon d)|p_i|^2}{1+\varepsilon|p_i|^2}+\sum_{i=1}^N \frac{d\varepsilon}{1+\varepsilon|p_i|^2}}\\
    &\qquad+6H_{N,\varepsilon}\sum_{i=1}^N\frac{\varepsilon^2|p_i|^2}{\sqrt{1+\varepsilon|p_i|^2}}\\
    \le &3H_{N,\varepsilon}^2\lr{-\sum_{i=1}^N\frac{\varepsilon|p_i|^2}{\sqrt{1+\varepsilon|p_i|^2}}+2\varepsilon Nd}+6\varepsilon H_{N,\varepsilon}^2.
\end{align*}
Next, concerning the cross term $\varepsilon H_{N,\varepsilon}\la \p,\q\ra=\sum_{i=1}^N\varepsilon H_{N,\varepsilon}\la p_i,q_i\ra$, we have
\begin{align*}
    &\mathcal{L}^\varepsilon_N\lrbig{\varepsilon H_{N,\varepsilon}\la \p,\q\ra }
    \\&=\varepsilon H_{N,\varepsilon} \mathcal{L}^\varepsilon_N\la \p,\q\ra +\varepsilon \lrbig{L_NH_{N,\varepsilon}}\la \p,\q\ra +\frac{1}{2}\sum_{i=1}^N\tr \lr{D(q_i)\lr{\nabla_{p_i}H_{N,\varepsilon}\otimes \nabla_{p_i}\la \p,\q\ra }}\\
    &=\varepsilon H_{N,\varepsilon} \mathcal{L}^\varepsilon_N\la \p,\q\ra +\varepsilon\lr{-\sum_{i=1}^N\frac{\varepsilon|p_i|^2}{\sqrt{1+\varepsilon|p_i|^2}}+\sum_{i=1}^N\frac{\varepsilon(\varepsilon d)|p_i|^2}{1+\varepsilon|p_i|^2}+\sum_{i=1}^N \frac{d\varepsilon}{1+\varepsilon|p_i|^2}}\la \p,\q\ra +c\varepsilon\la \p,\q\ra \\
    &\le  \varepsilon H_{N,\varepsilon} \mathcal{L}^\varepsilon_N\la \p,\q\ra +c\varepsilon\la \p,\q\ra \le \varepsilon H_{N,\varepsilon} \mathcal{L}^\varepsilon_N\la \p,\q\ra +C\varepsilon \la \p,\q\ra .
\end{align*}
Morevoer, the first term on the above right hand side can be decomposed as follows.
\begin{align*}
    \varepsilon H_{N,\varepsilon} \mathcal{L}^\varepsilon_N\la p,q\ra=&\varepsilon H_{N,\varepsilon}\bigg[\sum_{i=1}^N \frac{|p_i|^2}{\sqrt{1+\varepsilon|p_i|^2}}-\sum_{i=1}^N\inner{D(p_i)\frac{p_i}{{\sqrt{1+\varepsilon|p_i|^2}}}}{q_i}+\sum_{i=1}^N\inner{\div_{p_i}D(p_i)}{q_i}\\
    &-\sum_{i=1}^N\inner{\nabla U(q_i)}{q_i}-\sum_{1\le i<j\le N}\inner{\nabla G(q_i-q_j)}{q_i-q_j}\bigg]\\
    \eqqcolon& H_{N,\varepsilon}\sum_{k=1}^5   R_k.
\end{align*}
With regard to $R_1$, it is clear that
\begin{align*}
    R_1=\varepsilon\sum_{i=1}^N\frac{|p_i|^2}{\sqrt{1+\varepsilon|p_i|^2}}\le \sum_{i=1}^N \sqrt{\varepsilon}|p_i|.
\end{align*}
From \eqref{E:DefD}, we get
\begin{align*}
    R_2=\varepsilon\sum_{i=1}^N\inner{D(p_i)\frac{p_i}{{\sqrt{1+\varepsilon|p_i|^2}}}}{q_i}=&\varepsilon\sum_{i=1}^N\frac{\la q_i,q_i\ra +\varepsilon|p_i|^2\la q_i,q_i\ra }{1+\varepsilon |p_i|^2}=\varepsilon\sum_{i=1}^N\la q_i,q_i\ra .
\end{align*}
Concerning $R_3$, we invoke~\eqref{E:U_3} to infer
\begin{align*}
    R_3=-\varepsilon\sum_{i=1}^N\innerbig{\nabla U(q_i)}{q_i}\le -\varepsilon\sum_{i=1}^N a_2|q_i|^{\lambda+1}+\varepsilon Na_3.
\end{align*}
Concerning $R_4$, we employ~\eqref{E:G_2} to see that
\begin{align*}
    R_4=-\varepsilon\sum_{1\le i<j\le N}\innerbig{\nabla G(q_i-q_j)}{q_i-q_j}\le \varepsilon\sum_{1\le i<j\le N}a_1\lr{|q_i-q_j|+\frac{1}{|q_i-q_j|^{\beta_1-1}}}.
\end{align*}
A routine calculation yields $\div D(p)=\frac{\varepsilon d }{\sqrt{1+\varepsilon|p|^2}}p$, which implies that
\begin{align*}
    R_5=\sum_{i=1}^N \varepsilon\div_{p_i} D(p_i)q_i=\varepsilon&\sum_{i=1}^N\frac{\varepsilon d}{\sqrt{1+\varepsilon|p_i|^2}}\la q_i,q_i\ra \\
    \le& \varepsilon\sum_{i=1}^N\sqrt{\varepsilon}d|q_i|\le \varepsilon\sum_{i=1}^N\frac{d}{2}\lrbig{\varepsilon|q_i|^2+1}.
\end{align*}
Altogether, we obtain
\begin{align*}
    \varepsilon H_{N,\varepsilon} \mathcal{L}^\varepsilon_N\la p,q\ra\le H_{N,\varepsilon}\Big(&\sum_{i=1}^N \sqrt{\varepsilon}|p_i|+\la \p,\q\ra -\varepsilon\sum_{i=1}^N\frac{1}{2}a_2|q_i|^{\lambda+1}\\
    &\qquad\qquad+a_1\varepsilon\sum_{1\le i<j\le N}\frac{1}{|q_i-q_j|^{\beta_1-1}}+C\Big).
\end{align*}
Next for the singular term of $V_{N,\varepsilon}$, we have
\begin{align*}
    &\mathcal{L}^\varepsilon_N\lr{\sum_{1\le i<j\le N}\frac{\la q_i-q_j,p_i-p_j \ra }{|q_i-q_j|^{\beta_1-1}}}\\
    &=\sum_{1\le i<j\le N}\inner{\frac{p_i}{\sqrt{1+\varepsilon|p_i|^2}}-\frac{p_j}{\sqrt{1+\varepsilon|p_j|^2}}}{\frac{p_i-p_j}{|q_i-q_j|^{\beta_1-1}}}\\
    &\qquad-(\beta_1-1)\sum_{1\le i<j\le N}\la q_i-q_j,p_i-p_j \ra \inner{\frac{p_i}{\sqrt{1+\varepsilon|p_i|^2}}-\frac{p_j}{\sqrt{1+\varepsilon|p_j|^2}}}{\frac{p_i-p_j}{|q_i-q_j|^{\beta_1+1}}}\\
    &\qquad+\sum_{1\le i<j\le N}\inner{D(p_i)\frac{p_i}{\sqrt{1+\varepsilon|p_i|^2}}-D(p_j)\frac{p_j}{\sqrt{1+\varepsilon|p_j|^2}}}{\frac{q_i-q_j}{|q_i-q_j|^{\beta_1-1}}}\\
    &\qquad+\sum_{1\le i<j\le N}\inner{\div_{p_i}D(p_i)-\div_{p_j}D(p_j)}{\frac{q_i-q_j}{|q_i-q_j|^{\beta_1-1}}}\\
    &\qquad+\sum_{1\le i<j\le N}\inner{\nabla U(q_i)-\nabla U(q_j)}{\frac{q_i-q_j}{|q_i-q_j|^{\beta_1-1}}}\\
    &\qquad+\sum_{i=1}^N\inner{\sum_{j\neq i}\frac{q_i-q_j}{|q_i-q_j|^{\beta_1-1}}}{\sum_{k\neq i}\nabla G(q_i-q_j)}\eqqcolon \sum_{k=6}^{11} R_k.
\end{align*}
Since $\frac{p}{\sqrt{1+\varepsilon|p|^2}}\le 1/\sqrt{\varepsilon}$, we see that
\begin{align*}
    \varepsilon^2 (R_6+R_7)\le& \sum_{1\le i<j\le N}\varepsilon^{3/4}|p_i-p_j|\frac{\varepsilon^{5/4}}{|q_i-q_j|^{\beta_1-1}}\\
    \le& \sum_{i}\varepsilon^{3/2} |p_i|^2+\sum_{1\le i<j\le N}\frac{\varepsilon^{5/2}}{|q_i-q_j|^{2\beta_1-2}}.
    % \le & H_{N,\varepsilon}\sum_{i=1}^N\sqrt{\varepsilon}|p_i|+\sum_{1\le i<j\le N}\frac{\varepsilon^2}{|q_i-q_j|^{2\beta_1-2}}
\end{align*}
Concerning $R_8$ we have
\begin{align*}
    \varepsilon^2 R_8=&\varepsilon^2\inner{p_i-p_j}{\frac{q_i-q_j}{|q_i-q_j|^{\beta_1-1}}}\\
    \le& \varepsilon^2\sum_{1\le i<j\le N}\frac{|p_i-p_j|^2}{2}+\frac{|q_i-q_j|^{4-2\beta_1}}{2}\\
    \le& \varepsilon C\sum_{i=1}^N \varepsilon|p_i|^2+\varepsilon^2\sum_{i=1}^N |q_i|^{4-2\beta_1}\le \varepsilon C\sum_{i=1}^N \varepsilon|p_i|^2+\varepsilon\sum_{i=1}^N \varepsilon U(q_i)+C.
\end{align*}
Turning to $R_9$, it holds that
\begin{align*}
    \varepsilon^2 R_9
    =&\varepsilon^3 d\sum_{1\le i<j\le N}\inner{\frac{p_i}{\sqrt{1+\varepsilon|p_i|^2}}-\frac{p_j}{\sqrt{1+\varepsilon|p_j|^2}}}{\frac{q_i-q_j}{|q_i-q_j|^{\beta_1-1}}}\\
    \le &\varepsilon^{3/2} \sum_{i} \varepsilon|q_i|^{2-\beta_1}\le \varepsilon^{3/2}\sum_{i=1}^N \varepsilon U(q_i)+C.
\end{align*}
% We can check that the function $f(p)=\frac{p}{\sqrt{(1+\varepsilon |p|^2)}}$ is Lipschitz. 
% \begin{align*}
%     \nabla f(p)=\frac{1}{\sqrt{1+\varepsilon|p|^2}}I-\frac{\varepsilon}{\lr{1+\varepsilon|p|^2}^{3/2}}p\otimes p
% \end{align*}
% For all $|v|=1$,
% \begin{align*}
%     |\nabla f(p)v|^2=&\frac{1}{1+\varepsilon|p|^2}|v|^2-\frac{2\varepsilon}{\lr{1+\varepsilon|p|^2}^2}\inner{p}{v}^2+\frac{\varepsilon^2}{\lr{1+\varepsilon|p|^2}^3}\inner{p}{v}^2|p|^2\\
%     =&\frac{1}{1+\varepsilon|p|^2}-\frac{\varepsilon^2|p|^2+2\varepsilon}{\lr{1+\varepsilon|p|^2}^3}\inner{p}{v}^2\coloneqq g(\inner{p}{v}^2)
% \end{align*}
% which is linear with respect to $\inner{p}{v}^2\in [0,|p|^2]$, with maximum
% \begin{align*}
%     g(0)=\frac{1}{1+\varepsilon|p|^2}\le 1
% \end{align*}
Using Assumption \ref{Assumption U}, we obtain
\begin{align*}
    \varepsilon^2 R_{10} \leq \varepsilon ^2C \sum_{i=1}^N |\nabla U(q_i)| \sum_{i=1}^N |q_i|^{2 - \beta_1} \leq \varepsilon C \Big( 1 + \sum_{i=1}^N \varepsilon |q_i|^{\lambda + 2 - \beta_1} \Big)\le \varepsilon C\lr{1+\sum_{i=1}^N\varepsilon U(q_i)}.
\end{align*}
Concerning $R_{11}$, we apply a similar decomposition as in~\eqref{E:G_trick} to get
\begin{align*}
    & \sum_{i=1}^N\inner{\sum_{j\neq i}\frac{q_i-q_j}{|q_i-q_j|^{\beta_1-1}}}{\sum_{k\neq i}\nabla G(q_i-q_k)} \\
&=-a_4\sum_{i=1}^N\inner{\sum_{j\neq i}\frac{q_i-q_j}{|q_i-q_j|^{\beta_1-1}}  }{\sum_{k\neq i}\frac{q_i-q_k}{|q_i-q_k|^{\beta_1+1}}}\\
&\qquad+ \sum_{i=1}^N\inner{\sum_{j\neq i}\frac{q_i-q_j}{|q_i-q_j|^{\beta_1-1}}}{\sum_{k\neq i}\nabla G(q_i-q_k)+ a_4\frac{q_i-q_k}{|q_i-q_k|^{\beta_1+1}}}\\
&\le  R_{11}^{(1)}+R_{11}^{(2)}.
\end{align*}
On the one hand, in view of Lemma~\ref{L:s+1timesGamma} and Assumption~\ref{A:G:relativistic:N>1}, by setting $\gamma=\beta_1-1\in (0,1]$ and $s=\beta_1+1\ge 0$, we have
\begin{align*}
    R_{11}^{(1)}=&-a_4 \sum_{i=1}^{N} \Big\langle \sum_{j \neq i} \frac{q_i - q_j}{|q_i - q_j|^{\beta_1 - 1}}, \sum_{k \neq i} \frac{q_i - q_k}{|q_i - q_k|^{\beta_1 + 1}} \Big\rangle \\
    \leq &   -2 a_4 \sum_{1 \leq i < j \leq N} \frac{1}{|q_i - q_j|^{2\beta_1 - 2}}.
\end{align*}
On the other hand, we employ Assumption \ref{A:G} to infer
\begin{align*}
    R_{11}^{(2)}&\le   \sum_{i=1}^N\Big\la  \sum_{j\neq i}\frac{q_i-q_j}{|q_i-q_j|^{\beta_1-1}}  ,\sum_{k\neq i}\nabla G(q_i-q_k)+ a_4\frac{q_i-q_k}{|q_i-q_k|^{\beta_1+1}}   \Big\ra\\
    &\qquad\le C\sum_{i=1}^N |q_i|^{2-\beta_1} \sum_{i=1}^N\sum_{l\neq i}\Big|\nabla G(q_i-x_l)+ a_4\frac{q_i-q_l}{|q_i-q_l|^{\beta_1+1}}   \Big|\\
    &\qquad\le C\sum_{i=1}^N |q_i|^{2-\beta_1}\Big[ \sum_{1\le i<j\le N}\frac{1}{|q_i-q_j|^{\beta_2}}+1\Big].
\end{align*}
In turn, Young's inequality implies that
\begin{align*}
    R_{11}^{(2)}\le &   C\sum_{i=1}^N |q_i|^{4-2\beta_1}+ C\sum_{1\le i<j\le N}\frac{1}{|q_i-q_j|^{2\beta_2}}+C.
\end{align*}
It follows that
\begin{align*}
\varepsilon^2 R_{11}\le -a_4 \varepsilon^2 \sum_{1\le i<j\le N}\frac{1}{|q_i-q_j|^{2\beta_1-1}} + C\varepsilon\sum_{i=1}^N \varepsilon U(q_i)+C.
\end{align*}
Altogether, we deduce
\begin{align*}
    \mathcal{L}^\varepsilon_NV_{N,\varepsilon}\le& 3A_1H_{N,\varepsilon}^2\lr{-\sum_{i=1}^N\frac{\varepsilon|p_i|^2}{\sqrt{1+\varepsilon|p_i|^2}}+2\varepsilon Nd+2\varepsilon }-\frac{1}{2}a_2H_{N,\varepsilon}\sum_{i=1}^N \varepsilon|q_i|^{\lambda+1}\\
    &\qquad+CH_{N,\varepsilon}\lr{\sum_{i=1}^N \sqrt{\varepsilon}|p_i|+\varepsilon\la \p,\q\ra +\sum_{1\le i<j\le N}\frac{\varepsilon}{|q_i-q_j|^{\beta_1-1}}}\\
    &\qquad+CA_2\sum_{i=1}^N\varepsilon|p_i|^2+CA_2\varepsilon \sum_{i=1}^N \varepsilon U(q_i)\\
    &\qquad-a_4A_2 \sum_{1\le i<j\le N}\frac{\varepsilon^2}{|q_i-q_j|^{2\beta_1-2}}+C.
\end{align*}
Observe that
\begin{align*}
    \sum_{i=1}^N\varepsilon|p_i|^2\le \sum_{i}\sqrt{1+\varepsilon|p_i|^2}\sqrt{\varepsilon}|p_i|\le H_{N,\varepsilon}\sqrt{\varepsilon}|\p|\le C H_{N,\varepsilon}\sum_{i=1}^N\sqrt{\varepsilon}|p_i|.
\end{align*}
Letting $\delta$ be given and be chosen later, we employ the elementary inequality $ab\le \delta a^2+\frac{1}{4\delta}b^2$, $a,b\in \R$ to estimate
\begin{align*}
    \varepsilon\la \p,\q\ra \le C(\delta)\sum_{i=1}^N\varepsilon|p_i|^2+\delta\sum_{i=1}^N\varepsilon |q_i|^2,\quad\text{and}\quad
    \sum_{i=1}^N\varepsilon U(q_i)\le H_{N,\varepsilon}.
\end{align*}
Thus, for arbitrary positive constants $A_1,A_2$, we have
\begin{align}\label{ER:LNVNBound}
    \notag \mathcal{L}^\varepsilon_NV_{N,\varepsilon}\le&3A_1H_{N,\varepsilon}^2\lr{-\sum_{i=1}^N\frac{\varepsilon|p_i|^2}{\sqrt{1+\varepsilon|p_i|^2}}+2\varepsilon Nd+2\varepsilon +\frac{C(\delta)}{A_1}\sum_{i=1}^N\sqrt{\varepsilon}|p_i|}\\
    &\qquad-\frac{1}{2}a_2H_{N,\varepsilon}\sum_{i=1}^N \varepsilon|q_i|^{\lambda+1}-a_4A_2 \sum_{1\le i<j\le N}\frac{\varepsilon^2}{|q_i-q_j|^{2\beta_1-2}}+R_{12},
\end{align}
where
\begin{align}\label{ER:DefI12}
    R_{12}=CH_{N,\varepsilon}\lr{A_2\sum_{i=1}^N \sqrt{\varepsilon}|p_i|+\sum_{1\le i<j\le N}\frac{\varepsilon}{|q_i-q_j|^{\beta_1-1}}+\varepsilon CA_2+\delta\sum_{i}\varepsilon|q_i|^2+C},
\end{align}
and $C$ does not depend on $A_1,A_2,\varepsilon$ and $\delta$. At this point, we claim that for $\varepsilon$ and $\delta$ sufficiently small, one may tune $A_1,A_2$ appropriately so as to deduce the following
\begin{align}  \label{ineq:L_NVC}
    \mathcal{L}^\varepsilon_N V_{N,\varepsilon}\le -c_1H_{N,\varepsilon}^2+C_1.
\end{align}

\noindent\textbf{Case 1:} 
 $\varepsilon \max\{|p_1|^2,\dots,|p_n|^2 \}\le K$ for certain small $K=K(A_2)$ to be chosen which only depends on $A_2$. Recall that $H_{N,\varepsilon}=\sum_{i=1}^N\sqrt{1+\varepsilon|p_i|^2}+\varepsilon\sum_{i=1}^NU(q_i)+\varepsilon\sum_{1\le i<j\le N}G(q_i-q_j)$. We have 
\begin{align*}
& CH_{N,\varepsilon}\bigg[A_2\sum_{i=1}^N \sqrt{\varepsilon}|p_i| + \sum_{1\le i<j\le N}\frac{\varepsilon}{|q_i-q_j|^{\beta_1-1}}\bigg] \\
&\le C\sqrt{K}A_2H_{N,\varepsilon}+C\sum_{i=1}^N (\varepsilon U(q_i)+\sqrt{K}) \sum_{1\le i<j\le N}\frac{\varepsilon}{|q_i-q_j|^{\beta_1-1}} + C\bigg[  \sum_{1\le i<j\le N}\frac{\varepsilon}{|q_i-q_j|^{\beta_1-1}}\bigg]^2.
\end{align*}
We apply $ab\le ra^2+\frac{1}{4r}b^2$ again to the second term on the right hand side with appropriate coefficient $r=\frac{1}{16}\frac{a_2}{a_1}$ to get
\begin{align*}
    C\sum_{i=1}^N& \varepsilon (U(q_i)+\sqrt{K}) \sum_{1\le i<j\le N}\frac{\varepsilon}{|q_i-q_j|^{\beta_1-1}}\\
    \le& \frac{1}{16}\frac{a_2}{a_1}\Big(\sum_{i=1}^N\varepsilon U(q_i)+N\sqrt{K}\Big)^2+C(a_1,a_2)\sum_{1\le i<j\le N}\frac{\varepsilon^2}{|q_i-q_j|^{2\beta_1-2}}\\
    \le &\frac{1}{8}a_2\sum_{i=1}^N\varepsilon U(q_i)\sum_{j=1}^N\varepsilon(1+|q_j|^{\lambda+1})+C(a_1,a_2)\sum_{1\le i<j\le N}\frac{\varepsilon^2}{|q_i-q_j|^{2\beta_1-2}}+C(K)\\
    \le &\frac{1}{8}a_2 H_{N,\varepsilon} \sum_{i=1}^N \varepsilon|q_i|^{\lambda+1}+\frac{N}{8}a_2 H_{N,\varepsilon}+ C\sum_{1\le i<j\le N}\frac{\varepsilon^2}{|q_i-q_j|^{2\beta_1-2}}+C(K).
\end{align*}
It follows from above estimate and~\eqref{ER:DefI12} that
\begin{align*}
    R_{12}\le &C(\varepsilon,A_2)H_{N,\varepsilon}+\lr{\frac{1}{8}a_2+\delta C} H_{N,\varepsilon} \sum_{i=1}^N \varepsilon|q_i|^{\lambda+1}+ C\sum_{1\le i<j\le N}\frac{\varepsilon^2}{|q_i-q_j|^{2\beta_1-2}} +D.
\end{align*}
Also, it is not difficult to see that
\begin{align*}
    & 3A_1H_{N,\varepsilon}^2\lr{-\sum_{i=1}^N\frac{\varepsilon|p_i|^2}{\sqrt{1+\varepsilon|p_i|^2}}+2\varepsilon Nd+2\varepsilon +\frac{C(\delta)}{A_1}\sum_{i=1}^N\sqrt{\varepsilon}|p_i|} \\
    &\qquad\le6A_1H_{N,\varepsilon}^2\varepsilon (Nd+1) +C(\delta)\sqrt{K} H_{N,\varepsilon}^2.
\end{align*}
Altogether, from~\eqref{ER:LNVNBound}, we have 
\begin{align*}
    \mathcal{L}^\varepsilon_N V_{N,\varepsilon}\le&  -\lr{\frac{3}{8}a_2- C\delta}H_{N,\varepsilon}\sum_{i=1}^N\varepsilon |q_i|^{\lambda+1}-a_4A_2 \sum_{}\frac{\varepsilon^2}{|q_i-q_j|^{2\beta_1-2}}\\
    &+C\sum_{1\le i<j\le N}\frac{\varepsilon^2}{|q_i-q_j|^{2\beta_1-2}} +C(\varepsilon,A_2) H_{N,\varepsilon}\\
    &+6A_1H_{N,\varepsilon}^2\varepsilon (Nd+1) +C(\delta)\sqrt{K} H_{N,\varepsilon}^2.
\end{align*}
By choosing $A_2$ sufficiently large and recalling the kinetic energy $\sum_{i=1}^N\sqrt{1+\varepsilon|p_i|^2}$ is bounded in this case, for certain positive constants $c$ and $C$ depending on $A_2$, we take $\delta$ sufficiently small such that $C\delta\le 1/8$ to get
\begin{align*}
    \mathcal{L}^\varepsilon_N V_{N,\varepsilon}\le &-\frac{1}{4}a_2H_{N,\varepsilon}\sum_{i=1}^N\varepsilon |q_i|^{\lambda+1}-\frac{1}{2}a_4A_2 \sum_{1\le i<j\le N}\frac{\varepsilon^2}{|q_i-q_j|^{2\beta_1-2}}\\
    & +C(\varepsilon,A_2) H_{N,\varepsilon}+6A_1H_{N,\varepsilon}^2\varepsilon (Nd+1) +C(\delta)\sqrt{K} H_{N,\varepsilon}^2+C\\
    \le& -cH_{N,\varepsilon}^2+C(\varepsilon,A_2) H_{N,\varepsilon}+6A_1H_{N,\varepsilon}^2\varepsilon (Nd+1) +C(\delta)\sqrt{K} H_{N,\varepsilon}^2+C.
\end{align*}
Now we can choose $K=K(A_2)$ sufficiently small and shrink $\varepsilon$ to zero if necessary to obtain that
\begin{align*}
    \mathcal{L}^\varepsilon_N V_{N,\varepsilon}\le-cH_{N,\varepsilon}^2+C,
\end{align*}
which proves \eqref{ineq:L_NVC} in Case 1.

\noindent\textbf{Case 2:} 
 $\varepsilon \max\{|p_1|^2,\dots,|p_n|^2 \}> K$.
 Denote the index set $S=\{i\in\{1,\dots,n\}: \varepsilon|p_{i}|^2>K\}$. For any $i_0\in S$,
\begin{align*}
    -\frac{\varepsilon|p_{i_0}|^2}{\sqrt{1+\varepsilon|p_{i_0}|^2}}\le -\frac{\varepsilon|p_{i_0}|^2}{\sqrt{\frac{1}{K}\varepsilon|p_{i_0}|^2+\varepsilon|p_{i_0}|^2}}\le -\sqrt{\frac{K}{K+1}}\sqrt{\varepsilon}|p_{i_0}|.
\end{align*}
 Therefore, by taking $A_1$ large and $\varepsilon$ small, it holds that
\begin{align*}
3A_1H_{N,\varepsilon}^2&\lr{-\sum_{i=1}^N\frac{\varepsilon|p_i|^2}{\sqrt{1+\varepsilon|p_i|^2}}+2\varepsilon Nd+2\varepsilon +\frac{C}{A_1}\sum_{i=1}^N\sqrt{\varepsilon}|p_i|}\\ 
\le& \lr{-\sum_{i\in S}\lr{\sqrt{\frac{K}{K+1}}-\frac{C}{A_1}}\sqrt{\varepsilon}|p_{i}|+\frac{C}{A_1}\sum_{i\notin S}\sqrt{\varepsilon}|p_i|+C\varepsilon}3A_1H_{N,\varepsilon}^2\\
\le &-\lr{\frac{1}{2}\sqrt{\frac{K}{K+1}}\sqrt{K}-\frac{C}{A_1}N\sqrt{K}-C\varepsilon} 3A_1 H_{N,\varepsilon}^2\\
\le &-\frac{3}{4}\frac{K}{\sqrt{K+1}}A_1H_{N,\varepsilon}^2.
\end{align*}
We can bound $R_{12}$ by
\begin{align*}
    R_{12}\le CH_{N,\varepsilon}^2+\varepsilon C A_2 H_{N,\varepsilon}+CH_{N,\varepsilon}\le C_1H_{N,\varepsilon}^2+C_2,
\end{align*}
where $C_1$ is independent of $\varepsilon, A_1, A_2$. We can further take $A_1$ large enough such that
\begin{align*}
    \mathcal{L}^\varepsilon_N V_{N,\varepsilon}\le -\frac{3}{4}\frac{K}{\sqrt{K+1}}A_1 H_{N,\varepsilon}^2+C_1H_{N,\varepsilon}^2+C_2\le -\frac{3}{8}\frac{K}{\sqrt{K+1}}A_1 H_{N,\varepsilon}^2+C_2.
\end{align*}
which verifies \eqref{ineq:L_NVC}, as claimed. 
Recalling $V_{N,\varepsilon}$ is given by~\eqref{ER:DefVN} and $\beta_1\in (1,2]$, for $A_1$ and $\kappa_N$ sufficient large, it is clear that $V_{N.\varepsilon}\ge 1$ and
\begin{align*}
    \frac{1}{2}A_1H_{N,\varepsilon}^3-C\le V_{N,\varepsilon}\le \frac{3}{2}A_1H_{N,\varepsilon}^3+C.
\end{align*}
As a consequence of~\eqref{ineq:L_NVC}, we deduce
\begin{align*}
    \mathcal{L}^\varepsilon_N V_{N,\varepsilon}\le -c V_{N,\varepsilon}^{\frac{2}{3}}+C.
\end{align*}
This establishes \eqref{ineq:L_NV_N:Relativistic} for the case $n=1$. When $n\ge 2$, similar to~\eqref{ER:L1V1n}, we apply It\^{o}'s formula to $V_{N,\varepsilon}^n$
\begin{align}\label{ER:LNVNn}
        \notag \mathcal{L}_{N}^\varepsilon V_{N,\varepsilon}^n=&n V_{N,\varepsilon}^{n-1}\mathcal{L}_{N}^\varepsilon V_{N,\varepsilon}+n(n-1)V_{N,\varepsilon}^{n-2}\sum_{i=1}^N\tr\Big(D(p_i)\nabla_{p_i} V_{N,\varepsilon}\otimes \nabla_{p_i} V_{N,\varepsilon}\Big)\\
        =&n V_{N,\varepsilon}^{n-1}\mathcal{L}_{N}^\varepsilon V_{N,\varepsilon}+n(n-1)V_{N,\varepsilon}^{n-2}\sum_{i=1}^N\langle\nabla_{p_i} V_{N,\varepsilon},D(p_i)\nabla_{p_i} V_{N,\varepsilon}\rangle.
\end{align}
By noting that $\|D(p_i)\|\le \sqrt{1+\varepsilon|p_i|^2}\le H_{N,\varepsilon}\le cV_{N,\varepsilon}^{1/3}$ and 
\begin{align*}
    |\nabla_{p_i} V_{N,\varepsilon}|^2=&\Big|(3A_1H_{N,\varepsilon}^2+\varepsilon\langle \p,\q\rangle)\frac{\varepsilon p_i}{\sqrt{1+\varepsilon|p_i|^2}}+\varepsilon H_{N,\varepsilon}p_i+A_2\varepsilon^2\sum_{j\ne i}\frac{q_i-q_j}{|q_i-q_j|^{\beta_1-1}}\Big|^2\\
    \le& 
    \varepsilon cH_{N,\varepsilon}^4+C\le \varepsilon cV^{4/3}_{N,\varepsilon}+C,
\end{align*}
we have
\begin{align*}
    \mathcal{L}_{N}^\varepsilon V_{N,\varepsilon}^n\le& -cnV_{N,\varepsilon}^{n-\frac{1}{3}}+nCV_{N,\varepsilon}^{n-1}+\varepsilon n(n-1)c V_{N,\varepsilon}^{n-\frac{1}{3}}+ n(n-1)C V_{N,\varepsilon}^{n-2}\\
    \le& -c_nV_{N,\varepsilon}^{n-\frac{1}{3}}+C_n,
\end{align*}
which completes the proof of Proposition~\ref{P:RLyapunovMulti}.
\end{proof}

%We still need to establish the solvablity of the associated ODE system to~\eqref{E:RelativisticSys} given by~\eqref{E:ControlledSys} and $K(p)=\frac{1}{\varepsilon}\sqrt{1+\varepsilon|p|^2}$.

\subsection{Newtonian limit} \label{sec: RLE:Newtonian_limit}
{In this subsection, we prove Theorem \ref{thm:NewtonianLimit} validating the Newtonian limit, by sending $\mathbf{c}\to \infty$ in~\eqref{E:RelativisticSys:epsilon}, which is equivalent to taking $\varepsilon=1/\mathbf{c}^2\to 0$ in \eqref{E:RelativisticSys:epsilon}. Similar to the small mass limit for the classical system \eqref{E:Classical_Eq}, owing to the presence of the nonlinearities, the proof of Theorem \ref{thm:NewtonianLimit} consists of two main steps: we first consider a truncated system of \eqref{E:RelativisticSys:epsilon} and establish the Newtonian limit toward the corresponding truncated limiting equation. This is presented in Section \ref{sec: RLE:Newtonian_limit:Lipschizt}. Then, we remove the Lipschitz constraint by exploiting suitable energy estimates, thereby ultimately concluding Theorem \ref{thm:NewtonianLimit}. All of this will be discussed in Section~\ref{sec: RLE:Newtonian_limit:Original:N>1} and Section~\ref{sec: RLE:Newtonian_limit:Original:N=1}.}
\begin{comment}
We prove the following theorem.
\begin{theorem}\label{T:NewtonianLimit}
    Let $(\q^c,\p^c)$ and $(\q,\p)$ be the solutions to~\eqref{E:RelativisticSys} and \eqref{E:LimitingRelativisticSys} respectively. Then the following statements hold
    \begin{enumerate}
        \item For all $N\ge 2$, $T>0$ and $\xi>0$
        \begin{equation}
    \mathbb{P} \Big( \sup_{t \in [0, T]} \Big[ |\q^c(t) - \mathbf{q}(t)| + |\p^c(t) - \mathbf{p}(t)| \Big] > \xi \Big) \to 0, \quad c \to \infty
\end{equation}
        \item When $N=1$, for all $T>0$ and $n\ge 1$,
        \begin{equation}
            \E\sup_{t\in [0,T]}\Big[|q^c(t)-q(t)|^n+|p^c(t)-p(t)|^n\Big]\le \to 0\quad\text{as $c\to \infty$}
        \end{equation}
    \end{enumerate}
    
\end{theorem}
\end{comment}
\subsubsection{Lipschitz system} \label{sec: RLE:Newtonian_limit:Lipschizt}
Recall the truncation function $\theta_R$ given by~\eqref{E:truncation function}. Since $D$ defined in~\eqref{E:DefD} is unbounded and is different from the choice of $D$ in the classical equation \eqref{E:Classical_Eq}, we consider a truncated system of \eqref{E:RelativisticSys:epsilon} as follows.
\begin{equation}\label{E:RTruncSys}
    \begin{aligned}
        \ud q_i^{\varepsilon,R}=&\frac{p^{\varepsilon,R}_i}{\sqrt{1+\varepsilon|p_i^{\varepsilon,R}|^2}}\ud t,\\
    \ud p_i^{\varepsilon,R}=&-D\lrbig{p_i^{\varepsilon,R}}\frac{p_i^{\varepsilon,R}}{\sqrt{1+\varepsilon|p_i^{\varepsilon,R}|^2}}\ud t +\div \lrbig{D\lrbig{p_i^{\varepsilon,R}}}\ud t\\
    &\qquad-\theta_R(|q_i^{\varepsilon,R}|)\nabla U(q_i^{\varepsilon,R})\ud t-\sum_{j\neq i}\theta_R(|q_i^{\varepsilon,R}-q_j^{\varepsilon,R}|^{-1})\nabla G(q_i^{\varepsilon,R}-q_j^{\varepsilon,R})\ud t\\
    &\qquad +\sqrt{2\Big[\theta_R(|p_i^{\varepsilon,R}|)\lr{D\lrbig{p_i^{\varepsilon,R}}-I}+I\Big]}\ud W_i.
    \end{aligned}
\end{equation}
Here we denote $I\in \R^{d\times d}$ to be the identity matrix. The corresponding limiting system in the Newtonian regime as $\varepsilon\to 0$ is given by
\begin{equation}\label{E:RTruncdLim}
    \begin{aligned}
        \ud q_i^R=&p_i^R \ud t,\\
    \ud p_i^R=&-p_i^R\ud t+\sqrt{2}\ud W_i-\theta_R(|q_i^R|)\nabla U(q_i^R)\ud t-\theta_R(|q_i^R-q_j^R|^{-1})\sum_{j\neq i}\nabla G\lr{q_i^R-q_j^R}\ud t.
    \end{aligned}
\end{equation}
\begin{comment} 
recalling the relativistic Langevin equation \eqref{E:RelativisticSysN=1}, the truncated system \eqref{E:RTruncSys} is reduced to 
\begin{equation}\label{E:RTruncSysN=1}
    \begin{aligned}
        \ud q^{\varepsilon,R}=&\frac{p^{\varepsilon,R}}{\sqrt{1+\varepsilon|p^{\varepsilon,R}|^2}}\ud t\\
    \ud p^{\varepsilon,R}=&-D\lrbig{p^{\varepsilon,R}}\frac{p^{\varepsilon,R}}{\sqrt{1+\varepsilon|p^{\varepsilon,R}|^2}}\ud t +\div \lrbig{D\lrbig{p^{\varepsilon,R}}}\ud t-\theta_R(|q^{\varepsilon,R}|)\nabla U\lrbig{q^{\varepsilon,R}}\ud t\\
    &\qquad-\theta_R\lrbig{|q^{\varepsilon,R}|^{-1}}\nabla G\lrbig{q^{\varepsilon,R}}\ud t+\sqrt{2\Big[\theta_R(|p^{\varepsilon,R}|)\lrbig{D\lrbig{p^{\varepsilon,R}}-I}+I\Big]}\ud W,
    \end{aligned}
\end{equation}
and whose corresponding Newtonian limit system is given by
\begin{equation}\label{E:RTruncdLimN=1}
    \begin{aligned}
        \ud q^R=&p^R \ud t,\\
    \ud p^R=&-p^R\ud t+\sqrt{2}\ud W-\theta_R(|q^R|)\nabla U(q^R)\ud t-\theta_R\lrbig{|q^R|^{-1}}\sum_{j\neq i}\nabla G(q^R)\ud t.
    \end{aligned}
\end{equation}
\end{comment}

\begin{proposition}\label{P:RTruncConv}
    Let $(\q^{\varepsilon,R},\p^{\varepsilon,R})$ and $(\q^{R},\p^{R})$ be the solutions to~\eqref{E:RTruncSys} and~\eqref{E:RTruncdLim} with the same initial conditions $(\q_0,\p_0)$. Then, for any $R>0$ and $n\ge 2$, it holds that 
    \begin{equation} \label{lim:truncated:relativistic}
        \mathbb{E} \sup_{t \in [0, T]} \Big[ \big| \mathbf{q}^{\varepsilon, R}(t) - \mathbf{q}^R(t) \big|^n + \big| \mathbf{p}^{\varepsilon, R}(t) - \mathbf{p}^R(t) \big|^n \Big] \leq \varepsilon^{{\frac{n}{2}}} C(n,R, T, \mathbf{q}_0, \mathbf{p}_0).
    \end{equation}
\end{proposition}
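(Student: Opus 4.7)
\textbf{Proof plan for Proposition~\ref{P:RTruncConv}.} Set $\delta q_i(t)=q_i^{\varepsilon,R}(t)-q_i^R(t)$ and $\delta p_i(t)=p_i^{\varepsilon,R}(t)-p_i^R(t)$. The plan is to apply It\^o's formula to $|\delta\q|^n+|\delta\p|^n$, identify the $\varepsilon$-small error terms, and close by Gronwall together with the Burkholder--Davis--Gundy (BDG) inequality. A crucial algebraic simplification, verified directly from~\eqref{E:DefD}, is the identity
\begin{align*}
D(p)\,\frac{p}{\sqrt{1+\varepsilon|p|^2}}=p,\qquad \div D(p)=\frac{\varepsilon d}{\sqrt{1+\varepsilon|p|^2}}\,p.
\end{align*}
Thus the drift of $p_i^{\varepsilon,R}$ differs from that of $p_i^R$ only by the bounded remainder $\frac{\varepsilon d\,p_i^{\varepsilon,R}}{\sqrt{1+\varepsilon|p_i^{\varepsilon,R}|^2}}$, which is pointwise $O(\sqrt{\varepsilon})$, plus the differences of the truncated forces $\theta_R(|\cdot|)\nabla U$ and $\theta_R(|\cdot|^{-1})\nabla G$, both globally Lipschitz in $\q$ with Lipschitz constant $L_R$ depending only on $R$.

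First I would establish the uniform-in-$\varepsilon$ moment bound $\E\sup_{t\in[0,T]}|\p^{\varepsilon,R}(t)|^m\le C(m,R,T,\p_0)$ for every $m\ge 1$. Applying It\^o to $|\p^{\varepsilon,R}|^{2k}$, the term $-\p^{\varepsilon,R}$ produces strict dissipation, while the remaining drift contributions are dominated by $C(R)(1+|\p^{\varepsilon,R}|^{2k-1})$ thanks to the truncations $\theta_R$ and the bound on $\div D$. The diffusion matrix $\sqrt{2[\theta_R(|p|)(D(p)-I)+I]}$ is uniformly bounded in operator norm by $C(R)$, since $\theta_R(|p|)$ vanishes once $|p|>R+1$ and $\|D(p)\|\le\sqrt{1+\varepsilon(R+1)^2}$ otherwise. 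A standard Gronwall and BDG argument then closes the bound.

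Next I would expand the SDE for $(\delta\q,\delta\p)$ and isolate three independent $\varepsilon$-small error sources: (a) the $q$-drift defect $p_i^{\varepsilon,R}/\sqrt{1+\varepsilon|p_i^{\varepsilon,R}|^2}-p_i^{\varepsilon,R}$, bounded in modulus by $\tfrac{\varepsilon}{2}|p_i^{\varepsilon,R}|^3$; (b) the $p$-drift defect $\div D(p_i^{\varepsilon,R})$, bounded by $\sqrt{\varepsilon}\,d$; (c) the noise defect $\sqrt{2[\theta_R(|p_i^{\varepsilon,R}|)(D(p_i^{\varepsilon,R})-I)+I]}-\sqrt{2}\,I$, which is $O(\varepsilon)$ in operator norm because $\|\theta_R(|p|)(D(p)-I)\|=O(\varepsilon)$ uniformly in $p$ and the matrix square root is Lipschitz on the set of uniformly positive-definite matrices. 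Applying It\^o to $|\delta\q|^n+|\delta\p|^n$, the strict dissipation $n|\delta\p|^{n-2}\delta\p\cdot(-\delta\p)=-n|\delta\p|^n$ combines with Young's inequality to absorb the $|\delta\p|^{n-1}$-type error contributions, and BDG handles the stochastic integral. The result is an inequality of the form
\begin{align*}
\E\sup_{s\in[0,t]}\big(|\delta\q(s)|^n+|\delta\p(s)|^n\big)\le C_{n,R}\!\int_0^t\!\E\sup_{u\in[0,s]}\big(|\delta\q(u)|^n+|\delta\p(u)|^n\big)\ud s+C_{n,R,T,\q_0,\p_0}\,\varepsilon^{n/2},
\end{align*}
where the $\varepsilon^{n/2}$ budget on the right comes entirely from source (b), while sources (a) and (c) contribute only $O(\varepsilon^n)$ after combining with the moment bound from the previous step. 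Gronwall's inequality then gives~\eqref{lim:truncated:relativistic}. The main subtlety will be in step (c): one must verify that the truncation $\theta_R(|p|)$ makes $D(p)-I$ uniformly $O(\varepsilon)$ in $p$ (rather than merely $O(\varepsilon|p|^2)$), so that the matrix-square-root Lipschitz estimate produces a genuine $O(\varepsilon)$ noise defect; without this, a naive bound would yield only $O(\sqrt{\varepsilon})$ and spoil the sharp rate.
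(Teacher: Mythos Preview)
Your plan is correct and follows essentially the same skeleton as the paper: uniform-in-$\varepsilon$ moments on $\p^{\varepsilon,R}$, identification of the same three $\varepsilon$-small defects (with the same algebraic simplification $D(p)p/\sqrt{1+\varepsilon|p|^2}=p$), and a Gronwall closure. Your final subtlety about the noise defect being $O(\varepsilon)$ uniformly in $p$ thanks to the truncation $\theta_R$ is exactly the content of the paper's Lemma~\ref{L:SpectralBound}.

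The one methodological difference worth noting is in handling the stochastic integral. You propose BDG; the paper instead exploits that the quadratic variation satisfies $\ud\langle M\rangle\le 2\varepsilon^2R^4|\hat\p|^2\,\ud t$, which for small $\varepsilon$ is dominated by the retained dissipation $-|\hat\p|^2\,\ud t$. This lets them write the differential inequality in the form $\ud Y\le C(R)(Y+\varepsilon|\p^\varepsilon|^4+\varepsilon|\p^\varepsilon|^2)\,\ud t+\ud M-\tfrac{1}{2\varepsilon}\ud\langle M\rangle$ and apply the exponential martingale inequality to get $\E\big[\sup_t(M-\tfrac{1}{2\varepsilon}\langle M\rangle)\big]^n\le \varepsilon^n\,n!$ directly, with no bootstrap. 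Your BDG route also closes, but after the standard $\tfrac12\E\sup Y^{n/2}$ absorption the residual term is $C\varepsilon^2\E\int Y^{n/2-1}$, which requires an induction on $n$ (or a separate a~priori bound) to reach the sharp $\varepsilon^{n/2}$ rate. Both approaches are valid; the paper's is a little more self-contained, yours is the more standard manoeuvre.
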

In order to prove Proposition~\ref{P:RTruncConv}, we will need the following auxiliary results on the uniform moment bounds on $\p^{\varepsilon,R}$ and on the diffusion matrix $D$.
\begin{lemma}\label{L:RPUniBd}
    For all $T>0$ and $n\ge 1$,
    \begin{equation} \label{ineq:RPUniBd}
        \E\sup_{t\in[0,T]}\big|\p^{\varepsilon,R}(t)\big|^n\le C(n,R, T, \mathbf{q}_0, \mathbf{p}_0).
    \end{equation}
\end{lemma}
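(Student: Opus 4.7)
The plan is to apply It\^o's formula to $|p_i^{\varepsilon,R}|^{2n}$ (or equivalently to $(1+|p_i^{\varepsilon,R}|^2)^n$) and exploit the strong dissipation that is hidden in the friction term, combined with the fact that every other drift and the entire diffusion coefficient are bounded by constants depending only on $R$ (uniformly in $\varepsilon\in(0,1]$).

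The key algebraic observation is that, for $D$ given by \eqref{E:DefD},
\begin{align*}
D(p)\,\frac{p}{\sqrt{1+\varepsilon|p|^2}}
=\frac{1}{1+\varepsilon|p|^2}\bigl(I+\varepsilon p\otimes p\bigr)p
=\frac{1+\varepsilon|p|^2}{1+\varepsilon|p|^2}\,p
=p,
\end{align*}
so the friction drift reduces exactly to $-p_i^{\varepsilon,R}$. Also, $\div D(p)=\varepsilon d\,p/\sqrt{1+\varepsilon|p|^2}$, whose norm is at most $\sqrt{\varepsilon}\,d$. For the confining and interaction drifts, the cut-offs $\theta_R(|q_i^{\varepsilon,R}|)$ and $\theta_R(|q_i^{\varepsilon,R}-q_j^{\varepsilon,R}|^{-1})$ restrict $q$ and $q_i-q_j$ to regions on which $\nabla U$ and $\nabla G$ are bounded, hence both are bounded by a constant $C(R)$ independently of $\varepsilon$. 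Finally, the diffusion matrix $\theta_R(|p|)(D(p)-I)+I$ is equal to $I$ when $|p|>R+1$, while on $\{|p|\le R+1\}$ one has $\|D(p)\|=\sqrt{1+\varepsilon|p|^2}\le\sqrt{1+(R+1)^2}$ for $\varepsilon\le 1$; hence its operator norm is bounded by $C(R)$ uniformly in $\varepsilon$.

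With these facts, I apply It\^o to $f(p)=|p|^{2n}$, using $\nabla f=2n|p|^{2n-2}p$ and $\|\nabla^2 f\|\le C(n)|p|^{2n-2}$. The friction contributes $-2n|p|^{2n}$, the divergence term contributes at most $C(n)\sqrt{\varepsilon}\,|p|^{2n}\le n|p|^{2n}$ for $\varepsilon$ small, the confining/interaction drifts contribute $O_R(|p|^{2n-1})$, and the It\^o correction contributes $O_R(|p|^{2n-2})$. Young's inequality then yields, for the generator $\mathcal L^{\varepsilon,R}$ of \eqref{E:RTruncSys},
\begin{align*}
\mathcal L^{\varepsilon,R}|p_i^{\varepsilon,R}|^{2n}\le -n|p_i^{\varepsilon,R}|^{2n}+C(n,R),
\end{align*}
which after summation in $i$ gives $\mathcal L^{\varepsilon,R}|\mathbf p^{\varepsilon,R}|^{2n}\le C(n,R)(1+|\mathbf p^{\varepsilon,R}|^{2n})$. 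Taking expectation, Gronwall's lemma produces the pointwise bound $\E|\mathbf p^{\varepsilon,R}(t)|^{2n}\le C(n,R,T,\p_0)$ for $t\in[0,T]$. To pass from $\E[\,\cdot\,]$ to $\E\sup_{[0,T]}$, I use the standard trick of writing $|\mathbf p^{\varepsilon,R}(t)|^{2n}$ as the sum of a bounded-variation part and a martingale, applying the Burkholder--Davis--Gundy inequality to the latter, and bounding the resulting quadratic variation by $C(n,R)\int_0^T|\mathbf p^{\varepsilon,R}(s)|^{4n-2}\,\ud s$, which after another Young inequality and the pointwise moment bound (at level $2n$ replaced by $2n+\delta$) closes the estimate. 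Finally, the bound for arbitrary $n\ge 1$ follows from the even case by H\"older's inequality.

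The proof is mostly a routine Lyapunov/BDG argument, so there is no major obstacle; the only point worth highlighting is the cancellation $D(p)p/\sqrt{1+\varepsilon|p|^2}=p$, which supplies the genuine dissipation $-p$ in the momentum equation and renders the drift of $|p|^{2n}$ coercive uniformly in $\varepsilon$.
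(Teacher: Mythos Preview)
Your proof is correct and in fact slightly more direct than the paper's. The paper applies It\^o to $\tfrac12(|p_i^{\varepsilon,R}|^2+|q_i^{\varepsilon,R}|^2)$, treats the truncated force $K_i(\q^{\varepsilon,R})$ merely as Lipschitz (hence of linear growth in $\q$), and then closes the estimate via the exponential martingale inequality before extracting polynomial moments. You instead exploit the stronger observation that the truncated forces $\theta_R(|q|)\nabla U$ and $\theta_R(|q_i-q_j|^{-1})\nabla G$ are \emph{bounded} by $C(R)$, so there is no need to carry $|q|^2$ at all; working directly with $|p|^{2n}$ and the BDG inequality suffices. Both routes rely on the same algebraic cancellation $D(p)\,p/\sqrt{1+\varepsilon|p|^2}=p$, which supplies the uniform-in-$\varepsilon$ dissipation. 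Your approach buys a shorter argument and a constant that does not even depend on $\q_0$; the paper's approach is closer in spirit to the exponential-martingale machinery used elsewhere in Section~\ref{sec: classical LE:Small-mass}. A minor remark: your estimate of the divergence term actually holds uniformly for all $\varepsilon\le 1$ via $|\langle p,\div D(p)\rangle|\le d\sqrt{\varepsilon}\,|p|$ and Young's inequality, so the caveat ``for $\varepsilon$ small'' there can be dropped.
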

\begin{lemma}\label{L:SpectralBound}
    Let $D(p)$ be given as~\eqref{E:DefD}. For $R\ge 1$, the eigenvalues (or say the spectral norm) of 
    \begin{align*}
        \lr{\sqrt{\Big[\theta_R(|p|)\lrbig{D(p)-I}+I\Big]}-I}^2
    \end{align*}
    are bounded by both $\frac{1}{4}\varepsilon^2R^2|p|^2$ and $\varepsilon^2R^4$. Also, the eigenvalues of 
    \begin{align*}
        \theta_R(|p|)\lrbig{D(p)-I}+I
    \end{align*}
    is bounded by $1+2\varepsilon R^2$.
\end{lemma}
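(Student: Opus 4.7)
The plan is to exploit the explicit spectral decomposition of $D(p)$ given in \eqref{E:DefD}--\eqref{E:sqrtD}. Writing $P = p\otimes p/|p|^2$ and $P^\perp = I - P$, one has
\[
D(p) = \frac{1}{\sqrt{1+\varepsilon|p|^2}}\, P^\perp + \sqrt{1+\varepsilon|p|^2}\, P,
\]
so $D(p)-I$ commutes with both $P$ and $P^\perp$, and thus $A := \theta_R(|p|)(D(p)-I)+I$ is diagonal in the same orthogonal basis, with eigenvalues
\[
\mu_1 = 1 + \theta_R(|p|)\Big(\tfrac{1}{\sqrt{1+\varepsilon|p|^2}}-1\Big)\in (0,1],\qquad
\mu_2 = 1 + \theta_R(|p|)\big(\sqrt{1+\varepsilon|p|^2}-1\big) \ge 1,
\]
of multiplicities $d-1$ and $1$ respectively. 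The square root $\sqrt{A} = \sqrt{\mu_1}\,P^\perp + \sqrt{\mu_2}\,P$ is diagonal in the same basis, so $(\sqrt{A}-I)^2$ has eigenvalues $(\sqrt{\mu_i}-1)^2$. Consequently, all the matrix estimates reduce to scalar inequalities on $\mu_i$ and $\sqrt{\mu_i}$.

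For the second spectral bound, observe $\mu_1\le 1 \le 1+2\varepsilon R^2$ trivially, while for $\mu_2$ the elementary bound $\sqrt{1+x}-1\le x/2$ gives $\mu_2-1\le \theta_R(|p|)\,\varepsilon|p|^2/2$. Since $\theta_R(|p|)>0$ forces $|p|\le R+1$, and $R\ge 1$ gives $(R+1)^2\le 4R^2$, we obtain $\mu_2\le 1+2\varepsilon R^2$, as claimed. For the first bound, treat the two eigenvalues separately. Since $\mu_2\ge 1$ we have $\sqrt{\mu_2}+1\ge 2$, so
\[
(\sqrt{\mu_2}-1)^2 = \frac{(\mu_2-1)^2}{(\sqrt{\mu_2}+1)^2}\le \tfrac{1}{4}(\mu_2-1)^2 \le \frac{\varepsilon^2|p|^4}{16}.
\]
For $\mu_1$, since $\theta_R(|p|)\le 1$ one has $\mu_1 \ge (1+\varepsilon|p|^2)^{-1/2}$, hence $\sqrt{\mu_1}\ge (1+\varepsilon|p|^2)^{-1/4}$ and
\[
1-\sqrt{\mu_1}\ \le\ 1 - (1+\varepsilon|p|^2)^{-1/4} \ =\ \frac{(1+\varepsilon|p|^2)^{1/4}-1}{(1+\varepsilon|p|^2)^{1/4}}.
\]
The elementary calculus inequality $(1+t)^{1/4}-1\le t/4$ for $t\ge 0$ (verified by checking that $f(t):= t/4-(1+t)^{1/4}+1$ has $f(0)=0$ and $f'(t)=\tfrac{1}{4}(1-(1+t)^{-3/4})\ge 0$) then yields $1-\sqrt{\mu_1}\le \varepsilon|p|^2/4$ and therefore $(1-\sqrt{\mu_1})^2\le \varepsilon^2|p|^4/16$.

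To conclude, both eigenvalues of $(\sqrt{A}-I)^2$ are bounded by $\varepsilon^2|p|^4/16$. If $\theta_R(|p|)=0$ the eigenvalues are simply zero, while if $\theta_R(|p|)>0$ then $|p|\le R+1\le 2R$ (using $R\ge 1$), whence
\[
\frac{\varepsilon^2|p|^4}{16} \le \frac{\varepsilon^2 (R+1)^2|p|^2}{16}\le \frac{\varepsilon^2 R^2|p|^2}{4},\qquad
\frac{\varepsilon^2|p|^4}{16} \le \frac{\varepsilon^2 (R+1)^4}{16}\le \varepsilon^2 R^4,
\]
which are the two bounds stated in the lemma. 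There is no serious obstacle here: once the spectral decomposition reduces the problem to two scalar eigenvalues, all that is required are the elementary inequalities $\sqrt{1+t}-1\le t/2$ and $(1+t)^{1/4}-1\le t/4$, together with the observation $|p|\le R+1\le 2R$ on the support of $\theta_R(|p|)$.
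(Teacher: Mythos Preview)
Your proof is correct and follows essentially the same approach as the paper: both arguments reduce the matrix bounds to scalar inequalities on the two eigenvalues of $D(p)$ via its explicit spectral decomposition, then apply the elementary bound $(1+t)^{1/4}-1\le t/4$ together with the support constraint $|p|\le R+1\le 2R$. Your treatment of the two eigenvalues $\mu_1\le 1$ and $\mu_2\ge 1$ is slightly more explicit than the paper's (which bundles both into a single line via the spectral mapping theorem), but the content is the same.
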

For the sake of clarity, we defer the proofs of Lemmas~\ref{L:RPUniBd} and \ref{L:SpectralBound} to the end of this subsection. Assuming their results, let us conclude Proposition~\ref{P:RTruncConv}.
\begin{proof}[Proof of Proposition~\ref{P:RTruncConv}]
During this proof, we drop the superscript $R$ for notational convenience. Also, we denote
\begin{align*}
    K_i(\q)=-\theta_R(|q_i^{\varepsilon,R}|)\nabla U(q_i^{\varepsilon,R})-\sum_{j\neq i}\theta_R(|q_i^{\varepsilon,R}-q_j^{\varepsilon,R}|^{-1})\nabla G(q_i^{\varepsilon,R}-q_j^{\varepsilon,R})
\end{align*}
which is Lipschitz in the variable $\q$.
\begin{comment}
    In what follows, we present the proof only for the multi-particle system.
For the case $N=1$, we write
\begin{align}
    K(q)=-\theta_R(|q|)\nabla U\lr{q}-\theta_R\lr{|q|^{-1}}\nabla G\lr{q}
\end{align}
and note that $K(q)$ remains Lipschitz continuous—this is the only property required in the argument.
\end{comment}
Letting $\hat{q}_i=q^\varepsilon_i-q_i$ and $\hat{p}_i=p_i^\varepsilon-p_i$, from \eqref{E:RTruncSys} and \eqref{E:RTruncdLim}, observe that $(\hat{\q},\hat{\p})$ satisfies
\begin{align*}
    \ud \hat{q}_i=&\hat{p}_i \ud t+\frac{p_i^\varepsilon}{\sqrt{1+\varepsilon|p_i^\varepsilon|^2}}\ud t-p_i^\varepsilon\ud t,\\
    \ud \hat{p}_i=&-\hat{p}_i \ud t+\div D(p_i^\varepsilon)\ud t+\lr{\sqrt{2\Big[\theta_R(|p_i^\varepsilon|)\lr{D\lr{p_i^\varepsilon}-I}+I\Big]}-\sqrt{2}I}\ud W_i+\big[K_i(\q^\varepsilon)-K_i(\q)\big]\ud t.
\end{align*}
Applying It\^{o}'s formula, we find
\begin{equation}\label{E:RTrunc_1}
    \begin{aligned}
        \frac{1}{2}\ud \lrbig{|\hat{p}_i|^2+|\hat{q}_i|^2}=&\innerbig{\hat{q}_i}{\hat{p}_i}\ud t+\inner{\hat{q}_i}{\frac{p_i^\varepsilon}{\sqrt{1+\varepsilon|p_i^\varepsilon|^2}}-p_i^\varepsilon}\ud t\\
    &-|\hat{p}_i|^2\ud t+\inner{\hat{p}_i}{\frac{\varepsilon d p_i^\varepsilon}{\sqrt{1+\varepsilon|p_i^\varepsilon|^2}}}\ud t+\innerbig{\hat{p}_i}{K_i(\q^\varepsilon)-K_i(\q)}\ud t\\
    &+\inner{\hat{p}_i}{\lr{\sqrt{2\Big[\theta_R(p_i^\varepsilon)\lr{D\lr{p_i^\varepsilon}-I}+I\Big]}-\sqrt{2}I}\ud W_i}\\
    &+\tr\lr{\lr{\sqrt{\Big[\theta_R(p_i^\varepsilon)\lr{D\lr{p_i^\varepsilon}-I}+I\Big]}-I}^2}\ud t.
    \end{aligned}
\end{equation}
We first note that
\begin{align*}
    \Big|\frac{p_i^\varepsilon}{\sqrt{1+\varepsilon|p_i|^2}}-p_i^\varepsilon\Big|\le\sqrt{\varepsilon}|p_i^\varepsilon|^2,\quad\text{and}\quad \frac{\Big|\varepsilon d p_i^\varepsilon\Big|}{\sqrt{1+\varepsilon|p_i^\varepsilon|^2}}\le \varepsilon d |p_i^\varepsilon|\le \sqrt{\varepsilon}|p_i^\varepsilon|.
\end{align*}
By the Lipschitz property of $K_i$,
\begin{align*}
    \Big|K_i(\q^\varepsilon)-K_i(\q)\Big|\le C(R)|\hat{\q}|.
\end{align*}
An application of Cauchy-Schwarz inequality shows that
\begin{align}\label{E:RTrunc_2}
    \notag\innerbig{\hat{q}_i}{\hat{p}_i}+&\inner{\hat{q}_i}{\frac{p_i^\varepsilon}{\sqrt{1+\varepsilon|p_i^\varepsilon|^2}}-p_i^\varepsilon}
    -|\hat{p}_i|^2+\inner{\hat{p}_i}{\frac{\varepsilon d p_i^\varepsilon}{\sqrt{1+\varepsilon|p_i^\varepsilon|^2}}}+\innerbig{\hat{p}_i}{K_i(\q^\varepsilon)-K_i(\q)}\\
    &\le C(R)\lr{|\hat{q}_i|^2+|\hat{p}_i|^2+|\hat{\q}|^2+{\varepsilon}|p_i^\varepsilon|^4+\varepsilon|p_i^\varepsilon|^2}-|\hat{p}_i|^2.
\end{align}
By Lemma~\ref{L:SpectralBound}, the eigenvalues of $\lr{\sqrt{\Big[\theta_R(|p_i^\varepsilon|)\lrbig{D(p_i^\varepsilon)-I}+I\Big]}-I}^2$ are bounded by $\frac{1}{4}\varepsilon^2R^2|p_i^\varepsilon|^2\le \varepsilon R^2|p_i^\varepsilon|^2$. It follows that the trace is bounded by
\begin{align}\label{E:RTrunc_3}
    \tr\lr{\sqrt{\Big[\theta_R(|p_i^\varepsilon|)\lrbig{D(p_i^\varepsilon)-I}+I\Big]}-I}^2\le d\varepsilon R^2|p_i^\varepsilon|^2.
\end{align}
Letting $\ud M_3(t)=\sum_{i=1}^N\inner{\hat{p}_i}{\lr{\sqrt{2\Big[\theta_R(p_i^\varepsilon)\lrbig{D(p_i^\varepsilon)-I}+I\Big]}-\sqrt{2}I}\ud W_i}$ be the martingale whose quadratic variation is given by
\begin{align*}
    \ud \langle M_3(t)\rangle=\sum_{i=1}^N\inner{\hat{p}_i}{\lr{\sqrt{2\Big[\theta_R(p_i^\varepsilon)\lrbig{D(p_i^\varepsilon)-I}+I\Big]}-\sqrt{2}I}^2\hat{p}_i}\ud t.
\end{align*}
Again by Lemma~\ref{L:SpectralBound}, it holds that
\begin{align*}
    \ud \langle M_3(t)\rangle\le 2\varepsilon^2R^4|\hat{\p}|^2\ud t.
\end{align*}
% We already know the eigenvalues of the positive semi-definite matrix is bounded below and up by $\lr{1-\frac{1}{\lr{1+\varepsilon|p^\varepsilon_i|^2}^{1/4}}}^2$ and $\lr{\lr{1+\varepsilon|p^\varepsilon_i|^2}^{1/4}-1}^2$.
% \begin{align*}
%     \lr{1-\frac{1}{\lr{1+\varepsilon|p^\varepsilon_i|^2}^{1/4}}}^2|\hat{p}_i|^2\le \ud \langle M_t\rangle\le  \lr{\lr{1+\varepsilon|p^\varepsilon_i|^2}^{1/4}-1}^2|\hat{p}_i|^2\le C\varepsilon|p^\varepsilon_i|^2|\hat{p}_i|^2.
% \end{align*}
% Since we have truncated $|p^\varepsilon_i|^2$ up to $R^2$, we have
% \begin{align*}
%     \ud \langle M_t\rangle\le C\varepsilon R^2|\hat{p}_i|^2.
% \end{align*}
As a consequence, from~\eqref{E:RTrunc_1},~\eqref{E:RTrunc_2} and~\eqref{E:RTrunc_3}, we find that, for $\varepsilon$ sufficiently small,
\begin{align}\label{E:RTrunc_4}
     \notag \frac{1}{2}\ud \lrbig{|\hat{\p}|^2+|\hat{\q}|^2}\le& C(R)\lr{|\hat{\p}|^2+|\hat{\q}|^2+\varepsilon |\p^\varepsilon|^4+\varepsilon|\p^\varepsilon|^2}\ud t+\ud M_3(t)-|\hat{\p}|^2\ud t\\
     \le &C(R)\lr{|\hat{\p}|^2+|\hat{\q}|^2+\varepsilon |\p^\varepsilon|^4+\varepsilon|\p^\varepsilon|^2}\ud t+\ud M_3(t)-\varepsilon R^4|\hat{\p}|^2\ud t.
\end{align}
We invoke the exponential martingale inequality to infer
\begin{align*}
    &\P\lr{\sup_{t\in [0,T]}\Big[ M_3(t)-\varepsilon R^4|\hat{p}_i|^2\ud s\rangle\Big]>r}\\
    &\qquad\qquad\le \P\lr{\sup_{t\in [0,T]}\Big[M_3(t)-\frac{1}{2\varepsilon} \langle M_3(t)\rangle\Big]>r}\le e^{-\frac{r}{\varepsilon}}.
\end{align*}
We apply tail integral formula with respect to probability measure to obtain
\begin{align}\label{ER:M_3Moments}
    \E \sup_{t\in [0,T]}\Big[M_3(t)-\frac{1}{2\varepsilon} \langle M_3(t)\rangle\Big]^n\le \int_0^\infty n r^{n-1} e^{-\frac{r}{\varepsilon}}\ud r= \varepsilon^n \Gamma(n+1),
\end{align}
where $\Gamma(n+1)=n!$ refers to the Gamma function. Since $(\hat{\q}(0),\hat{\p}(0))=0$, we combine~\eqref{ER:M_3Moments} together with~\eqref{E:RTrunc_4} and H\"{o}lder inequality to deduce
\begin{align*}
    \E\sup_{t\in[0,T]}\big|\lrbig{\hat{\q}(t),\hat{\p}(t)}\big|^{2n}\le& C(n,R)\int_0^T\E\sup_{s\in[0,t]}|\lr{\hat{\q}(s),\hat{\p}(s)}|^{2n}\ud t +\varepsilon^{n} C(n,R)\int_0^T\E\sup_{s\in [0,t]}|\p^\varepsilon(s)|^{4n}\ud t\\
    &+\varepsilon^{n} C(n,R)\int_0^T\E\sup_{s\in [0,t]}|\p^\varepsilon(s)|^{2n}\ud t+\varepsilon^n C(n).
\end{align*}
By using the moment bounds in Lemma~\ref{L:RPUniBd} that is uniform with respect to $\varepsilon$ and applying Gr\"{o}nwall inequality, we establish limit \eqref{lim:truncated:relativistic}, thereby completing the proof.
\end{proof}
Now, we turn to the auxiliary inequalitites stated in Lemmas~\ref{L:RPUniBd} and \ref{L:SpectralBound}. In what follows, we provide the proof of the former result giving the uniformity in $\varepsilon$ for the moment bounds on the solution $\p^{\varepsilon,R}$.
\begin{proof}[Proof of Lemma~\ref{L:RPUniBd}]
Considering~\eqref{E:RTruncSys}, we apply It\^o's formula to compute
    \begin{align*}
        \frac{1}{2}\ud \lrbig{|p_i^\varepsilon|^2+|q_i^\varepsilon|^2}=&\inner{q_i^\varepsilon}{\frac{p_i^\varepsilon}{\sqrt{1+\varepsilon|p_i^\varepsilon|}}}\ud t-|p_i^\varepsilon|^2\ud t+\innerbig{p_i^\varepsilon}{\div \lrbig{D(p_i^\varepsilon)}}\ud t\\
        &-\innerbig{p_i^\varepsilon}{K_i(\q^\varepsilon)}\ud t +\inner{p_i^\varepsilon}{\sqrt{2}\sqrt{\theta_R(p_i^\varepsilon)\lrbig{D(p_i^\varepsilon)-I}+I}\ud W_i}\\
        &+\tr\lr{\theta(p_i^\varepsilon)\lrbig{D(p_i^\varepsilon)-I}+I}\ud t,
    \end{align*}
    where we recall that $K_i(\q^\varepsilon)=-\theta_R(|q_i^\varepsilon|)\nabla U(q_i^\varepsilon))-\sum_{j\neq i}\nabla G\lr{q_i^\varepsilon-q_j^\varepsilon}$ is Lipschitz. By Cauchy-Swarz inequality,
    \begin{align*}
        \inner{q_i^\varepsilon}{\frac{p_i^\varepsilon}{\sqrt{1+\varepsilon|p_i^\varepsilon|}}}\le \frac{1}{2}|q_i^\varepsilon|^2+\frac{1}{2}|p_i^\varepsilon|^2.
    \end{align*}
    Note that the divergence of $D$ defined in \eqref{E:DefD} is given by
    \begin{align*}
        \div \lrbig{D(p_i^\varepsilon)}=\frac{\varepsilon d p_i^\varepsilon}{\lrbig{1+\varepsilon|p_i^\varepsilon|^2}^{3/2}},
    \end{align*}
    whence
    \begin{align*}
        \innerbig{p_i^\varepsilon}{\div \lrbig{D(p_i^\varepsilon)}}\le \varepsilon d |p_i^\varepsilon|^2.
    \end{align*}
    Since $K_i$ is Lipschitz, we have
    \begin{align*}
        -\innerbig{p_i^\varepsilon}{K_i(\q^\varepsilon)}\le \frac{1}{2}|p_i^\varepsilon|^2+C(R)\lrbig{|\q^\varepsilon|^2+1}.
    \end{align*}
    As shown in~\eqref{E:RTrunc_3},
    \begin{align*}
        \tr\lr{\theta_R(p_i^\varepsilon)\lrbig{D(p_i^\varepsilon)-I}+I}\le d \sqrt{1+\varepsilon |p_i^\varepsilon|^2}\le d\lrbig{1+\varepsilon|p_i^\varepsilon|^2}.
    \end{align*}
    Let $\ud M_4(t)=\sum_{i=1}^N\inner{p_i^\varepsilon}{\sqrt{2}\sqrt{\theta_R(p_i^\varepsilon)\lr{D(p_i^\varepsilon)-I}+I}\ud W_i}$ be the martingale process whose the quadratic variation satisfies
    \begin{align*}
        \ud\langle M_4(t)\rangle=2\sum_{i=1}^N \langle p_i^\varepsilon, \big[\theta_R(p_i^\varepsilon)\lrbig{D(p_i^\varepsilon)-I}+I\big]p_i^\varepsilon\rangle.
    \end{align*}
    By Lemma~\ref{L:SpectralBound}, for $\varepsilon$ sufficiently small,
    \begin{align*}
        \ud \langle M_4(t)\rangle\le 2\lrbig{1+2\varepsilon R^2}|\p^\varepsilon|^2 \ud t\le 4|\p^\varepsilon|^2\ud t.
    \end{align*}
    For any $\rho>0$, we have
    % \begin{align}\label{E:RTrunc_5}
    %     \notag \frac{1}{2}\ud \lrbig{|\p^\varepsilon|^2+|\q^\varepsilon|^2}\le& C(R)\lrbig{|\p_i^\varepsilon|^2+|\q_i^\varepsilon|^2+1}\ud t-|\p^\varepsilon|^2\ud t+\ud M_4(t)\\
    %     \le & C(R)\lrbig{|\p_i^\varepsilon|^2+|\q_i^\varepsilon|^2+1}\ud t-\frac{2}{3}\ud \la M_4(t)\ra+\ud M_4(t).
    % \end{align}
    \begin{align}\label{E:RTrunc_5}
        \notag \frac{\rho}{2}\ud \lrbig{|\p^\varepsilon|^2+|\q^\varepsilon|^2}\le& \rho C(R)\lrbig{|\p_i^\varepsilon|^2+|\q_i^\varepsilon|^2+1}\ud t-\rho|\p^\varepsilon|^2\ud t+\ud (\rho M_4(t))\\
        \le & \rho C(R)\lrbig{|\p_i^\varepsilon|^2+|\q_i^\varepsilon|^2+1}\ud t-\frac{1}{4\rho}\ud \la \rho M_4(t)\ra+\ud (\rho M_4(t)).
    \end{align}
    We once again invoke the exponential martingale inequality to infer
    \begin{align*}
        \P\lr{\sup_{t\in [0,T]}\Big[ M_4(t)-\frac{1}{4\rho} \langle M_4(t)\rangle\Big]\ge r}\le e^{-\frac{1}{2\rho }r}.
    \end{align*}
    Now we choose $\rho=1/3$. Taking expectation by tail integral formula yields
    \begin{align*}
        \E\exp\Big\{\sup_{t\in [0,T]}\Big[M_4(t)-\frac{3}{4}\langle M_4(t)\rangle \Big]\Big\}\le 1+\int_0^\infty e^{-\frac{3}{2}r}e^r\ud r\le 3.
    \end{align*}
    This together with~\eqref{E:RTrunc_5} produces
    \begin{align*}
        \E\exp\Big\{\sup_{t\in [0,T]}\Big[\frac{1}{6}\big(|\q^\varepsilon(t)|^2+|\p^\varepsilon(t)|^2\big) -C\int_0^t\lrbig{|\q^\varepsilon(s)|^2+|\p^\varepsilon(s)|^2+1}\ud s\Big]\Big\}\le 3\exp\Big\{\frac{1}{2}\big|\lrbig{\q_0,\p_0}\big|^2\Big\}.
    \end{align*}
    As a consequence, we have
    \begin{align*}
        \E\sup_{t\in [0,T]}|\q^\varepsilon(t),\p^\varepsilon(t)|^{2n}\le C(n,T,R)\int_0^T\E \sup_{s\in [0,t]}|\q^\varepsilon(s),\p^\varepsilon(s)|^{2n}\ud t+C(n,R,T,\q_0,\p_0).
    \end{align*}
    By virtue of Gr\"{o}nwall inequality, we obtain \eqref{ineq:RPUniBd}, as claimed.
\end{proof}
 Lastly, in this subsection, we present the proof of Lemma \ref{L:SpectralBound}, giving upper bounds on the eigenvalues of {the truncated version of $D$}. This together with Lemma \ref{L:RPUniBd} were employed to establish Proposition \ref{P:RTruncConv}.

\begin{proof}[Proof of Lemma~\ref{L:SpectralBound}]
    By spectral mapping theorem, (see e.g.~\cite[Theorem 4.10]{conway;96;a}) all eigenvalues of the matrix
    \begin{align*}
        \lr{\sqrt{\Big[\theta_R(|p|)\lrbig{D(p)-I}+I\Big]}-I}^2
    \end{align*}
    can be expressed as
    \begin{align*}
        \lambda'=\lr{\sqrt{\big[\theta_R(|p|)(\lambda-1)+1\big]}-1}^2,
    \end{align*}
    where $\lambda$ is one of the eigenvalues of $D(p)$. Recalling $D(p)$ defined in~\eqref{E:DefD}, the eigenvalues of $D(p)$ are $ \sqrt{1+\varepsilon|p|^2}$ with eigenvector $p/|p|$ ($1$-multiplicity) and $(\sqrt{1+\varepsilon|p|^2})^{-1}$ with eigenvectors ($(d-1)$-multiplicity) that are the orthonormal to $p/|p|$. Note that since $\theta_R$ satisfies~\eqref{E:truncation function}, on the one hand, when $|p|> R+1$, $\lambda'$ vanishes. On the other hand, when $|p|\le R+1$, we employ the elementary inequality $(1+x)^a-1\le ax$ for $0<a<1$ and $x\ge 0$ to infer
    \begin{align*}
        \lambda'\le \lr{\lrbig{1+\varepsilon|p|^2}^{1/4}-1}^2\le \lr{\frac{1}{4}\varepsilon|p|^2}^2.
    \end{align*}
    Since $|p|\le R+1\le 2R$, we deduce further that
    \begin{align*}
        \lambda'\le \frac{1}{4}\varepsilon^2R^2|p|^2\le \varepsilon^2R^4.
    \end{align*}
    In the same way, we can prove the eigenvalues of 
    \begin{align*}
        \theta_R(|p|)\lrbig{D(p)-I}+I
    \end{align*}
    are given by
    \begin{align*}
        \lambda''=\theta_R(|p|)(\lambda-1)+1,
    \end{align*}
    which vanishes when $|p|>R+1$ and are bounded by $\sqrt{1+\varepsilon|p|^2}\le 1+2\varepsilon R^2$ when $|p|\le R+1$.
    The proof is thus completed.
\end{proof}

\subsubsection{The Newtonian limit $\varepsilon\to 0$ when $N\ge 2$} \label{sec: RLE:Newtonian_limit:Original:N>1}

\begin{comment}
\begin{equation}\label{E:RNewtonianLimitSys}
    \begin{aligned}
        \mathrm{d}q_i(t) &=  p_i(t) \, \mathrm{d}t, \\
        \mathrm{d}p_i(t) &= - p_i(t) \, \mathrm{d}t - \nabla U(q_i(t)) \, \mathrm{d}t - \sum_{j \neq i} \nabla G(q_i(t) - q_j(t)) \, \mathrm{d}t + \sqrt{2} \, \mathrm{d}W_i(t).
    \end{aligned}
\end{equation}
\end{comment}

In this subsection, we consider the multi-particle case and establish Theorem \ref{thm:NewtonianLimit}, part (1). More specifically, we aim to extend the convergence result of Proposition \ref{P:RTruncConv} for Lipschitz nonlinearities to those potentials satisfying Assumptions \ref{Assumption U} and \ref{A:G}. Following the approach of \cite{duong2024trend}, this relies on a probabilistic argument making use of energy estimates on the classical Langevin equation \eqref{E:LimitingRelativisticSys} that are given next.

\begin{comment}

we begin with the following lemma which is given by~\cite[Lemma 4.3]{duong2024trend} to obtain a uniform energy bound in the system~\eqref{E:RNewtonianLimitSys}.
\noindent \textbf{Remark}: 
\end{comment}

\begin{lemma}\label{L:RUnifEnergy}
Let $(\q,\p)$ be the solution to~\eqref{E:LimitingRelativisticSys}. Then,
    \begin{align} \label{ineq:RUnifEnergy}
    \E \sup_{t\in [0,T]}\Big[\sum_{i=1}^N U\lrbig{q_i(t)}+\sum_{1\le i<j\le N}G\lrbig{q_i(t)-q_j(t)}+\frac{1}{2}|\p(t)|^2\Big]\le C.
\end{align}
\end{lemma}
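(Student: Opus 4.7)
The plan is to apply Itô's formula to the natural Hamiltonian
\[
H(\q,\p) := \tfrac{1}{2}|\p|^2 + \sum_{i=1}^N U(q_i) + \sum_{1\le i<j\le N} G(q_i-q_j)
\]
along trajectories of \eqref{E:LimitingRelativisticSys}. By the normalisation convention introduced just after Assumption \ref{A:G:relativistic:N>1}, the potential part is bounded below by $1$, so $H \ge \tfrac12 |\p|^2$ pointwise, and in particular $|\p|^2 \le 2H$. To cope with the blow-up of $G$ near collisions, I would first localise by the stopping times $\tau_n := \inf\{t \ge 0 : H(\q(t),\p(t)) \ge n\}$; on $[0, T\wedge \tau_n]$ the trajectory stays in a region of $\X$ where all nonlinearities are smooth, and by the well-posedness statement in Theorem \ref{T:ExistandUnique} one has $\tau_n \to \infty$ almost surely. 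Applying Itô's formula up to $\tau_n$, the cross terms $\innerbig{p_i}{\nabla U(q_i)}$ and $\innerbig{p_i}{\nabla G(q_i-q_j)}$ arising from $p_i\cdot \ud p_i$ exactly cancel the contributions of $\ud U(q_i)$ and $\ud G(q_i-q_j)$; the latter cancellation uses that $G$ is even and $\nabla G$ odd, through the symmetrisation $\sum_i\sum_{j\ne i}\innerbig{p_i}{\nabla G(q_i-q_j)} = \sum_{i<j}\innerbig{p_i - p_j}{\nabla G(q_i-q_j)}$. What survives is the friction dissipation $-|\p|^2\,\ud t$, the Itô correction $Nd\,\ud t$ from the additive $\sqrt{2}$ noise, and a martingale $M(t) := \sqrt{2}\sum_i \int_0^t p_i(s)\cdot\ud W_i(s)$ with $\langle M\rangle(t) = 2\int_0^t|\p(s)|^2\,\ud s \le 4\int_0^t H(s)\,\ud s$.

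Dropping the negative drift and taking suprema, I would then have
\[
\sup_{t\le T\wedge \tau_n}H(t) \le H(\q_0,\p_0) + NdT + \sup_{t\le T\wedge\tau_n}|M(t)|,
\]
and estimate the martingale term via the Burkholder-Davis-Gundy inequality followed by Jensen and Young:
\[
\E\sup_{t\le T\wedge\tau_n}|M(t)| \le C\,\E\sqrt{\langle M\rangle(T\wedge\tau_n)} \le C'\sqrt{T}\,\sqrt{\E\sup_{t\le T\wedge\tau_n}H(t)} \le \tfrac12 \E\sup_{t\le T\wedge\tau_n}H(t) + C''T.
\]
Inserting this into the previous inequality and absorbing the half term produces $\E\sup_{t\le T\wedge\tau_n}H(t) \le 2H(\q_0,\p_0) + 2(Nd + C'')T$ uniformly in $n$, and monotone convergence as $n\to\infty$ yields \eqref{ineq:RUnifEnergy}.

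The only real obstacle is the interplay between the singularity of the pair potential $G$ and the legitimate use of Itô's formula on $H$; this is handled cleanly by the stopping-time truncation above, after which the argument reduces to a short, Gronwall-free energy estimate with no further analytical input.
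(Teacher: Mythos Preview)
Your proposal is correct. The paper itself omits the proof, pointing to \cite[Lemma 4.3]{duong2024trend} and noting that the extra $\tfrac12|\p|^2$ term makes no difference; your It\^o-on-the-Hamiltonian argument with BDG and stopping-time localisation is exactly the standard route one expects there, and all the cancellations and bounds you invoke are valid.
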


\begin{remark}
    We note that the statement of Lemma \ref{L:RUnifEnergy} is almost the same as that of~\cite[Lemma 4.3]{duong2024trend}, except for the appearance of the term $\frac{1}{2}|\p|^2$ on the left-hand side of \eqref{ineq:RUnifEnergy}. Nevertheless, the proof of \eqref{ineq:RUnifEnergy} follows the same argument of \cite[Lemma 4.3]{duong2024trend}, and thus is omitted.
\end{remark}

In order to prove Theorem \ref{thm:NewtonianLimit}, part (1), we will also need auxiliary inequalities on the potentials $U$ and $G$, stated below through Lemma \ref{L:U_G_p}, whose proof is similar to that of~\cite[Lemma A.3]{duong2024trend}. 
\begin{lemma}\label{L:U_G_p} Under Assumptions \ref{Assumption U} (U) (i) and \ref{A:G} (G) (i) (ii'), there exist positive constants $C_G$ and $c_G$ such that the followings hold:
    \begin{align} \label{ineq:U_G_p}
        C_G& \Big[ \sum_{i=1}^{N} U(q_i) + \sum_{1 \leq i < j \leq N} G(q_i - q_j)+ \frac{1}{2}|\p|^2 \Big] \notag \\
        &    \geq \sum_{i=1}^{N} |q_i| - c_G \sum_{1 \leq i < j \leq N} \log |q_i - q_j| +\sum_{i=1}^N |p_i|\\
    &\geq c_G \sum_{i=1}^{N} |q_i| +c_G\sum_{i=1}^N |p_i|+ c_G \max\{-\log |q_i - q_j| : 1 \leq i < j \leq N \}.\notag 
    \end{align}
    Furthermore,
    \begin{align}\label{E:q2-log>=0}
        \frac{1}{2}\sum_{i=1}^N|q_i|^2+c_G\sum_{1\le i<j\le N}\log |q_i-q_j|\ge 0.
    \end{align}
\end{lemma}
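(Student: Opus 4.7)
The plan is to prove \eqref{ineq:U_G_p} by establishing three separate lower bounds -- one for each of $\sum_i U(q_i)$, $\sum_{i<j} G(q_i-q_j)$, and $\tfrac12|\p|^2$ -- and then combining them with constants chosen large enough to absorb additive errors. For $U$, the remark following Assumption \ref{Assumption U} yields $U(Q)\ge a_1^{-1}|Q|^{\lambda+1}-a_1$, and since $\lambda\ge 1$ the super-linear term dominates any prescribed multiple of $|Q|$. For the kinetic energy, Young's inequality gives $\tfrac12|p_i|^2\ge |p_i|-\tfrac12$, hence $\tfrac12|\p|^2\ge\sum_i|p_i|-N/2$. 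The crucial and only nontrivial input is a pointwise lower bound on $G$, which I would derive from (G)(ii'): setting
\[
\tilde G(Q) := G(Q) + \frac{a_4}{\beta_1-1}|Q|^{1-\beta_1}+\frac{a_5}{\beta_2-1}|Q|^{1-\beta_2}
\]
(with logarithms replacing $|Q|^{1-\beta}$ in the borderline cases $\beta=1$), one checks that $\nabla\tilde G=\nabla G+a_4|Q|^{-\beta_1-1}Q+a_5|Q|^{-\beta_2-1}Q$, so $|\nabla\tilde G|\le a_6$. Hence $\tilde G$ is $a_6$-Lipschitz on the connected domain $\R^d\setminus\{0\}$, and combining with $\beta_2<\beta_1$ (so the $\beta_2$-correction is dominated near $0$ by the positive $a_4$-contribution) and with the upper bound from (G)(i) away from $0$, one arrives at a uniform estimate $G(Q)\ge -c_5\log|Q|-C_5(1+|Q|)$.

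With these three ingredients, summing over indices and using $\sum_{i<j}(|q_i|+|q_j|)=(N-1)\sum_i|q_i|$ produces
\[
\textstyle\sum_i U(q_i)+\sum_{i<j}G(q_i-q_j)+\tfrac12|\p|^2\ge a_1^{-1}\sum_i|q_i|^{\lambda+1}-c_5\sum_{i<j}\log|q_i-q_j|-C_6\sum_i|q_i|+\sum_i|p_i|-C_7.
\]
The super-linear term $|q|^{\lambda+1}\ge A|q|-A^{(\lambda+1)/\lambda}$ absorbs any multiple of $\sum|q_i|$ at the cost of an additive constant, and the hypothesis $\sum U+\sum G\ge 1$ let me absorb the remaining constant into $C_G\cdot \text{LHS}$. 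Choosing $C_G$ large enough and then $c_G:=C_Gc_5$ yields the first inequality in \eqref{ineq:U_G_p}. For the second inequality in \eqref{ineq:U_G_p}, the elementary bound $\log|q_i-q_j|\le\log 2+\sum_k|q_k|$ (from $|q_i-q_j|\le 2\max_k|q_k|$ and $\log x\le x$) gives, after singling out the maximum pair,
\[
\textstyle-\sum_{i<j}\log|q_i-q_j|\ge \max_{i<j}\{-\log|q_i-q_j|\}-C_8\sum_i|q_i|-C_9,
\]
so shrinking $c_G$ leaves a fixed fraction of $\sum|q_i|$ and $\sum|p_i|$ intact to dominate all residual constants.

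The auxiliary estimate \eqref{E:q2-log>=0} follows similarly: the same bound $\log|q_i-q_j|\le \log 2+\sum_k|q_k|$ shows that $c_G\sum_{i<j}\log|q_i-q_j|$ is bounded below by a constant multiple of $-\sum_k|q_k|$ minus an additive constant, and Young's inequality $\tfrac12 a^2\ge \varepsilon a-\varepsilon^2/2$ lets $\tfrac12\sum|q_i|^2$ dominate this linear term, after possibly shrinking $c_G$ once more and invoking $\sum U+\sum G\ge 1$ to absorb the leftover constant. The main obstacle throughout is the lower bound on $G$: once the Lipschitz perturbation argument is set up, careful but routine sign tracking of the two singular power corrections (noting that $a_5$ may have either sign while the leading $a_4$ term is positive near $0$) delivers the needed estimate, and the rest of the proof is bookkeeping made painless by the super-linear growth of $U$.
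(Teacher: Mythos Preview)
The paper does not prove this lemma itself, deferring instead to \cite[Lemma~A.3]{duong2024trend}, so there is no in-paper argument to compare against. Your treatment of \eqref{ineq:U_G_p} is correct in substance, with two easily repaired slips: the signs in your definition of $\tilde G$ are reversed (with your formula one computes $\nabla\tilde G=\nabla G-a_4|Q|^{-\beta_1-1}Q-a_5|Q|^{-\beta_2-1}Q$, not $+$), and in the second inequality the residual constant $C_9$ coming from $\log 2$ cannot literally be ``dominated by a fixed fraction of $\sum|q_i|$'' when the latter is small---use instead the sharper bound $\log|q_i-q_j|\le\log(|q_i|+|q_j|)\le|q_i|+|q_j|$, which carries no additive constant and makes the step go through for $c_G\le 1/N$.

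Your argument for \eqref{E:q2-log>=0}, however, has a genuine gap: you invoke the \emph{upper} bound $\log|q_i-q_j|\le\log 2+\sum_k|q_k|$ to derive a \emph{lower} bound on $c_G\sum_{i<j}\log|q_i-q_j|$, which is the wrong direction. In fact \eqref{E:q2-log>=0} is false as written for every $c_G>0$: take $N=2$, $q_1=0$, $q_2=\epsilon$, so that $\tfrac12\epsilon^2+c_G\log\epsilon\to-\infty$ as $\epsilon\to 0$. Inspecting how the paper actually uses \eqref{E:q2-log>=0} (for instance to pass from $\{\sum_i|q_i|\ge R/3\}$ to $\{\sum_i|q_i|-c_G\sum_{i<j}\log|q_i-q_j|\ge R/6\}$), what is really needed is $\tfrac12\sum_i|q_i|\ge c_G\sum_{i<j}\log|q_i-q_j|$, with $|q_i|$ in place of $|q_i|^2$; \emph{that} inequality follows immediately from $\log|q_i-q_j|\le|q_i|+|q_j|$ once $c_G\le\tfrac{1}{2(N-1)}$. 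Your final appeal to ``$\sum U+\sum G\ge 1$'' to absorb constants is also misplaced here, since neither $U$ nor $G$ appears in \eqref{E:q2-log>=0}.
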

Now we are in a position to conclude Theorem~\ref{thm:NewtonianLimit}, part (1), cf. \eqref{lim:Newtonian:N>1}, validating the Newtonian approximation of \eqref{E:RelativisticSys:epsilon} by \eqref{E:LimitingRelativisticSys} when $N\ge 2$.
\begin{proof}[Proof of Theorem~\ref{thm:NewtonianLimit}, part (1)]
Letting $(\q^\varepsilon,\p^\varepsilon)$ and $(\q,\p)$ respectively be the solutions of \eqref{E:RelativisticSys:epsilon} and \eqref{E:LimitingRelativisticSys}, the strategy of proving \eqref{lim:Newtonian:N>1} will make use of a stopping time argument. To see this, we introduce the stopping times defined as
\begin{align*}
    \sigma^R=\inf_{t\ge 0}\Big\{\sum_{i=1}^N|q_i(t)|+\sum_{}|q_i-q_j|^{-1}+\sum_{i=1}^N|p_i|\ge R\Big\},
\end{align*}
and
\begin{align*}
    \sigma^R_\varepsilon=\inf_{t\ge 0}\Big\{\sum_{i=1}^N|q_i^\varepsilon(t)|+\sum_{}|q_i^\varepsilon-q_j^\varepsilon|^{-1}+\sum_{i=1}^N|p_i^\varepsilon|\ge R\Big\}.
\end{align*}
We observe that
\begin{align}\label{E_:RMult_1}
    &\mathbb{P} \Big( \sup_{t \in [0,T]} (|\q^{\epsilon}(t) - \q(t)| + |\p^{\epsilon}(t) - \p(t)| > \xi) \Big) \\
    &\leq \mathbb{P} \Big( \sup_{t \in [0,T]} |\q^{\epsilon}(t) - \q(t)| + |\p^{\epsilon}(t) - \p(t)| > \xi, \sigma^R \wedge \sigma^{R}_{\epsilon} \geq T \Big) + \mathbb{P} (\sigma^R \wedge \sigma^{R}_{\epsilon} < T).\notag 
\end{align}
With regard to the first term on the right-hand side of \eqref{E_:RMult_1}, in view of Proposition~\eqref{P:RTruncConv}, we apply Markov’s inequality to find
\begin{align}\label{E_:RMult_2}
    & \mathbb{P} \Big( \sup_{t \in [0,T]} \big| \q^{\varepsilon}(t) - \q(t) \big| + \big| \p^{\varepsilon}(t) - \p(t) \big| \geq \xi, \sigma^R \wedge \sigma^R_{\varepsilon} \geq T \Big)\notag  \\
    & \leq \mathbb{P} \Big( \sup_{t \in [0,T]} \big| \q^{\varepsilon, R}(t) - \q^R(t) \big| + \big| \p^{\varepsilon, R}(t) - \p^R(t) \big| > \xi \Big) \leq \frac{\varepsilon}{\xi} \cdot C(T, R).
\end{align}
In the above, we recall that $(\q^{\varepsilon, R},\p^{\varepsilon, R})$ and $(\q^{R},\p^{R})$ are respectively the solutions of the truncated equations \eqref{E:RTruncSys} and \eqref{E:RTruncdLim}. Concerning the second term on the right-hand side of~\eqref{E_:RMult_1}, we have
\begin{align*}
    &\mathbb{P} \Big( \sigma^R \land \sigma_\varepsilon^R < T \Big) \\
    &\leq \mathbb{P} \Big( \sup_{t \in [0,T]} \big| \q^{\varepsilon,R}(t) - \q^R(t) \big| + \big| \p^{\varepsilon,R}(t) - \p^R(t) \big| \leq \frac{\xi}{R}, \sigma^R \land \sigma_\varepsilon^R < T \Big) \\
    &\quad + \mathbb{P} \Big( \sup_{t \in [0,T]} \big| \q^{\varepsilon,R}(t) - \q^R(t) \big| + \big| \p^{\varepsilon,R}(t) - \p^R(t) \big| > \frac{\xi}{R} \Big)\\
    &\leq \mathbb{P}\Big( \sup_{t \in [0,T]} \big| \q^{\varepsilon, R}(t) - \q^{R}(t) \big| + \big| \p^{\varepsilon, R}(t) - \p^{R}(t) \big| \leq \frac{\xi}{R} , \sigma_\varepsilon^R < T \leq \sigma^R \Big) + \mathbb{P}(\sigma^R < T) \\
    &\quad + \mathbb{P}\Big( \sup_{t \in [0,T]} \big| \q^{\varepsilon, R}(t) - \q^{R}(t) \big| + \big| \p^{\varepsilon, R}(t) - \p^{R}(t) \Big| > \frac{\xi}{R} \Big) \\
    &= Q_1 + Q_2 + Q_3.
\end{align*}
We again estimate $I_3$ using Proposition~\ref{P:RTruncConv} together with Markov inequality.
\begin{align}\label{E_:RI3}
    Q_3 = \mathbb{P} \Big( \sup_{t \in [0, T]} \big| \q^{\varepsilon, R}(t) - \q^R(t) \big| + \big| \p^{\varepsilon, R}(t) - \p^R(t) \big| > \frac{\xi}{R} \Big) \leq \frac{\varepsilon}{\xi} \cdot C(T, R).
\end{align}
To control $Q_2$, we split $R$ into $R/3+R/3+R/3$ to find that
\begin{equation}\label{ER:sigmaR<T}
    \begin{aligned}
        &\big\{\sigma^R < T\big\}\\
    &= \Big\{ \sup_{t \in [0,T]} \sum_{i=1}^{N} |q_i(t)|+\sum_{i=1}^N |p_i| + \sum_{1 \leq i < j \leq N} |q_i(t) - q_j(t)|^{-1} \geq R \Big\} \\
    &\subseteq \Big\{ \sup_{t \in [0,T]} \sum_{i=1}^{N} |q_i(t)| \geq \frac{R}{3} \Big\}\bigcup\Big\{\sup_{t \in [0,T]}\sum_{i=1}^N |p_i|\ge \frac{R}{3}\Big\} \\
    &\hspace{1cm}\bigcup_{1\le i<j\le N} \Big\{ \sup_{t\in [0,T]}-c_G \log |q_i(t) - q_j(t)| \geq c_G \log \Big(\frac{2R}{3N^2}\Big) \Big\} .
    \end{aligned}
\end{equation}
It is clear that $\sum_{i=1}^N|p_i|\ge 0$. We invoke Lemma~\ref{L:U_G_p}, cf. \eqref{E:q2-log>=0}, to obtain for $R$ large enough
\begin{align*}
    &\Big\{ \sup_{t \in [0,T]} \sum_{i=1}^{N} |q_i(t)| \geq \frac{R}{3} \Big\}\\
    &\subseteq  \Big\{ \sup_{t \in [0,T]} \sum_{i=1}^{N} |q_i(t)|+\sum_{i=1}^N |p_i| - c_G \sum_{1 \leq i < j \leq N} \log |q_i(t) - q_j(t)| \geq \frac{R}{6}\Big) \Big\} \\
    &\subseteq \Big\{ \sup_{t \in [0,T]} \sum_{i=1}^{N} |q_i(t)|+\sum_{i=1}^N |p_i| - c_G \sum_{1 \leq i < j \leq N} \log |q_i(t) - q_j(t)| \geq c_G \log \Big(\frac{2R}{3N^2}\Big) \Big\} .
\end{align*}
Again by~\eqref{E:q2-log>=0} and the fact $\frac{1}{2}\sum_{i=1}^N|q_i|^2\ge 0$, for $R$ sufficient large, we have
\begin{align*}
    &\Big\{\sup_{t \in [0,T]}\sum_{i=1}^N |p_i|\ge \frac{R}{3}\Big\}\\
    &\subseteq \Big\{ \sup_{t \in [0,T]} \sum_{i=1}^{N} |q_i(t)|+\sum_{i=1}^N |p_i| - c_G \sum_{1 \leq i < j \leq N} \log |q_i(t) - q_j(t)| \geq c_G \log \Big(\frac{2R}{3N^2}\Big) \Big\} .
\end{align*}
Since for any pair $(i,j)$,
\begin{align*}
    c_G \sum_{i=1}^{N} |q_i| +c_G\sum_{i=1}^N |p_i|+ c_G \max\{-\log |q_k - q_l| : 1 \leq k < l \leq N \}\ge  -c_G\log |q_i - q_j| ,
\end{align*}
we apply Lemma~\ref{L:U_G_p}, cf. the second inequality of~\eqref{ineq:U_G_p} to get
\begin{align*}
    \Big\{ \sup_{t\in [0,T]}&-c_G \log |q_i(t) - q_j(t)| \geq c_G \log \Big(\frac{2R}{3N^2}\Big) \Big\}\\
    &\subseteq \Big\{ \sup_{t \in [0,T]} \sum_{i=1}^{N} |q_i(t)|+\sum_{i=1}^N |p_i| - c_G \sum_{1 \leq i < j \leq N} \log |q_i(t) - q_j(t)| \geq c_G \log \Big(\frac{2R}{3N^2}\Big) \Big\}.
\end{align*}
Now we are able to apply the first inequality of~\eqref{ineq:U_G_p} and~\eqref{ER:sigmaR<T} to get
\begin{align*}
    \big\{ \sigma^R < T \big\} \subseteq \Big\{ \sup_{t \in [0, T]} \sum_{i=1}^{N} U(q_i) + \sum_{1 \le i < j \le N} G(q_i - q_j) +\frac{1}{2}|\p|^2\ge \frac{c_G}{C_G} \log \Big( \frac{2R}{3N^2} \Big) \Big\}.
\end{align*}
In turn, we apply Lemma~\ref{L:RUnifEnergy} to obtain
\begin{align}\label{E_:RI2}
    Q_2 = \P(\sigma^R < T) \leq \frac{C_G}{c_G} \cdot \frac{C(T, \q_0, \p_0)}{\log(R/3N^2)} \leq \frac{C(T)}{\log R}.
\end{align}
Concerning $I_1$, observe that
\begin{align*}
&\Big\{ \sup_{t \in [0,T]} \big| \mathbf{q}^{\varepsilon, R}(t) - \mathbf{q}^R(t) \big| + \big| \mathbf{p}^{\varepsilon, R}(t) - \mathbf{p}^R(t) \big| \leq \frac{\xi}{R},\, \sigma_\varepsilon^R < T \leq \sigma^R \Big\} \\
&= \Big\{ \sup_{t \in [0,T]} \big| \mathbf{q}^{\varepsilon, R}(t) - \mathbf{q}(t) \big| + \big| \mathbf{p}^{\varepsilon, R}(t) - \mathbf{p}(t) \big| \leq \frac{\xi}{R},\, \sigma_\varepsilon^R < T \leq \sigma^R \Big\} \\
&\qquad \bigcap \Big\{ \sup_{t \in [0,T]} \Big( \sum_{i=1}^N \big| q_i^{\varepsilon, R}(t) \big| + \sum_{1 \leq i < j \leq N} \big| q_i^{\varepsilon, R}(t) - q_j^{\varepsilon, R}(t) \big|^{-1}+\sum_{i=1}^N |p_i^{\varepsilon, R}| \Big) \geq R \Big\}.
\end{align*}
Also, note that
\begin{align*}
    &\Big\{ \sup_{t \in [0,T]} \Big( \sum_{i=1}^N \big| q_i^{\varepsilon, R}(t) \big| + \sum_{1 \leq i < j \leq N} \big| q_i^{\varepsilon, R}(t) - q_j^{\varepsilon, R}(t) \big|^{-1}+\sum_{i=1}^N |p_i^{\varepsilon, R}| \Big) \geq R \Big\}\\
    &\quad\subseteq \Big\{\sup_{t \in [0,T]}\sum_{i=1}^N \big| q_i^{\varepsilon, R}(t) \big|\ge \frac{R}{3}\Big\}\bigcup \Big\{\sup_{t \in [0,T]}\sum_{1 \leq i < j \leq N} \big| q_i^{\varepsilon, R}(t) - q_j^{\varepsilon, R}(t) \big|^{-1}\ge \frac{R}{3}\Big\}\\
    &\hspace{1cm}\bigcup \Big\{\sup_{t \in [0,T]}\sum_{i=1}^N |p_i^{\varepsilon, R}|\ge \frac{R}{3}\Big\},
\end{align*}
and that the following implication holds for arbitrary sets $A,B,C$ and $D$, 
\begin{align*}
    \lrbig{A\cap B}\cap\lrbig{C\cup D\cup E}=&\lrbig{A\cap B\cap C}\cup \lrbig{A\cap B\cap D}\cup \lrbig{A\cap B\cap E}\\
    \subseteq&\lrbig{A\cap C}\cup\lrbig{B\cap E}\cup\lrbig{A\cap D}.
\end{align*}
We deduce
\begin{align*}
&\Big\{ \sup_{t \in [0,T]} \big| \mathbf{q}^{\varepsilon, R}(t) - \mathbf{q}^R(t) \big| + \big| \mathbf{p}^{\varepsilon, R}(t) - \mathbf{p}^R(t) \big| \leq \frac{\xi}{R},\, \sigma_\varepsilon^R < T \leq \sigma^R \Big\} \\
&\subseteq \Big\{ \sup_{t \in [0,T]} \big| \mathbf{q}^{\varepsilon, R}(t) - \mathbf{q}(t) \big| \leq \frac{\xi}{R},\, \sup_{t \in [0,T]} \sum_{i=1}^N \big| q_i^{\varepsilon, R}(t) \big| \geq \frac{R}{3} \Big\} \\
&\qquad \bigcup\{\sup_{t \in [0,T]}   \big| \mathbf{p}^{\varepsilon, R}(t) - \mathbf{p}^R(t) \big|\le \frac{\xi}{R},\sup_{t \in [0,T]}\sum_{i=1}^N |p_i^{\varepsilon, R}(t)|\ge \frac{R}{3}\}\\
&\qquad \bigcup_{} \Big\{ \sup_{t \in [0,T]} \big| \q^{\varepsilon, R}(t) - \mathbf{q}(t) \big| \leq \frac{\xi}{R},\, \sup_{t \in [0,T]} \big| q_i^{\varepsilon, R}(t) - q_j^{\varepsilon, R}(t) \big|^{-1} \geq \frac{2R}{3N^2} \Big\}\\
& \eqqcolon B_1\bigcup B_2\bigcup_{1\le i<j\le N} B_{i,j},
\end{align*}
By triangle inequality, we have
\begin{align*}
    B_1 &= \Big\{ \sup_{t \in [0,T]} \big| \q^{\varepsilon, R}(t) - \q(t) \big| \leq \frac{\xi}{R}, \sup_{t \in [0,T]} \sum_{i=1}^{N} \big| q_i^{\varepsilon, R}(t) \big| \geq \frac{R}{3} \Big\} \\
    &\subseteq \Big\{ \sup_{t \in [0,T]} \sum_{i=1}^{N} \big| q_i^{\varepsilon, R}(t) - q_i(t) \big| \leq \frac{\xi}{R \sqrt{N}}, \sup_{t \in [0,T]} \sum_{i=1}^{N} \big| q_i^{\varepsilon, R}(t) \big| \geq \frac{R}{3} \Big\} \\
    &\subseteq \Big\{ \sup_{t \in [0,T]} \sum_{i=1}^{N} \big| q_i(t) \big| \geq \frac{R}{3} - \frac{\xi}{R \sqrt{N}} \Big\}.
\end{align*}
Taking $R$ large enough such that $\frac{R}{3} - \frac{\xi}{R \sqrt{N}}\ge \sqrt{R} $, by Lemma~\ref{L:U_G_p}, we deduce
\begin{align}\label{E_:RB1}
    \notag B_1 \subseteq& \Big\{ \sup_{t \in [0, T]} \sum_{i=1}^{N} |q_i(t)| \geq \sqrt{R} \Big\} & \\
    \subseteq& \Big\{ \sup_{t \in [0, T]} \sum_{i=1}^{N} U(q_i(t)) + \sum_{1 \leq i < j \leq N} G(q_i(t) - q_j(t)) +\frac{1}{2}|\p|^2\geq \frac{c_G}{C_G} \sqrt{R} \Big\}.
\end{align}
Likewise,
\begin{align*}
    \notag B_2 &= \Big\{ \sup_{t \in [0,T]} \big| \p^{\varepsilon, R}(t) - \p(t) \big| \leq \frac{\xi}{R}, \sup_{t \in [0,T]} \sum_{i=1}^{N} \big| p_i^{\varepsilon, R}(t) \big| \geq \frac{R}{3} \Big\} \\
    \notag&\subseteq \Big\{ \sup_{t \in [0,T]} \sum_{i=1}^{N} \big| p_i^{\varepsilon, R}(t) - p_i(t) \big| \leq \frac{\xi}{R \sqrt{N}}, \sup_{t \in [0,T]} \sum_{i=1}^{N} \big| p_i^{\varepsilon, R}(t) \big| \geq \frac{R}{3} \Big\} \\
    & \subseteq \Big\{ \sup_{t \in [0,T]} \sum_{i=1}^{N} \big| p_i(t) \big| \geq \frac{R}{3} - \frac{\xi}{R \sqrt{N}} \Big\}.
\end{align*}
We once again invoke Lemma~\ref{L:U_G_p} to infer
\begin{align} \label{E_:RB2}
    B_2 \subseteq& \Big\{ \sup_{t \in [0, T]} \sum_{i=1}^{N} |p_i(t)| \geq \sqrt{R} \Big\} & \notag \\
    \subseteq& \Big\{ \sup_{t \in [0, T]} \sum_{i=1}^{N} U(q_i(t)) + \sum_{1 \leq i < j \leq N} G(q_i(t) - q_j(t)) +\frac{1}{2}|\p|^2\geq \frac{c_G}{C_G} \sqrt{R} \Big\}.
\end{align}
Turning to $B_{ij}$, using triangle inequality, we have
\begin{align*}
    \inf_{t \in [0,T]} \big| q_i(t) - q_j(t) \big| \leq 2 \sup_{t \in [0,T]} \big| \mathbf{q}^{\varepsilon,R}(t) - \mathbf{q}(t) \big| + \inf_{t \in [0,T]} \big| q_i^{\varepsilon,R}(t) - q_j^{\varepsilon,R}(t) \big|,
\end{align*}
whence
\begin{align*}
B_{ij} = & \Big\{ \sup_{t \in [0,T]} |q^{\epsilon, R}_i(t) - q(t)| \leq \frac{\xi}{R}, \inf_{t \in [0,T]} |q^{\epsilon, R}_i(t) - q^{\epsilon, R}_j(t)| \leq \frac{3N^2}{2R} \Big\} \\
\subseteq & \Big\{ \inf_{t \in [0,T]} |q_i(t) - q_j(t)| \leq \frac{2\xi + 3N^2}{2R} \Big\} \\
= & \Big\{ \sup_{t \in [0,T]} |q_i(t) - q_j(t)|^{-1} \geq \frac{2R}{2\xi + 3N^2} \Big\} \\
= & \Big\{ -\sup_{t \in [0,T]} \log |q_i(t) - q_j(t)| \geq \log 2R - \log(2\xi + 3N^2) \Big\}.
\end{align*}
From Lemma~\ref{L:U_G_p}, for $R$ large enough, we get
\begin{align}\label{E_:RBij}
    \notag B_{ij} \subseteq& \Big\{ -\sup_{t \in [0,T]} \log |q_i(t) - q_j(t)| \geq \log 2R - \log(2\xi + 3N^2) \Big\} \\
    \notag \subseteq& \Big\{ \sup_{t \in [0,T]} \sum_{i=1}^{N} |q_i(t)| - c_G| 
    \sum_{1 \leq i < j \leq N} \log |q_i(t) - q_j(t)|+\frac{1}{2}|\p|^2 \geq \frac{1}{2} c_G \log R \Big\} \\
    \subseteq& \Big\{ \sup_{t \in [0,T]} \sum_{i=1}^{N} U(q_i(t)) + 
    \sum_{1 \leq i < j \leq N} G(q_i(t) - q_j(t))+\frac{1}{2}|\p|^2 \geq \frac{c_G}{2c_G} \log R \Big\}.
\end{align}
So, from~\eqref{E_:RB1},~\eqref{E_:RB2} and~\eqref{E_:RBij}, we obtain
\begin{align}\label{E_:RI1}
    Q_1 = \mathbb{P} \Big\{ \sup_{t \in [0, T]} \big|\q^{\varepsilon, R}(t) - \q^R(t)\big| + \big|\p^{\varepsilon, R}(t) -\p^R(t)\big| \leq \frac{\xi}{R}, \, \sigma_\varepsilon^R < T \leq \sigma^R \Big\} \leq \frac{C(T, q_0, p_0)}{\log R}.
\end{align}
by virtue of Markov inequality and Lemma \ref{L:RUnifEnergy}. Now, we collect estimates~\eqref{E_:RI3},~\eqref{E_:RI2} and ~\eqref{E_:RI1} to find that
\begin{align}\label{E_:RMult_3}
    \mathbb{P}(\sigma^R \land \sigma_\varepsilon^R < T) \leq \frac{\varepsilon}{\xi} \cdot C(T, R) + \frac{C(T)}{\log R}.
\end{align}
Altogether, we combine~\eqref{E_:RMult_2} with~\eqref{E_:RMult_3} and plug into~\eqref{E_:RMult_1} to get
\begin{equation}
    \mathbb{P}\Big( \sup_{t \in [0, T]}  \lrbig{|\mathbf{q}^\varepsilon(t) - \mathbf{q}(t)| + |\mathbf{p}^\varepsilon(t) - \mathbf{p}(t)|} > \xi \Big) \leq \frac{\varepsilon}{\xi} \cdot C(T, R) + \frac{C(T)}{\log R}.
\end{equation}
By passing $R$ to infinity, we arrive at \eqref{lim:Newtonian:N>1}, thereby completing the proof of Theorem~\ref{thm:NewtonianLimit}, part (1), as claimed.
\end{proof}

\subsubsection{The Newtonian limit $\varepsilon\to 0$ when $N=1$} \label{sec: RLE:Newtonian_limit:Original:N=1}
We now turn to the single-particle case, $N=1$, and proceed to prove Theorem~\ref{thm:NewtonianLimit}, part (2). In order to do so, we will derive a suitable energy estimate for equation~\eqref{E:RelativisticSysN=1} that is independent of $\varepsilon$. Such a uniformity will allow us to extend the convergence in probability established in Section \ref{sec: RLE:Newtonian_limit:Original:N>1}
to $L^p$. More specifically, the main ingredient of the proof of Theorem~\ref{thm:NewtonianLimit}, part (2) is stated below through Proposition \ref{P:RSingUnifMomentBd}.
\begin{proposition}[Uniform moment boundedness]\label{P:RSingUnifMomentBd}
When $N=1$, let $\lrbig{q^\varepsilon(t),p^\varepsilon(t)}$ solve~\eqref{E:RelativisticSysN=1} with initial condition $(q_0,p_0)\in\X$. For all $T>0$ and $n>0$, it holds that
    \begin{equation} \label{ineq:sup.E(U+G+p):N=1}
            \sup_{\varepsilon \in (0,1]} \mathbb{E} \sup_{t \in [0,T]} 
    \Big[ U(q^{\varepsilon}(t)) + G(q^{\varepsilon}(t)) + \big| p^{\varepsilon}(t) \big|^2 \Big]^n \leq C(T, n, q_0, p_0).
    \end{equation}
\end{proposition}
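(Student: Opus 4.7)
The plan is to introduce a renormalized energy function $\widetilde E_\varepsilon(q,p) = \frac{1}{\varepsilon}(\sqrt{1+\varepsilon|p|^2}-1) + U(q) + G(q)$, which interpolates between the relativistic Hamiltonian (for which the divergent constant $1/\varepsilon$ has been subtracted) and the Newtonian energy $\tfrac12|p|^2 + U(q) + G(q)$. Crucially, at the initial condition, the elementary inequality $\frac{1}{\varepsilon}(\sqrt{1+\varepsilon|p_0|^2}-1) = \frac{|p_0|^2}{1+\sqrt{1+\varepsilon|p_0|^2}} \le \tfrac{1}{2}|p_0|^2$ yields $\widetilde E_\varepsilon(q_0,p_0) \le \tfrac12|p_0|^2 + U(q_0)+G(q_0)$, which is bounded uniformly in $\varepsilon \in (0,1]$. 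Moreover, Assumption \ref{Assumption U} guarantees $\widetilde E_\varepsilon\ge 1$ since $U\ge 1$ and $G$ is bounded below (after the normalisation adopted just before Assumption \ref{Assumption D}).

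The first key step is to compute $\mathcal{L}_1^\varepsilon \widetilde E_\varepsilon$ and observe the Hamiltonian cancellation. Setting $K_\varepsilon(p)=\frac{1}{\varepsilon}(\sqrt{1+\varepsilon|p|^2}-1)$, one checks that $\nabla_p K_\varepsilon = p/\sqrt{1+\varepsilon|p|^2}$, so the position drift and the $-\nabla U - \nabla G$ contributions cancel exactly. A direct calculation using $D(p)p = p\sqrt{1+\varepsilon|p|^2}$ and the identity $(I+\varepsilon p\otimes p)[(1+\varepsilon|p|^2)I - \varepsilon p\otimes p] = (1+\varepsilon|p|^2)I$ (which also underlies the positivity shown in Lemma \ref{L:HormanderCondition}) gives the clean expression
\begin{equation*}
\mathcal{L}_1^\varepsilon \widetilde E_\varepsilon \;=\; -\alpha(p) + d, \qquad \alpha(p):=\frac{|p|^2}{\sqrt{1+\varepsilon|p|^2}},
\end{equation*}
together with the quadratic variation $d\langle \widetilde E_\varepsilon\rangle_t = 2\alpha(p)\, dt$. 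The second key observation is the pointwise inequality $\alpha(p) \le 2\widetilde E_\varepsilon$, which follows from $K_\varepsilon(p) = \frac{|p|^2}{1+\sqrt{1+\varepsilon|p|^2}} \ge \frac{|p|^2}{2\sqrt{1+\varepsilon|p|^2}} = \alpha/2$.

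Applying It\^o's formula to $\widetilde E_\varepsilon^n$ and using $\alpha\le 2\widetilde E_\varepsilon$ to absorb the It\^o correction $n(n-1)\widetilde E_\varepsilon^{n-2}\alpha$ into $2n(n-1)\widetilde E_\varepsilon^{n-1}$, I obtain
\begin{equation*}
\mathcal{L}_1^\varepsilon \widetilde E_\varepsilon^n \;\le\; [nd + 2n(n-1)]\widetilde E_\varepsilon^{n-1} \;\le\; C_n(1 + \widetilde E_\varepsilon^n),
\end{equation*}
with $C_n$ independent of $\varepsilon$. Gr\"onwall's lemma in expectation then yields $\sup_{\varepsilon\in(0,1]} \mathbb{E}\widetilde E_\varepsilon^n(t) \le C(n,T,q_0,p_0)$ for all $t\in[0,T]$ and every $n\ge 1$. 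To upgrade this to the supremum in time, I apply the Burkholder--Davis--Gundy inequality to the martingale part, whose quadratic variation is bounded by $2n^2\widetilde E_\varepsilon^{2(n-1)}\alpha \le 4n^2 \widetilde E_\varepsilon^{2n-1}$; Jensen's inequality together with the already-established moment bound for exponent $2n-1$ controls the resulting term, and a standard combination with the deterministic estimate closes the argument to give $\mathbb{E}\sup_{t\in[0,T]}\widetilde E_\varepsilon^n \le C(n,T,q_0,p_0)$ uniformly in $\varepsilon$.

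Finally, the inequality $|p|^2 = K_\varepsilon(p)(1+\sqrt{1+\varepsilon|p|^2}) = K_\varepsilon(p)(2+\varepsilon K_\varepsilon(p)) \le 2\widetilde E_\varepsilon + \varepsilon \widetilde E_\varepsilon^2 \le 3\widetilde E_\varepsilon^2$ for $\varepsilon\in(0,1]$, combined with $U+G\le \widetilde E_\varepsilon$, yields $U(q^\varepsilon)+G(q^\varepsilon)+|p^\varepsilon|^2 \le 4\widetilde E_\varepsilon^2$, so that \eqref{ineq:sup.E(U+G+p):N=1} follows with the exponent $2n$ on the right-hand side. The main obstacle is the identification of the correct energy function: the Lyapunov function $V_{1,\varepsilon}$ constructed in Proposition \ref{P:RLyapunovSing} has $\varepsilon$-dependent coefficients in its Hamiltonian $H_{1,\varepsilon}$ and therefore gives bounds that degenerate as $\varepsilon\to 0$; the renormalisation $\widetilde E_\varepsilon$ is precisely what is needed to separate the limiting Newtonian energy from the diverging rest-mass contribution, and the algebraic identity $\alpha\le 2\widetilde E_\varepsilon$ is what makes the It\^o correction not destroy the closed Gr\"onwall loop.
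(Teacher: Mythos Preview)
Your proof is correct and in fact cleaner than the paper's. Both arguments follow the same broad strategy (energy estimate $+$ Gr\"onwall $+$ BDG to pass to the supremum), but the choice of energy functional differs. The paper works with
\[
\Gamma_3 = \tfrac{1}{2}\varepsilon (U+G)^2 + (U+G)\sqrt{1+\varepsilon|p|^2} + \tfrac12|p|^2,
\]
shows $\mathrm{d}\Gamma_3 \le c\Gamma_3\,\mathrm{d}t + C\,\mathrm{d}t + \mathrm{d}M_5$, and needs an auxiliary lemma (Lemma~\ref{L:VariationM_t}) to control $\mathrm{d}\langle M_5\rangle \le 8\sqrt{2}(\Gamma_3^{3/2}+\Gamma_3)\,\mathrm{d}t$. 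Your renormalised Hamiltonian $\widetilde E_\varepsilon$ is related to theirs by the exact identity $\Gamma_3 = \tfrac{\varepsilon}{2}\widetilde E_\varepsilon^2 + \widetilde E_\varepsilon$, but working directly with $\widetilde E_\varepsilon$ yields the \emph{exact} expression $\mathcal{L}_1^\varepsilon\widetilde E_\varepsilon = -\alpha(p)+d$ and the simple quadratic-variation bound $\mathrm{d}\langle\widetilde E_\varepsilon\rangle = 2\alpha\,\mathrm{d}t \le 4\widetilde E_\varepsilon\,\mathrm{d}t$, so no separate lemma is needed and the It\^o correction is absorbed linearly rather than with a $3/2$-power. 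This is a genuine simplification.

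One minor correction: $G$ itself is \emph{not} bounded below (Assumption~\ref{A:G} allows $G(q)\sim -a_1|q|$ at infinity). What is bounded below is the sum $U+G$, since $U(q)\gtrsim |q|^{\lambda+1}$ with $\lambda\ge 1$ dominates the linear negative part of $G$; after adjusting the additive constant in $U$ one indeed has $U+G\ge 1$, which is all you use.
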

In order to establish Proposition \ref{P:RSingUnifMomentBd}, we introduce the following functional
    \begin{align} \label{form:Gamma_2}
        \Gamma_3(q^\varepsilon,p^\varepsilon) = \frac{1}{2} \varepsilon (U(q^\varepsilon) + G(q^\varepsilon))^2 + (U(q^\varepsilon) + G(q^\varepsilon)) \sqrt{1 + \varepsilon |p^\varepsilon|^2} + \frac{1}{2} |p^\varepsilon|^2,
    \end{align}
and the semi-martingale process $M_5(t)$ given by
\begin{align} \label{form:M(t)}
        \ud M_5(t)\coloneqq \inner{\big(U(q^\varepsilon)+G(q^\varepsilon)\big)\frac{\varepsilon p^\varepsilon}{\sqrt{1+\varepsilon|p^\varepsilon|^2}}+p^\varepsilon}{\sqrt{2D(p^\varepsilon)}\ud W}.
    \end{align}
We note that $M_5(t)$ is involved with $\Gamma_3$ through applying It\^o's formula to $\Gamma_3$. It is therefore crucial to establish useful moment bounds on the associated quadratic variation process $\la M_5(t)\ra$. This auxiliary result is summarized in Lemma \ref{L:VariationM_t} below.

\begin{lemma}\label{L:VariationM_t}
    Let $M_5(t)$ be the martingale process given by \eqref{form:M(t)}. Then for all $T>0$, the quadratic variation process $\langle M_5(t)\rangle$ satisfies
    \begin{align*}
        \ud \langle M_5(t)\rangle\le 8\sqrt{2}(\Gamma_3^{3/2}+\Gamma_3)\ud t,
    \end{align*}
    for all $\varepsilon\le 1$.
\end{lemma}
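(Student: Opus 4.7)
The plan is to exploit the fact that the integrand
\[
v := (U(q^\varepsilon)+G(q^\varepsilon))\,\frac{\varepsilon p^\varepsilon}{\sqrt{1+\varepsilon|p^\varepsilon|^2}}+p^\varepsilon
\]
against which the Brownian increment is paired is collinear with $p^\varepsilon$. Writing $v=(1+a)p^\varepsilon$ with $a=\varepsilon(U+G)/\sqrt{1+\varepsilon|p^\varepsilon|^2}\ge 0$, and recalling from~\eqref{E:DefD} that $p^\varepsilon$ is the eigenvector of $D(p^\varepsilon)$ with eigenvalue $\sqrt{1+\varepsilon|p^\varepsilon|^2}$, the quadratic variation collapses to
\[
\ud\langle M_5\rangle=2\langle v,D(p^\varepsilon)v\rangle\,\ud t=2(1+a)^2\sqrt{1+\varepsilon|p^\varepsilon|^2}\,|p^\varepsilon|^2\,\ud t.
\]
This collinearity is what prevents the orthogonal eigenvalue $1/\sqrt{1+\varepsilon|p^\varepsilon|^2}$ from ever showing up, and is the only place where the explicit form of $D$ is used.

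Next I would expand $(1+a)^2=1+2a+a^2$ and multiply each of the three terms by $\sqrt{1+\varepsilon|p^\varepsilon|^2}|p^\varepsilon|^2$. After the cancellations
\[
a\sqrt{1+\varepsilon|p^\varepsilon|^2}=\varepsilon(U+G), \qquad a^2\sqrt{1+\varepsilon|p^\varepsilon|^2}=\frac{\varepsilon^2(U+G)^2}{\sqrt{1+\varepsilon|p^\varepsilon|^2}},
\]
the three quantities to bound are
\[
\sqrt{1+\varepsilon|p^\varepsilon|^2}\,|p^\varepsilon|^2,\qquad \varepsilon(U+G)\,|p^\varepsilon|^2,\qquad \frac{\varepsilon^2(U+G)^2\,|p^\varepsilon|^2}{\sqrt{1+\varepsilon|p^\varepsilon|^2}}.
\]
Each of these is dominated by $\Gamma_3$ and $\Gamma_3^{3/2}$ via the elementary inequalities $|p^\varepsilon|^2\le 2\Gamma_3$, $\varepsilon(U+G)^2\le 2\Gamma_3$, and $(U+G)\sqrt{1+\varepsilon|p^\varepsilon|^2}\le \Gamma_3$, all of which follow immediately from the definition~\eqref{form:Gamma_2} of $\Gamma_3$ once we note that each summand there is non-negative (using the paper's normalisation $U+G\ge 1$). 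For instance, writing $\sqrt{1+\varepsilon|p^\varepsilon|^2}\le 1+\sqrt{\varepsilon}|p^\varepsilon|$ and combining $\sqrt{\varepsilon}|p^\varepsilon|\le\sqrt{2\Gamma_3}$ with $|p^\varepsilon|^2\le 2\Gamma_3$ yields the first quantity $\le 2\Gamma_3+2\sqrt{2}\,\Gamma_3^{3/2}$; factoring the middle quantity as $\sqrt{\varepsilon}(U+G)\cdot\sqrt{\varepsilon}|p^\varepsilon|^2$ and using $\sqrt{\varepsilon}(U+G)\le\sqrt{2\Gamma_3}$ together with $\sqrt{\varepsilon}|p^\varepsilon|^2\le 2\Gamma_3$ (the latter because $\varepsilon\le 1$) bounds it by $2\sqrt{2}\,\Gamma_3^{3/2}$; and for the third, $\varepsilon|p^\varepsilon|^2/\sqrt{1+\varepsilon|p^\varepsilon|^2}\le \sqrt{\varepsilon}|p^\varepsilon|\le\sqrt{2\Gamma_3}$ combined with $\varepsilon(U+G)^2\le 2\Gamma_3$ gives another $2\sqrt{2}\,\Gamma_3^{3/2}$.

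The main obstacle is merely keeping the numerical constants in check so that the sum of the three contributions, multiplied by the overall factor $2$, fits inside $8\sqrt{2}(\Gamma_3^{3/2}+\Gamma_3)$; no step is genuinely hard, but the cancellations have to be selected with some care to avoid accumulating spurious factors of $\Gamma_3$. The hypothesis $\varepsilon\le 1$ is used essentially, as it is the mechanism by which factors of $\varepsilon$ are converted into $\sqrt{\varepsilon}$ and hence produce the correct leading order $\Gamma_3^{3/2}$ rather than $\Gamma_3^{2}$. Should the explicit constant come out marginally too large after this book-keeping, any slack can be absorbed into the $\Gamma_3$ term using $\Gamma_3\ge 1$, which is forced by $U+G\ge 1$ and $\sqrt{1+\varepsilon|p^\varepsilon|^2}\ge 1$.
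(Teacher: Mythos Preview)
Your approach is essentially the same as the paper's: both rely on $p^\varepsilon$ being an eigenvector of $D(p^\varepsilon)$ with eigenvalue $\sqrt{1+\varepsilon|p^\varepsilon|^2}$, which reduces $\ud\langle M_5\rangle$ to $2(1+a)^2\sqrt{1+\varepsilon|p^\varepsilon|^2}\,|p^\varepsilon|^2$. The paper does not name this eigenvector step explicitly but uses it when evaluating $|\sqrt{2D}\,p^\varepsilon|^2$. The only difference is bookkeeping: the paper applies $(1+a)^2\le 2(1+a^2)$ first, arriving at
\[
4\sqrt{1+\varepsilon|p^\varepsilon|^2}\big[\varepsilon(U+G)^2+|p^\varepsilon|^2\big]\le 8\sqrt{1+\varepsilon|p^\varepsilon|^2}\,\Gamma_3,
\]
and then $\sqrt{1+\varepsilon|p^\varepsilon|^2}\le 1+\sqrt{2\Gamma_3}$ yields exactly $8\sqrt{2}(\Gamma_3^{3/2}+\Gamma_3)$.

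Your full expansion $(1+a)^2=1+2a+a^2$ with the individual bounds you list produces, after summing with weights $1,2,1$ and multiplying by the overall factor~$2$, a bound of $4\Gamma_3+16\sqrt{2}\,\Gamma_3^{3/2}$ --- the $\Gamma_3^{3/2}$ coefficient comes out twice too large. Your proposed repair via $\Gamma_3\ge 1$ points the wrong way: that inequality lets you trade $\Gamma_3$ \emph{up} to $\Gamma_3^{3/2}$, not the reverse, so it cannot shrink the leading coefficient. To hit the stated constant, simply replace the full expansion by $(1+a)^2\le 2(1+a^2)$ before estimating; this is precisely the paper's route. The specific constant is of course immaterial for the application in Proposition~\ref{P:RSingUnifMomentBd}.
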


For the sake of clarity, we will defer the proof of Lemma \ref{L:VariationM_t} to the end of this subsection. Assuming its result, let us prove Proposition \ref{P:RSingUnifMomentBd} and conclude Theorem \ref{thm:NewtonianLimit}, part (2).

\begin{proof}[Proof of Proposition \ref{P:RSingUnifMomentBd}]
    First of all, we apply It\^{o}'s formula to $\Gamma_3$ defined in \eqref{form:Gamma_2} and obtain the identity
    \begin{align*}
        \ud \Gamma_3=&-(U+G)\frac{\varepsilon|p^\varepsilon|^2}{\sqrt{1+\varepsilon|p^\varepsilon|^2}}\ud t+(U+G)\frac{\varepsilon^2 d |p^\varepsilon|^2}{1+\varepsilon|p^\varepsilon|^2}\ud t
        -|p^\varepsilon|^2\ud t+\frac{\varepsilon d |p^\varepsilon|^2}{\sqrt{1+\varepsilon|p^\varepsilon|^2}}\ud t\\
        &\qquad+\varepsilon(U+G)\lr{\frac{d-1}{1+\varepsilon|p^\varepsilon|^2}+1+\frac{\varepsilon|p^\varepsilon|^2}{1+\varepsilon|p^\varepsilon|^2}}\ud t+\lr{\frac{d-1}{\sqrt{1+\varepsilon|p^\varepsilon|^2}}+\sqrt{1+\varepsilon|p^\varepsilon|^2}}\ud t\\
        &\qquad+\ud M_5(t),
    \end{align*}
    where $M_5(t)$ is the process given by \eqref{form:M(t)}. It is not difficult to see that all of the positive drift terms are controlled by $\Gamma_3$, namely,
    \begin{align*}
            (U+G)\frac{\varepsilon^2 d |p^\varepsilon|^2}{1+\varepsilon|p^\varepsilon|^2}&\le (U+G)\varepsilon d\le 2d\Gamma_3,\\
            \varepsilon(U+G)\lr{\frac{d-1}{1+\varepsilon|p^\varepsilon|^2}+1+\frac{|p^\varepsilon|^2}{1+\varepsilon|p^\varepsilon|^2}}&=\varepsilon (d+1) \lr {U+G}\le 2(d+1)\Gamma_3,\\
            \frac{d-1}{\sqrt{1+\varepsilon|p^\varepsilon|^2}} &\le d-1,\\
            \frac{\varepsilon d |p^\varepsilon|^2}{\sqrt{1+\varepsilon|p^\varepsilon|^2}}&\le d\sqrt{\varepsilon}|p^\varepsilon|\le \lr {\Gamma_3+1}d.
    \end{align*}
   We therefore may infer positive constants $c$ and $C$ independent of $\varepsilon$ such that
    \begin{align*}
        \ud \Gamma_3\le c\Gamma_3\ud t+\ud M_5(t)+C\ud t.
    \end{align*}
    We apply It\^{o}'s formula to obtain
    \begin{align}\label{ER:DiffGamma2n}
        \ud \Gamma_3^n=&n \Gamma_3^{n-1}\ud \Gamma_3+\frac{1}{2}n(n-1)\Gamma_3^{n-2}\langle \ud \Gamma_3,\ud \Gamma_3\rangle \notag \\
        \le& n (c\Gamma_3^{n}+C)\ud t+4\sqrt{2}n(n-1)(\Gamma_3^{n-1}+\Gamma_3^{n-\frac{1}{2}})\ud t+n \Gamma_3^{n-1}\ud M_5(t),
    \end{align}
    where the last implication follows from Lemma~\ref{L:VariationM_t}. We
    integrate over $[0,t]$ and then take expectation to get
    \begin{align*}
        \E (\Gamma_3(t)^n)\le C_T+C\E\int_0^t \Gamma_3(s)^{n}+\Gamma_3(s)^{n-\frac{1}{2}}+\Gamma_3(s)^{n-1}\ud s.
    \end{align*}
    This implies by virtue of Gr\"{o}nwall's inequality that
    \begin{align*}
        \E \Gamma_3(t)^n\le C,
    \end{align*}
    whence
    \begin{align}\label{ER:IntGamma2n}
        \E\int_0^T\Gamma_3(t)^n\ud t\le C,
    \end{align}
    for certain $C=C(n,T,q_0,p_0)$ independent of $\varepsilon$.
    Now we employ once again~\eqref{ER:DiffGamma2n}, take integration over $[0,t]$, supremum over $t\in [0,T]$ then expectation to find
    \begin{align*}
        \mathbb{E}\sup_{t\in[0,T]}\Gamma_{3}(t)^{n}
        \leq& C_T
        + C\int_{0}^{T}\Gamma_{3}(t)^{n}\ud t
        + n\mathbb{E}\sup_{t\in[0,T]}\left| \int_{0}^{T}\Gamma_{3}(t)^{n-1}\ud M_5(t) \right|\\
        \leq& C_T
        + \mathbb{E}\int_{0}^{T}\big(c\Gamma_{3}(t)^{n}+C\big)\ud t
        + C\mathbb{E}\int_{0}^{T}\big(\Gamma_{3}(t)^{2n-1}+\Gamma_{3}(t)^{2n-\frac{1}{2}}\big)\ud t+ C
        \leq C.
    \end{align*}
    In the above, the first inequality follows from the Burkholder-Davis-Gundy inequality together with Lemma~\ref{L:VariationM_t} and the last inequality follows from~\eqref{ER:IntGamma2n}. By noting that $\Gamma_3\ge U+G+\frac{1}{2}|p|^2$, the proof of Proposition~\ref{P:RSingUnifMomentBd} is thus completed.
\end{proof}
    Now, we are in a position to conclude part (2) of Theorem~\ref{thm:NewtonianLimit}.
    \begin{proof}[Proof of Theorem~\ref{thm:NewtonianLimit}, part (2)]
        Similar to the proof of Theorem \ref{thm:NewtonianLimit}, part (1), we will employ a stopping time argument. Let $\tau^R$ and $\tau^R_\varepsilon$ be defined as
        \begin{align*} 
            \tau^R = \inf_{t\geq 0}\Big\{|q(t)|+|q(t)|^{-1}\geq R\Big\},
        \quad\text{and}\quad
            \tau^R_{\varepsilon} = \inf_{t\geq 0}\Big\{|q^\varepsilon(t)|+ |q^\varepsilon(t))|^{-1}\geq R\Big\}.
        \end{align*}
        where $q^\varepsilon$ and $q$ respectively are the solutions of~\eqref{E:RelativisticSysN=1} and \eqref{E:LimitingRelativisticSys:N=1}. Also, recalling the cut-off function $\theta_R$ in \eqref{E:truncation function}, we introduce the truncated systems 
        \begin{equation}\label{E:RTruncSys:N=1}
    \begin{aligned}
        \ud q^{\varepsilon,R}=&\frac{p^{\varepsilon,R}}{\sqrt{1+\varepsilon|p^{\varepsilon,R}|^2}}\ud t\\
    \ud p^{\varepsilon,R}=&-D\lrbig{p^{\varepsilon,R}}\frac{p^{\varepsilon,R}}{\sqrt{1+\varepsilon|p^{\varepsilon,R}|^2}}\ud t +\div \lrbig{D\lrbig{p^{\varepsilon,R}}}\ud t-\theta_R\big(|q^{\varepsilon,R}|\big)\nabla U\lrbig{q^{\varepsilon,R}}\ud t\\
    &\qquad-\theta_R\big(|q_i^{\varepsilon,R}|^{-1}\big)\nabla G\big(q^{\varepsilon,R}\big)\ud t+\sqrt{2\Big[\theta_R(|p^{\varepsilon,R}|)\lr{D\lrbig{p^{\varepsilon,R}}-I}+I\Big]}\ud W,
    \end{aligned}
\end{equation}
and
\begin{equation}\label{E:RTruncdLim:N=1}
    \begin{aligned}
        \ud q^R=&p^R \ud t,\\
    \ud p_i^R=&-p^R\ud t+\sqrt{2}\ud W-\theta_R\big(|q^R|\big)\nabla U\big(q^R\big)\ud t-\theta_R\big(|q^R|^{-1}\big)\nabla G\lrbig{q^R}\ud t.
    \end{aligned}
\end{equation}
By the definition of $\tau^r$ and $\tau^R_\varepsilon$, we readily have
        \begin{align*}
            \P\Big( 0\le t\le \tau^R \wedge \tau_\varepsilon^R,\, \big(q^\varepsilon(t),p^\varepsilon(t)\big)=\big(q^{\varepsilon,R}(t),p^{\varepsilon,R}(t)\big) , \, \big(q(t),p(t)\big)  =\big(q^R(t),p^R(t)\big)    \Big)=1.
        \end{align*}
        In order to estimate $\E \sup_{t\in[0,T]}\big[ |q^\varepsilon(t)-q(t)|^n+|p^\varepsilon(t)-p(t)|^n\big]$, we decompose this expectation into two terms using indicator functions, namely,
        \begin{align} \label{E_:RSing_1}
&\E \sup_{t\in[0,T]}\big[ |q^\varepsilon(t)-q(t)|^n+|p^\varepsilon(t)-p(t)|^n\big] \notag \\
&= \E \Big[\sup_{t\in[0,T]}\big[ |q^\varepsilon(t)-q(t)|^n+|p^\varepsilon(t)-p(t)|^n\big]\cdot  \mathbf{1}\{T\le \tau^R\wedge\tau^R_\varepsilon\}  \Big] \notag \\
&\qquad + \E \Big[\sup_{t\in[0,T]}\big[ |q^\varepsilon(t)-q(t)|^n+|p^\varepsilon(t)-p(t)|^n\big]\cdot  \mathbf{1}\{T> \tau^R\wedge\tau^R_\varepsilon\}  \Big] \notag\\
&\le \E \sup_{t\in[0,T]}\big[ |q^{\varepsilon,R}(t)-q^R(t)|^n+|p^{\varepsilon,R}(t)-p^R(t)|^n\big] \notag\\
&\qquad + \E \Big[\sup_{t\in[0,T]}\big[ |q^\varepsilon(t)-q(t)|^n+|p^\varepsilon(t)-p(t)|^n\big]\cdot  \mathbf{1}\{\tau^R \le T\}  \Big] \notag \\
&\qquad +\E \Big[\sup_{t\in[0,T]}\big[ |q^\varepsilon(t)-q(t)|^n+|p^\varepsilon(t)-p(t)|^n\big]\cdot  \mathbf{1}\{\tau^R_\varepsilon \le T\}  \Big] \notag \\
&=:F_1+F_2+F_3.
\end{align}
Following the proof of Proposition~\ref{P:RTruncConv} applied to the systems \eqref{E:RTruncSys:N=1} and \eqref{E:RTruncdLim:N=1}, we see that 
\begin{align}\label{E_:RSingI1}
F_1 =  \E \sup_{t\in[0,T]}\big[ |q^{\varepsilon,R}(t)-q^R(t)|^n+|p^{\varepsilon,R}(t)-p^R(t)|^n\big] \le \varepsilon^{{\frac{n}{2}}} C(T,R).
\end{align}
Concerning $F_2$, we apply Cauchy-Schwarz and Holder's inequalities to get
\begin{align}\label{E_:RSingI2}
\notag F_2 & = \E \Big[\sup_{t\in[0,T]}\big[ |q^\varepsilon(t)-q(t)|^n+|p^\varepsilon(t)-p(t)|^n\big]\cdot  \mathbf{1}\{\tau^R \le T\}  \Big]\\
\notag&\le C \Big|\E \sup_{t\in[0,T]}\big[ |q^\varepsilon(t)|^{2n}+|p^\varepsilon(t)|^{2n}+|q(t)|^{2n}+| p(t)|^{2n} \big] \Big|^{1/2}\\
&\qquad\times \Big|\P\Big( \sup_{t\in[0,T]}\big[ |q(t)|+|q(t)|^{-1}\big]\geq R  \Big)\Big|^{1/2}.
\end{align}
On the one hand, in view of Proposition~\ref{P:RSingUnifMomentBd}, it holds that
\begin{align*}
    \Big|\E \sup_{t\in[0,T]}\big[ |q^\varepsilon(t)|^{2n}+|p^\varepsilon(t)|^{2n}+|q(t)|^{2n}+| p(t)|^{2n} \big] \Big|\le C.
\end{align*}
On the other hand, in view of Lemma~\ref{L:U_G_p}, we have the following inclusions
\begin{align*}
& \Big\{ \sup_{t\in[0,T]}\big[ |q(t)|+|q(t)|^{-1}\big]\geq R  \Big\} \\
&\subseteq  \Big\{ \sup_{t\in[0,T]} |q(t)|\geq \frac{R}{2}  \Big\} \cup \Big\{ \sup_{t\in[0,T]}\big[-\log|q(t)|\big]\geq \log R-\log 2  \Big\} \\
&\subseteq  \Big\{ \sup_{t\in[0,T]} U(q(t))+G(q(t)) \ge \frac{c_G}{2C_G}\log R \Big\}.
\end{align*}
An application of Markov inequality together with Lemma~\ref{L:RUnifEnergy} shows that
\begin{align*}
\P\Big( \sup_{t\in[0,T]}\big[ |q(t)|+|q(t)|^{-1}\big]\geq R  \Big) \le \E \sup_{t\in[0,T]}\big[ U(q(t))+G(q(t))\big]\le \frac{C(T)}{\log R},
\end{align*}
whence
\begin{align*}
    F_2\le \frac{C(T)}{\sqrt{\log R}}.
\end{align*}
Turning to $F_3$, the uniform boundedness result in Proposition~\ref{P:RSingUnifMomentBd} implies that
\begin{align}\label{E_:RSingI3}
\notag F_3 & = \E \Big[\sup_{t\in[0,T]}\big[ |q^\varepsilon(t)-q(t)|^n+|p^\varepsilon(t)-p(t)|^n\big]\cdot  \mathbf{1}\{\tau^R_\varepsilon \le T\}  \Big]\\
\notag&\le C \Big|\E \sup_{t\in[0,T]}\big[ |q^\varepsilon(t)|^{2n}+|p^\varepsilon(t)|^{2n}+|q(t)|^{2n}+| p(t)|^{2n} \big] \Big|^{1/2}\\
\notag&\qquad\times \Big|\P\Big( \sup_{t\in[0,T]}\big[ |q^\varepsilon(t)|+|q^\varepsilon(t)|^{-1}\big]\geq R  \Big)\Big|^{1/2}\\
\notag& \le  C \Big|\E \sup_{t\in[0,T]}\big[ |q^\varepsilon(t)|^{2n}+|p^\varepsilon(t)|^{2n}+|q(t)|^{2n}+| p(t)|^{2n} \big] \Big|^{1/2}\\
\notag&\qquad \times \Big|\E \sup_{t\in[0,T]}\big[ U(q^\varepsilon(t))+G(q^\varepsilon(t))\big]\Big|^{1/2}\\
&\le \frac{C(T)}{\sqrt{\log R}}.
\end{align}
We collect~\eqref{E_:RSingI1},~\eqref{E_:RSingI2} and~\eqref{E_:RSingI3} and plug them into~\eqref{E_:RSing_1} to get
\begin{align*}
&\E \sup_{t\in[0,T]}\big[ |q^\varepsilon(t)-q(t)|^n+|p^\varepsilon(t)-p(t)|^n\big] \notag \\
&\le F_1+F_2+F_3\le \varepsilon^{{\frac{n}{2}}} C(T,R)+ \frac{C(T)}{\sqrt{\log R}}.
\end{align*}
{By sending $R$ to infinity and then taking $\varepsilon$ to zero}, we deduce the desired limit \eqref{lim:Newtonian:N=1:lambda>1}, thereby completing the proof of Theorem~\ref{thm:NewtonianLimit}, part (2).
    \end{proof}

Lastly, let us present the proof of Lemma \ref{L:VariationM_t}, which was a crucial ingredient of Proposition \ref{P:RSingUnifMomentBd} allowing us to conclude Theorem \ref{thm:NewtonianLimit}, part (2).

\begin{proof}[Proof of Lemma \ref{L:VariationM_t}]
        Recall $\Gamma_3$ is given by~\eqref{form:Gamma_2}. That is,
        \begin{align*}
            \Gamma_3(q^\varepsilon,p^\varepsilon) = \frac{1}{2} \varepsilon (U(q^\varepsilon) + G(q^\varepsilon))^2 + (U(q^\varepsilon) + G(q^\varepsilon)) \sqrt{1 + \varepsilon |p^\varepsilon|^2} + \frac{1}{2} |p^\varepsilon|^2.
        \end{align*}
        From~\eqref{form:M(t)}, we have
     \begin{align*}
        \ud\langle M_5(t)\rangle=&\Big|\frac{\varepsilon\lrbig{U(q^\varepsilon)+G(q^\varepsilon)}}{\sqrt{1+\varepsilon |p^\varepsilon|^2}}\sqrt{2D}p^\varepsilon+\sqrt{2D}p^\varepsilon\Big|^2\ud t\\
        \le& 4\lr{\frac{\varepsilon^2(U(q^\varepsilon)+G(q^\varepsilon))^2}{\sqrt{1+\varepsilon|p^\varepsilon|^2}}|p|^2+\sqrt{1+\varepsilon|p^\varepsilon|^2}|p^\varepsilon|^2}\ud t\\
        \le &4\lr{\varepsilon(U(q^\varepsilon)+G(q^\varepsilon))^2\sqrt{1+\varepsilon|p^\varepsilon|^2}+\sqrt{1+\varepsilon|p^\varepsilon|^2}|p^\varepsilon|^2}\ud t\\
        \le &8\sqrt{1+\varepsilon|p^\varepsilon|^2}\Gamma_3\ud t.
        \end{align*}
        So, it remains to to prove that $\sqrt{1+\varepsilon |p^\varepsilon|^2}\le \sqrt{2\Gamma_3}+1$. Indeed, this holds true for $\varepsilon\le 1$ thanks to the fact that
        \begin{align*}
            2\Gamma_3\ge 2\varepsilon\Gamma_3\ge \varepsilon|p^\varepsilon|^2.
        \end{align*}
        The proof if thus complete.
        % Recall $\Gamma_2$ is given by~\eqref{form:Gamma_2}. That is,
        % \begin{align*}
        %     \Gamma_2(q,p) = \frac{1}{2} \varepsilon (U(q) + G(q))^2 + (U(q) + G(q)) \sqrt{1 + \varepsilon |p|^2} + \frac{1}{2} |p|^2.
        % \end{align*}
        % We note that
        % \begin{align*}
        %     \frac{1}{2}\Big(\varepsilon(U+G)+\sqrt{1+\varepsilon|p|^2}\Big)^2=\varepsilon \Gamma_2+\frac{1}{2}
        % \end{align*}
        % Take square root on both sides to get
        % \begin{align*}
        %     \varepsilon(U+G)+\sqrt{1+\varepsilon|p|^2}=(2\varepsilon\Gamma_2+1)^{\frac{1}{2}}\le \sqrt{2\varepsilon}\Gamma_2^{\frac{1}{2}}+1\le \Gamma_2^{\frac{1}{2}}+1 
        % \end{align*}
        % which completes the proof since $\varepsilon(U+G)\ge 0$.
\end{proof}

\appendix

\section{Auxiliary results} \label{sec:appendix}

\begin{lemma}{\cite[Lemma A.1]{duong2024asymptotic}}\label{L:s+1times1}
For all $s\ge 0$ and any $\x=\lrbig{x_1,\dots,x_N}\in \mathcal{D}$, the following holds.
    \begin{equation}
    \sum_{i=1}^{N} \Big\langle \sum_{j \neq i} \frac{x_i - x_j}{|x_i - x_j|^{s+1}}, \sum_{l \neq i} \frac{x_i - x_l}{|x_i - x_l|} \Big\rangle \geq 2 \sum_{1 \leq i < j \leq N} \frac{1}{|x_i - x_j|^s}.
\end{equation}
\end{lemma}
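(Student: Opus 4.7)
The plan is to exploit the symmetry in the index $i$ to rewrite the left-hand side as a sum over unordered pairs $\{i,j\}$, and then extract the diagonal contribution that produces exactly the right-hand side. Denote $v_i = \sum_{l \neq i} \frac{x_i - x_l}{|x_i - x_l|}$. The left-hand side is
\[
\text{LHS} = \sum_{i=1}^N \sum_{j \neq i} \frac{\langle x_i - x_j,\, v_i \rangle}{|x_i - x_j|^{s+1}}.
\]
Since $|x_i - x_j|^{s+1}$ is symmetric in $(i,j)$ while $x_i - x_j$ is antisymmetric, pairing the ordered terms $(i,j)$ and $(j,i)$ yields
\[
\text{LHS} = \sum_{1 \leq i < j \leq N} \frac{\langle x_i - x_j,\, v_i - v_j \rangle}{|x_i - x_j|^{s+1}}.
\]

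Next, I would split the sums defining $v_i$ and $v_j$ into the ``diagonal'' contributions (the summand $l=j$ in $v_i$ and $l=i$ in $v_j$) and the remaining indices $l \notin \{i,j\}$. The diagonal contribution to $v_i - v_j$ is precisely $2(x_i - x_j)/|x_i - x_j|$, which produces
\[
\sum_{i<j} \frac{2|x_i-x_j|^2}{|x_i-x_j|^{s+2}} \;=\; 2\sum_{i<j} \frac{1}{|x_i-x_j|^s},
\]
i.e. exactly the right-hand side of the claimed inequality. Therefore it suffices to show that the ``cross'' part is non-negative, namely
\[
\sum_{i<j}\frac{1}{|x_i-x_j|^{s+1}} \sum_{l \neq i,j} \Big\langle x_i - x_j,\ \frac{x_i-x_l}{|x_i-x_l|} - \frac{x_j-x_l}{|x_j-x_l|}\Big\rangle \;\geq\; 0.
\]

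The key pointwise step is then the following elementary identity: writing $a = x_i - x_l$ and $b = x_j - x_l$ so that $x_i - x_j = a - b$, a direct computation gives
\[
\Big\langle a - b,\ \tfrac{a}{|a|} - \tfrac{b}{|b|}\Big\rangle \;=\; |a| + |b| - \langle a,b\rangle\Big(\tfrac{1}{|a|} + \tfrac{1}{|b|}\Big),
\]
which is non-negative by the Cauchy--Schwarz bound $\langle a, b\rangle \leq |a||b|$. Summing over $l \notin \{i,j\}$ and over pairs $i<j$ with the positive weights $|x_i-x_j|^{-(s+1)}$ establishes non-negativity of the cross part, and the claim follows.

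There is no real obstacle here; the proof is essentially a bookkeeping argument combined with Cauchy--Schwarz. The only point that requires care is the correct identification of the diagonal terms when splitting $v_i - v_j$, since each unordered pair $\{i,j\}$ contributes $2(x_i-x_j)/|x_i-x_j|$ rather than just a single copy.
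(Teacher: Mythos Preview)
Your proof is correct. The paper does not give its own proof of this lemma but simply cites \cite[Lemma~A.1]{duong2024asymptotic}; your symmetrization followed by the diagonal/off-diagonal split and the pointwise Cauchy--Schwarz estimate $\langle a-b,\,a/|a|-b/|b|\rangle\ge 0$ is the standard argument and matches what one finds in that reference.
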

\begin{lemma}{\cite[Lemma A.2]{duong2024asymptotic}}\label{L:s+1timess+1}
    For all $s\in[0,1]$ and any $\x=\lrbig{x_1,\dots,x_N}\in \mathcal{D}$, the followings hold:

    \textup{(i)} For all $s\ge 0$,
        \begin{equation}
            \sum_{i=1}^{N} \Big\langle \sum_{j \neq i} \frac{x_i - x_j}{|x_i - x_j|^{s+1}}, \sum_{\ell \neq i} \frac{x_i - x_\ell}{|x_i - x_\ell|^{s+1}} \Big\rangle \geq \frac{4}{N(N-1)^2} \sum_{1 \leq i < j \leq N} \frac{1}{|x_i - x_j|^{2s}}.
        \end{equation}

    \textup{(ii)} Furthermore, for $s\in [0,1]$
        \begin{equation}
            \sum_{i=1}^{N} \Big\langle \sum_{j \neq i} \frac{x_i - x_j}{|x_i - x_j|^{s+1}}, \sum_{\ell \neq i} \frac{x_i - x_\ell}{|x_i - x_\ell|^{s+1}} \Big\rangle \geq 2 \sum_{1 \leq i < j \leq N} \frac{1}{|x_i - x_j|^{2s}}.
        \end{equation}

\end{lemma}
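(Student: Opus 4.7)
My plan is to expand the inner products on the left-hand side, extract a diagonal contribution that matches the target up to a constant, and reduce control of the off-diagonal (cross) terms to a purely two-dimensional geometric inequality for triangles. Writing
\[
v_i := \sum_{j \neq i}\frac{x_i - x_j}{|x_i - x_j|^{s+1}},
\]
the left-hand side equals $\sum_{i=1}^N |v_i|^2$. Expanding each $|v_i|^2$, separating the diagonal contributions ($j = \ell$) from the off-diagonal ones, and using $\langle x_i - x_j, x_i - x_k\rangle = |x_i - x_j|\,|x_i - x_k|\cos\alpha_i$, I arrive at
\[
\sum_i |v_i|^2 = 2 \sum_{1 \le i < j \le N}\frac{1}{|x_i - x_j|^{2s}} + 2\!\!\!\sum_{\{i,j,k\}}\!\!\Theta_s(x_i, x_j, x_k),
\]
where the second sum runs over unordered triples and
\[
\Theta_s(x_i, x_j, x_k) := \frac{\cos\alpha_i}{(a_j a_k)^s} + \frac{\cos\alpha_j}{(a_i a_k)^s} + \frac{\cos\alpha_k}{(a_i a_j)^s},
\]
with $a_i = |x_j - x_k|$, $a_j = |x_i - x_k|$, $a_k = |x_i - x_j|$ and $\alpha_i, \alpha_j, \alpha_k$ the interior angles of the triangle $x_i x_j x_k$.

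Part (ii) then reduces to showing $\Theta_s \geq 0$ for every non-degenerate triangle and every $s \in [0, 1]$. I would verify this via endpoint analysis plus a case split. At $s = 0$, $\Theta_0 = \cos\alpha_i + \cos\alpha_j + \cos\alpha_k = 1 + r/R \geq 1$. At $s = 1$, clearing denominators gives $a_i a_j a_k\,\Theta_1 = a_i\cos\alpha_i + a_j\cos\alpha_j + a_k\cos\alpha_k$, and combining the law of sines $a_i = 2R\sin\alpha_i$ with the triangle identity $\sum \sin 2\alpha_i = 4\sin\alpha_i\sin\alpha_j\sin\alpha_k$ yields $4R\prod_i \sin\alpha_i > 0$. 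For $s \in (0, 1)$ and acute triangles every summand in $\Theta_s$ is already nonnegative. In the obtuse case (say $\alpha_i > \pi/2$, so $a_i$ is the longest side), normalising $a_i = 1$ turns $\Theta_s \geq 0$ into a one-parameter family of two-variable inequalities that I would check by analysing monotonicity or convexity of $\Theta_s$ in $s$ on the admissible region of the two shorter sides.

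Part (i) is weaker than (ii) on $s \in [0, 1]$ since $4/(N(N-1)^2) \leq 2$, so on that range it is immediate. For $s > 1$, where (ii) can fail, I would instead run a pairwise argument: because $v_i - v_j$ has leading term $2(x_i - x_j)/|x_i - x_j|^{s+1}$, we have
\[
|v_i|^2 + |v_j|^2 \geq \tfrac12 |v_i - v_j|^2 \geq \tfrac12\langle v_i - v_j, e_{ij}\rangle^2,\quad e_{ij} := \tfrac{x_i - x_j}{|x_i - x_j|};
\]
summing over pairs and absorbing the cross remainders back into $\sum_i |v_i|^2$ via a second Cauchy--Schwarz step gives $\sum_i |v_i|^2 \geq C_N \sum_{i<j}|x_i - x_j|^{-2s}$, and the double counting produces the explicit constant $C_N = 4/(N(N-1)^2)$.

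The main obstacle is establishing $\Theta_s \geq 0$ in the obtuse regime for intermediate $s \in (0, 1)$, since a termwise estimate fails: the unique negative contribution $\cos\alpha_i/(a_j a_k)^s$ has the smallest denominator among the three summands and hence the largest weight. The sharpness of the restriction $s \leq 1$ is visible from a nearly-collinear triple: for $x_1 = (0, 0)$, $x_2 = (1, 0)$, $x_3 = (2, \varepsilon)$ with $\varepsilon \to 0$, a direct computation gives $\Theta_s \to 2^{1-s} - 1$, which is nonneg precisely when $s \leq 1$. Overcoming this should require either a careful case analysis on the triangle's moduli space or, more compactly, a sum-of-squares decomposition of $\Theta_s$ after substituting the law of cosines $a_i^2 = a_j^2 + a_k^2 - 2 a_j a_k \cos\alpha_i$ and exploiting the constraint $a_i < a_j + a_k$.
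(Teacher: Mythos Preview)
The paper does not prove this lemma; it is quoted verbatim from \cite{duong2024asymptotic}, so there is no in-paper proof to compare against. I will therefore comment on the proposal on its own terms.

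Your reduction of part (ii) to the triangle inequality $\Theta_s\ge 0$ is correct, and your treatment of $s=0$, $s=1$ and the acute case is fine. But the ``main obstacle'' you flag --- the obtuse case for $s\in(0,1)$ --- is not an obstacle at all, and the monotonicity idea you mention in passing already settles it. Multiply $\Theta_s$ by $(a_ia_ja_k)^s>0$ and normalise the longest side $a_i=1$; the sign of $\Theta_s$ is then that of
\[
f(s)=\cos\alpha_i+a_j^s\cos\alpha_j+a_k^s\cos\alpha_k .
\]
Since $\alpha_i$ is obtuse, $a_j,a_k\in(0,1)$ strictly, so $a_j^s,a_k^s$ are decreasing in $s$; as $\cos\alpha_j,\cos\alpha_k>0$, $f$ is decreasing on $[0,1]$ and hence $f(s)\ge f(1)>0$ by your own $s=1$ computation. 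No case analysis or sum-of-squares is needed.

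For part (i) with $s>1$ your sketch has a genuine gap: projecting $v_i-v_j$ onto $e_{ij}$ gives $2|x_i-x_j|^{-s}$ plus cross terms of uncontrolled sign and size, and the phrase ``absorbing the cross remainders via a second Cauchy--Schwarz'' does not correspond to an inequality that holds. A clean route that produces exactly the constant $4/(N(N-1)^2)$ is to combine the companion Lemma~\ref{L:s+1times1} with Cauchy--Schwarz: writing $w_i=\sum_{\ell\neq i}e_{i\ell}$,
\[
2\sum_{i<j}\frac{1}{|x_i-x_j|^{s}}\le \sum_i\langle v_i,w_i\rangle\le\Big(\sum_i|v_i|^2\Big)^{1/2}\Big(\sum_i|w_i|^2\Big)^{1/2}\le\Big(\sum_i|v_i|^2\Big)^{1/2}\sqrt{N}(N-1),
\]
and squaring together with $(\sum_{i<j}a_{ij})^2\ge\sum_{i<j}a_{ij}^2$ gives the claim.
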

\begin{lemma}{\cite[Lemma A.1]{duong2024trend}}\label{L:s+1timesGamma}
    For all $\gamma\in (0,1]$, $s\ge 0$ and $\q=\lrbig{q_1,\dots,q_N}\in \mathcal{D}$,
\begin{equation}
    \sum_{i=1}^{N} \Big\langle \sum_{ j \neq i}^{N} \frac{q_i - q_j}{|q_i - q_j|^{\gamma}}, \sum_{ k \neq i}^{N} \frac{q_i - q_k}{|q_i - q_k|^{s+1}} \Big\rangle \geq 2 \sum_{1 \leq i < j \leq N} \frac{1}{|q_i - q_j|^{s+\gamma-1}}.
\end{equation}
\end{lemma}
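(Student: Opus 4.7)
The plan is to expand the inner product and extract a diagonal contribution that gives exactly the right-hand side, leaving an off-diagonal residual whose nonnegativity is the real content of the lemma.

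\smallskip
\textbf{Step 1 (diagonal extraction).} Write out
\[
\mathrm{LHS}=\sum_{i=1}^{N}\sum_{j\neq i}\sum_{k\neq i}\frac{\langle q_i-q_j,\,q_i-q_k\rangle}{|q_i-q_j|^{\gamma}|q_i-q_k|^{s+1}},
\]
and split the inner double sum into the part with $k=j$ and the part with $k\neq j$. The $k=j$ piece gives
\[
\sum_{i}\sum_{j\neq i}\frac{|q_i-q_j|^2}{|q_i-q_j|^{\gamma+s+1}}
=\sum_{i}\sum_{j\neq i}\frac{1}{|q_i-q_j|^{s+\gamma-1}}
=2\sum_{1\le i<j\le N}\frac{1}{|q_i-q_j|^{s+\gamma-1}},
\]
which already matches the right-hand side. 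So it suffices to show that the off-diagonal sum
\[
\mathcal{S}:=\sum_{i}\sum_{\substack{j\neq i,\,k\neq i\\ j\neq k}}\frac{\langle q_i-q_j,\,q_i-q_k\rangle}{|q_i-q_j|^{\gamma}|q_i-q_k|^{s+1}}
\]
is nonnegative.

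\smallskip
\textbf{Step 2 (reduction to a single triple).} Apply the polarization identity $2\langle q_i-q_j,q_i-q_k\rangle=r_{ij}^2+r_{ik}^2-r_{jk}^2$ with $r_{pq}:=|q_p-q_q|$, and regroup $\mathcal{S}$ by the unordered triple $\{a,b,c\}$ of distinct indices. Summing the six permutations of $(i,j,k)=(a,b,c)$ and using $f(u,v):=u^{-\gamma}v^{-s-1}+v^{-\gamma}u^{-s-1}$, the contribution of one triple simplifies to $\tfrac12 T(x,y,z)$, where $(x,y,z)=(r_{bc},r_{ac},r_{ab})$ and
\[
T(x,y,z):=(y^2+z^2-x^2)f(y,z)+(x^2+z^2-y^2)f(x,z)+(x^2+y^2-z^2)f(x,y).
\]
The claim therefore reduces to showing $T(x,y,z)\ge 0$ for every triple $(x,y,z)$ of side lengths of a (possibly degenerate) triangle in $\R^d$.

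\smallskip
\textbf{Step 3 (per-triple inequality).} Order $x\ge y\ge z$. Then only the coefficient $y^2+z^2-x^2$ may be negative, while the other two are nonnegative; the triangle inequality $x\le y+z$ bounds $x^2-y^2-z^2\le 2yz$. Since $f$ is decreasing in each of its arguments, $f(y,z)\ge f(x,z)\ge f(x,y)$, so the only potentially negative coefficient multiplies the \emph{largest} value of $f$, while the two positive coefficients multiply the smaller ones. A Schur-type rearrangement, which uses the restriction $\gamma\in(0,1]$ crucially, then shows that the two positive contributions absorb the negative one. As a consistency check, the case $\gamma=1$, $s=0$ yields the closed form $\tfrac12 xyz\,T=\tfrac{16 A^2}{x+y+z}+2xyz\ge 0$ via Heron's formula, with $A$ the triangle's area.

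\smallskip
\textbf{Main obstacle.} The analytic heart is Step 3: the bound $T\ge 0$ is sharp in the collinear regime (e.g.\ as $z\to 0$ with $x\approx y$), so no crude estimate suffices, and the hypothesis $\gamma\le 1$ is essential. Two viable routes are: (a) an explicit algebraic rearrangement using $x\ge y\ge z$ and the monotonicity of $f$, collecting terms into manifestly nonnegative blocks; or (b) an integral representation such as $r^{-\gamma}=\tfrac{1}{\Gamma(\gamma)}\int_0^\infty t^{\gamma-1}e^{-tr}\,\mathrm{d}t$, which would let one reduce the general $(\gamma,s)$ case to the already-proved Lemma~\ref{L:s+1times1} (the $\gamma=1$ case) by superposition. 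The verification is elementary but delicate, and is carried out in~\cite{duong2024trend}.
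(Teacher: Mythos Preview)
The paper does not give a proof of this lemma; it simply records it as an auxiliary result imported from \cite{duong2024trend}. Your Steps~1 and~2 go further than the paper does: the diagonal extraction is correct and yields exactly the right-hand side, and the regrouping by unordered triples is carried out correctly (your consistency check via Heron's formula for $\gamma=1$, $s=0$ is valid). So as a reduction, your write-up is fine and in fact more informative than the paper's bare citation.

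The gap is Step~3. You do not actually prove $T(x,y,z)\ge 0$; you describe two candidate routes and then, like the paper, defer to \cite{duong2024trend}. Both routes as stated need more care. For route~(a), the observation that the only possibly negative coefficient multiplies the largest $f$-value is correct, but this is the \emph{unfavourable} direction for a rearrangement bound, and the coefficients $(y^2+z^2-x^2)$, $(x^2+z^2-y^2)$, $(x^2+y^2-z^2)$ sum to $x^2+y^2+z^2$, not zero, so this is not a Schur/Abel situation that closes on its own; a genuine algebraic manipulation exploiting $\gamma\le 1$ is still required. For route~(b), writing $r^{-\gamma}$ as a Laplace integral does not by itself reduce to Lemma~\ref{L:s+1times1}, because the weight $e^{-t|q_i-q_j|}$ depends on the pair $(i,j)$ and cannot be pulled through the inner sum to recover the unit vectors $\tfrac{q_i-q_j}{|q_i-q_j|}$ appearing there. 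In short: your outline is sound up to the per-triple inequality, but the proof of that inequality is the entire content of the lemma, and your proposal leaves it at the same point the paper does --- as a citation.
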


\section*{Acknowledgment} M. H. D is funded by an EPSRC Standard Grant EP/Y008561/1.
  \bibliographystyle{abbrv}
\bibliography{GLE-bib}
\end{document}